\title{A martingale approach to noncommutative stochastic calculus}
\author[1]{David A.\ Jekel\thanks{Supported by NSF grant DMS-2002826}}
\author[2]{Todd A.\ Kemp\thanks{Supported by NSF grants DMS-2055340 and DMS-1800733}}
\author[3]{Evangelos A.\ Nikitopoulos\thanks{Supported by NSF grant DGE-2038238 and partially supported by NSF grant DMS-2055340}}
\affil[1]{Department of Mathematical Sciences, University of Copenhagen\protect\\
Universitetsparken 5, 2100 København Ø (Denmark)\protect\\
{Email: \tt \href{mailto:daj@math.ku.dk}{daj@math.ku.dk}}\vspace{2mm}}
\affil[2]{Department of Mathematics, University of California San Diego\protect\\
\noindent 9500 Gilman Drive, La Jolla, CA 92093-0112 (USA)\protect\\
{Email: \tt \href{mailto:tkemp@ucsd.edu}{tkemp@ucsd.edu}}\vspace{2mm}}
\affil[3]{Department of Mathematics, University of Michigan\protect\\
\noindent 530 Church Street, Ann Arbor, MI 48109-1043 (USA)\protect\\
{Email: {\tt \href{mailto:enikitop@umich.edu}{enikitop@umich.edu}}}}
\date{\vspace{-5ex}}
\begin{document}

\maketitle

\begin{abstract}
We present a new approach to noncommutative stochastic calculus that is, like the classical theory, based primarily on the martingale property.
Using this approach, we introduce a general theory of stochastic integration and quadratic (co)variation for a certain class of noncommutative processes, analogous to semimartingales, that includes both the $q$-Brownian motions and classical matrix-valued Brownian motions.
As applications, we obtain Burkholder--Davis--Gundy inequalities (with $p \geq 2$) for continuous-time noncommutative martingales and a noncommutative It\^{o}'s formula for ``adapted $C^2$ maps,'' including trace $\ast$-polynomial maps and operator functions associated to the noncommutative $C^2$ scalar functions $\mathbb{R} \to \mathbb{C}$ introduced by Nikitopoulos, as well as the more general multivariate tracial noncommutative $C^2$ functions introduced by Jekel, Li, and Shlyakhtenko.

$\,$

\noindent \textbf{Keyphrases:} free probability, noncommutative stochastic analysis, stochastic integral, quadratic covariation, It\^{o}'s formula, noncommutative smooth functions

$\,$

\noindent\textbf{MSC (2020):} 46L54, 60H05 (Primary); 46L52 (Secondary)
\end{abstract}

\tableofcontents

\section{Introduction}\label{sec.intro}

Stochastic calculus is a cornerstone of modern probability theory, supporting the foundations of fields of quantitative research from statistical physics to mathematical finance.
Invented first by Kiyoshi It\^{o} to develop a differential model of the evolution of a Markov process, it fully burst onto the scene with It\^{o}'s 1951 paper, \textit{On a formula concerning stochastic differentials} \cite{Ito1951}.
That paper's main result is what is now known as It\^{o}'s formula or sometimes, due to McKean's choice of nomenclature in \cite{McKean1969}, It\^{o}'s lemma;
since the label ``lemma'' grossly understates the importance of this result, we stick firmly to ``formula.''
We now state a special case.
Let $W = (W_t)_{t \geq 0}$ be a standard Brownian motion, and suppose $X=(X_t)_{t\ge 0}$ is (what is now known as) an \textbf{It\^{o} process}, i.e.,
\[
X_t = X_0 + \int_0^t H_s\,\d W_s + \int_0^t K_s\,\d s
\]
for some appropriately nice stochastic processes $H = (H_t)_{t \geq 0}$ and $K = (K_t)_{t \geq 0}$ adapted to the natural filtration of $W$. 
(The first integral above is a Brownian stochastic integral developed by It\^{o} in \cite{Ito1944}.)
If $f \colon \R \to \R$ is a $C^2$ function, then the process $f(X) = (f(X_t))_{t \geq 0}$ satisfies
\begin{equation}
f(X_t) = f(X_0) + \int_0^t f'(X_s)\,\d X_s + \frac{1}{2}\int_0^t f''(X_s) \, (\d X_s)^2. \label{eq.Ito.1951}
\end{equation}
Above, $\d X_s$ stands for $H_s \,\d W_s + K_s\,\d s$, and $(\d X_s)^2$ stands for $H_s^2\,\d s$.
In other words, \eqref{eq.Ito.1951} says
\[
f(X_t) = f(X_0) + \int_0^t f'(X_s)H_s\,\d W_s + \int_0^t \left(f'(X_s)K_s + \frac{1}{2} f''(X_s)H_s^2\right)\d s.
\]
Actually, in \cite{Ito1951}, It\^{o} stated and proved a form of \eqref{eq.Ito.1951} for multivariate functions of It\^{o} processes driven by multidimensional Brownian motion.
In Section \ref{sec.phil}, we discuss a generalization of this formula in detail;
see Theorem \ref{thm.clIF} and \eqref{eq.clIF} below.

Over the subsequent two decades, It\^{o}'s stochastic calculus was expanded from these important but limited beginnings to its modern form.
Beginning with the work of Doob and the follow-up work of Meyer, the field's perspective shifted away from Brownian integrators to processes sharing a key orthogonal-increments property with Brownian motion:
processes now known as martingales.
Through further key contributions of Courr\`{e}ge \cite{Courrege1962} and Motoo--Watanabe \cite{MotooWatanabe1965}, the framework for stochastic calculus was moved almost completely to the world of martingales.
(Here, ``almost'' refers to the fact that some simplifying technical assumptions lingered from the roots of stochastic calculus as a tool to study Markov processes.)
In their influential 1967 paper \cite{KW1967}, Kunita and Watanabe made another major advancement by clarifying the role of the\pagebreak\ quadratic variation in It\^o's original formula, enabling a vast generalization thereof;
see Theorem \ref{thm.clQC} below for the definition of quadratic (co)variation.
After further development and refinement by Meyer \cite{Meyer1967x4} and Dol\'{e}ans-Dade--Meyer \cite{DDM1970} of the concept of quadratic covariation, thereby removing the remaining extraneous technical assumptions, the theory reached its modern form by 1970.\footnote{This paragraph firmly centers the development of stochastic calculus in Japan and France, 1944--1970.
In fact, there were important parallel developments in the same mathematical arena in the Soviet Union during the same period, owing to Dynkin, Girsanov, Skorohod, Stratonovich, and others.
These developments would not reach across the Iron Curtain until much later, and a richer theory of stochastic analysis grew out of the combined understanding of both worlds.
For a more thorough summary of the historical development of stochastic calculus, the reader should consult the excellent introduction by Varadhan and Stroock to a curated selection of It\^o's works \cite{VaradhanStroock1987} or the concise paper \cite{JarrowProtter2004} of Jarrow and Protter on which our discussion is based.}
Throughout this whole development, it was clear that It\^{o}'s original formula was the linchpin of the theory;
it is the key computational tool in the theory, like the fundamental theorem of calculus it generalizes from the world of smooth ``processes.''
Indeed, had it not been for It\^{o}'s humility and the diminishing names given to his formula by his colleagues, it may well have garnered the appropriate name ``fundamental theorem of stochastic calculus.''

In the mirror universe of noncommutative probability, there have been many developments of versions of stochastic calculus.
Perhaps the earliest major steps in this direction are the 1984 papers of Hudson--Parthasarathy \cite{HP1984} and Applebaum--Hudson \cite{AH1984}.
These highly cited papers develop rudimentary theories of stochastic calculus for certain noncommutative It\^{o}-type processes driven by ``quantum Brownian motions,'' i.e., one-parameter families of quantum field operators on the bosonic and fermionic Fock spaces, respectively.
Motivated by the work of Hudson--Parthasarathy and Applebaum--Hudson, K\"{u}mmerer and Speicher published in 1992 the paper \cite{KS1992}, which developed a similar theory over the full (or Boltzmann) Fock space.
Such frameworks relied heavily on the Fock space structure and were limited in scope;
for example, It\^{o}'s formula in these contexts only applied to products of (or polynomials in) their It\^{o}-type processes.

The Brownian character of these families of field operators and the special algebraic (free probabilistic) properties of the ones acting on the full Fock space led Biane to define in  \cite{Biane1} an abstract notion of ``free Brownian motion'' (Example \ref{ex.freeinc}), an example of which can be constructed using field operators acting on the full Fock space.
Biane and Speicher then joined forces in \cite{BS1998,BS2001} to formulate and apply a theory of stochastic calculus for It\^{o} processes driven by free Brownian motion.
Their foundational work on free stochastic calculus supports more than one hundred important papers from the last quarter century.
Here are just a few landmark accomplishments:
the theory of free unitary Brownian motion introduced by Biane in \cite{Biane1}, which is a central ingredient in Voiculescu's non-microstates approach to free entropy via the liberation process from \cite{VoiculescuVI};
applications to estimates on microstates free entropy, e.g., \cite{BS2001,Shlyakhtenko2009};
applications to deformation/rigidity theory of group von Neumann algebras \cite{Dabrowski2010};
and free analogs of coercive functional inequalities, e.g., Talagrand inequalities \cite{HiaiUeda2006}.

All the papers mentioned above, as well as those not mentioned,\footnote{There are at least two areas of research we have not mentioned.
First is the area of $q$-stochastic analysis ($-1 \leq q \leq 1$), which interpolates between the quantum Brownian motions (with $q=-1,0,1$ corresponding respectively to the fermionic, free, and bosonic cases) and was pioneered in \cite{BS1991,BKS1997}. 
Key works on $q$-stochastic calculus include \cite{DonatiMartinS2003,DS2018} for $q$-Brownian motion and \cite{Anshelevich2002,Anshelevich2004} for (free and) $q$-L\'{e}vy processes. 
Second is the realm of non-tracial noncommutative stochastic analysis, which makes use of Haagerup-type $L^p$ spaces and is of a different flavor.
See, e.g., the recent works \cite{ABDVG2022,DVFG2025,DVFGG2025}.}
rely on special properties of integrators with noncommutative Brownian/Gaussian or L\'{e}vy-process character, and there is no general theory of quadratic (co)variation to unite them.
Consequently, It\^{o}'s formula is approached in a somewhat \textit{ad hoc} and limited way in each particular context.
One standard approach is as follows:
1) For the class of processes (e.g., free It\^{o} processes) and functions (e.g., polynomial or other functional calculi) of interest, work out heuristically---using special properties of the processes of interest---how an It\^{o}-type formula \textit{ought} to work (see \cite[\S1.1]{NikitopoulosIto} for an example);
2) prove a product rule--type special case of the formula from the previous step;
and 3) extend the product rule from the previous step to the desired class of functions through a mix of combinatorial methods and limiting arguments. 
Such an approach is tantamount to treating the term $\frac{1}{2}\int_0^t f''(X_s)\,(\d X_s)^2 = \frac{1}{2}\int_0^t f''(X_s)H_s^2\,\d s$ in \eqref{eq.Ito.1951} as a \textit{single} entity, the ``It\^{o} correction term,'' depending on the pair $(f,X)$ instead of the combination of \textit{two} distinct entities:
one depending on $f$ (its second derivative) and one depending on $X$ (its quadratic variation).
Specifically, as we explain in Section \ref{sec.phil}, the modern statement of It\^{o}'s formula in a vector-valued setting is
\[
F(X_t) = F(X_0) + \int_0^t DF(X_s)[\d X_s] + \frac{1}{2}\int_0^t D^2F(X_s)[\d X_s, \d X_s]
\]
whenever $X$ is a (continuous vector-valued) semimartingale and $F$ is a $C^2$ map;
above, $DF$ and $D^2F$ are, respectively, the first and second Fr\'{e}chet derivatives of $F$, and $[\d X_s, \d X_s]$ denotes integration against the quadratic variation of $X$.
This crucial perspective from classical stochastic analysis has not yet made it into the noncommutative probability literature.
The central goal of the present paper is to incorporate this insight by developing a general theory of noncommutative stochastic calculus that follows as closely as possible the classical martingale-theoretic development of the subject.
As we summarize in more detail in Section \ref{sec.mainres}, the fruits of these labors are the first general theory of noncommutative quadratic (co)variation, continuous-time noncommutative Burkholder--Davis--Gundy inequalities (Theorem \ref{thm.NCBDG}), and a noncommutative It\^{o}'s formula (Theorem \ref{thm.NCIF}) in which the correction term is a quadratic variation integral of the second derivative as in the classical case.
Moreover, we show that other known instances of noncommutative It\^{o}'s formula arise as special cases of ours via derivative and quadratic variation calculations, thereby demonstrating a ``universality'' of our formula.
A key point is that we consider general maps $F$ defined on subsets of the operator algebra rather than highly specific classes of functions like those induced via functional calculus by a scalar function of a real variable.

The remainder of this paper is organized as follows.
In Section \ref{sec.phil}, we briefly outline the modern form of the classical (continuous semi)martingale-theoretic approach to stochastic integration.
In Section \ref{sec.mainres}, we give precise statements of the main constructions and results of this paper. 
Section \ref{sec.bg1} summarizes the necessary preliminaries:
background on noncommutative probability theory and notations for trace polynomials and various classes of multilinear maps.
Section \ref{sec.ncprocesses} introduces notions of adaptedness (of noncommutative $L^p$ space--valued processes and various multilinear map--valued processes) and special processes (martingales, FV processes, and decomposable processes) that are key to our development.

In Section \ref{sec.stochint}, we develop a general theory of stochastic integration with respect to the $L^2$-decomposable processes introduced in Section \ref{sec.NCsemi}.
We do so by adapting the classical method, explained in Section \ref{sec.phil}, of proving an It\^{o} isometry using the Dol\'{e}ans measure of a square-integrable martingale.
In Section \ref{sec.QC}, the most technically challenging part, we develop our theory of noncommutative quadratic covariation.
In Section \ref{sec.BDG}, we obtain our noncommutative Burkholder--Davis--Gundy inequalities.
We also compute examples of quadratic covariations in Section \ref{sec.QCexs} that shed new light on other calculations in the literature.

In Section \ref{sec.NCIF}, we introduce a concept of adapted $C^2$ maps (more generally, adapted $C^{k,\ell}$ maps) defined on subsets of operator algebras and prove our noncommutative It\^{o}'s formula for such maps.
Using ideas and results from \cite{JLS2022,NikitopoulosNCk}, we conclude in Sections \ref{sec.trsmoothmaps} and \ref{sec.NCk} with many examples of applying the formula, demonstrating its computational flexibility and recovering other noncommutative It\^{o} formulas from the literature as special cases.

Finally, Appendix \ref{sec.CstarLp} fills a small gap in the literature on noncommutative $L^p$ spaces---see Section \ref{sec.freeprob} for more information---and Appendix \ref{sec.nota} is a notation index for the reader's convenience.

\subsection{Philosophy of the approach}\label{sec.phil}

Using classical stochastic calculus as our guide, we describe the philosophy of the present paper's approach to noncommutative stochastic calculus.
For the duration of this discussion, we assume the reader is familiar with the basics of continuous-time stochastic processes;
see \cite{CW1990,EK1986,RY1999} for relevant background.
Aside from Theorem \ref{thm.matBMtilde} and Remark \ref{rem.Poisson} below, we shall not use this material elsewhere in the paper.

Fix a filtered probability space $(\Om,\sF,(\sF_t)_{t \geq 0},P)$ satisfying the usual conditions.\footnote{right-continuity and completeness:
$\sF_t = \bigcap_{u > t}\sF_u$ and $\{G \subseteq \Om : G \subseteq G_0$ for some $G_0 \in \sF$ with $P(G_0) = 0\} \subseteq \sF_0$\label{p.usual}}
An \textbf{FV process} is an adapted process $A = (A_t)_{t \geq 0} \colon \R_+ \times \Om \to \R$ whose paths almost surely have locally bounded variation.
A (\textbf{continuous}) \textbf{semimartingale} is an adapted continuous process $X$ such that $X = X_0 + M + A$ for some continuous local martingale $M$ and some continuous FV process $A$ with $M_0 = A_0 = 0$ almost surely.
In this case, $M$ and $A$ are unique up to indistinguishability, and we call $M$ the \textbf{martingale part} of $X$ and $A$ the \textbf{FV part} of $X$.
It might not be clear from this definition why a semimartingale is a useful object.
However, in a certain sense that can be made precise---see, e.g., \cite{Bichteler2002,Protter2005}---semimartingales are precisely the continuous stochastic processes against which it is possible to define ``well-behaved'' stochastic integrals.
For the present discussion, knowing Theorem \ref{thm.clSI} below suffices.

\begin{nota}[Partitions]\label{nota.part}
Suppose $-\infty < a  < b \leq \infty$, and write $I \coloneqq [a,b] \cap \R$.
If $b < \infty$, then a partition of $I$ is a finite subset $\Pi = \{a = t_0 < \cdots < t_n = b\} \subseteq I$.
A partition of $[a,\infty)$ is a collection $\Pi = \{t_n : n \in \N_0\}$ such that $t_0 = a$, $t_n < t_{n+1}$ for all $n \in \N_0$, and $t_n \to \infty$ as $n \to \infty$.
In general, $\cP_I$ is the set of partitions of $I$.
Now, fix $\Pi \in \cP_I$.
If $t \in \Pi$, then $t_- \in \Pi$ is the member of $\Pi$ to the left of $t$;
precisely, $a_- \coloneqq a$, and $t_- \coloneqq \max\{s \in \Pi : s < t\}$ for $t \in \Pi \setminus \{a\}$.
Also, $\Delta t \coloneqq t-t_-$, $|\Pi| \coloneqq \sup\{\Delta s : s \in \Pi\}$ is the mesh of $\Pi$, and $\Delta_tF \coloneqq F(t) - F(t_-)$ for a function $F$ from $I$ to a vector space.
Limits as $|\Pi| \to 0$ will be denoted by $\lim_{\Pi \in \cP_I}$;
see Fact \ref{fact.partlim} below.
\end{nota}

\begin{thm}[Stochastic integral]\label{thm.clSI}
If $X$ is a semimartingale and $H$ is an adapted continuous process, then there exists a unique-up-to-indistinguishability semimartingale $\into H_s \,\d X_s$ such that for all $t \geq 0$,
\[
\int_0^t H_s\,\d X_s = L^0\text{-}\lim_{\Pi \in \cP_{[0,t]}} \sum_{s \in \Pi} H_{s_-}\,\Delta_s X.
\]
The limit above is a limit in probability as $|\Pi| \to 0$.
We call $\into H_s \,\d X_s$ the \textbf{stochastic integral} of $H$ with respect to $X$.
\end{thm}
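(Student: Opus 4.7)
The plan is to use the semimartingale decomposition $X = M + A$ (with $M$ a continuous local martingale, $A$ a continuous FV process, $A_0 = 0$, unique up to indistinguishability) and to define the integral piecewise via $\into H_s\,dX_s \coloneqq \into H_s\,dM_s + \into H_s\,dA_s$. For the FV part, almost every path of $A$ has locally bounded variation and $H$ has continuous paths, so the pathwise Lebesgue--Stieltjes integral $\int_0^t H_s(\om)\,dA_s(\om)$ exists; uniform continuity of $s \mapsto H_s(\om)$ on $[0,t]$ combined with $V(A(\om):[0,t]) < \infty$ gives pathwise (hence in-probability) convergence of the left-endpoint Riemann sums $\sum_{s\in\Pi}H_{s_-}(\om)\,\Delta_s A(\om)$ to that integral as $|\Pi| \to 0$.

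For the martingale part, first assume $M$ is bounded and invoke the Doob--Meyer decomposition of the submartingale $M^2$ to produce the quadratic variation $\la M\ra$. On elementary predictable processes $K = K_0\mathbf{1}_{\{0\}} + \sum_j K_{t_j}\mathbf{1}_{(t_j,t_{j+1}]}$ with each $K_{t_j} \in L^\infty(\sF_{t_j})$, define $\int_0^t K_s\,dM_s \coloneqq \sum_j K_{t_j}(M_{t_{j+1}\wedge t} - M_{t_j\wedge t})$ and verify the It\^o isometry
\[
\E\Big[\Big(\int_0^t K_s\,dM_s\Big)^2\Big] = \E\Big[\int_0^t K_s^2\,d\la M\ra_s\Big]
\]
using $L^2$-orthogonality of martingale increments and the defining property of $\la M\ra$. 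Extend by density to all predictable $K$ with the right-hand side finite. For continuous adapted $H$, the left-endpoint approximants $H^\Pi$ converge to $H$ uniformly on $[0,t]$ almost surely and are dominated there by $\sup_{s \in [0,t]}|H_s| \in L^0$; combined with a preliminary truncation in $\sup_{s \in [0,t]}|H_s|$, dominated convergence gives $H^\Pi \to H$ in $L^2(d\la M\ra\otimes dP)$, and the isometry yields $\int_0^t H^\Pi_s\,dM_s \to \int_0^t H_s\,dM_s$ in $L^2(P)$, hence in probability.

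To pass from bounded $M$ to a general continuous local martingale, localize with stopping times $\tau_n \coloneqq \inf\{s : |M_s| \geq n\} \wedge n$ so that each $M^{\tau_n}$ is bounded; define the integral against $M$ to agree on $[0,\tau_n]$ with the integral against $M^{\tau_n}$, check consistency in $n$, and use $P(\tau_n > t) \to 1$ to pass to the limit in the Riemann-sum identity at fixed $t$. Uniqueness follows since any two candidate processes realizing the Riemann-sum-in-probability identity at each $t$ must agree almost surely at each $t$, and path continuity then upgrades this to indistinguishability.

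The principal obstacle is producing the quadratic variation $\la M\ra$ that makes the It\^o isometry even meaningful: classically this rests on the Doob--Meyer decomposition of the submartingale $M^2$, a genuinely deep structural input. Secondary technical points are the density of left-endpoint simple approximants of continuous adapted processes in $L^2(d\la M\ra\otimes dP)$ and the careful localization bookkeeping needed to glue bounded-case integrals into a semimartingale-valued process. The noncommutative analogue of the Doob--Meyer input is, as the introduction indicates, exactly what Sections \ref{sec.stochint} and \ref{sec.QC} of this paper are devoted to constructing.
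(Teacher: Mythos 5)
Your outline is correct and follows the same standard route the paper itself sketches immediately after the statement: Lebesgue--Stieltjes integration pathwise on the FV part, localization by stopping times to reduce to bounded $M$ and $H$, an It\^{o} isometry for elementary predictable integrands, and left-endpoint approximation $H^{\Pi}\to H$ for continuous adapted $H$. The one small divergence is that you produce the isometry via Doob--Meyer and $\la M\ra$, whereas the paper's sketch constructs the Dol\'{e}ans measure $\mu_M$ directly from $\E_P[1_{F_s}(M_t-M_s)^2]$ on predictable rectangles (an equivalent formulation, since $\mu_M(G)=\E_P[\int 1_G\,d[M]]$), and it is this measure-first viewpoint that the noncommutative development imitates when Lemma \ref{lem.fakeDoleans} builds only the time-marginal $\kappa_M$ from $\norm{M(t)}_2^2-\norm{M(s)}_2^2$.
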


\begin{rem}
The choice of the left-endpoint evaluation scheme above matters in the sense that other evaluation schemes can yield different answers.
The standard example is when $X = H = B$ is a Brownian motion, in which case
\[
L^2\text{-}\lim_{\Pi \in \cP_{[0,t]}} \sum_{s \in \Pi} B_{s_-} \,\Delta_s B = \frac{1}{2}(B_t^2 - t) \; \text{ and } \; L^2\text{-}\lim_{\Pi \in \cP_{[0,t]}} \sum_{s \in \Pi} B_s \,\Delta_s B = \frac{1}{2}(B_t^2 + t).
\]
The left-endpoint choice ensures the probabilistically desirable property that if $M$ is a continuous local martingale, then so is  $\into H_s \, \d M_s$.
\end{rem}

One standard proof of this result proceeds as follows:
1) Use pathwise Stieltjes integration theory on the FV part of $X$ to reduce to the case in which $X = M$ is a continuous local martingale,
2) use stopping time ``localization'' arguments to reduce to the case in which $M$ and $H$ are bounded, and
3) use the It\^{o} isometry (\cite[Thm.\ 2.3]{CW1990}) to treat the latter case.
The third step is important in spirit for us, so we say a few more words about it.
Write $\cP \subseteq 2^{\R_+ \times \Om}$ for the $\sigma$-algebra generated by
\[
\{\{0\} \times F : F \in \sF_0\} \cup \{(s,t] \times F : 0 \leq s < t, \, F \in \sF_s\}.
\]
We call $\cP$ the $\sigma$-algebra of \textbf{predictable sets}.
If $M$ is a continuous $L^2$-martingale, then there exists a unique measure $\mu_M$ on $(\R_+ \times \Om, \cP)$, called the \textbf{Dol\'{e}ans measure} of $M$, such that $\mu_M(\{0\} \times F_0) = 0$ for all $F_0 \in \sF_0$~and
\[
\mu_M((s,t] \times F_s) = \E_P\big[1_{F_s}(M_t - M_s)^2\big] \qquad (0 \leq s < t, \; F_s \in \sF_s),
\]
where $1_S$ is the indicator function of $S$.\label{page.indicator}
(See \cite[\S 2.4 \& \S2.8]{CW1990}.)
Now, if
\[
H_t(\om) = \sum_{i=1}^n 1_{\{0\}}(t)\,Y_i(\om)  + \sum_{j=1}^m 1_{(s_j,t_j]}(t)\, Z_j(\om) 
\]
for bounded $\sF_0$-measurable random variables $Y_i$ and bounded $\sF_{s_j}$-measurable random variables $Z_j$, then $H$ is called an \textbf{elementary predictable process}, and we define
\[
I_M(H)_t \coloneqq \sum_{j=1}^m Z_j(M_{t_j \wedge t} - M_{s_j \wedge t}).
\]
For such $H$, $I_M(H)$ is a continuous $L^2$-martingale, and the It\^{o} isometry says that
\[
\E_P[I_M(H)_t^2] = \int_{[0,t] \times \Om} |H|^2 \,\d\mu_M.
\]
This enables the extension of the definition of $\into H_s\,\d M_s \coloneqq I_M(H)$ to the set of predictable, i.e., $\cP$-measurable, processes $H \colon \R_+ \times \Om \to \R$ such that $\int_{[0,t] \times \Om} |H|^2 \,\d\mu_M < \infty$ for all $t \geq 0$.
(See \cite[\S2.5]{CW1990} for details.)
Finally, if $H$ is bounded, continuous, and adapted, then the It\^{o} isometry and the approximation $H_r^{\Pi} \coloneqq \sum_{s \in \Pi} 1_{(s_-,s]}(r)\,H_{s_-}$ can be used to show that
\[
I_M(H)_t = L^2\text{-}\lim_{\Pi \in \cP_{[0,t]}} \sum_{s \in \Pi} H_{s_-} \Delta_sM,
\]
as desired.
\pagebreak

Theorem \ref{thm.clSI} is used to construct one of the most important objects in stochastic analysis:
the quadratic covariation of a pair of semimartingales.

\begin{thm}[Quadratic covariation]\label{thm.clQC}
If $X$ and $Y$ are semimartingales and $t \geq 0$, then
\[
L^0\text{-}\lim_{\Pi \in \cP_{[0,t]}} \sum_{s \in \Pi}\Delta_s X\, \Delta_sY = X_tY_t - X_0Y_0 - \int_0^t X_s \,\d Y_s - \int_0^t Y_s\,\d X_s.
\]
The process
\[
[X,Y] \coloneqq XY - X_0Y_0 - \into X_s \,\d Y_s - \into Y_s\,\d X_s
\]
is called the \textbf{quadratic covariation} of $X$ and $Y$.
It is a continuous FV process, and $[X,Y] = [M,N]$, where $M$ is the martingale part of $X$ and $N$ is the martingale part of $Y$.
Also, we write $[X] \coloneqq [X,X]$.
\end{thm}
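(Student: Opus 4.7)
The plan is to prove the displayed formula by summation by parts and then handle the continuity, $[X,Y] = [M,N]$, and FV claims in turn. The starting point is the elementary algebraic identity
\[
X_sY_s - X_{s_-}Y_{s_-} = X_{s_-}\,\Delta_sY + Y_{s_-}\,\Delta_sX + \Delta_sX\,\Delta_sY,
\]
which telescopes, for any $\Pi \in \cP_{[0,t]}$, to
\[
X_tY_t - X_0Y_0 = \sum_{s \in \Pi}X_{s_-}\,\Delta_sY + \sum_{s \in \Pi}Y_{s_-}\,\Delta_sX + \sum_{s \in \Pi}\Delta_sX\,\Delta_sY.
\]
Since $X$ and $Y$ are continuous adapted semimartingales, Theorem \ref{thm.clSI} yields that the first two sums converge in probability to $\int_0^tX_s\,dY_s$ and $\int_0^tY_s\,dX_s$, respectively, as $|\Pi|\to 0$; rearranging gives both the existence of the limit of the quadratic sum and the claimed formula. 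Continuity of $[X,Y] = XY - X_0Y_0 - \into X_s\,dY_s - \into Y_s\,dX_s$ is then immediate from continuity of each term.

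Next, to show $[X,Y] = [M,N]$, decompose $X = M + A$ and $Y = N + B$ with $A, B$ continuous FV. Expanding
\[
\Delta_sX\,\Delta_sY = \Delta_sM\,\Delta_sN + \Delta_sM\,\Delta_sB + \Delta_sA\,\Delta_sN + \Delta_sA\,\Delta_sB,
\]
each of the three cross terms vanishes almost surely as $|\Pi| \to 0$. For instance,
\[
\Bigl|\sum_{s \in \Pi}\Delta_sM\,\Delta_sB\Bigr| \leq \Bigl(\max_{s \in \Pi}|\Delta_sM|\Bigr) V(B:[0,t]) \to 0
\]
almost surely, since $M$ is almost surely uniformly continuous on $[0,t]$ and $V(B:[0,t]) < \infty$ almost surely; the $\Delta A\,\Delta N$ and $\Delta A\,\Delta B$ terms are handled by the same estimate. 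Hence $\sum_s\Delta_sX\,\Delta_sY$ and $\sum_s\Delta_sM\,\Delta_sN$ share a common limit in probability, yielding $[X,Y] = [M,N]$.

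The main obstacle is the locally bounded variation claim, since the definition exhibits $[X,Y]$ only as a limit in probability of signed-looking sums and does not immediately encode monotonicity. By the previous step and polarization $[M,N] = \frac{1}{4}([M+N] - [M-N])$, it suffices to show $[L]$ is nondecreasing for any continuous local martingale $L$. Fix rationals $0 \leq s \leq t$ and choose $\Pi_n \in \cP_{[0,t]}$ with $s \in \Pi_n$ and $|\Pi_n| \to 0$; then $\Pi_n^s \coloneqq \Pi_n \cap [0,s] \in \cP_{[0,s]}$ also has mesh tending to zero, and the pointwise inequality
\[
\sum_{u \in \Pi_n^s}(\Delta_uL)^2 \leq \sum_{u \in \Pi_n}(\Delta_uL)^2
\]
survives a common almost-surely-convergent subsequence to yield $[L]_s \leq [L]_t$ almost surely. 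Running over the countable family of rational pairs $s \leq t$ and using the continuity of $[L]$ established above shows that $[L]$ is almost surely nondecreasing on $\R_+$, hence of locally bounded variation.
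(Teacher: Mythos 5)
Your proof is correct, and in fact the paper offers no proof of Theorem \ref{thm.clQC} to compare against: it is quoted as classical background (with references to \cite{CW1990,EK1986}), so a self-contained argument like yours is exactly what would be needed to fill it in. Your route is the standard one, and it is also the template the paper later follows in the noncommutative setting: the summation-by-parts identity and telescoping step is the classical shadow of Theorem \ref{thm.NCIPR}; the reduction $[X,Y]=[M,N]$ via the estimate $\bigl|\sum_{s\in\Pi}\Delta_sM\,\Delta_sB\bigr|\leq\bigl(\max_{s\in\Pi}|\Delta_sM|\bigr)V(B:[0,t])$ is precisely Proposition \ref{prop.QCofFV} and Corollary \ref{cor.reducetomart}. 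The one place where your argument genuinely diverges from the paper's noncommutative analogue is the FV claim: you use polarization $[M,N]=\frac{1}{4}([M+N]-[M-N])$ together with monotonicity of $[L]$ for a continuous local martingale $L$ (extracting an a.s.\ convergent subsequence for each rational pair $s\leq t$ and invoking continuity), whereas in the noncommutative setting positivity/monotonicity is unavailable and the paper instead derives the FV property from the increment bound against the control measure $\kappa_{M,N}$ (Theorem \ref{thm.QC2}\ref{item.QCFV}). Both are sound; the polarization route is the more elementary one in the scalar commutative case, while the measure-domination route is the one that generalizes. All steps in your write-up check out, including the appeal to Theorem \ref{thm.clSI} for the convergence of $\sum_{s\in\Pi}X_{s_-}\Delta_sY$ and the subsequence argument for monotonicity of $[L]$.
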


Since\hspace{-0.32mm} $[X,\hspace{-0.32mm}Y]$\hspace{-0.32mm} is\hspace{-0.32mm} a\hspace{-0.32mm} continuous\hspace{-0.32mm} FV\hspace{-0.32mm} process,\hspace{-0.32mm} one\hspace{-0.32mm} can\hspace{-0.32mm} Stieltjes\hspace{-0.32mm} integrate\hspace{-0.32mm} against\hspace{-0.32mm} it\hspace{-0.32mm} pathwise.\hspace{-0.32mm}
It\hspace{-0.32mm} is\hspace{-0.32mm} common\hspace{-0.32mm} to\hspace{-0.32mm} write
\[
\int_0^tH_s\,\d X_s\,\d Y_s \coloneqq \int_0^t H_s\,\d[X,Y]_s
\]
for such integrals.
The quadratic covariation appears in two places relevant to our development.
First, one can use it to rewrite the Dol\'{e}ans measure and therefore the It\^{o} isometry:
\cite[Thm.\ 4.2(iv)]{CW1990} says that if $M$ is a continuous $L^2$-martingale, then
\[
\mu_M(G) = \E_P\Bigg[\int_{\R_+} 1_G(t,\cdot) \,\d[M]_t\Bigg] \qquad (G \in \cP).
\]
It follows that the It\^{o} isometry may be rewritten as
\begin{equation}
    \E_P\Bigg[\Bigg(\int_0^t H_s\,\d M_s\Bigg)^2\Bigg] = \E_P\Bigg[\int_0^t |H_s|^2\,\d[M]_s\Bigg] \label{eq.clII}
\end{equation}
for elementary predictable $H$.
Second, quadratic covariations show up in It\^{o}'s formula.

\begin{thm}[It\^{o}'s formula]\label{thm.clIF}
If $F \colon \R^n \to \R^m$ is twice continuously differentiable and $X = (X_1,\ldots,X_n)$ is a vector of semimartingales, then $F(X) = (F_1(X),\ldots,F_m(X))$ is a vector of semimartingales satisfying
\[
F_i(X) = F_i(X_0) + \sum_{j=1}^n\into \partial_jF_i(X_t)\,\d X_{j,t} + \frac{1}{2}\sum_{j,k=1}^n\into \partial_k\partial_j F_i(X_t) \,\d[X_j,X_k]_t \qquad (i=1,\ldots,m).
\]
In ``stochastic differential notation,''
\[
\d F_i(X_t) = \sum_{j=1}^n\partial_jF_i(X_t)\,\d X_{j,t} + \frac{1}{2}\sum_{j,k=1}^n\partial_k\partial_j F_i(X_t) \,\d X_{j,t}\,\d X_{k,t} \qquad (i=1,\ldots,m).
\]
\end{thm}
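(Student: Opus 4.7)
The plan is to follow the classical strategy: first reduce to the scalar case $m = 1$, since the components $F_i$ decouple, and then use stopping-time localization to assume that $X$ takes values in a fixed compact set $K \subseteq \R^n$, making $f \coloneqq F_i$ together with $\partial_j f$ and $\partial_j \partial_k f$ bounded and uniformly continuous on a neighborhood of $K$. Fix $t \geq 0$ and $\Pi \in \cP_{[0,t]}$; telescoping $f(X_t) - f(X_0) = \sum_{s \in \Pi} \Delta_s (f \circ X)$ and applying Taylor's theorem to second order at $X_{s_-}$, I would write
\[
f(X_t) - f(X_0) = A_{\Pi} + \tfrac{1}{2}\, B_{\Pi} + R_{\Pi},
\]
where $A_{\Pi} \coloneqq \sum_{s \in \Pi} \sum_{j} \partial_j f(X_{s_-}) \, \Delta_s X_j$, the bilinear sum $B_{\Pi} \coloneqq \sum_{s \in \Pi} \sum_{j,k} \partial_j \partial_k f(X_{s_-}) \, \Delta_s X_j \, \Delta_s X_k$, and $R_{\Pi}$ is the integral Taylor remainder. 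The goal is to show these three sums converge in probability to the three corresponding integrals (with $R_{\Pi} \to 0$).

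The first-order sum $A_{\Pi}$ converges to $\sum_j \int_0^t \partial_j f(X_s) \, dX_{j,s}$ by Theorem \ref{thm.clSI} applied to each continuous adapted integrand $\partial_j f(X)$. For $B_{\Pi}$, the crucial lemma I would prove is the natural weighted analogue of Theorem \ref{thm.clQC}: for any bounded continuous adapted $H$ and semimartingales $Y, Z$,
\[
L^0\text{-}\lim_{|\Pi| \to 0} \sum_{s \in \Pi} H_{s_-} \, \Delta_s Y \, \Delta_s Z = \int_0^t H_s \, d[Y,Z]_s.
\]
By polarization I may reduce to the diagonal case $Y=Z$, which I would establish by comparing the weighted quadratic Riemann sum to the Lebesgue--Stieltjes Riemann sum $\sum_{s \in \Pi} H_{s_-} \, \Delta_s [Y]$ (which converges to $\int_0^t H_s\,d[Y]_s$ since $[Y]$ is continuous FV). A summation-by-parts manipulation, together with Theorem \ref{thm.clQC} applied on each subinterval and uniform continuity of $H \circ X$ on $K$, then controls the difference.

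The hard part will be the Taylor remainder $R_{\Pi}$. Using the integral form of Taylor, it admits the bound
\[
|R_{\Pi}| \leq C\, \omega\!\left(\sup_{s \in \Pi} |X_s - X_{s_-}|\right) \sum_{s \in \Pi} \sum_{j,k} |\Delta_s X_j|\, |\Delta_s X_k|,
\]
where $\omega$ is a modulus of continuity for $D^2 f$ on a neighborhood of $K$. Path continuity of $X$ forces $\sup_{s \in \Pi} |X_s - X_{s_-}| \to 0$ in probability, while Cauchy--Schwarz and the fact that $\sum_{s \in \Pi} (\Delta_s X_j)^2 \to [X_j]_t$ in probability keep the remaining double sum tight. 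Hence $R_{\Pi} \to 0$ in probability, and combining the three limits yields the formula. The principal technical obstacle is thus establishing the weighted quadratic covariation convergence for a general continuous weight $H$, since Theorem \ref{thm.clQC} gives only the unweighted identity directly; once that lemma is in hand, everything else is Taylor estimation plus the already-available stochastic integral from Theorem \ref{thm.clSI}.
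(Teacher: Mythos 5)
The paper does not prove this statement: Theorem \ref{thm.clIF} appears in the expository Section \ref{sec.phil} as quoted classical background, with the reader referred to \cite{CW1990,EK1986}, so there is no in-paper argument to compare yours against. Your sketch is the standard textbook proof and its architecture is sound: localization to make $X$ bounded, second-order Taylor expansion with integral remainder, identification of the first-order sums via Theorem \ref{thm.clSI}, the weighted quadratic-covariation lemma $\sum_{s\in\Pi}H_{s_-}\,\Delta_sY\,\Delta_sZ \to \int_0^t H_s\,d[Y,Z]_s$ for the second-order sums, and a modulus-of-continuity bound times a tight quadratic sum for the remainder. You correctly isolate the weighted covariation lemma as the real technical content; the per-subinterval application of Theorem \ref{thm.clQC} you allude to amounts to writing $(\Delta_sY)^2 - \Delta_s[Y] = 2\int_{s_-}^{s}(Y_u - Y_{s_-})\,dY_u$ and then invoking continuity of the stochastic integral in the integrand, which does require a dominated-convergence-type statement for stochastic integrals slightly beyond the literal statement of Theorem \ref{thm.clSI}, so that step should be flagged as using more of the classical toolkit than the two quoted theorems alone. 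It is worth noting that your outline is precisely the template the paper itself follows for its noncommutative It\^{o} formula: the proof of Theorem \ref{thm.NCIFtimedep} uses a compactness/Lebesgue-number reduction in place of stopping-time localization, Taylor's theorem with integral remainder, Proposition \ref{prop.LERSapprox} in place of Theorem \ref{thm.clSI}, Theorem \ref{thm.QC1} in place of your weighted covariation lemma, and controls the remainder terms $\e_{\Pi},\delta_{\Pi}$ exactly as you propose, via uniform continuity of $D^2F$ on compacts together with the bound $\sum_{s\in\Pi}\|\Delta_sX\|_2^2 \leq C_t$.
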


Let us rewrite this formula in a ``vector-valued way'' that is more conducive to interpretation in infinite-dimensional contexts.
First, we can write the terms $\sum_{j=1}^n\int_0^t \partial_jF_i(X_s)\,\d X_{j,s}$ ($i=1,\ldots,m$) together as a vector-valued stochastic integral
\[
\int_0^t DF(X_s)[\d X_s] = L^0\text{-}\lim_{\Pi \in \cP_{[0,t]}} \sum_{s \in \Pi} DF(X_{s_-})[\Delta_sX].
\]
Above, $DF$ is the Fr\'{e}chet derivative of $F$, so $DF(X)$ is a stochastic process with values in the space of linear maps from $\R^n$ to $\R^m$.
The quadratic covariation integral terms $\sum_{j,k=1}^n\int_0^t\partial_k\partial_j F_i(X_s) \,\d X_{j,s}\,\d X_{k,s}$ ($i=1,\ldots,m$) are a bit trickier.
Indeed, define
\[
\llbracket X \rrbracket_t \coloneqq \sum_{j,k=1}^n [X_j,X_k]_t\,e_j \otimes e_k = L^0\text{-}\lim_{\Pi \in \cP_{[0,t]}}\sum_{s \in \Pi} \Delta_s X \otimes \Delta_s X,
\]
where $e_1,\ldots,e_n$ is the standard basis of $\R^n$.
Then $\llbracket X \rrbracket$ is a continuous FV process with values in $\R^n \otimes \R^n$.
In terms of vector-valued stochastic integrals,
\[
\llbracket X \rrbracket = X \otimes X - X_0 \otimes X_0 - \into \d X_t \otimes X_t - \into X_t \otimes \d X_t.
\]
Now, if $D^2F$ is the second Fr\'{e}chet derivative of $F$ (i.e., $D^2F(x)[h,k] = \partial_k\partial_hF(x)$), then $D^2F(X)$ is a stochastic process with values in the space of bilinear maps $\R^n \times \R^n \to \R^m$.
Since $\R^n$ is finite-dimensional, we can equivalently view $D^2F(X)$ as a stochastic process with values in the space of linear maps from $\R^n \otimes \R^n$ to $\R^m$.
This allows us to write
\[
\Bigg(\sum_{j,k=1}^n\int_0^t\partial_k\partial_j F_1(X_s) \,\d X_{j,s}\,\d X_{k,s}, \ldots, \sum_{j,k=1}^n\int_0^t\partial_k\partial_j F_m(X_s) \,\d X_{j,s}\,\d X_{k,s}\Bigg) = \int_0^t D^2F(X_s)[\d\llbracket X \rrbracket_s].
\]
This is a nice interpretation.
However, since infinite-dimensional tensor products can be ill-behaved, it is desirable to remove tensor products from the picture.
This is possible with a small amount of extra work.
Indeed, one can show that
\[
\int_0^t D^2F(X_s)[\d\llbracket X \rrbracket_s] = L^0\text{-}\lim_{\Pi \in \cP_{[0,t]}} \sum_{s \in \Pi} D^2F(X_{s_-})[\Delta_sX, \Delta_sX].
\]
We emphasize that on the right-hand side, $D^2F(X_{s_-})[\Delta_sX, \Delta_sX]$ is the application of the bilinear map $D^2F(X_{s_-})$ to the pair $(\Delta_sX, \Delta_sX)$.
The right-hand side motivates the notation $\int_0^t D^2F(X_s)[\d X_s, \d X_s]$ for the left-hand side.
This enables us to rewrite It\^{o}'s formula as
\begin{equation}
    \d F(X_t) = DF(X_t)[\d X_t] + \frac{1}{2}D^2F(X_t)[\d X_t, \d X_t] \label{eq.clIF}
\end{equation}
in vector-valued stochastic differential notation.

From the preceding discussion emerges the following list of tasks for someone interested in developing a general theory of noncommutative stochastic calculus.
\begin{enumerate}[leftmargin=2\parindent]
\itemsep0em
    \item Define a noncommutative analog of a semimartingale.
    \item For each ``noncommutative semimartingale'' $X$ and sufficiently many ``adapted, linear map--valued processes'' $H$, construct a stochastic integral $\into H(t)[\d X(t)]$. 
    \item For each pair $(X,Y)$ of ``noncommutative semimartingales'' and sufficiently many ``adapted, bilinear map--valued processes $\Lambda$,'' construct a quadratic covariation integral $\into \Lambda(t)[\d X(t), \d Y(t)]$, preferably in a way that \eqref{eq.clII} has a noncommutative analog.
    \item Define a space of $C^2$ maps $F$ appropriate for a noncommutative analog of \eqref{eq.clIF}.
\end{enumerate}
Unsurprisingly, this list essentially forms an outline of the paper, which we summarize in the next section.

\subsection{Summary of main results}\label{sec.mainres}

Here, we summarize our results on noncommutative stochastic calculus.
Fix three filtered $\mathrm{W}^*$-probability spaces $(\cA,(\cA_t)_{t \geq 0},\E = \E_{\mathsmaller{\cA}})$, $(\cB,(\cB_t)_{t \geq 0},\E_{\mathsmaller{\cB}})$, and $(\cC,(\cC_t)_{t \geq 0},\E_{\mathsmaller{\cC}})$ (Definition \ref{def.filtr}).
The results we shall quote also hold for filtered $\mathrm{C}^*$-probability spaces, but the definitions are slightly more technical in that setting.
We therefore opt to restrict ourselves to the $\mathrm{W}^*$ setting in this section.
Also, let us point out our non-standard use of $\E_{\mathsmaller{\cA}}$, $\E_{\mathsmaller{\cB}}$, and $\E_{\mathsmaller{\cC}}$ for the states on our operator algebras (rather than more common lowercase Greek letters like $\varphi$ and $\tau$).
We have made this choice to emphasize our conceptual adherence to the classical approach to stochastic calculus.

Let $p \in [1,\infty]$.
A process $X \colon \R_+ \to L^p(\cA,\E) = L^p(\E)$ is \textbf{adapted} if $X(t) \in L^p(\cA_t,\E)$ for all $t \geq 0$.
An \textbf{$\boldsymbol{L^p}$-FV} process is an adapted process $X \colon \R_+ \to L^p(\E)$ with locally bounded variation with respect to the noncommutative $L^p$ norm.
By analogy with the classical notion of a semimartingale, we consider processes that can be decomposed as the sum of a martingale and an FV process:
$X$ is a (\textbf{continuous}) \textbf{$\boldsymbol{L^p}$-decomposable process} if $X = X(0) + M + A$ for some $L^p$-continuous martingale $M \colon \R_+ \to L^p(\E)$ (Definition \ref{def.processes}\ref{item.mart}) and some $L^p$-continuous $L^p$-FV process $A \colon \R_+ \to L^p(\E)$ with $M(0) = A(0) = 0$.
In Section \ref{sec.NCsemi}, we give many examples of $L^p$-decomposable processes and show that if $p \geq 2$, then the decomposition $X = X(0) + M + A$ is unique (Corollary \ref{cor.uniquedecomp}).
As in the classical case, we call $M$ the \textbf{martingale part} of $X$ and $A$ the \textbf{FV part} of $X$.

Now,\hspace{-0.1mm} we\hspace{-0.1mm} construct\hspace{-0.1mm} stochastic\hspace{-0.1mm} integrals\hspace{-0.1mm} of\hspace{-0.1mm} ``adapted,\hspace{-0.1mm} linear\hspace{-0.1mm} map--valued\hspace{-0.1mm} processes''\hspace{-0.1mm} against\hspace{-0.1mm} $L^2$-decomposable processes.
The key to doing so, as will also be the case for other parts of the development, is to find the right notion of adaptedness.
Fix $p,q \in [1,\infty]$ and a map $H \colon \R_+ \to B(L^p(\E_{\mathsmaller{\cA}});L^q(\E_{\mathsmaller{\cB}}))$, where $B(\cV;\cW)$ is the space of bounded real-linear maps $\cV \to \cW$ (Notation \ref{nota.nota}\ref{item.Bk}).
We say $H$ is \textbf{adapted} if
\[
u \geq t \geq 0, \; x \in L^p(\E_{\mathsmaller{\cA}}) \implies \E_{\mathsmaller{\cB}}[H(t)x \mid \cB_u] = H(t)\E_{\mathsmaller{\cA}}[x \mid \cA_u].
\]
Note that $H(t)x = H(t)[x]$ is the application of the linear map $H(t) \colon L^p(\E_{\mathsmaller{\cA}}) \to L^q(\E_{\mathsmaller{\cB}})$ to the vector $x \in L^p(\E_{\mathsmaller{\cA}})$.
We explore this concept of adaptedness---and more generally, a concept of adaptedness of multilinear map--valued processes---in more depth in Section \ref{sec.filtradap}.
For now, here is a motivating example.

\begin{ex}\label{ex.introadap1}
Take $(\cA,(\cA_t)_{t \geq 0},\E) = (\cB,(\cB_t)_{t \geq 0},\E_{\mathsmaller{\cB}})$ and $p=q$.
If $A,B \colon \R_+ \to \cA = L^{\infty}(\E)$ are adapted, then the processes $H,K \colon \R_+ \to B(L^p(\E))$ defined by
\[
H(t)x \coloneqq A(t)xB(t) \; \text{ and } \; K(t)x \coloneqq \E_{\mathsmaller{\cA}}[A(t)x]\,B(t)
\]
are adapted (Proposition \ref{prop.TkinFk}).
We encourage the reader to think through why this is.
\end{ex}

We are now prepared to state our first main result:
a noncommutative analog of Theorem \ref{thm.clSI}.

\begin{thm}[Noncommutative stochastic integral]\label{thm.NCSI}
Suppose $X \colon \R_+ \to L^2(\E_{\mathsmaller{\cA}})$ is an $L^2$-decomposable process.
If $H \colon \R_+ \to B(L^2(\E_{\mathsmaller{\cA}});L^2(\E_{\mathsmaller{\cB}}))$ is adapted and continuous and $t \geq 0$, then
\[
\int_0^t H(s)[\d X(s)] \coloneqq L^2\text{-}\lim_{\Pi \in \cP_{[0,t]}}\sum_{s \in \Pi}H(s_-)[\Delta_sX] \in L^2(\cB_t,\E_{\mathsmaller{\cB}})
\]
exists.
The limit above is a limit in the space $L^2(\cB_t,\E_{\mathsmaller{\cB}})$ as $|\Pi| \to 0$ (Notation \ref{nota.part}).
Moreover, the process $\into H(s)[\d X(s)] \colon \R_+ \to L^2(\E_{\mathsmaller{\cB}})$ is $L^2$-decomposable.
\end{thm}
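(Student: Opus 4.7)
The plan is to follow the classical martingale-theoretic template from Section \ref{sec.phil}. Using $L^2$-decomposability, write $X = M + A$ with $M$ the $L^2$-continuous martingale part and $A$ the $L^2$-continuous $L^2$-FV part (unique up to the initial value by Corollary \ref{cor.uniquedecomp}). This splits every Riemann--Stieltjes sum as
\[
\sum_{s \in \Pi}H(s_-)[\Delta_sX] = \sum_{s \in \Pi}H(s_-)[\Delta_sM] + \sum_{s \in \Pi}H(s_-)[\Delta_sA],
\]
so the problem reduces to treating the two summands separately and then adding them.

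For the FV part, the map $A \colon [0,t] \to L^2(\cA,\E_{\cA})$ has bounded variation while $H \colon [0,t] \to B(L^2(\cA,\E_{\cA});L^2(\cB,\E_{\cB}))$ is continuous (hence bounded and uniformly continuous) on the compact $[0,t]$. The vector-valued Lebesgue--Stieltjes machinery of Theorem \ref{thm.LSint} then produces $\int_0^t H(s)[dA(s)] \in L^2(\cB,\E_{\cB})$ as the partition limit of Riemann--Stieltjes sums. Adaptedness of $H$ and $A$ --- specifically, the identity $\E_{\cB}[H(t)x \mid \cB_u] = H(t)\E_{\cA}[x \mid \cA_u]$ applied with $u \geq t$ --- places each $H(s_-)[\Delta_sA]$ in $L^2(\cB_s,\E_{\cB}) \subseteq L^2(\cB_t,\E_{\cB})$, so the limit lives in the desired subspace; a routine variation bound further shows $t \mapsto \int_0^t H(s)[dA(s)]$ is itself an $L^2$-FV process.

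The martingale part is the heart of the argument, handled via a noncommutative It\^{o} isometry. The crucial computation is orthogonality of the Riemann summands: for $s < r$ in $\Pi$, the adaptedness identity combined with the martingale property $\E_{\cA}[\Delta_rM \mid \cA_{r_-}] = 0$ yields $\E_{\cB}[H(r_-)[\Delta_rM] \mid \cB_{r_-}] = 0$, while $H(s_-)[\Delta_sM] \in L^2(\cB_{r_-},\E_{\cB})$, so a tower-property computation annihilates the cross term in $\langle H(s_-)[\Delta_sM], H(r_-)[\Delta_rM]\rangle_{L^2(\cB,\E_{\cB})}$. This yields the Pythagorean identity
\[
\Bigl\lVert \sum_{s \in \Pi}H(s_-)[\Delta_sM]\Bigr\rVert_2^2 = \sum_{s \in \Pi} \lVert H(s_-)[\Delta_sM]\rVert_2^2,
\]
and the analogous identity applied to the difference of sums for a refinement $\Pi \subseteq \Pi'$ bounds the $L^2$-discrepancy by $\sum_{r \in \Pi'}\lVert H(r_-) - H(s(r)_-)\rVert^2\,\lVert \Delta_rM\rVert_2^2$, where $s(r) \in \Pi$ is the coarse left neighbor of $r$. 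The Dol\'{e}ans-type measure $\kappa_M$ of Notation \ref{nota.kappa} gives a uniform bound $\sum_r \lVert\Delta_rM\rVert_2^2 \leq \kappa_M([0,t])$, and uniform continuity of $H$ on $[0,t]$ then closes the Cauchy-net argument. The resulting limit lies in $L^2(\cB_t,\E_{\cB})$ (a closed subspace containing each partial sum), and the orthogonality identity survives the limit to show that $t \mapsto \int_0^t H(s)[dM(s)]$ is an $L^2$-continuous martingale, completing the $L^2$-decomposability assertion.

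The principal obstacle, in my view, is technical rather than conceptual: converting the discrete Pythagorean estimate into convergence along the \emph{net} of partitions ordered by mesh (not merely along refining sequences) requires comparing arbitrary $\Pi, \Pi'$ via a common refinement and invoking the adaptedness hypothesis at precisely the right moment in each cross-term calculation. A secondary subtlety is extracting enough uniformity from $\kappa_M$ to control the partition sums $\sum_r \lVert\Delta_rM\rVert_2^2$ as $|\Pi| \to 0$; this is where the specifically $L^2$ setting --- rather than $L^p$ for general $p$ --- pays off, since only in $L^2$ do we have the Hilbert-space orthogonality needed to equate the norm squared of a sum with a sum of norms squared.
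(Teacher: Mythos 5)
Your argument is correct and follows essentially the same route as the paper: decompose $X = M + A$, handle $A$ with the vector-valued Lebesgue--Stieltjes theory, and handle $M$ via the orthogonality of adapted images of martingale increments together with the measure $\kappa_M$ and the uniform continuity of $H$ on compacts --- precisely the content of Theorem \ref{thm.Itocont}, Theorem \ref{thm.stochint}, and Proposition \ref{prop.LERSapprox}. The only difference is organizational: the paper first extends the elementary-predictable integral by density with respect to the seminorms $\norm{\cdot}_{X,t}$ and then shows $H^{\Pi} \to H_-$ in that topology, whereas you run the equivalent Cauchy-net estimate directly on the Riemann--Stieltjes sums.
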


This result is a special case of a combination of Theorem \ref{thm.stochint} and Proposition \ref{prop.LERSapprox}.
The relevant development is inspired by parts of the classical case described in the previous section.
For the FV part of $X$, we use vector-valued Stieltjes integration theory (Section \ref{sec.vint}).
For the martingale part $M$ of $X$, we use a noncommutative analog of the time marginal of the Dol\'{e}ans measure to bound the $L^2$ norm of the integral of ``elementary predictable processes'' against $M$.
This allows us to extend the ``elementary integral'' against $M$.
Finally, we approximate an adapted continuous process by elementary predictable processes to complete the proof.
See Sections \ref{sec.L2NCstochint} and \ref{sec.RS1} for the full development, including additional properties like the ``substitution formula'': $\into H(t)[\d U(t)] = \into H(t)K(t)[\d X(t)]$ when $U = \into K(t)[\d X(t)]$ (Theorem \ref{thm.subform}).

By combining Example \ref{ex.introadap1} and Theorem \ref{thm.NCSI}, we can make sense of the stochastic integrals
\[
\into A(t)\,\d X(t)\,B(t) \; \text{ and } \; \into \E[A(t)\,\d X(t)] \,B(t)
\]
whenever $A,B \colon \R_+ \to \cA$ are adapted and continuous.
These are special cases of integrals of trace biprocesses (Definition \ref{def.trkpr}), which we introduce at the end of Section \ref{sec.filtradap}.
The term ``trace biprocess'' is inspired by previous work on noncommutative stochastic calculus---specifically, that of Biane--Speicher \cite{BS1998}---in which integrals like $\into A(t) \,\d X(t)\,B(t)$ are treated by defining integrals of tensor-valued processes called ``biprocesses.'' 
Notably, however, integrals like $\into \E[A(t)\,\d X(t)] \, B(t)$ are not considered in previous work.

Per the list at the end of the previous section, our next goal is to define quadratic covariation integrals of bilinear map--valued processes.
As was the case with the stochastic integrals we just discussed, the key is the right notion of adaptedness.
Fix $p,q,r \in [1,\infty]$ and a map $\Lambda \colon \R_+ \to B_2(L^p(\E_{\mathsmaller{\cA}}) \times L^q(\E_{\mathsmaller{\cB}});L^r(\E_{\mathsmaller{\cC}}))$, where $B_2(\cU \times \cV ;\cW)$ is the space of bounded real-bilinear maps $\cU \times \cV \to \cW$.
We say $\Lambda$ is \textbf{adapted} if
\[
u \geq t \geq 0, \; (x,y) \in L^p(\E_{\mathsmaller{\cA}}) \times L^q(\E_{\mathsmaller{\cB}}) \implies \begin{cases}
x \in L^p(\cA_u,\E_{\mathsmaller{\cA}}) \Rightarrow \E_{\mathsmaller{\cC}}[\Lambda(t)[x,y] \mid \cC_u] =\Lambda(t)[x,\E_{\mathsmaller{\cB}}[y \mid \cB_u]] \\
y \in L^q(\cB_u,\E_{\mathsmaller{\cB}}) \Rightarrow \E_{\mathsmaller{\cC}}[\Lambda(t)[x,y] \mid \cC_u] = \Lambda(t)[\E_{\mathsmaller{\cA}}[x \mid \cA_u],y].
\end{cases}
\]
Here is a motivating example similar to Example \ref{ex.introadap1}.

\begin{ex}\label{ex.introadap2}
Assume that $(\cA,(\cA_t)_{t \geq 0},\E) = (\cB,(\cB_t)_{t \geq 0},\E_{\mathsmaller{\cB}}) = (\cC,(\cC_t)_{t \geq 0},\E_{\mathsmaller{\cC}})$ and $1/p + 1/q = 1/r$.
If $A,B,C \colon \R_+ \to \cA$ are adapted, then the processes $\Lambda,\Xi,\Sigma,\Om, \colon \R_+ \to B_2(L^p(\E) \times L^q(\E);L^r(\E))$ defined by
\begin{align*}
    \Lambda(t)[x,y] & \coloneqq A(t)xB(t)yC(t), \;\; \Xi(t)[x,y] \coloneqq \E[A(t)xB(t)y]\,C(t), \\
    \Sigma(t)[x,y] & \coloneqq \E[A(t)x]\,B(t)yC(t), \; \text{ and } \;\Om(t)[x,y] \coloneqq \E[A(t)x]\,\E[B(t)y]\,C(t)
\end{align*}
are adapted (Proposition \ref{prop.TkinFk}).
Once again, we encourage the reader to think through why this is.
The processes $\Lambda$, $\Xi$, $\Sigma$, and $\Om$ are special cases of trace triprocesses (Definition \ref{def.trkpr}).
\end{ex}

The second main result is a construction of noncommutative quadratic covariation.

\begin{thm}[Noncommutative quadratic covariation]\label{thm.NCQC}
Suppose $X \colon \R_+ \to L^2(\E_{\mathsmaller{\cA}})$ and $Y \colon \R_+ \to L^2(\E_{\mathsmaller{\cB}})$ are $L^2$-decomposable processes and $\Lambda \colon \R_+ \to B_2(L^2(\E_{\mathsmaller{\cA}}) \times L^2(\E_{\mathsmaller{\cB}}) ; L^1(\E_{\mathsmaller{\cC}}))$ is adapted and continuous.
Assume, in addition, that
\begin{enumerate}[label=(\roman*),font=\normalfont]
    \item if $t \geq 0$, then $\Lambda(t)[\cA,\cB] \subseteq \cC$, and
    \[
    \sup\bigg\{\norm{\Lambda(t)[x,y]}_r : 1 \leq p,q,r, \leq \infty, \; \frac1p + \frac1q = \frac1r, \; x \in \cA, \; \norm{x}_p \leq 1, \; y \in \cB, \; \norm{y}_q \leq 1\bigg\} < \infty,
    \]
    where $\norm{\cdot}_p$ is the noncommutative $L^p$ norm (Notation \ref{nota.Lp}); and\label{item.bbBcond}
    \item the martingale parts of $X$ and $Y$ are locally uniformly $L^2$-approximable by $L^{\infty}$-continuous martingales.
    (See Definitions \ref{def.processes}\ref{item.mart} and \ref{def.tildesemi} for a precise statement of this condition.)\label{item.Mtildecond}
\end{enumerate}
If $t \geq 0$, then
\[
\int_0^t \Lambda(s)[\d X(s), \d Y(s)] \coloneqq L^1\text{-}\lim_{\Pi \in \cP_{[0,t]}}\sum_{s \in \Pi}\Lambda(s_-)[\Delta_sX, \Delta_sY] \in L^1(\cC_t,\E_{\mathsmaller{\cC}})
\]
exists.
Moreover, the process $\into \Lambda(s)[\d X(s), \d Y(s)] \colon \R_+ \to L^1(\E_{\mathsmaller{\cC}})$ is $L^1$-continuous and $L^1$-FV, and
\[
\into \Lambda(s)[\d X(s), \d Y(s)] = \into \Lambda(s)[\d M(s), \d N(s)],
\]
where $M$ (resp., $N$) is the martingale part of $X$ (resp., $Y$).
\end{thm}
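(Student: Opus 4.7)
Using the uniqueness of the $L^2$-decomposition from Section \ref{sec.NCsemi}, I would write $X = M + A$ and $Y = N + B$ with $M, N$ the martingale parts and $A(0) = B(0) = 0$. Bilinearity of $\Lambda(s_-)$ expands each Riemann summand as
\[
\Lambda(s_-)[\Delta_s X, \Delta_s Y] = \Lambda(s_-)[\Delta_s M, \Delta_s N] + \Lambda(s_-)[\Delta_s M, \Delta_s B] + \Lambda(s_-)[\Delta_s A, \Delta_s N] + \Lambda(s_-)[\Delta_s A, \Delta_s B].
\]
The plan is to show that each of the three sums containing a $\Delta_s A$ or $\Delta_s B$ factor vanishes in $L^1$ as $|\Pi| \to 0$, and then to establish convergence of the pure martingale sum; the stated identity $\into \Lambda[dX,dY] = \into \Lambda[dM, dN]$ and the facts that the limit process is $L^1$-continuous and $L^1$-FV will both fall out of this construction.

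For the three mixed terms, local continuity of $\Lambda$ gives $C_T \coloneqq \sup_{s \in [0,T]} \norm{\Lambda(s)}_{2,2;1} < \infty$, so for example
\[
\Bigg\|\sum_{s \in \Pi,\, s \leq t} \Lambda(s_-)[\Delta_s A, \Delta_s B]\Bigg\|_1 \leq C_T \Big(\max_{s \in \Pi}\norm{\Delta_s A}_2\Big) V_{L^2}(B : [0,T]),
\]
and similarly for the $M$-$B$ and $A$-$N$ pieces. Because $M, N, A, B$ are $L^2$-continuous while $A$ and $B$ are $L^2$-finite variation, the maximum increment vanishes with $|\Pi|$ while the variation factor stays bounded, so these three sums are $o(1)$ locally uniformly in $t$.

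The crux is convergence of $\sum_s \Lambda(s_-)[\Delta_s M, \Delta_s N]$ in $L^1(\cC_t, \E_{\mathsmaller{\cC}})$. Using hypothesis \ref{item.Mtildecond}, I would approximate $M$ and $N$ locally uniformly in $L^2$ by $L^\infty$-continuous martingales $M_k$ and $N_k$; hypothesis \ref{item.bbBcond} then lets $\Lambda(s_-)$ act on the resulting $\cA$-, $\cB$-valued increments with $L^1(\cC)$-output controlled in operator norm, uniformly for $s \in [0,T]$. Existence of the limit would be shown by a Cauchy-in-$L^1$ argument along partitions: for $\Pi \subseteq \Pi'$, the difference of the two Riemann sums rewrites as a double sum indexed by $s \in \Pi$ and pairs of sub-increments $r, r' \in \Pi'$ with $r_-, r'_- \geq s_-$ and $r \neq r'$. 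The adaptedness of $\Lambda$ (cf. Example \ref{ex.introadap2}) then lets one condition on $\cA_{r_- \vee r'_-}$ or $\cB_{r_- \vee r'_-}$ and use the martingale property of the later increment to produce mean-zero (orthogonal) summands, whose $L^1$-norm is controlled via a joint modulus of continuity of $s \mapsto \Lambda(s)$ and of the $L^\infty$-approximants $M_k, N_k$ on $[0,T]$.

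The main obstacle is assembling this Cauchy estimate in the martingale-martingale case: it is the noncommutative reflection of the classical fact that orthogonality of martingale increments forces $\sum \Delta_s M \, \Delta_s N$ to converge to $[M,N]$, but here the $B_2^{2,2;1}$-valued integrand $\Lambda$ and the $L^1$-valued output require hypothesis \ref{item.bbBcond} to convert the naive $L^2$-orthogonality calculation into a usable telescoping inequality in $L^1$. Once existence of the limit is in hand, $L^1$-continuity and $L^1$-finite variation of $\into \Lambda[dX, dY]$, together with the reduction to the martingale parts, follow routinely from uniform-in-partition bookkeeping of the Riemann sums and the vanishing of the mixed terms.
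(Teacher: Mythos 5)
Your reduction to the martingale parts is exactly the paper's (Proposition \ref{prop.QCofFV} and Corollary \ref{cor.reducetomart}): bilinear expansion, $C_T\max_s\|\Delta_sA\|_2\,V(B)$-type bounds on the mixed terms, done. The divergence, and the problem, is in the crux step. The paper does \emph{not} prove existence of $\lim_\Pi\sum_s\Lambda(s_-)[\Delta_sM,\Delta_sN]$ by a Cauchy-along-refinements argument. It first proves a summation-by-parts identity (Theorem \ref{thm.NCIPR}) valid for $L^\infty$-decomposable processes and $\Lambda\in\mathrm{Q}_0$, which rewrites $\mathrm{RS}_\Pi^{X,Y}(\Lambda)$ as $\Lambda[X,Y]-\Lambda(0)[X(0),Y(0)]-\int d\Lambda[X,Y]-\int\Lambda[dX,Y]-\int\Lambda[X,dY]$ plus an error that vanishes; existence then comes for free because every term on the right is an already-constructed (stochastic or Riemann--Stieltjes) integral. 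The general case follows by a density/$\tfrac{\e}{3}$ argument from the uniform bound $\sup_s\|\mathrm{RS}_\Pi^{M,N}(\Lambda)(s)\|_1\le C_t\|M(t)-M(0)\|_2\|N(t)-N(0)\|_2$ (Lemma \ref{lem.cheapbd}, which is just Cauchy--Schwarz plus Lemma \ref{lem.M(t)-M(s)}).

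Your proposed Cauchy argument, as written, does not close. First, ``mean-zero summands whose $L^1$-norm is controlled'' buys you nothing: conditional centering produces cancellation only through orthogonality, i.e.\ in $L^2(\E_{\mathsmaller{\cC}})$, never in $L^1$. To run it you must bound $\|\sum(\cdot)\|_2^2=\sum\|\cdot\|_2^2$ using the mixed operator norms $\|\Lambda(s)\|_{2,\infty;2}$ and $\|\Lambda(s)\|_{\infty,2;2}$ supplied by hypothesis \ref{item.bbBcond}, paired with the $L^\infty$-modulus of continuity of the approximants $M_k,N_k$ on one slot and the $\kappa$-summability $\sum_{r}\|\Delta_rN\|_2^2=\|N(t)-N(0)\|_2^2$ on the other --- this is precisely the computation the paper carries out (for a different purpose) in Theorem \ref{thm.NCcondQC}, and it is where all the work lives. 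Second, be careful which terms are conditionally centered: the raw summands $\Lambda(s_-)[\Delta_sM,\Delta_sN]$ are \emph{not} (their conditional expectations are exactly what converges to the quadratic covariation, cf.\ Theorem \ref{thm.NCcondQC}); only the off-diagonal terms $\Lambda(s_-)[M(r'_-)-M(s_-),\Delta_{r'}N]$ arising from the refinement $\Pi\subseteq\Pi'$ are, via adaptedness of $\Lambda$ and the martingale property of the later increment. With these repairs your route is viable and is the honest noncommutative analogue of the Kunita--Watanabe construction, but it is genuinely harder than the paper's product-rule shortcut. Finally, the $L^1$-FV claim is not pure bookkeeping: it needs the observation $\sum_s\|\Delta_sM\|_2\|\Delta_sN\|_2\le\frac12\sum_s(\|\Delta_sM\|_2^2+\|\Delta_sN\|_2^2)=\kappa_{M,N}((0,t])$, which yields $\|\llbracket X,Y\rrbracket^\Lambda(t)-\llbracket X,Y\rrbracket^\Lambda(s)\|_1\le\int_s^t\|\Lambda\|_{2,2;1}\,d\kappa_{M,N}$ and hence locally bounded variation.
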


\begin{rem}
The condition in \ref{item.Mtildecond} may seem strange, but many interesting examples satisfy it, e.g., $q$-Brownian motions and classical $n \times n$ Hermitian matrix Brownian motions (Theorem \ref{thm.matBMtilde}).
The fact that matrix Brownian motions satisfy this condition has applications to random matrix theory that will be explored in future work.
\end{rem}

Theorem \ref{thm.NCQC} is a special case of a combination of Theorems \ref{thm.QC1} and \ref{thm.QC2}.
The most interesting part of the proof is the first step, in which we establish a ``noncommutative It\^o product rule'' (Theorem \ref{thm.NCIPR}) for $\Lambda[X,Y]$ when $\Lambda$ is sufficiently nice and $X$ and $Y$ are $L^{\infty}$-decomposable processes.
This product rule is a noncommutative analog of Theorem \ref{thm.clQC}, which corresponds to $\Lambda(t)[x,y] = xy$.
See Sections \ref{sec.NCIPR}--\ref{sec.RS2} for the full development of noncommutative quadratic covariation, including additional properties like
\[
\into \Lambda(t)[\d U(t), \d V(t)] = \into \Lambda(t)[H(t)[\d X(t)], K(t)[\d Y(t)]]
\]
when $U = \into H(t)[\d X(t)]$ and $V = \into K(t)[\d Y(t)]$ (Theorem \ref{thm.QCSI}).

By combining Example \ref{ex.introadap2} and Theorem \ref{thm.NCQC}, we can make sense of the quadratic covariation integrals
\begin{align*}
    & \into A(t)\,\d X(t)\,B(t)\,\d Y(t)\,C(t), \;\; \into \E[A(t)\,\d X(t)\,B(t)\,\d Y(t)]\,C(t), \\
    & \into \E[A(t)\,\d X(t)]\,B(t)\,\d Y(t)\,C(t), \text{ and }  \into \E[A(t)\,\d X(t)]\,\E[B(t)\,\d Y(t)]\,C(t)
\end{align*}
whenever $A,B,C \colon \R_+ \to \cA$ are adapted and continuous.
In Section \ref{sec.QCexs}, we explicitly calculate these (and much more general) quadratic covariation integrals for a class of $L^2$-decomposable processes $X,Y$ that includes $q$-Brownian motions and classical matrix Brownian motions.
The resulting formulas shed new light on related calculations done on a case-by-case basis (without a general theory) in the literature on $q$-stochastic calculus, e.g., \cite{BS1998,DonatiMartinS2003,DS2018,NikitopoulosIto}.
\pagebreak

We end this section by discussing two applications of our theory of quadratic covariation.
The first is a noncommutative (NC) Burkholder--Davis--Gundy (BDG) inequality (with $p \geq 2$) for noncommutative martingales in continuous time.

\begin{thm}[NC BDG inequalities]\label{thm.NCBDG}
There exist increasing families $(\alpha_p)_{p \geq 2}$ and $(\beta_p)_{p \geq 2}$ of strictly positive constants such that the following holds.
If $2 \leq p < \infty$ and $M \colon \R_+ \to L^p(\E)$ is a martingale that is locally uniformly $L^2$-approximable by $L^{\infty}$-continuous martingales, then $\int_0^t \d M^*(s)\,\d M(s)$ and $\int_0^t \d M(s)\,\d M^*(s)$ belong to $L^{p/2}(\E)$, and
\[
\alpha_p^{-1}\norm{M}_{\cH_t^p(\cA)} \leq \norm{M(t)}_p = \sup_{0 \leq s \leq t} \norm{M(s)}_p \leq \beta_p\norm{M}_{\cH_t^p(\cA)},
\]
where
\[
\norm{M}_{\cH_t^p(\cA)} \coloneqq \max\Bigg\{\Bigg\|M(0)^*M(0) + \int_0^t \d M^*(s)\,\d M(s)\Bigg\|_{\frac{p}{2}}^{\frac{1}{2}}, \Bigg\|M(0)M(0)^* + \int_0^t \d M(s)\,\d M^*(s)\Bigg\|_{\frac{p}{2}}^{\frac{1}{2}}\Bigg\}.
\]
Furthermore, $\norm{M(t)}_2^2 = \E\big[M^*(0)M(0) + \int_0^t \d M^*(s)\,\d M(s)\big] = \E\big[M(0)M^*(0) + \int_0^t \d M(s)\,\d M^*(s)\big]$.
\end{thm}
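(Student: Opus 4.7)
The plan is to treat the three ingredients in increasing order of difficulty. First, the equality $\|M(t)\|_p = \sup_{0 \le s \le t}\|M(s)\|_p$ is routine: the martingale property $M(s) = \E[M(t) \mid \cA_s]$ and $L^p$-contractivity of conditional expectation on the tracial space $(\cA,\E)$ give $\|M(s)\|_p \le \|M(t)\|_p$ for all $s \le t$, so the supremum is attained at $t$.

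Second, for the $p = 2$ identity I would invoke the noncommutative product rule that underlies Theorem \ref{thm.NCQC}. Applied to the adapted bilinear-map-valued process $\Lambda(s)[x,y] \coloneqq x^*y$ with $X = Y = M$, it reads
\[
M(t)^*M(t) = M(0)^*M(0) + \int_0^t M^*(s)\,dM(s) + \int_0^t dM^*(s)\,M(s) + \int_0^t dM^*(s)\,dM(s).
\]
By Theorem \ref{thm.NCSI}, the two middle stochastic integrals are $L^1$-martingales starting at $0$, so their $\E$-expectations vanish; applying $\E$ identifies $\|M(t)\|_2^2 - \|M(0)\|_2^2$ with $\E\bigl[\int_0^t dM^*(s)\,dM(s)\bigr]$, which is the promised $p=2$ identity. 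Using the tracial identity $\|M(t)\|_2^2 = \E[M(t)M(t)^*]$, the row-variant of the argument produces the same right-hand side, so $\alpha_2 = \beta_2 = 1$ and $\|M\|_{\cH_t^2(\cA)} = \|M(t)\|_2$.

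Third, for general $p \ge 2$ the strategy is discretization combined with the discrete-time noncommutative Burkholder--Gundy inequality of Pisier--Xu and Junge--Xu for tracial noncommutative $L^p$-martingales. For each partition $\Pi \in \cP_{[0,t]}$, the sequence $(M(s))_{s \in \Pi}$ is a discrete-time $L^p$-martingale, so that inequality supplies universal constants $\alpha_p, \beta_p > 0$ with
\[
\alpha_p^{-1}\max\bigl\{\|s_c^\Pi(M)\|_p,\, \|s_r^\Pi(M)\|_p\bigr\} \;\le\; \|M(t)\|_p \;\le\; \beta_p\max\bigl\{\|s_c^\Pi(M)\|_p,\, \|s_r^\Pi(M)\|_p\bigr\},
\]
where $s_c^\Pi(M)^2 = M(0)^*M(0) + \sum_{s \in \Pi}(\Delta_sM)^*(\Delta_sM)$ and $s_r^\Pi(M)^2$ is defined analogously with the adjoint on the opposite side. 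By Theorem \ref{thm.NCQC}, which applies precisely because of hypothesis \ref{item.Mtildecond} (the LUL$^2$-approximation), these Riemann sums converge in $L^1$ as $|\Pi| \to 0$ to the continuous quadratic variation integrals $\int_0^t dM^*(s)\,dM(s)$ and $\int_0^t dM(s)\,dM^*(s)$. Combining this $L^1$-convergence with the uniform bound $\|s_c^\Pi\|_p^2 \le \alpha_p^2\|M(t)\|_p^2$ from discrete BDG, reflexivity of $L^{p/2}$, and lower semicontinuity of the $L^{p/2}$-norm, one obtains that the continuous quadratic variation lies in $L^{p/2}$ with $\|M\|_{\cH_t^p(\cA)} \le \alpha_p\|M(t)\|_p$, delivering one half of BDG.

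I expect the main obstacle to be the reverse direction $\|M(t)\|_p \le \beta_p\|M\|_{\cH_t^p(\cA)}$: discrete BDG gives $\|M(t)\|_p \le \beta_p\max\{\|s_c^\Pi\|_p,\|s_r^\Pi\|_p\}$ for each $\Pi$, and passing $|\Pi| \to 0$ requires a norm-level $L^{p/2}$-convergence of $(s_c^\Pi)^2$ to the continuous quadratic variation, which is a priori stronger than the $L^1$-convergence supplied by Theorem \ref{thm.NCQC}. I would upgrade to $L^{p/2}$-convergence by exploiting the positivity and monotone refinement behavior of the Riemann sums $\sum_{s\in\Pi}(\Delta_sM)^*(\Delta_sM)$ (mirroring the classical quadratic variation theory) together with uniform integrability in $L^{p/2}$ coming from the discrete BDG bound; this should be enough to promote weak to strong convergence along a refining sequence of partitions. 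This interplay between refinement monotonicity and uniform $L^{p/2}$-control is the delicate technical heart of the proof.
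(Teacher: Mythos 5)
Your overall architecture coincides with the paper's: discretize along partitions of $[0,t]$, apply the discrete-time Pisier--Xu inequalities to $(M(s))_{s\in\pi}$, and pass to the limit using the $L^1$-convergence of the quadratic Riemann--Stieltjes sums guaranteed by the quadratic covariation theory. The forward inequality $\alpha_p^{-1}\norm{M}_{\cH_t^p(\cA)}\le\norm{M(t)}_p$ is handled essentially as in the paper (a Fatou-type lower semicontinuity of the $L^{p/2}$-norm along $L^1$-convergent, $L^{p/2}$-bounded nets), and the $\norm{M(t)}_p=\sup_{s\le t}\norm{M(s)}_p$ and $p=2$ claims are fine (though for the $p=2$ identity the paper's route is more elementary: Lemma \ref{lem.M(t)-M(s)} gives $\norm{M(t)}_2^2-\norm{M(0)}_2^2=\sum_{s\in\pi}\norm{\Delta_sM}_2^2=\E\big[\sum_{s\in\pi}\Delta_sM^*\,\Delta_sM\big]$ directly, without invoking the product rule, whose stochastic integrals $\into M^*(s)\,dM(s)$ are not obviously defined for a general $M\in\tbM^2$ since left multiplication by $M^*(s)\in L^2$ is not a bounded map $L^2\to L^2$).

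The genuine gap is in your treatment of the reverse inequality. The mechanism you propose -- ``positivity and monotone refinement behavior of the Riemann sums'' plus ``uniform integrability in $L^{p/2}$'' to upgrade $L^1$-convergence to $L^{p/2}$-convergence -- does not work. Refining a partition replaces $(A+B)^*(A+B)$ by $A^*A+B^*B$, and the discarded cross term $A^*B+B^*A$ is self-adjoint but of no definite sign, so the sums $\sum_{s\in\Pi}\Delta_sM^*\,\Delta_sM$ are \emph{not} monotone under refinement (only their traces telescope, which is Lemma \ref{lem.M(t)-M(s)}); and $L^1$-convergence together with boundedness in $L^{p/2}$ yields $L^{q/2}$-convergence only for $q<p$ (Lemma \ref{lem.basicLp}\ref{item.Lrbdd}), never at the endpoint. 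The paper's resolution avoids the upgrade entirely: for each fixed $q\in(2,p)$ the convergence of the Riemann sums \emph{does} hold in $L^{q/2}$, so discrete BDG at exponent $q$ passes to the limit and gives $\norm{M(t)}_q\le\beta_q\norm{M}_{\cH_t^q(\cA)}\le\beta_p\norm{M}_{\cH_t^q(\cA)}$, using that $(\beta_r)_{r\ge2}$ is increasing; one then lets $q\nearrow p$ and uses $\lim_{q\nearrow p}\norm{a}_q=\norm{a}_p$ (Lemma \ref{lem.basicLp}\ref{item.limofpnorm}) on both sides. You should replace your endpoint-convergence argument with this ``prove it for all $q<p$ and take $q\nearrow p$'' scheme; as written, that step of your proof would fail.
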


The families $(\alpha_p)_{p \geq 2}$ and $(\beta_p)_{p \geq 2}$ do not depend on $(\cA,(\cA_t)_{t \geq 0},\E)$.
See Theorem \ref{thm.BDG} and Example \ref{ex.M2tilde} for the proof of Theorem \ref{thm.NCBDG}, which makes use of the discrete-time NC BDG inequalities of Pisier--Xu \cite{PX1997}.
The latter have counterparts for $p \in (1,2)$, but our tools do not seem to allow us to prove continuous-time versions of them.

Our continuous-time NC BDG inequalities allow us to prove noncommutative $L^p$-norm estimates for stochastic integrals (Theorem \ref{thm.LpnormSI}), the $p=2$ case of which yields a noncommutative analog of the It\^{o} isometry in the form of \eqref{eq.clII} (Corollary \ref{cor.NCII}).
Such estimates are useful for the study of noncommutative stochastic differential equations, which we shall explore in future work.

As a second application of our theory, we formulate and prove a noncommutative analog of It\^{o}'s formula in the form of \eqref{eq.clIF}.
Per the list at the end of the previous section, it is necessary to identify an appropriate collection of $C^2$ maps $F \colon \cA \to \cB$ for which to prove the formula.
Historically, certain spaces of polynomials or operator functions (i.e., maps induced by scalar functional calculus) have been used.
Instead of considering only these special maps, we formulate an abstract notion of adaptedness for $C^k$ maps (Definition \ref{def.adapCkl}) that encompasses all such examples of interest.
Since the definition is slightly more involved than the definition of adaptedness of (bi)linear map--valued processes, we omit it from the present summary.
It suffices to know that the definition ensures that the stochastic and quadratic variation integrals in Theorem \ref{thm.NCIF} below make sense and to keep the following motivating example in mind.

\begin{ex}\label{ex.clCk}
Let $(\Om,\sF,(\sF_t)_{t \geq 0},P)$ be a classical filtered probability space, and consider the filtered $\mathrm{W}^*$-probability space
\[
(\cA,(\cA_t)_{t \geq 0},\E) = (L^{\infty}(\Om,\sF,P),(L^{\infty}(\Om,\sF_t,P))_{t \geq 0},\E_P).
\]
If $d,\ell \in \N$ and $f \colon \R^d \to \C^{\ell}$ is $k$-times continuously differentiable and $f_*(\mathbf{a}) \coloneqq f \circ \mathbf{a} \in \cA^{\ell}$ for $\mathbf{a} \in \cA_{\sa}^d$, then $f_* \colon \cA_{\sa}^d \to \cA^{\ell}$ is an adapted $C^k$ map (Proposition \ref{prop.clCk} with $n=m=1$).
\end{ex}

\begin{thm}[Noncommutative It\^{o}'s formula]\label{thm.NCIF}
Let $\cA_{\beta}$ be a fixed choice of $\cA$ or $\cA_{\sa}$.
If $\cU \subseteq \cA_{\beta}$ is an open set, $F \colon \cU \to \cB$ is an adapted $C^2$ map (Definition \ref{def.adapCkl}), and $X \colon \R_+ \to \cA$ is an $L^{\infty}$-decomposable process such that $X(t) \in \cU$ for all $t \geq 0$, then
\begin{align*}
    \d F(X(t)) & = DF(X(t))[\d X(t)] + \frac{1}{2}D^2F(X(t))[\d X(t), \d X(t)], \; \text{ i.e.,} \\
    F(X) & = F(X(0)) + \into DF(X(t))[\d X(t)] + \frac{1}{2}\into D^2F(X(t))[\d X(t), \d X(t)].
\end{align*}
\end{thm}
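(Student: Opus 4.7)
The plan is to mirror the classical proof of Itô's formula: telescope $F(X(t)) - F(X(0))$ along a partition $\Pi \in \cP_{[0,t]}$, apply a second-order Taylor expansion to each increment, and then identify the limits of the linear and quadratic sums as $|\Pi| \to 0$ with the two integrals appearing in the formula, while separately showing the Taylor remainder vanishes in $L^1$. A preliminary check is that both integrals are well-defined: the adapted $C^2$ hypothesis is precisely what guarantees that $DF \circ X$ is a continuous adapted $B(L^2(\cA_{\mathsmaller{\cA}}); L^2(\cB_{\mathsmaller{\cB}}))$-valued process (so Theorem \ref{thm.NCSI} applies) and $D^2F \circ X$ is a continuous adapted $B_2(L^2(\cA_{\mathsmaller{\cA}})^2; L^1(\cB_{\mathsmaller{\cB}}))$-valued process (so Theorem \ref{thm.NCQC} applies to $\Lambda = D^2F(X(\cdot))$ and $Y = X$). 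Also, because $X$ is $L^\infty$-decomposable, its martingale part is already $L^\infty$-continuous, so hypothesis \ref{item.Mtildecond} in Theorem \ref{thm.NCQC} holds automatically, and the boundedness of $\Lambda$ in \ref{item.bbBcond} follows from the compactness of $X([0,t])$ in $\cA_\beta$ and the $C^2$ assumption on $F$.

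For the main step, writing $F(X(t)) - F(X(0)) = \sum_{s \in \Pi}[F(X(s)) - F(X(s_-))]$ and applying the integral form of Taylor's theorem for Fréchet $C^2$ maps between Banach spaces, each increment equals
\[
DF(X(s_-))[\Delta_s X] + \tfrac{1}{2} D^2F(X(s_-))[\Delta_s X, \Delta_s X] + R_s^\Pi,
\]
where $R_s^\Pi = \int_0^1 (1-r)\bigl(D^2F(X(s_-) + r\Delta_sX) - D^2F(X(s_-))\bigr)[\Delta_s X, \Delta_s X]\, dr$. The Riemann-sum characterization of the stochastic integral (Theorem \ref{thm.NCSI} together with Proposition \ref{prop.LERSapprox}) makes $\sum_{s \in \Pi} DF(X(s_-))[\Delta_s X]$ converge in $L^2$ to $\int_0^t DF(X(s))[dX(s)]$, while the Riemann-sum definition of quadratic covariation built in Theorem \ref{thm.NCQC} makes $\sum_{s \in \Pi} D^2F(X(s_-))[\Delta_s X, \Delta_s X]$ converge in $L^1$ to $\int_0^t D^2F(X(s))[dX(s), dX(s)]$. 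So the theorem reduces to showing $\bigl\|\sum_{s \in \Pi} R_s^\Pi\bigr\|_1 \to 0$.

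Controlling the remainder is where the real work lies, and I expect it to be the main obstacle. Because $X$ is $L^\infty$-continuous on the compact interval $[0,t]$, its image $K \coloneqq X([0,t])$ is norm-compact in $\cA_\beta$ and contained in the open set $U$, so I can fix a compact neighborhood $K' \subseteq U$ of $K$. By continuity of $D^2F$ on $K'$ (with values in $B_2(L^2(\cA_{\mathsmaller{\cA}})^2; L^1(\cB_{\mathsmaller{\cB}}))$), there is a modulus of continuity $\omega$ on $K'$ with $\omega(\delta) \to 0$ as $\delta \to 0$. For $|\Pi|$ small, uniform $L^\infty$-continuity of $X$ ensures $X(s_-) + r\Delta_sX \in K'$ for all $s \in \Pi$ and $r \in [0,1]$, and the bilinearity bound together with Hölder's inequality gives
\[
\sum_{s \in \Pi} \|R_s^\Pi\|_1 \leq \tfrac{1}{2}\, \omega\Bigl(\max_{s \in \Pi}\|\Delta_sX\|_\infty\Bigr) \sum_{s \in \Pi} \|\Delta_sX\|_2^2.
\]
The factor $\omega(\max_s \|\Delta_sX\|_\infty)$ tends to $0$ by $L^\infty$-continuity of $X$, and the second factor stays bounded: decomposing $X = M + A$, orthogonality of martingale increments gives $\sum_s \|\Delta_sM\|_2^2 = \|M(t) - M(0)\|_2^2$, while the FV estimate $\sum_s \|\Delta_sA\|_2^2 \leq \max_s \|\Delta_sA\|_\infty \cdot V_{L^\infty}(A:[0,t])$ tends to $0$. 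Together with the elementary bound $\|\Delta_sX\|_2^2 \leq 2(\|\Delta_sM\|_2^2 + \|\Delta_sA\|_2^2)$, this forces $\sum_s \|R_s^\Pi\|_1 \to 0$, completing the proof.
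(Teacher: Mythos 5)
Your proposal is correct and follows essentially the same route as the paper's proof of Theorem \ref{thm.NCIFtimedep} (of which Theorem \ref{thm.NCIF} is the special case with no FV argument): telescope along a partition, apply Taylor's theorem with integral remainder, identify the first-order sum via Proposition \ref{prop.LERSapprox} and the second-order sum via the Riemann-sum characterization of quadratic covariation, and kill the remainder using uniform continuity of $D^2F$ on a compact set together with the bound $\sum_{s\in\Pi}\norm{\Delta_sX}_2^2 \leq C_t$ from the martingale-plus-FV decomposition. The only cosmetic caveat is that a norm-compact set in $\cA$ has no compact \emph{neighborhood}; what you actually want (and what suffices) is the compact set of all segment points $X(u)+r(X(v)-X(u))$ with $|u-v|$ small, which lands in $U$ for small mesh — the paper achieves the same thing with a Lebesgue-number covering by convex balls.
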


This result is the special case of Theorem \ref{thm.NCIFtimedep} explained at the end of Example \ref{ex.timedepIF}.
The main thing setting Theorem \ref{thm.NCIF} apart from existing noncommutative analogs of It\^{o}'s formula is that the objects in our formula are truly computed by doing (stochastic) \textit{calculus}.
Specifically, one must compute the first and second derivatives of $F$ and quadratic variation integrals $\into \Lambda(t)[\d X(t), \d X(t)]$ of $X$.
Other noncommutative It\^{o} formulas in the literature---e.g., \cite[Props.\ 4.3.2 \& 4.3.4]{BS1998}, \cite[Thm.\ 9]{Anshelevich2002}, the equation following \cite[Cor.\ 4.9]{DS2018}, and \cite[Thms.\ 3.4.4 \& 4.3.4]{NikitopoulosIto}---are stated in terms of combinatorial objects like Voiculescu's free difference\pagebreak\ quotients or analytic objects like multiple operator integrals.
Though these objects do appear in the relevant derivative formulas, the aforementioned It\^{o} formulas do not explicitly use this fact because they are proven by induction and/or polynomial approximation arguments, not via Taylor's theorem as in the classical case.
In contrast, we do use Taylor's theorem to prove our formula (Theorem \ref{thm.NCIF}).

Finally, in Sections \ref{sec.trsmoothmaps} and \ref{sec.NCk}, we employ ideas from \cite{JLS2022,NikitopoulosNCk} to show that many interesting maps are adapted $C^k$.
First, by taking $\cA$ and $\cB$ to be direct sums of filtered $\mathrm{W}^*$-probability spaces, one can prove a multivariate version of Theorem \ref{thm.NCIF} for adapted $C^2$ maps from open subsets $\cU \subseteq \cA_{\beta}^n$ to $\cA^m$ (Example \ref{ex.multivarIF}).
In Section \ref{sec.trsmoothmaps}, we show that a large class of multivariate functions $\cA_{\beta}^n \to \cA^m$ are adapted $C^k$, including trace $\ast$-polynomial maps and tracial noncommutative $C^k$ maps in the sense of Jekel--Li--Shlyakhtenko \cite{JLS2022}.
In Section \ref{sec.NCk}, we show that if $n=m=1$ and $\cA_{\beta} = \cA_{\sa}$, then the class in the previous sentence contains all the operator functions $\cA_{\sa} \ni a \mapsto f(a) \in \cA$ associated to scalar functions $f \colon \R \to \C$ that are noncommutative $C^k$ in the sense of Nikitopoulos \cite{NikitopoulosNCk}.
In particular, we recover the ``free It\^{o} formulas'' of Biane--Speicher \cite{BS1998} and Nikitopoulos \cite{NikitopoulosIto}.

\section{Preliminaries}\label{sec.bg1}

To begin, we set some notation for basic objects that we use freely throughout the paper.
A complete notation index is available in Appendix \ref{sec.nota}.

\begin{nota}\label{nota.nota}
Suppose $-\infty < a  < b \leq \infty$, and write $I \coloneqq [a,b] \cap \R$.
Also, let $\cV$ be a vector space over $\F \in \{\R,\C\}$ and $F \colon I \to \cV$ be a function.
When $\cV$ is assumed to be normed, $\norm{\cdot}_{\cV}$ is its norm.
\begin{enumerate}[label=(\roman*),font=\normalfont]
    \item Suppose $\cV$ is normed.
    Then
    \[
    V(F:[s,t]) \coloneqq \sup_{\Pi \in \cP_{[s,t]}} \sum_{r \in \Pi}\norm{\Delta_rF}_{\cV} \in [0,\infty]
    \]
    is the variation of $F$ on $[s,t] \subseteq I$.
    Also,
    \[
    V(F:[a,\infty)) = V(F:[a,\infty]) \coloneqq \sup_{c \in I} V(F : [a,c]) \in [0,\infty]
    \]
    when $b=\infty$.
    When we want to emphasize the space $\cV$, we shall write $V = V_{\cV}$.
    Recall that $F$ has bounded variation if $V(F : I) < \infty$ and locally bounded variation if $V(F : [a,c]) < \infty$ for all $c \in I$.\label{item.V1}
    \item Suppose $\cV_1,\ldots,\cV_k,\cV$ are normed $\F$-vector spaces.
    If $\Lambda \colon \cV_1 \times \cdots \times \cV_k \to \cV$ is a real--$k$-linear map, then
    \[
    \norm{\Lambda}_{B_k(\cV_1 \times \cdots \times \cV_k; \cV)} \coloneqq \sup\{\norm{\Lambda[v_1,\ldots,v_k]}_{\cV} : \norm{v_1}_{\cV_1} \leq 1, \ldots, \norm{v_k}_{\cV_k} \leq 1\} \in [0,\infty]
    \]
    is the operator norm of $\Lambda$, and $B_k(\cV_1 \times \cdots \times \cV_k; \cV)$ is the normed $\F$-vector space of real--$k$-linear maps $\Lambda$ with finite operator norm.
    Also, $B(\cV_1;\cV) \coloneqq B_1(\cV_1;\cV)$, $B(\cV) \coloneqq B(\cV;\cV)$, and $\norm{\cdot}_{\cV_1 \to \cV} \coloneqq \norm{\cdot}_{B(\cV_1;\cV)}$.\label{item.Bk}
    \item Suppose $\cV$ is a Hausdorff topological vector space.
    If $t \in I$, then
    \[
    F(t-) \coloneqq \lim_{s \nearrow t} F(s) \; \text{ and } \; F(t+) \coloneqq \lim_{u \searrow t} F(u)
    \]
    when such limits exist, with the convention that $F(a-) \coloneqq F(a)$ and $F(b+) \coloneqq F(b)$ when $b < \infty$.
    If $F$ has left/right limits on all of $I$, then $F_{\pm} \colon I \to \cV$ is the function defined by $t \mapsto F(t\pm)$.\label{item.l/rlim}
    \item If $(\Om,\sF,\mu)$ is a complex or signed measure space, then $|\mu|$ is the total variation measure of $\mu$.
    If $\cV$ is a Banach space, then $L^0(\Om,\mu;\cV)$ is the space of $|\mu|$-a.e.\ equivalence classes of strongly measurable maps $\Om \to \cV$.
    If $p \in [1,\infty]$, then $L^p(\Om,\mu;\cV)$ is the Banach space of $F \in L^0(\Om,\mu;\cV)$ such that
    \[
    \norm{F}_{L^p(|\mu|;\cV)} \coloneqq \Bigg(\int_{\Om} \norm{F(\om)}_{\cV}^p \, |\mu|(\d\om)\Bigg)^{\frac{1}{p}} < \infty,
    \]
    with the obvious adjustment when $p=\infty$.
    If $F \in L^1(\Om,\mu;\cV)$, then $\int_{\Om} F \,\d\mu = \int_{\Om} F(\om) \, \mu(\d\om) \in \cV$ is the Bochner $\mu$-integral of $F$;
    see \cite[App.\ E]{Cohn2013} for background on strong measurability and Bochner integrals.
    If $\Om$ is a Hausdorff topological space and $\sF = \cB_{\Om}$, then $L_{\loc}^p(\Om,\mu;\cV)$ is the space of $F \in L^0(\Om,\mu;\cV)$ such that $F|_K \in L^p(K,\mu|_K;\cV)$ for all compact $K \subseteq \Om$.\label{item.Bochner}
\end{enumerate}
\end{nota}

\subsection{Noncommutative probability}\label{sec.freeprob}

In this section, we discuss some basic definitions and facts about free probability and noncommutative $L^p$ spaces.
We assume the reader is familiar with these and recall only what is necessary for the present application.
See \cite{MS2017,NS2006} for a proper treatment of the basics of free probability.

A pair $(\cA,\E)$ is a \textbf{$\boldsymbol{\ast}$-probability space} if $\cA$ is a unital $\ast$-algebra and $\E \colon \cA \to \C$ is a \textbf{state}, i.e., $\E$ is $\C$-linear, unital ($\E[1]=1$), and positive ($\E[a^*a] \geq 0$ for all $a \in \cA$).
The state $\E$ is \textbf{tracial} if $\E[ab] = \E[ba]$ for all $a,b \in \cA$ and \textbf{faithful} if $\E[a^*a] = 0$ implies $a=0$.
A collection $(\cA_i)_{i \in I}$ of (not necessarily $\ast$-)subalgebras of $\cA$ is \textbf{freely independent}---\textbf{free} for short---if $\E[a_1\cdots a_n] = 0$ whenever $\E[a_1] = \cdots = \E[a_n] = 0$ and $a_1 \in \cA_{i_1},\ldots,a_n \in \cA_{i_n}$ with $i_1 \neq i_2,i_2 \neq i_3, \ldots, i_{n-2} \neq i_{n-1}, i_{n-1}\neq i_n$.
When applied to elements or subsets of $\cA$, the terms  ``($\ast$-)free'' or ``($\ast$-)freely independent'' refer to the ($\ast$-)subalgebras these elements or subsets generate, e.g., $a \in \cA$ and $S \subseteq \cA$ are ($\ast$-)free if the ($\ast$-)subalgebra generated by $a$ is free from the ($\ast$-)subalgebra generated by $S$.

Let $\cH$ be a complex Hilbert space and $B_{\C}(\cH) \coloneqq \{$bounded $\C$-linear maps $\cH \to \cH\}$.
A \textbf{$\boldsymbol{\mathrm{C}^*}$-algebra} is an operator norm--closed $\ast$-subalgebra of $B_{\C}(\cH)$.
A \textbf{von Neumann algebra} is a unital $\mathrm{C}^*$-algebra that is closed in the $\sigma$-weak operator topology ($\sigma$-WOT).
A $\ast$-probability space $(\cA,\E)$ is a \textbf{$\boldsymbol{\mathrm{C}^*}$-probability space} if $\cA$ is a unital $\mathrm{C}^*$-algebra and the state $\E$ is tracial and faithful.\label{page.Cstarprob}
(In this case, $\E$ is bounded and has operator norm $\E[1] = 1$.)
A $\mathrm{C}^*$-probability space $(\cA,\E)$ is a \textbf{$\boldsymbol{\mathrm{W}^*}$-probability space} if $\cA$ is a von Neumann algebra and $\E$ is normal ($\sigma$-WOT continuous).
All $\ast$-probability spaces considered in this paper will be $\mathrm{C}^*$-probability spaces;
sometimes, they will be $\mathrm{W}^*$-probability spaces.
See \cite{Conway1990,Conway2000,Dixmier1981} for background on operator algebras.

\begin{ex}[Random matrices]\label{ex.randommatrices}
Let $(\Om,\sF,P)$ be a (classical) probability space and $n \in \N$.
The algebra $\cA_n \coloneqq L^{\infty}(\Om,\sF,P;\MnC)$ of $P$-essentially bounded, $\MnC$-valued random variables (modulo $P$-a.e.\ equality) is a $\mathrm{W}^*$-probability space with the expected normalized matrix trace $\tau_n \coloneqq \E_P[\tr_n(\cdot)] \coloneqq \E_P[n^{-1}\Tr_n(\cdot)]$.
Here, $\cA_n$ is represented as multiplication operators on $\cH \coloneqq L^2(\Om,\sF,P;\C^n)$, i.e., $A \in \cA_n$ is viewed as the operator $\cH \ni v \mapsto Av \in \cH$.
Note that $(\cA_1,\tau_1) = (L^{\infty}(\Om,\sF,P),\E_P)$, and if $(\Om,\sF,P)$ is the one-point probability space, then $(\cA_1,\tau_1) = (\MnC,\tr_n)$.
\end{ex}

Fix a $\mathrm{C}^*$-probability space $(\cA,\E)$.
If $a \in \cA$ is normal ($a^*a = aa^*$), then the \textbf{$\boldsymbol{\ast}$-distribution} of $a$ is the Borel probability measure $\mu_a$ on the spectrum $\sigma(a) \subseteq \C$ satisfying
\[
\E[a^n(a^*)^m] = \int_{\sigma(a)} \lambda^n\bar{\lambda}^m\,\mu_a(\d\lambda) \qquad (n,m \in \N_0).
\]
When $(\cA,\E)$ is a $\mathrm{W}^*$-probability space, $\mu_a(\d\lambda) = \E[P^a(\d\lambda)]$, where $P^a \colon \cB_{\sigma(a)} \to \cA$ is the \textbf{projection-valued spectral measure} of $a$, i.e., the projection-valued measure that is characterized by the identity $a = \int_{\sigma(a)} \lambda \, P^a(\d\lambda)$ and is guaranteed to exist by the spectral theorem (\cite[Chap.\ IX]{Conway1990}).
If $(\cA,\E) = (\cA_n,\tau_n)$~as in Example \ref{ex.randommatrices} and $A \in \cA_n$ is normal, then $\mu_A$ is the $P$-expected empirical distribution of eigenvalues of~$A$.

Define $\mu^{\mathrm{sc}}_0 \coloneqq \delta_0$ and
\[
\mu^{\mathrm{sc}}_t(\d s) \coloneqq \frac{\sqrt{(4t-s^2)_+}}{2\pi t} \, \d s \qquad (t > 0)
\]
to be the semicircle distribution of variance $t$.
An element $a \in \cA_{\sa} \coloneqq \{b \in \cA : b^*=b\}$ is \textbf{semicircular with variance $\boldsymbol{t}$} if $\mu_a = \mu_t^{\mathrm{sc}}$.
An element $c \in \cA$ is \textbf{circular with variance $\boldsymbol{t}$} if $c = 2^{-1/2}(a_1+ia_2)$ for two free semicircular elements $a_1,a_2 \in \cA_{\sa}$ with variance $t$.
In this case, $c^*$ is also circular with variance $t$ because $-a_2$ is semicircular with variance $t$.

Now, we turn to noncommutative $L^p$ spaces. 
See \cite{daSilva2018} for a detailed development of the basic properties of noncommutative $L^p$ spaces in the von Neumann algebra setting.
Most of these basic properties still hold in the $\mathrm{C}^*$-algebra case---in fact, they may be deduced from the von Neumann algebra case---but we are unaware of a reference in which these facts are proven.
We fill this gap in Appendix \ref{sec.CstarLp}.

\begin{nota}[Noncommutative $L^p$ spaces]\label{nota.Lp}
If $p \in [1,\infty)$, then
\[
\norm{a}_p = \norm{a}_{L^p(\E)} \coloneqq \E[|a|^p]^{\frac{1}{p}} = \E\big[(a^*a)^{\frac{p}{2}}\big]^{\frac{1}{p}} \qquad (a \in \cA),
\]
and $L^p(\cA,\E) = L^p(\E)$ is the $\norm{\cdot}_p$-completion of $\cA$.
Also, write $L^{\infty}(\cA,\E) = L^{\infty}(\E) \coloneqq \cA$ and, for all $a \in \cA$, $\norm{a}_{\infty} \coloneqq \lim_{p \to \infty}\norm{a}_p = \norm{a}_{\cA} = \norm{a}$.
\end{nota}

\begin{rem}
The convention $L^{\infty}(\cA,\E) = \cA$ is conceptually inappropriate when $\cA$ is not a von Neumann algebra, as we observe in Remark \ref{rem.Linfty} below, but it makes the notation in the present paper work much~better.
\end{rem}

As in the classical case, the trace $\E \colon \cA \to \C$ extends uniquely to a bounded linear map $L^1(\E) \to \C$ with operator norm $1$;
we use the same notation for this extension.
Also, if $a \in \cA$ and $1 \leq p \leq q \leq \infty$, then $\norm{a}_p \leq \norm{a}_q$.
Moreover, the complex-linear contraction $L^q(\E) \to L^p(\E)$ extending the identity is injective, so we view $L^q(\E)$ as a subset of $L^p(\E)$.
We also have noncommutative H\"{o}lder's inequality: 
If $a_1,\ldots,a_n \in \cA$ and $p_1,\ldots,p_n,p \in [1,\infty]$ satisfy $1/p_1+\cdots+1/p_n \leq 1/p$, then $\norm{a_1 \cdots a_n}_p \leq \norm{a_1}_{p_1}\cdots \norm{a_n}_{p_n}$.
(The usual statement requires $1/p_1+\cdots+1/p_n = 1/p$.
However, using that $p \leq q$ implies $\norm{\cdot}_p \leq \norm{\cdot}_q$, one may generalize the usual statement to ours.)
This allows us to extend multiplication to a bounded complex--$n$-linear map $L^{p_1}(\E) \times \cdots \times L^{p_n}(\E) \to L^p(\E)$.
In addition, there is a dual characterization of the $L^p$ norm:
If $p,q \in [1,\infty]$ satisfy $1/p+1/q = 1$, then
\[
\norm{a}_p = \sup\{|\E[ab]| : b \in \cA, \;\norm{b}_q \leq 1\} \qquad (a \in \cA).
\]
This leads to the duality relationship $L^q(\E) \cong L^p(\E)^*$, via the map $a \mapsto (b \mapsto \E[ab])$, whenever $1/p+1/q=1$ and $p \not\in \{1,\infty\}$.
When $(\cA,\E)$ is a $\mathrm{W}^*$-probability space, the duality still holds with $p=1$ (but not generally with $p=\infty$).

Next, we discuss the notion of conditional expectation.

\begin{prop}[Conditional expectation]\label{prop.condexp}
Let $\cB \subseteq \cA$ be a \textbf{$\boldsymbol{\mathrm{C}^*}$-subalgebra}, i.e., a unital, operator norm--closed $\ast$-subalgebra.
\begin{enumerate}[label=(\roman*),font=\normalfont]
    \item If $a \in L^1(\cA,\E)$, then there exists a unique $b \in L^1(\cB,\E) \subseteq L^1(\cA,\E)$ such that
    \[
    \E[b_0b] = \E[b_0a] \qquad (b_0 \in \cB).
    \]
    The element $b$ is the \textbf{conditional expectation of $\boldsymbol{a}$ onto $\boldsymbol{\cB}$} and is written $\E[a \mid \cB]$.
    The map $L^1(\cA,\E) \ni a \mapsto \E[a \mid \cB] \in L^1(\cB,\E)$ is a linear projection that respects the $\ast$-operation.
    Moreover, it is a $\cB$-$\cB$ bimodule map:\label{item.condexpexist}
    \[
    \E[b_1ab_2 \mid \cB] = b_1\E[a \mid \cB]b_2 \qquad \big(b_1,b_2 \in \cB, \; a \in L^1(\cA,\E)\big).
    \]
    \item If $p \in [1,\infty)$ and $a \in L^p(\cA,\E)$, then $\E[a \mid \cB] \in L^p(\cB,\E)$ and $\norm{\E[a \mid \cB]}_p \leq \norm{a}_p$.
    If $(\cA,\E)$ is a $\mathrm{W}^*$-probability space and $\cB \subseteq \cA$ is a \textbf{$\boldsymbol{\mathrm{W}^*}$-subalgebra} (a $\sigma$-WOT--closed $\mathrm{C}^*$-subalgebra), then this is also true when $p=\infty$.\label{item.condexpLpbd}
    \item {\rm (Tower property)} If $\cC \subseteq \cB$ is another $\mathrm{C}^*$-subalgebra, then $\E[\E[\cdot \mid \cB] \mid \cC] = \E[\cdot \mid \cC]$.\label{item.condexpTower}
\end{enumerate}
\end{prop}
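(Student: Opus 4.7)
My plan is to construct $\E[\cdot \mid \cB]$ via orthogonal projection in $L^2$ and then bootstrap to the other $L^p$-spaces using the tracial and faithful properties of $\E$. Since $\E$ is faithful and tracial, $(x,y) \mapsto \E[y^*x]$ equips $\cA$ with a pre-Hilbert structure whose completion is $L^2(\cA,\E)$, and $L^2(\cB,\E)$ sits inside as a closed subspace. For $a \in L^2(\cA,\E)$, set $\E[a \mid \cB]$ to be the orthogonal projection of $a$ onto $L^2(\cB,\E)$; the identity $\E[b_0 a] = \E[b_0 \E[a \mid \cB]]$ for $b_0 \in \cB$ is then just the orthogonality condition $\langle a - \E[a \mid \cB], b_0^*\rangle_{L^2} = 0$, together with the fact that $\cB$ is $\ast$-closed. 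Uniqueness is immediate from the faithfulness of $\E$, since any $d \in L^1(\cB,\E)$ satisfying $\E[b_0 d] = 0$ for every $b_0 \in \cB$ has $\E[d^* d] = 0$ by approximation of $d^*$ in $L^2(\cB,\E)$.

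Next I would dispatch the algebraic properties. Linearity and idempotence are immediate, and $\ast$-preservation follows by conjugating the defining identity (using $\overline{\E[x]} = \E[x^*]$ and that $\cB$ is $\ast$-closed). For the $\cB$-$\cB$-bimodule property, given $b_1, b_2 \in \cB$ and $a \in L^1(\cA,\E)$, a double use of traciality together with the defining identity yields, for every $b_0 \in \cB$,
\[
\E[b_0 \, b_1 \E[a \mid \cB] b_2] = \E[(b_2 b_0 b_1) \E[a \mid \cB]] = \E[(b_2 b_0 b_1) a] = \E[b_0 \, b_1 a b_2],
\]
so uniqueness forces $b_1 \E[a \mid \cB] b_2 = \E[b_1 a b_2 \mid \cB]$.

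For part (ii), contractivity at $p = 2$ is automatic for an orthogonal projection, and $L^1$-contractivity on the dense subspace $L^2 \cap \cA$ comes from duality: for $c \in \cB$ with $\|c\|_\infty \leq 1$, we have $|\E[c \E[a \mid \cB]]| = |\E[ca]| \leq \|a\|_1$, and testing against $\cB$ suffices to recover the $L^1$-norm of an element of $L^1(\cB,\E)$. Riesz--Thorin interpolation in the noncommutative setting (as developed in Appendix \ref{sec.CstarLp}) then gives $L^p$-contractivity for $p \in [1,2]$, and duality delivers the range $p \in [2,\infty)$. Density of $\cA$ in each $L^p(\cA,\E)$ extends the map continuously, and $L^p(\cB,\E)$-valuedness is preserved under completion. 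Finally, the tower property (iii) is immediate from the defining characterization: for $c_0 \in \cC \subseteq \cB$,
\[
\E[c_0 \E[\E[a \mid \cB] \mid \cC]] = \E[c_0 \E[a \mid \cB]] = \E[c_0 a],
\]
and uniqueness identifies the iterated conditional expectation with $\E[a \mid \cC]$.

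The main obstacle is $L^\infty$-contractivity in the $\mathrm{W}^*$-case, i.e.\ showing that the $L^2$-projection actually carries $\cA$ into $\cB$ (not merely into $L^2(\cB,\E)$) and is a contraction in operator norm. I expect to handle this via Tomiyama's theorem on contractive projections between $\mathrm{C}^*$-algebras, once it is verified that the restriction of the $L^2$-projection to $\cA$ is a well-defined idempotent onto $\cB$ of norm one in operator norm. The $\sigma$-WOT-closedness of $\cB$ is essential here, since without it the projection would naturally land in the $\sigma$-WOT-closure of $\cB$ inside a GNS completion rather than in $\cB$ itself; every other ingredient is either abstract Hilbert-space geometry or standard noncommutative $L^p$ interpolation.
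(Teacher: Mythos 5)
Your construction is genuinely different from the paper's. The paper proves this proposition in Appendix \ref{sec.CstarLp} by reduction: it passes to the GNS representation, takes $\cM$ and $\cN$ to be the $\sigma$-WOT-closures of $\pi(\cA)$ and $\pi(\cB)$, shows that $\iota_p$ restricts to an isometric isomorphism $L^p(\cB,\E) \cong L^p(\cN,\tau)$ for $p \in [1,\infty)$, and then simply \emph{defines} $\E[\cdot \mid \cB]$ as the pullback of the von Neumann algebraic conditional expectation $\tau[\cdot \mid \cN]$, citing the $\mathrm{W}^*$-literature for all of its properties. You instead build the map from scratch: orthogonal projection on $L^2$, uniqueness and the algebraic identities from the trace pairing, $L^1$-contractivity by duality against $\cB$, interpolation for $[1,2]$, and self-adjointness of the projection for $[2,\infty)$. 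For $p < \infty$ this is sound and more self-contained than the paper's citation-based argument, with two caveats. First, Appendix \ref{sec.CstarLp} does not in fact develop noncommutative Riesz--Thorin interpolation, so you must import it from the tracial $L^p$ literature. Second, your uniqueness argument as written (``$\E[d^*d]=0$ by approximation of $d^*$ in $L^2(\cB,\E)$'') only makes sense for $d \in L^2(\cB,\E)$, since $d^*d$ need not lie in $L^1$ when $d \in L^1(\cB,\E)$; for general $d$ you should instead use the dual characterization $\norm{d}_1 = \sup\{|\E[b_0 d]| : b_0 \in \cB,\ \norm{b_0} \leq 1\}$, which is exactly the duality you already invoke for contractivity.

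The one genuine gap is the $p=\infty$ statement in the $\mathrm{W}^*$-case. Tomiyama's theorem cannot do the work you assign to it: its \emph{hypothesis} is a norm-one idempotent from $\cA$ onto $\cB$, and its \emph{conclusion} is the structural properties (positivity, the bimodule identity) of such a map. What you need to prove is precisely that hypothesis --- that the $L^2$-projection carries $\cA$ into $\cB$ itself, not merely into $L^2(\cB,\E)$, and is an operator-norm contraction --- so the appeal is circular as stated. The standard way to close this is to first establish positivity: for $a \geq 0$ and $c \in \cB_+$, traciality gives $\E[c\,\E[a\mid\cB]] = \E[c^{1/2}ac^{1/2}] \geq 0$, and positive elements of $\cB$ detect positivity in $L^1(\cB,\E)$, so $\E[\cdot\mid\cB]$ is positive; then $0 \leq \E[a\mid\cB] \leq \norm{a}1$ forces $\E[a\mid\cB]$ to be a bounded operator lying in the $\sigma$-WOT-closed algebra $\cB$ with norm at most $\norm{a}$, and the non-self-adjoint case follows from the operator Cauchy--Schwarz inequality $\E[a\mid\cB]^*\E[a\mid\cB] \leq \E[a^*a\mid\cB]$, itself a consequence of positivity and the bimodule property. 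Alternatively, do what the paper does and cite the $\mathrm{W}^*$-theory for this step.
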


\begin{ex}[Scalars]\label{ex.scalarcond}
If $a \in L^1(\cA,\E)$, then $\E[a \mid \C] = \E[a]\,1 = \E[a]$.
Somewhat more generally, if $a \in \cA$ is free from $\cB$, then $\E[a \mid \cB] = \E[a]$.
\end{ex}

\begin{ex}[$p=2$]\label{ex.L2cond}
If $a \in L^2(\cA,\E)$, then $\E[a \mid \cB]$ is the orthogonal projection of $a$ onto the closed subspace $L^2(\cB,\E)$ of $L^2(\cA,\E)$.
\end{ex}

\begin{ex}[Random matrices]\label{ex.randommatricescond}
If $(\cA,\E) = (\cA_n,\tau_n)$ as in Example \ref{ex.randommatrices}, then
\[
(L^p(\tau_n),\norm{\cdot}_{L^p(\tau_n)}) = \Big(L^p(\Om,\sF,P;\MnC),\big\|\norm{\cdot}_{L^p(\tr_n)}\big\|_{L^p(\E_P)}\Big) \qquad (1 \leq p \leq \infty).
\]
Also, if $\sG \subseteq \sF$ is a sub--$\sigma$-algebra, then the algebra $\cB_n \coloneqq L^{\infty}(\Om,\sG,P;\MnC)$ of $\sG$-measurable elements of $\cA_n$ is a $\mathrm{W}^*$-subalgebra of $\cA_n$, and $\tau_n[A \mid \cB_n] = \E_P[A \mid \sG]$ for all $A \in L^1(\tau_n)$.
\end{ex}

Note that if $a \in \cB$ and $\E[a \mid \cB] \in \cB$, then
\[
\norm{\E[a \mid \cB]} = \lim_{p \to \infty}\norm{\E[a \mid \cB]}_p \leq \lim_{p \to \infty}\norm{a}_p = \norm{a}.
\]
If $\E[a \mid \cB] \in \cB$ for all $a \in \cA$, then $\cB$ is called \textbf{conditionable}.
Proposition \ref{prop.condexp}\ref{item.condexpLpbd} says that all $\mathrm{W}^*$-subalgebras of $\mathrm{W}^*$-probability spaces are conditionable.

Finally, we comment on the direct sum construction for $\mathrm{C}^*$-probability spaces, as it will be of use to us when considering ``multivariate'' instances of our results.
Suppose $(\cA_1,\E_1),\ldots,(\cA_n,\E_n)$ are $\mathrm{C}^*$-probability spaces, and write $\cA \coloneqq \cA_1 \oplus \cdots \oplus \cA_n$ for their $\mathrm{C}^*$-direct sum.
If
\[
\E[\a] \coloneqq \frac1n\sum_{i=1}^n \E_i[a_i] \qquad (\a = (a_1,\ldots,a_n) \in \cA),\pagebreak
\]
then $(\cA,\E)$ is a $\mathrm{C}^*$-probability space.
(If $(\cA_i,\E_i)$ is a $\mathrm{W}^*$-probability space for all $i=1,\ldots,n$, then $(\cA,\E)$ is a $\mathrm{W}^*$-probability space.)
If $p \in [1,\infty)$, then
$L^p(\E) = L^p(\E_1) \oplus \cdots \oplus L^p(\E_n)$, and
\[
\norm{\a}_p = \left(\frac1n\sum_{i=1}^n \norm{a_i}_p^p\right)^{\frac1p} \qquad \big(\a \in L^p(\E)\big).
\]
Of course,
\[
\norm{\a}_{\infty} = \max\{\norm{a_i}_{\infty} : i=1,\ldots,n\} \qquad (\a \in \cA)
\]
as well.
Moreover, if $\cB_i \subseteq \cA_i$ is a $\mathrm{C}^*$-subalgebra for all $i =1,\ldots,n$ and $\cB \coloneqq \cB_1 \oplus \cdots \oplus \cB_n$, then $\cB \subseteq \cA$ is a $\mathrm{C}^*$-subalgebra, and $\E[\a \mid \cB] = (\E_1[ a_1 \mid \cB_1],\ldots,\E_n[a_n \mid \cB_n])$ for all $\a \in L^1(\cA,\E)$.

\subsection{Multilinear maps on \texorpdfstring{$L^p$ spaces}{}}\label{sec.multilin}

We now set some notation for various classes of multilinear maps.
For the duration of this section, fix $k \in \N$ and, for each $i = 1,\ldots,k+1$, a $\mathrm{C}^*$-probability space $(\cA_i,\E_i)$.

\begin{conv}\label{conv.lin}
Henceforth, the terms ``linear,'' ``multilinear,'' and ``$k$-linear'' will refer to $\R$ as the base field unless otherwise specified, even if the vector spaces under consideration are complex.
For example, a ``linear map from $L^2(\E_1)$ to $L^2(\E_2)$'' is a \textit{real}-linear map from $L^2(\E_1)$ to $L^2(\E_2)$.
\end{conv}

Note that Convention \ref{conv.lin} is consistent with Notation \ref{nota.nota}\ref{item.Bk}, which the reader should now review.

\begin{nota}[Bounded multilinear maps on $L^p$ spaces]\label{nota.bddlin}
If $p_1,\ldots,p_k,p \in [1,\infty]$, then
\begin{align*}
    B_k^{p_1,\ldots,p_k;p} & \coloneqq B_k(L^{p_1}(\E_1) \times \cdots \times L^{p_k}(\E_k) ; L^p(\E_{k+1})) \, \text{ and} \\
    \norm{\cdot}_{p_1,\ldots,p_k;p} & \coloneqq \norm{\cdot}_{B_k(L^{p_1}(\E_1) \times \cdots \times L^{p_k}(\E_k) ; L^p(\E_{k+1}))}.
\end{align*}
Also, for $\Lambda \in B_k^{\infty,\ldots,\infty;\infty}$, we write
\begin{align*}
    \vertiii{\Lambda}_k & \coloneqq \sup\bigg\{\norm{\Lambda}_{p_1,\ldots,p_k;p} : \frac{1}{p_1}+\cdots+\frac{1}{p_k} = \frac{1}{p}\bigg\} \\
    & =  \sup\bigg\{\norm{\Lambda}_{p_1,\ldots,p_k;p} : \frac{1}{p_1}+\cdots+\frac{1}{p_k} \leq \frac{1}{p}\bigg\} \in [0,\infty] \, \text{ and} \\
    \mathbb{B}_k & = \mathbb{B}_k(\cA_1 \times \cdots \times \cA_k;\cA_{k+1}) \coloneqq \big\{\Xi \in B_k^{\infty,\ldots,\infty;\infty} : \vertiii{\Xi}_k < \infty\big\}.
\end{align*}
We shall omit the subscripts when $k=1$.
Finally, define $\mathbb{B}_0 \coloneqq \cA_{0+1} = \cA_1$.
\end{nota}

Note that if $\Lambda \in \mathbb{B}_k$ and $p_1,\ldots,p_k,p \in [1,\infty]$ are such that $1/p_1+\cdots+1/p_k \leq 1/p$, then $\Lambda$ extends uniquely to a bounded $k$-linear map $L^{p_1}(\E_1) \times \cdots \times L^{p_k}(\E_k) \to L^p(\E_{k+1})$ with operator norm at most $\vertiii{T}_k$.
We shall abuse notation and write $T$ for this extension as well.

We shall occasionally consider maps with only self-adjoint arguments.
Here are some useful facts about such maps;
we leave the proofs to the reader.

\begin{obs}\label{obs.realklin}
Suppose $\Lambda \colon \cA_{1,\sa} \times \cdots \times \cA_{k,\sa} \to \cA_{k+1}$ is a (real--)$k$-linear map.
\begin{enumerate}[label=(\roman*),font=\normalfont]
    \item There exists a unique complex--$k$-linear extension $\tilde{\Lambda} \colon \cA_1 \times \cdots \times \cA_k \to \cA_{k+1}$ of $\Lambda$.
    \item If $\tilde{\Lambda}$ is as in the previous item, then
    \[
    2^{-k}\big\|\tilde{\Lambda}\big\|_{p_1,\ldots,p_k;p} \leq \sup\{\norm{\Lambda[a_1,\ldots,a_n]}_p : a_i \in \cA_{i,\sa} \text{ and } \norm{a_i}_{p_i} \leq 1 \text{ for } i=1,\ldots,k\} \leq \big\|\tilde{\Lambda}\big\|_{p_1,\ldots,p_k;p}
    \]
    for all $p_1,\ldots,p_k,p \in [1,\infty]$.
\end{enumerate}
We shall often identify $\Lambda$ with its complex--$k$-linear extension $\tilde{\Lambda}$.
\end{obs}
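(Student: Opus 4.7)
The plan is to use the fact that every element $a \in \cA_i$ admits a unique decomposition $a = \re(a) + i\,\cIm(a)$ with $\re(a) \coloneqq \tfrac{1}{2}(a + a^*)$ and $\cIm(a) \coloneqq \tfrac{1}{2i}(a - a^*)$ both in $\cA_{i,\sa}$, so $\cA_i$ is the complexification of $\cA_{i,\sa}$ as a real vector space. Part (i) should follow immediately from the universal property of complexification for multilinear maps, but I will spell it out concretely. For uniqueness, any complex $k$-linear extension $\tilde{\Lambda}$ of $\Lambda$ must, by $\C$-multilinearity, satisfy
\[
\tilde{\Lambda}[a_1,\ldots,a_k] = \sum_{\epsilon \in \{0,1\}^k} i^{|\epsilon|}\, \Lambda\big[b_1^{\epsilon_1},\ldots,b_k^{\epsilon_k}\big],
\]
where $b_j^0 \coloneqq \re(a_j)$, $b_j^1 \coloneqq \cIm(a_j)$, and $|\epsilon| \coloneqq \epsilon_1 + \cdots + \epsilon_k$. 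For existence, I will take this formula as the definition of $\tilde{\Lambda}$ and verify real $k$-linearity (trivial, since $\re$ and $\cIm$ are $\R$-linear), the extension property (take all $a_j \in \cA_{j,\sa}$ so that $\cIm(a_j) = 0$, collapsing the sum to the $\epsilon = 0$ term), and complex homogeneity in each slot (check that multiplying the $j^{\text{th}}$ argument by $i$ swaps $\re$ and $\cIm$ in slot $j$ with a sign flip, which matches the action of $i^{\epsilon_j}$ in the sum).

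For (ii), the upper bound is immediate: $\cA_{j,\sa} \subseteq \cA_j$ is a real subspace and the $L^{p_j}$-norm on self-adjoint elements coincides with its restriction from $L^{p_j}(\E_j)$, so taking the supremum over a smaller set yields a smaller (or equal) number. For the lower bound, I will use the explicit formula for $\tilde{\Lambda}$ together with the contractivity of $\re$ and $\cIm$: from $\re(a) = \tfrac{1}{2}(a + a^*)$ and the fact that the $\ast$-operation is a noncommutative $L^p$-isometry, we get $\norm{\re(a)}_{p} \leq \norm{a}_p$ and likewise $\norm{\cIm(a)}_p \leq \norm{a}_p$. Given $(a_1, \ldots, a_k)$ with $\norm{a_j}_{p_j} \leq 1$, each of the $2^k$ summands $\Lambda[b_1^{\epsilon_1},\ldots,b_k^{\epsilon_k}]$ then has $L^p$-norm bounded by $S \coloneqq \sup\{\norm{\Lambda[c_1,\ldots,c_k]}_p : c_j \in \cA_{j,\sa},\, \norm{c_j}_{p_j} \leq 1\}$, and the triangle inequality gives $\norm{\tilde{\Lambda}[a_1,\ldots,a_k]}_p \leq 2^k S$. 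Taking the supremum over unit-ball inputs yields $\|\tilde{\Lambda}\|_{p_1,\ldots,p_k;p} \leq 2^k S$, which is the claimed inequality.

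I do not expect any genuine obstacle here; the only thing to keep straight is that $\R$-linearity on self-adjoint arguments is what must be hypothesized (not $\C$-linearity, which would be incompatible with $\cA_{j,\sa}$ not being complex subspaces), and that the factor $2^k$ arises precisely from the $|\{0,1\}^k|$ terms in the polarization-style expansion above. The constant $2^{-k}$ is presumably not sharp, but it is exactly what the proof produces and suffices for all later applications of the observation.
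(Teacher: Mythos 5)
Your proof is correct and is exactly the intended argument: the paper leaves this observation to the reader, and the standard route is the polarization-style expansion over $\{0,1\}^k$ via $a = \re(a) + i\,\cIm(a)$, with the lower bound in (ii) coming from $\norm{\re(a)}_p, \norm{\cIm(a)}_p \leq \norm{a}_p$ (using that the $\ast$-operation is an $L^p$-isometry, by traciality of $\E$) and the triangle inequality over the $2^k$ terms. Nothing to add.
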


Consequently, we may identify via restriction the set of complex--$k$-linear maps $\cA_1 \times \cdots \times \cA_k \to \cA_{k+1}$ with the set of (real--)$k$-linear maps $\cA_{1,\sa} \times \cdots \times \cA_{k,\sa} \to \cA_{k+1}$, and the norm $\vertiii{\cdot}_k$ on the former is equivalent to the analogous norm on the latter.
We therefore define the space $\mathbb{B}_k(\cA_{1,\sa} \times \cdots \times \cA_{k,\sa};\cA_{k+1})$ to be the complex-linear subspace of $\mathbb{B}_k$ consisting of complex--$k$-linear maps.
In the context of maps with self-adjoint arguments, we shall have occasion to consider the ``multivariate'' situation in which $\cA_1,\ldots,\cA_{k+1}$ are all direct sums of a single $\mathrm{C}^*$-probability space $(\cA,\E)$.
\pagebreak

\begin{nota}\label{nota.Bbkmult}
Fix $d_1,\ldots,d_{k+1} \in \N$, and write $d \coloneqq (d_1,\ldots,d_k)$ and $m \coloneqq d_{k+1}$.
For a $\mathrm{C}^*$-probability space $(\cA,\E)$, we write $\cA^d \coloneqq \cA^{d_1} \times \cdots \times \cA^{d_k}$.
Also, we write
\[
\mathbb{B}_k(\cA^d;\cA^m) = \mathbb{B}_k(\cA^{d_1} \times \cdots \times \cA^{d_k} ;\cA^m) \; \text{ and } \; \mathbb{B}_k(\cA_{\sa}^d;\cA^m) = \mathbb{B}_k(\cA_{\sa}^{d_1} \times \cdots \times \cA_{\sa}^{d_k} ;\cA^m)
\]
for the spaces $\mathbb{B}_k(\cA_1 \times \cdots \times \cA_k;\cA_{k+1})$ and $\mathbb{B}_k(\cA_{1,\sa} \times \cdots \times \cA_{k,\sa};\cA_{k+1})$ with $(\cA_i,\E_i) = (\cA^{\oplus d_i},\E^{\oplus d_i})$ for all $i = 1,\ldots,k+1$.
\end{nota}

\subsection{Trace \texorpdfstring{$\ast$}{}-polynomials}\label{sec.trpoly}

In this section, we set notation for trace $\ast$-polynomials with some linear arguments.
Before getting started, we informally explain the notion of a trace polynomial.
For a rigorous treatment and further history, see \cite[\S3.1]{JLS2022}.
(See also \cite{Cebron2013,DHK2013,Kemp2016,Kemp2017}, whence the term ``trace polynomial'' originates.)
Throughout this section, ``$\C$-algebra'' is short for ``unital associative $\C$-algebra,'' and all subalgebras are unital.
Let $\C\la x_1,\ldots,x_n \ra$ be the $\C$-algebra of noncommutative polynomials in the indeterminates $(x_1,\ldots,x_n)$.
The $\C$-algebra $\TrP(x_1,\ldots,x_n)$ of trace polynomials in $(x_1,\ldots,x_n)$ is a superalgebra of $\C\la x_1,\ldots,x_n \ra$ with a complex-linear ``abstract trace'' operation $\tr \colon \TrP(x_1,\ldots,x_n) \to Z(\TrP(x_1,\ldots,x_n))$ such that $\tr(1) = 1$, $\tr(PQ) = \tr(QP)$, and $\tr(\tr(P)Q) = \tr(P)\tr(Q)$ for all $P,Q \in \TrP(x_1,\ldots,x_n)$.
Here, $Z(A) \subseteq A$ is the center of the $\C$-algebra $A$.
Moreover,
\[
\TrP(x_1,\ldots,x_n) = \spn\{\tr(P_1)\cdots \tr(P_{\ell}) \,P_0 : \ell \in \N, P_0,P_1,\ldots,P_{\ell} \in \C\la x_1,\ldots,x_n\ra\}.
\]
For example, $P(x_1,x_2,x_3) = \tr(x_1x_2x_1)x_3x_2^2x_3 - \tr(x_1)\tr(x_2)x_3 \in \TrP(x_1,x_2,x_3)$, and sine $\tr$ is ``tracial,'' $P(x_1,x_2,x_3) = \tr(x_1^2x_2)x_3x_2^2x_3 - \tr(x_1)\tr(x_2)x_3$ as well.

\begin{nota}[Trace ($\ast$-)polynomials]\label{nota.TrPoly}
Fix $n \in \N$, and write $\x = (x_1,\ldots,x_n)$.
\begin{enumerate}[label=(\roman*),font=\normalfont]
    \item Write $\C\la \x \ra = \C\la x_1,\ldots,x_n\ra$ for the $\C$-algebra of noncommutative polynomials in $n$ indeterminates, and write $\TrP(\x) = \TrP(x_1,\ldots,x_n)$ for the $\C$-algebra of trace polynomials in $n$ indeterminates.
    \item Write $\C^*\la \x \ra = \C^*\la x_1,\ldots,x_n \ra$ for $\ast$-algebra of noncommutative $\ast$-polynomials in $n$ indeterminates, and write $\TrP^*(\x) = \TrP^*(x_1,\ldots,x_n)$ for the $\ast$-algebra of trace $\ast$-polynomials in $n$ indeterminates.
\end{enumerate}
To be clear, $\C^*\la \x \ra$ is just the space $\C\la x_1,y_1,\ldots,x_n,y_n \ra$ with the unique $\ast$-operation determined by $x_i^* = y_i$ ($i  = 1,\ldots,n$).
Similarly, $\TrP^*(\x)$ is just the space $\TrP(x_1,y_1,\ldots,x_n,y_n)$ with the unique $\ast$-operation that commutes with the abstract trace $\tr$ and agrees with the $\ast$-operation on $\C^*\la \x \ra \subseteq \TrP^*(\x)$.
\end{nota}

Now, fix $k \in \N$ as well.
We say that $P \in \TrP^*(x_1,\ldots,x_n,y_1,\ldots,y_k)$ is \textbf{real--$\boldsymbol{k}$-linear in $\boldsymbol{(y_1,\ldots,y_k)}$} if $P$ can be written as a complex-linear combination trace $\ast$-polynomials of the form $\tr(P_1)\cdots \tr(P_{\ell}) P_0$, where $P_0,\ldots,P_{\ell}$ are $\ast$-monomials in $(x_1,\ldots,x_n,y_1,\ldots,y_k)$ such that for each $j \in \{1,\ldots,k\}$, either
\begin{itemize}
    \item $y_j$ appears precisely once in $P_0\cdots P_{\ell}$, and $y_j^*$ does not appear in $P_0\cdots P_{\ell}$; or
    \item $y_j^*$ appears precisely once in $P_0\cdots P_{\ell}$, and $y_j$ does not appear in $P_0\cdots P_{\ell}$.
\end{itemize}
We say that $P \in \TrP^*(x_1,\ldots,x_n,y_1,\ldots,y_k)$ is \textbf{complex--$\boldsymbol{k}$-linear in $\boldsymbol{(y_1,\ldots,y_k)}$} if $P$ can be written as a complex-linear combination of trace $\ast$-polynomials of the form $\tr(P_1)\cdots \tr(P_{\ell}) P_0$, where $P_0,\ldots,P_{\ell}$ are $\ast$-monomials in $(x_1,\ldots,x_n,y_1,\ldots,y_k)$ such that for each $j \in \{1,\ldots,k\}$, $y_j$ appears precisely once in $P_0\cdots P_{\ell}$, and $y_j^*$ does not appear in $P_0\cdots P_{\ell}$.

\begin{ex}
The trace $\ast$-polynomial
\[
\tr(x_1y_1^*)x_3^*y_2x_3x_2 - \tr(y_2^*)x_2^7y_1x_3^5 + 4y_1x_3^*y_2 + iy_2^*y_1^*x_2 \in \TrP^*(x_1,x_2,x_3,y_1,y_2)
\]
is real-bilinear in $(y_1,y_2)$, and the trace $\ast$-polynomial
\[
\tr(x_1y_1)x_3^*y_2x_3x_2 - \tr(y_2)x_2^7y_1x_3^5 + 4y_1x_3^*y_2 + iy_2y_1x_2 \in \TrP^*(x_1,x_2,x_3,y_1,y_2)
\]
is complex-bilinear in $(y_1,y_2)$.
\end{ex}

Now, fix $d = (d_1,\ldots,d_k) \in \N^k$, and write $|d| = d_1+\cdots+d_k$ as usual.
Also, write $\x = (x_1,\ldots,x_n)$ and $\y_j = (y_{j,1},\ldots,y_{j,d_j})$ ($j = 1,\ldots,k$) so that
\[
\TrP^*(\x,\y_1,\ldots,\y_k) = \TrP^*(x_1,\ldots,x_n,y_{1,1},\ldots,y_{1,d_1},\ldots,y_{k,1},\ldots,y_{k,d_k})\pagebreak
\]
is the space of trace $\ast$-polynomials in $n+|d|$ indeterminates.
A trace $\ast$-polynomial $P \in \TrP^*(\x,\y_1,\ldots,\y_k)$ is said to be \textbf{real--$\boldsymbol{k}$-linear} (resp., \textbf{complex--$\boldsymbol{k}$-linear}) \textbf{in $\boldsymbol{(\y_1,\ldots,\y_k)}$} if $P$ can be written as a sum
\[
P(\x,\y_1,\ldots,\y_k) = \sum_{i_1=1}^{d_1}\cdots\sum_{i_k=1}^{d_k}P_{i_1,\ldots,i_k}(\x,y_{1,i_1},\ldots,y_{k,i_k}),
\]
where $P_{i_1,\ldots,i_k}(\x,y_{1,i_1},\ldots,y_{k,i_k}) \in \TrP^*(\x, y_{1,i_1},\ldots,y_{k,i_k})$ is real--$k$-linear (resp., complex--$k$-linear) in the indeterminates $(y_{1,i_1},\ldots,y_{k,i_k})$.

\begin{nota}\label{nota.lintrpoly}
For $n,k \in \N$ and $d = (d_1,\ldots,d_k) \in \N^k$, write
\[
\TrP_{n,k,d}^* = \TrP^*(\x)[\y_1,\ldots,\y_k] \subseteq \TrP^*(\x,\y_1,\ldots,\y_k)
\]
for the space of trace $\ast$-polynomials in $n+|d|$ indeterminates (as above) that are real--$k$-linear in $(\y_1,\ldots,\y_k)$.
For $P \in \TrP_{n,k,d}^*$, we shall often write
\[
P(\x)[\y_1,\ldots,\y_k] \coloneqq P(\x,\y_1,\ldots,\y_k).
\]
Also, to cover the $k=0$ case, write $\TrP_n^* = \TrP_{n,0,\emptyset}^* \coloneqq \TrP^*(\x)$.
Finally, write
\[
\TrP_{n,k,d}^{*,\C} = \TrP_{\C}^*(\x)[\y_1,\ldots,\y_k]
\]
for the set of $P \in \TrP_{n,k,d}^*$ that are complex--$k$-linear in $(\y_1,\ldots,\y_k)$.
\end{nota}

\begin{obs}\label{obs.trpolyprop}
If $n,k \in \N$ and $d \in \N^k$, then
\[
\TrP_{n,k,d}^* = \TrP^*(\x)[\y_1,\ldots,\y_k] \subseteq \TrP^*(\x,\y_1,\ldots,\y_k)
\]
is a complex-linear subspace that is closed under the $\ast$-operation.
Also,
\[
\TrP_{\C}^*(\x)[\y_1,\ldots,\y_k] \subseteq \TrP^*(\x)[\y_1,\ldots,\y_k]
\]
is a complex-linear subspace.
\end{obs}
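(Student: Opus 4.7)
The plan is to verify each claim by direct unpacking of the definitions of real and complex $k$-linearity given just above the statement; no ideas beyond careful bookkeeping of occurrences of $y_{j,i}$ and $y_{j,i}^\ast$ are required.

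First I would address complex linearity. By definition, $P \in \TrP^*(\x)[\y_1,\ldots,\y_k]$ iff it admits a decomposition $P = \sum_{i_1,\ldots,i_k} P_{i_1,\ldots,i_k}$ with $(i_1,\ldots,i_k) \in \prod_j \{1,\ldots,d_j\}$, where each $P_{i_1,\ldots,i_k} \in \TrP^\ast(\x, y_{1,i_1},\ldots,y_{k,i_k})$ is a $\C$-linear combination of ``basic terms'' $\tr(P_1)\cdots \tr(P_\ell)\,P_0$ obeying the single-occurrence condition on $y_{j,i_j}$ and $y_{j,i_j}^\ast$. Given two such $P,Q$ and $\alpha,\beta \in \C$, summing the two decompositions component-wise and combining $\C$-coefficients of basic terms exhibits $\alpha P + \beta Q$ as an element of $\TrP^\ast(\x)[\y_1,\ldots,\y_k]$. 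The identical argument, with the complex linearity condition replacing the real linearity one in each basic term, yields claim~(ii) that $\TrP_\C^\ast(\x)[\y_1,\ldots,\y_k]$ is a complex linear subspace.

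Next I would handle $\ast$-closure. By conjugate-linearity of $\ast$ and the decomposition above, it suffices to check that $M^\ast$ satisfies the single-occurrence condition whenever $M = \tr(P_1)\cdots \tr(P_\ell)\,P_0$ does (with $d_1 = \cdots = d_k = 1$, say). Since $\tr$ takes values in the center of $\TrP^\ast(\x,\y_1,\ldots,\y_k)$ and commutes with $\ast$ (by the definition in Notation~\ref{nota.TrPoly}), we have $M^\ast = \tr(P_\ell^\ast)\cdots\tr(P_1^\ast)\,P_0^\ast$. The $\ast$-operation on $\ast$-monomials merely reverses the order of letters and swaps each $y_j$ with $y_j^\ast$ (and likewise for the $x_i$); in particular, the \emph{number} of occurrences of $y_j$ plus $y_j^\ast$ across $P_0^\ast,\ldots,P_\ell^\ast$ equals the corresponding count in $M$. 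So the single-occurrence condition in the real $k$-linear definition is preserved, and $M^\ast \in \TrP^\ast_{n,k,d}$.

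The main (and only) obstacle is the notational bookkeeping in the preceding paragraph; once the behavior of $\ast$ on basic terms is made explicit, everything is mechanical. This bookkeeping also makes clear why claim~(ii) does \emph{not} include $\ast$-closure: the complex $k$-linear condition requires $y_j$ to appear and $y_j^\ast$ not to appear, and $\ast$ swaps these roles. For instance, $y_1 x_1 y_2 \in \TrP_\C^\ast(x_1)[y_1,y_2]$ but $(y_1 x_1 y_2)^\ast = y_2^\ast x_1^\ast y_1^\ast \notin \TrP_\C^\ast(x_1)[y_1,y_2]$.
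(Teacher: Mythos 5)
Your proof is correct; the paper states this as an Observation without proof, and your direct verification from the definitions (component-wise combination of decompositions for linearity, and the fact that $\ast$ swaps each $y_{j,i}$ with $y_{j,i}^\ast$ while preserving the total occurrence count, so the real $k$-linearity condition is preserved but the complex one is not) is exactly the intended bookkeeping. Your closing example showing why $\TrP_{\C}^*(\x)[\y_1,\ldots,\y_k]$ fails to be $\ast$-closed is a nice touch that correctly explains the asymmetry in the statement.
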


Finally, we discuss evaluations.

\begin{nota}\label{nota.BCloc}
If $\cV$ and $\cW$ are normed vector spaces, then we write $BC_{\loc}(\cV;\cW)$ for the space of continuous maps $F \colon \cV \to \cW$ such that $\sup\{\norm{F(v)}_{\cW} : \norm{v}_{\cV} \leq R\} < \infty$ for all $R > 0$.
We endow this space with the topology of uniform convergence on bounded sets.
When $\cW$ is a Banach space, $BC_{\loc}(\cV;\cW)$ is a Fr\'{e}chet space (\cite[Prop.\ 4.1.4]{NikitopoulosNCk}).
\end{nota}

If $(\cA,\E)$ is a $\mathrm{C}^*$-probability space, then we may define the evaluation map 
\[
\ev_{\mathsmaller{(\cA,\E)}}^n \colon \TrP^*(x_1,\ldots,x_n) \to BC_{\loc}(\cA^n;\cA)
\]
as the unique unital $\ast$-homomorphism such that
\begin{align*}
    \big(\ev_{\mathsmaller{(\cA,\E)}}^nx_i\big)(\a) & = a_i \qquad (i =1,\ldots,n, \; \a = (a_1,\ldots,a_n) \in \cA^n), \\
    \ev_{\mathsmaller{(\cA,\E)}}^n\tr(P) & = \E \circ \ev_{\mathsmaller{(\cA,\E)}}^nP \qquad  (P \in \C^*\la x_1,\ldots,x_n \ra).
\end{align*}
(In the second line, we view $\C$ as the $\ast$-subalgebra $\C1$ of $\cA$.)
Now, fix $k \in \N_0$, $d = (d_1,\ldots,d_k) \in \N^k$, and $m \in \N$.
Observe that if $P = (P_1,\ldots,P_m) \in (\TrP_{n,k,d}^*)^m$ and $\a \in \cA^n$, then the map
\[
\cA^d \ni (\b_1,\ldots,\b_k) \mapsto \Big(\big(\ev_{\mathsmaller{(\cA,\E)}}^{n+|d|}P_1\big)(\a,\b_1,\ldots,\b_k),\ldots,\big(\ev_{\mathsmaller{(\cA,\E)}}^{n+|d|}P_m\big)(\a,\b_1,\ldots,\b_k)\Big) \in \cA^m
\]
belongs to $\mathbb{B}_k(\cA^d;\cA^m)$.
(To be clear, in the $k=0$ case, this should be interpreted as the statement that $((\ev_{\mathsmaller{(\cA,\E)}}^nP_1)(\a),\ldots,(\ev_{\mathsmaller{(\cA,\E)}}^nP_m)(\a)) \in \cA^m$.)
We write $P(\a) \in \mathbb{B}_k(\cA^d;\cA^m)$ for this map.
Moreover, the map
\[
\cA^n \ni \a \mapsto P(\a) \in \mathbb{B}_k(\cA^d;\cA^m)
\]
belongs to $BC_{\loc}(\cA^n;\mathbb{B}_k(\cA^d;\cA^m))$.
Note also that if $P \in (\TrP_{n,k,d}^{*,\C})^m$ and $\a \in \cA^n$, then $P(\a) \colon \cA^d \to \cA^m$ is complex--$k$-linear.

\begin{nota}[Evaluations]\label{nota.eval}
If $n,m \in \N$, $k \in \N_0$, and $d \in \N^k$, then we write
\[
\ev_{\mathsmaller{(\cA,\E)}}^{n,m,k,d} \colon (\TrP_{n,k,d}^*)^m \to BC_{\loc}(\cA^n;\mathbb{B}_k(\cA^d;\cA^m))
\]
for the map $P \mapsto (\a \mapsto P(\a))$ described in the previous paragraph.
Also, when $k=0$, we omit $k=0$ and $d = \emptyset$ from the notation.
Finally, for $P \in (\TrP_{n,k,d}^*)^m$, we write $P_{\mathsmaller{(\cA,\E)}} \coloneqq \ev_{\mathsmaller{(\cA,\E)}}^{n,m,k,d}P$.
\end{nota}

\section{Noncommutative processes}\label{sec.ncprocesses}

\subsection{Filtrations and adaptedness}\label{sec.filtradap}

In this section, we introduce notions of adaptedness that will be important for our noncommutative stochastic integral development.
We first recall the notion of a filtration of a $\mathrm{C}^*$-probability space.

\begin{defi}[Filtration]\label{def.filtr}
A \textbf{filtration} of a $\mathrm{C}^*$-probability space $(\cA,\E)$ is a family $(\cA_t)_{t \geq 0}$ of $\mathrm{C}^*$-subalgebras of $\cA$ such that $\cA_s \subseteq \cA_t$ whenever $0 \leq s \leq t$.
In this case, $(\cA,(\cA_t)_{t \geq 0},\E)$ is called a \textbf{filtered $\boldsymbol{\mathrm{C}^*}$-probability space}.
If $\cA_t \subseteq \cA$ is conditionable for all $t \geq 0$, then $(\cA,(\cA_t)_{t \geq 0},\E)$ is called \textbf{conditionable}.
If $(\cA,\E)$ is a $\mathrm{W}^*$-probability space and $\cA_t$ is a $\mathrm{W}^*$-subalgebra for all $t \geq 0$, then $(\cA,(\cA_t)_{t \geq 0},\E)$ is called a \textbf{filtered $\boldsymbol{\mathrm{W}^*}$-probability space}.
\end{defi}

In what follows, we shall work with arbitrary filtered $\mathrm{C}^*$-probability spaces.
Occasionally, something extra can be said when the filtered $\mathrm{C}^*$-probability spaces under consideration are all conditionable.
To avoid repeating cumbersome phrases in these situations, we institute the following shorthand.

\begin{conv}\label{conv.cond}
``In the  conditionable case'' is short for ``when all filtered $\mathrm{C}^*$-probability spaces in question are conditionable.''
Using this shorthand, we shall write
\[
[1,\infty\ra \coloneqq \begin{cases}
[1,\infty] & \text{in the conditionable case,} \\
[1,\infty) & \text{otherwise},
\end{cases}
\]
to exclude unwanted indices.
\end{conv}

Filtrations of $\mathrm{C}^*$-probability spaces induce ``filtrations'' of the spaces of bounded multilinear maps from Section \ref{sec.multilin}.
We now define and study these induced filtrations.

\begin{defi}[Induced filtrations, adaptedness]\label{def.adapt}
For the remainder of this section, fix $k \in \N$ and, for each $i = 1,\ldots, k+1$, a filtered $\mathrm{C}^*$-probability space $(\cA_i,(\cA_{i,t})_{t \geq 0},\E_i)$.
Also, let $t \geq 0$.
\begin{enumerate}[label=(\roman*),font=\normalfont]
    \item If $p_1,\ldots,p_k,p \in [1,\infty\ra$, then we define $\cF_{k,t}^{p_1,\ldots,p_k;p} = \cF_{k,t}^{p_1,\ldots,p_k;p}(\E_1, \ldots, \E_k;\E_{k+1})$ to be the set of all $\Lambda \in B_k^{p_1,\ldots,p_k;p}$ such that for all $u \geq t$ and $(a_1,\ldots,a_k) \in L^{p_1}(\cA_{1,u},\E_1) \times \cdots \times L^{p_k}(\cA_{k,u},\E_k)$,
    \[
    \E_{k+1}\big[\Lambda[a_1,\ldots,a_{i-1},b,a_{i+1},\ldots,a_k] \mid \cA_{k+1,u}\big] = \Lambda\big[a_1,\ldots,a_{i-1},\E_i[b \mid \cA_{i,u}] ,a_{i+1},\ldots,a_k\big]
    \]
    for all $i =1,\ldots,k$ and $b \in L^{p_i}(\cA_i,\E_i)$.
    Also, write $\cF_t^{p_1;p} \coloneqq \cF_{1,t}^{p_1;p}$.
    Finally, we say that a $k$-linear process $\Lambda \colon \R_+ \to B_k^{p_1,\ldots,p_k;p}$ is \textbf{adapted} if $\Lambda(s) \in \cF_{k,s}^{p_1,\ldots,p_k;p}$ for all $s \geq 0$.\label{item.klinadap1}
    \item Define $\cF_{k,t} = \cF_{k,t}(\E_1,\ldots,\E_k;\E_{k+1})$ to be the set of all $\Lambda \in \mathbb{B}_k$ such that $\Lambda(\cA_{1,u} \times \cdots \times \cA_{k,u}) \subseteq \cA_{k+1,u}$ for all $u \geq t$ and $\Lambda \in \cF_{k,t}^{p_1,\ldots,p_k;p}$ for some (equivalently, for all) $p_1,\ldots,p_k,p \in [1,\infty\ra$ satisfying $1/p_1 + \cdots + 1/p_k \leq 1/p$.
    Also, write $\cF_t \coloneqq \cF_{1,t}$.
    Finally, we say that a $k$-linear process $\Lambda \colon \R_+ \to \mathbb{B}_k$ is \textbf{adapted} if $\Lambda(s) \in \cF_{k,s}$ for all $s \geq 0$.\label{item.klinadap2}
    \item Fix $p \in [1,\infty]$.
    A process $X \colon \R_+ \to L^p(\cA_1,\E_1)$ is \textbf{adapted} if $X(s) \in L^p(\cA_{1,s},\E_1)$ for all $s \geq 0$.
    We write $C_a(\R_+;L^p(\E_1))$ for the complex Fr\'{e}chet space of $L^p$-continuous, adapted processes $\R_+ \to L^p(\E_1)$ equipped with the topology of uniform $L^p$-convergence on compact sets.\label{item.Lpadap}
\end{enumerate}
\end{defi}

\begin{rem}
In the conditionable case, $\cF_{k,t}$ may be defined simply as $\mathbb{B}_k \cap \cF_{k,t}^{\infty,\ldots,\infty;\infty}$.
Also, the spaces $\cF_{k,t}^{p_1,\ldots,p_k;p} = \cF_{k,t}^{p_1,\ldots,p_k;p}(\E_1, \ldots, \E_k;\E_{k+1})$ and $\cF_{k,t} = \cF_{k,t}(\E_1,\ldots,\E_k;\E_{k+1})$ clearly depend on the underlying filtrations of $\cA_1,\ldots,\cA_{k+1}$, as will other objects we introduce later.
Since we shall not have occasion to consider multiple filtrations on the same $\mathrm{C}^*$-probability space, we have chosen not to introduce cumbersome notation that obviates the dependence on the filtrations.
\end{rem}

\begin{obs}\label{obs.Ftpq}
Let $p,q,r,p_1,\ldots,p_k \in [1,\infty\ra$ and $s,t \geq 0$.
\begin{enumerate}[label=(\roman*),font=\normalfont]
    \item If $s \leq t$, then $\cF_{k,s}^{p_1,\ldots,p_k;p} \subseteq \cF_{k,t}^{p_1,\ldots,p_k;p}$ and $\cF_{k,s} \subseteq \cF_{k,t}$.\label{item.grow}
    \item If $\Lambda \in \cF_{k,t}^{p_1,\ldots,p_k;p}$, $u \geq t$, and $\a \in L^{p_1}(\cA_{1,u},\E_1) \times \cdots \times L^{p_k}(\cA_{k,u},\E_k)$, then $\Lambda[\a] \in L^p(\cA_{k+1,u},\E_{k+1})$.\label{item.preservefilt}
    \item $\cF_{k,t}^{p_1,\ldots,p_k;p} \subseteq B_k^{p_1,\ldots,p_k;p}$ is a $\norm{\cdot}_{p_1,\ldots,p_k;p}$-closed, complex-linear subspace, and $\cF_{k,t} \subseteq \mathbb{B}_k$ is a $\vertiii{\cdot}_k$-closed, complex-linear subspace.\label{item.WOTclosed}
\end{enumerate}
\end{obs}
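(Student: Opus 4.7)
My approach is to tackle the three parts separately, each being a short deduction from the definitions combined with the $L^p$-continuity of conditional expectation (Proposition \ref{prop.condexp}\ref{item.condexpLpbd}).

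For \ref{item.grow}, the plan is to observe that the defining condition for $\Lambda \in \cF_{k,s}^{p_1,\ldots,p_k;p}$ must hold for all $u \geq s$, which includes all $u \geq t$ when $s \leq t$. So membership in $\cF_{k,s}^{p_1,\ldots,p_k;p}$ is a strictly stronger constraint than membership in $\cF_{k,t}^{p_1,\ldots,p_k;p}$. The inclusion $\cF_{k,s} \subseteq \cF_{k,t}$ follows by the same reasoning, noting also that the additional ``subalgebra-preservation'' condition $\Lambda(\cA_{1,u} \times \cdots \times \cA_{k,u}) \subseteq \cA_{k+1,u}$ is likewise required on a larger range of $u$.

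For \ref{item.preservefilt}, the key observation is that if $a_i \in L^{p_i}(\cA_{i,u},\E_i)$, then $\E_i[a_i \mid \cA_{i,u}] = a_i$. Applying the defining equation of $\cF_{k,t}^{p_1,\ldots,p_k;p}$ with $b = a_1$ in the first slot yields
\[
\E_{k+1}\big[\Lambda[\a] \mid \cA_{k+1,u}\big] = \Lambda\big[\E_1[a_1 \mid \cA_{1,u}], a_2,\ldots,a_k\big] = \Lambda[\a],
\]
which is exactly the statement that $\Lambda[\a] \in L^p(\cA_{k+1,u},\E_{k+1})$.

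For \ref{item.WOTclosed}, complex linearity of each of $\cF_{k,t}^{p_1,\ldots,p_k;p}$ and $\cF_{k,t}$ is immediate because both sides of the defining equation are $\C$-linear in $\Lambda$ (the real-linear base-field Convention \ref{conv.lin} does not interfere, since conditional expectation is $\C$-linear). For closedness of $\cF_{k,t}^{p_1,\ldots,p_k;p}$, suppose $\Lambda_n \to \Lambda$ in $\norm{\cdot}_{p_1,\ldots,p_k;p}$ with each $\Lambda_n \in \cF_{k,t}^{p_1,\ldots,p_k;p}$; then for any admissible choice of $u$, $i$, and tuple, $\Lambda_n[a_1,\ldots,b,\ldots,a_k]$ converges to $\Lambda[a_1,\ldots,b,\ldots,a_k]$ in $L^p$, and $\E_{k+1}[\cdot \mid \cA_{k+1,u}]$ is $L^p$-continuous, so passing to the limit in the defining equation for $\Lambda_n$ gives the same equation for $\Lambda$. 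For closedness of $\cF_{k,t}$ in $\vertiii{\cdot}_k$, the same argument (taking any $p_1,\ldots,p_k,p$ with $p_1^{-1}+\cdots+p_k^{-1}\leq p^{-1}$) handles the $L^p$-level adaptedness, while the additional subalgebra condition follows because $\vertiii{\Lambda_n - \Lambda}_k \to 0$ implies $\norm{\Lambda_n - \Lambda}_{\infty,\ldots,\infty;\infty} \to 0$, giving operator-norm convergence $\Lambda_n[\a] \to \Lambda[\a]$ for $\a \in \cA_{1,u} \times \cdots \times \cA_{k,u}$, and $\cA_{k+1,u}$ is operator-norm-closed. The main subtlety rather than obstacle is the $p = \infty$ case permitted in the conditionable setting (Convention \ref{conv.cond}), which requires the $\mathrm{W}^*$-version of $L^\infty$-continuity of conditional expectation from Proposition \ref{prop.condexp}\ref{item.condexpLpbd}.
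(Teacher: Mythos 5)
Your proof is correct and is exactly the routine verification the paper leaves implicit (the statement is labeled an Observation and given no proof in the text): (i) is monotonicity of the quantifier over $u$, (ii) follows by taking $b=a_1$ so that $\E_1[a_1\mid\cA_{1,u}]=a_1$ and invoking Proposition \ref{prop.condexp}\ref{item.condexpLpbd} to identify fixed points of $\E_{k+1}[\,\cdot\mid\cA_{k+1,u}]$ with elements of $L^p(\cA_{k+1,u},\E_{k+1})$, and (iii) follows from $\C$-linearity and $L^p$-contractivity of conditional expectation together with norm-closedness of $\cA_{k+1,u}$. You also correctly flag the only delicate point, namely that $p=\infty$ is permitted only in the conditionable case, where Proposition \ref{prop.condexp}\ref{item.condexpLpbd} still applies.
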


In view of this observation, $(\cF_{k,t}^{p_1,\ldots,p_k;p})_{t \geq 0}$ should be considered as a filtration of $B_k^{p_1,\ldots,p_k;p}$ induced by the filtrations $(\cA_{1,t})_{t \geq 0},\ldots,(\cA_{k+1,t})_{t \geq 0}$.
Similarly $(\cF_{k,t})_{t \geq 0}$ should be viewed as a filtration of $\mathbb{B}_k$ induced by the filtrations $(\cA_{1,t})_{t \geq 0},\ldots,(\cA_{k+1,t})_{t \geq 0}$.
Next, we introduce a special class of adapted multilinear processes called trace $k$-processes, inspired in part by the biprocesses studied by Biane--Speicher \cite{BS1998} that relate to the $k=1$ case.
As we shall see throughout, trace $k$-processes appear in essentially all examples of interest.

\begin{nota}\label{nota.tens}
For the rest of this section, write $(\cA,(\cA_t)_{t \geq 0},\E) \coloneqq (\cA_1,(\cA_{1,t})_{t \geq 0}, \E_1)$, and fix $k \in \N$.
All tensor products below are over $\C$.
\begin{enumerate}[label=(\roman*),font=\normalfont]
    \item Write $\mathbb{B}_k(\cA) \coloneqq \mathbb{B}_k(\cA^k;\cA) = \mathbb{B}_k(\cA^{(1,\ldots,1)};\cA)$ and $\mathbb{B}(\cA) \coloneqq \mathbb{B}_1(\cA)$.\label{item.BbkA}
    \item Let $m \in \N$ and $d \in \N^k$.
    If $t \geq 0$, then we define
    \begin{align*}
        \cT_{m,k,d,t}^0 & \coloneqq \big\{P(\a) : n \in \N, \; P \in (\TrP_{n,k,d}^*)^m, \text{ and } \a \in \cA_t^n\big\} \subseteq \mathbb{B}_k(\cA^d;\cA^m) \, \text{ and} \\
        \cT_{m,k,d,t} & \coloneqq \overline{\cT_{m,k,d,t}^0} \subseteq \mathbb{B}_k(\cA^d;\cA^m) \text{ (closure with respect to $\vertiii{\cdot}_k$)}.
    \end{align*}
    Also, we write $\cT_{k,t}^0 \coloneqq \cT_{1,k,(1,\ldots,1),t}^0 \subseteq \mathbb{B}_k(\cA)$, $\cT_{k,t} \coloneqq \cT_{1,k,(1,\ldots,1),t} \subseteq \mathbb{B}_k(\cA)$, $\cT_t^0 \coloneqq \cT_{1,t}^0$, and $\cT_t \coloneqq \cT_{1,t}$.
    Finally, we define the spaces $\cT_{m,k,d,t}^{\C,0}$, $\cT_{m,k,d,t}^{\C}$, $\cT_{k,t}^{\C,0}$, $\cT_{k,t}^{\C}$, $\cT_t^{\C,0}$, and $\cT_t^{\C}$ similarly using $(\TrP_{n,k,d}^{*,\C})^m$ in place of $(\TrP_{n,k,d}^*)^m$.\label{item.Tfilt}
    \item Write $\#_k \colon \cA^{\otimes(k+1)} \to \mathbb{B}_k(\cA)$ for the complex-linear map determined by
    \[
    \#_k(a_1 \otimes \cdots \otimes a_{k+1})[b_1,\ldots,b_k] = a_1b_1\cdots a_kb_ka_{k+1} \qquad (a_i,b_j \in \cA).
    \]
    Also, write
    \[
    u \sh_k b \coloneqq \#_k(u)[b] \qquad \big(u \in \cA^{\otimes(k+1)}, \;b \in \cA^k\big).
    \]
    When $k=1$, we shall omit the subscript, i.e., $\# = \#_1$.\label{item.hash}
    \item Write $\#_k^{\E} \colon \cA^{\otimes(k+1)} \to \mathbb{B}_k(\cA)$ for the complex-linear map determined by
    \[
    \#_k^{\E}(a_1 \otimes \cdots \otimes a_{k+1})[b_1,\ldots,b_k] = \E[a_1b_1\cdots a_kb_k]a_{k+1} \qquad (a_i,b_j \in \cA).
    \]
    Also, write
    \[
    u \sh_k^{\mathsmaller{\E}} b \coloneqq \#_k^{\E}(u)[b] \qquad \big(u \in \cA^{\otimes(k+1)}, \;b \in \cA^k\big).
    \]
    When $k=1$, we shall omit the subscript, i.e., $\#^{\E} = \#_1^{\E}$.\label{item.Ehash}
\end{enumerate}
\end{nota}

For example, $H \in \cT_t^0$ if and only if there exist $a_i,b_i,c_i,d_i,e_i,f_i,g_i,h_i \in \cA_t$ such that
\[
Hx = \sum_{i=1}^n (a_ixb_i + c_ix^*d_i + \E[e_ix]\,f_i + \E[g_ix^*]\,h_i) \qquad (x \in \cA).
\]
Equivalently, there exist $u_1,u_2,v_1,v_2 \in \cA_t \otimes \cA_t$ such that
\[
Hx = u_1\sh x + u_2 \sh x^* +  v_1\sh^{\mathsmaller{\E}} x + v_2\sh^{\mathsmaller{\E}}x^* \qquad (x \in \cA).
\]
Moreover, $H \in \cT_t^{0,\C}$ if and only if we can take $u_2 = v_2 = 0$ above.

\begin{defi}[Trace $k$-processes]\label{def.trkpr}
A $k$-linear process $\Lambda \colon \R_+ \to \mathbb{B}_k(\cA^d;\cA^m)$ is called a (\textbf{multivariate}) \textbf{trace $\boldsymbol{k}$-process} if $\Lambda(t) \in \cT_{m,k,d,t}$ for all $t \geq 0$.
A trace $1$-process is also called a \textbf{trace biprocess}, and a trace $2$-process is also called a \textbf{trace triprocess}.
A (multivariate) trace $k$-process $\Lambda$ is called \textbf{complex--$\boldsymbol{k}$-linear} if $\Lambda(t) \in \cT_{m,k,d,t}^{\C}$ for all $t \geq 0$.
\end{defi}

We now show that trace $k$-processes are adapted.
\pagebreak

\begin{lem}\label{lem.T0inF}
If $t \geq 0$ and $m,d \in \N$, then $\cT_{m,1,d,t}^0 \subseteq \cF_t(\E^{\oplus d};\E^{\oplus m}) \subseteq \mathbb{B}(\cA^d;\cA^m)$.
\end{lem}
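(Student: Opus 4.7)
The plan is straightforward once one unwinds the definitions. First, by Observation \ref{obs.Ftpq}\ref{item.WOTclosed}, $\cF_t$ is a complex linear subspace of the ambient space of bounded linear maps; so by complex linearity of the evaluation $P \mapsto P_{\mathsmaller{(\cA,\E)}}(\a)$, it suffices to verify $P_{\mathsmaller{(\cA,\E)}}(\a) \in \cF_t$ for $P$ drawn from a complex-linear generating set of $(\TrP_{n,1,d}^*)^m$. Moreover, adaptedness of a map into $\cA^m$ reduces via \eqref{eq.condexpdirectsum} to componentwise adaptedness of each of the $m$ scalar-output maps, so I may assume $m=1$.

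By the definition of real $1$-linearity in $\y_1 = (y_{1,1}, \ldots, y_{1,d})$ preceding Notation \ref{nota.lintrpoly}, a complex-linear generating set of $\TrP_{n,1,d}^*$ consists of ``atomic'' trace $\ast$-polynomials of the form $P = \tr(R_1(\x)) \cdots \tr(R_\ell(\x)) \cdot Q(\x, \y_1)$ in which either (i) $Q$ contains a single letter from $\{y_{1,1}, \ldots, y_{1,d}\} \cup \{y_{1,1}^*, \ldots, y_{1,d}^*\}$ and all $R_j$ depend only on $\x$, or (ii) one of the $R_j$ contains such a single letter and $Q$ depends only on $\x$. Evaluating at $\a \in \cA_t^n$, these atomic terms produce maps $\cA^d \to \cA$ of one of four forms, indexed by the position $i \in \{1,\ldots,d\}$ of the $\y_1$-letter, with $\alpha, \beta, \gamma, \delta, \xi \in \cA_t$ (each obtained by evaluating some $\ast$-monomial in $\x$ at $\a$) and $\lambda \in \C$ (a product of the scalars $\E[R_j(\a)]$): (a) $\b \mapsto \lambda \alpha b_i \beta$; (b) $\b \mapsto \lambda \alpha b_i^* \beta$; (c) $\b \mapsto \lambda \E[\gamma b_i \delta]\,\xi$; (d) $\b \mapsto \lambda \E[\gamma b_i^* \delta]\,\xi$.

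I plan to verify, for each of these four building blocks, both that it sends $\cA_u^d$ into $\cA_u$ whenever $u \geq t$ and that it satisfies the conditional-expectation identity defining adaptedness. The former is immediate from $\alpha, \beta, \gamma, \delta, \xi \in \cA_t \subseteq \cA_u$ and the fact that $\E[\gamma b_i \delta]$ (and analogously in (d)) is a scalar. For the adaptedness identity, fix $u \geq t$ and $\b \in \cA^d$. Case (a) follows from the $\cA_u$-bimodule property of $\E[\cdot \mid \cA_u]$ (Proposition \ref{prop.condexp}\ref{item.condexpexist}) together with $\alpha, \beta \in \cA_u$, giving $\E[\lambda \alpha b_i \beta \mid \cA_u] = \lambda \alpha \E[b_i \mid \cA_u] \beta$. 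Case (b) is identical, additionally using that $\E[\cdot \mid \cA_u]$ respects the $\ast$-operation. In (c) (and analogously (d)), the scalar $\E[\gamma b_i \delta]$ and $\xi \in \cA_u$ are both fixed by $\E[\cdot \mid \cA_u]$, so the left-hand side of the adaptedness identity equals $\lambda \E[\gamma b_i \delta]\,\xi$, while the right-hand side equals $\lambda \E[\gamma \E[b_i \mid \cA_u] \delta]\,\xi$; the two scalar factors agree via $\E[\gamma b_i \delta] = \E[\E[\gamma b_i \delta \mid \cA_u]] = \E[\gamma \E[b_i \mid \cA_u] \delta]$, combining the tower property (Proposition \ref{prop.condexp}\ref{item.condexpTower}) with the bimodule property and $\gamma, \delta \in \cA_u$. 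Finally, the algebraic identity just verified on $\cA^d$ extends to $L^{p_1}(\cA,\E^{\oplus d})$ by density and continuity, establishing membership in $\cF_t^{p_1;p}$ for any admissible exponents.

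The main obstacle is purely combinatorial: identifying the ``atomic'' generators of $\TrP_{n,1,d}^*$ and the four types of maps they produce on evaluation. Once that decomposition is in hand, the verification is entirely a matter of standard conditional-expectation arithmetic, with no deeper machinery required.
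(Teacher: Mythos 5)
Your proof is correct and follows essentially the same route as the paper's: reduce to generators of the space of evaluated trace $\ast$-polynomials (the paper reduces immediately to $m=d=1$ and works with $\#(a\otimes b)+\#^{\E}(c\otimes d)$ applied to $x^{\e}$, which are exactly your four building blocks), then verify the adaptedness identity via the bimodule and tower properties of conditional expectation. The only cosmetic difference is that you keep $d$ general and index the position of the linear letter, whereas the paper disposes of $d$ at the outset.
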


\begin{proof}
By the definitions of $(\TrP_{n,1,d}^*)^m$ and $\cF_t(\E^{\oplus d};\E^{\oplus m})$, it suffices to treat the $m=d=1$ case, i.e., to prove $\cT_t^0 \subseteq \cF_t = \cF_t(\E;\E)$.
To this end, let $a,b,c,d \in \cA_t$.
If $u \geq t$, $x \in L^1(\E)$, and $\e \in \{1,\ast\}$,~then
\begin{align*}
    \E\big[\big(\#(a \otimes b) + \#^{\E}(c \otimes d)\big)x^{\e} \mid \cA_u\big] & = \E[ax^{\e}b \mid \cA_u] + \E[cx^{\e}] \,\E[d \mid \cA_u] \\
    & = a\,\E[x \mid \cA_u]^{\e}\,b + \E[\E[cx^{\e} \mid \cA_u]]\,d \\
    & = (a \otimes b) \sh \E[x \mid \cA_u]^{\e} + \E\big[c\,\E[x \mid \cA_u]^{\e}\big] \,d \\
    & = \big(\#(a \otimes b) + \#^{\E}(c \otimes d)\big)\E[x \mid \cA_u]^{\e}.
\end{align*}
Also, if $x \in \cA_u$, then
\[
\big(\#(a \otimes b) + \#^{\E}(c \otimes d)\big)x^{\e} = ax^{\e}b + \E[cx^{\e}]\,d \in \cA_u.
\]
It follows that $\cT_t^0 \subseteq \cF_t$.
\end{proof}

\begin{prop}\label{prop.TkinFk}
If $0 \leq s \leq t$, $k,m \in \N$, and $d = (d_1,\ldots,d_k) \in \N^k$, then $\cT_{m,k,d,s}^0 \subseteq \cT_{m,k,d,t}^0$, and $\cT_{m,k,d,t}^0$ is a complex-linear subspace of $\cF_{k,t}(\E^{\oplus d_1},\ldots,\E^{\oplus d_k};\E^{\oplus m})$.
Consequently, $\cT_{m,k,d,s} \subseteq \cT_{m,k,d,t}$; $\cT_{m,k,d,t}$ is a closed, complex-linear subspace of $\cF_{k,t}(\E^{\oplus d_1},\ldots,\E^{\oplus d_k};\E^{\oplus m})$; and trace $k$-processes are adapted.
\end{prop}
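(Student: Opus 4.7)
The plan is to first reduce the proposition to the ``scalar'' case $m=1$, $d=(1,\ldots,1)$, then establish the three assertions (monotonicity, linearity, adaptedness) for $\cT_{k,t}^0$, and finally pass to closures. The reduction uses Eq.~\eqref{eq.condexpdirectsum}: conditional expectation on a $\mathrm{C}^*$-direct sum acts coordinatewise, so membership in $\cF_{k,t}(\E^{\oplus d_1},\ldots,\E^{\oplus d_k};\E^{\oplus m})$ amounts to a conjunction of scalar adaptedness identities. Combined with the fact that every $P\in(\TrP_{n,k,d}^*)^m$ is a finite sum of scalar trace $\ast$-polynomials, each $\C$-linear in a single tuple of indeterminates $(y_{1,i_1},\ldots,y_{k,i_k})$ and themselves living in $\TrP_{n,k,(1,\ldots,1)}^*$, this reduces everything to proving that $\cT_{k,t}^0\subseteq\cF_{k,t}$ is a complex linear subspace.

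Monotonicity $\cT_{k,s}^0\subseteq\cT_{k,t}^0$ is immediate from $\cA_s\subseteq\cA_t$. Linearity follows by relabeling indeterminates: given $P(\a)$ and $Q(\b)$ with $P\in\TrP_{n,k,(1,\ldots,1)}^*$, $Q\in\TrP_{n',k,(1,\ldots,1)}^*$, $\a\in\cA_t^n$, $\b\in\cA_t^{n'}$, one views $P$ and $Q$ as living in $\TrP_{n+n',k,(1,\ldots,1)}^*$ on disjoint $x$-indeterminates, so $\alpha P+\beta Q\in\TrP_{n+n',k,(1,\ldots,1)}^*$ evaluates at $(\a,\b)\in\cA_t^{n+n'}$ to $\alpha P(\a)+\beta Q(\b)$.

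The heart of the argument is adaptedness. By linearity it suffices to take $\Lambda=P(\a)$ for a monomial $P=\tr(P_1)\cdots\tr(P_\ell)\,P_0$ in which each $y_i$ appears precisely once in the string $P_0P_1\cdots P_\ell$ (as $y_i$ or $y_i^*$); for each $i$ let $j(i)\in\{0,1,\ldots,\ell\}$ be the unique index with $y_i^{\e_i}\in P_{j(i)}$ and $\e_i\in\{1,*\}$ recording which. Fix $u\geq t$ and freeze $a_j\in L^{p_j}(\cA_u,\E)$ for $j\neq i$. If $j(i)=0$, then the evaluation factors as
\[
\Lambda[a_1,\ldots,b,\ldots,a_k]=S\cdot c_1\,b^{\e_i}\,c_2,\qquad S\coloneqq\prod_{j\geq 1}\tr(P_j)(\a,\b_{-i})\in\C,
\]
with $c_1,c_2$ obtained by multiplication from $\a\in\cA_t\subseteq\cA_u$ and the frozen $b_j$'s; the bimodule property of $\E[\cdot\mid\cA_u]$ together with $\E[b\mid\cA_u]^{\e_i}=\E[b^{\e_i}\mid\cA_u]$ (Proposition~\ref{prop.condexp}\ref{item.condexpexist}) yields the adaptedness identity. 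If $j(i)\geq 1$, then $\tr(P_{j(i)})(\a,\b)=\E[d_1\,b^{\e_i}\,d_2]$ for some $d_1,d_2$ built from the same adapted data, and the tower property (Proposition~\ref{prop.condexp}\ref{item.condexpTower}) combined with the bimodule identity gives $\E[d_1 b^{\e_i} d_2]=\E[d_1\,\E[b\mid\cA_u]^{\e_i}\,d_2]$; the other $\tr(P_j)$ factors and $P_0$ are independent of $b$, so the identity extends to the full monomial. The main obstacle is the combinatorial bookkeeping of tracking where each $y_i$ lands and distinguishing $\cA_u$-adapted wrappers from $L^p$-generic inputs; the analytic content of each case is elementary, resting on Proposition~\ref{prop.condexp} and the noncommutative H\"older inequality.

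The ``consequently'' part is then formal: taking $\vertiii{\cdot}_k$-closures preserves the monotone inclusion to give $\cT_{m,k,d,s}\subseteq\cT_{m,k,d,t}$, and since $\cT_{m,k,d,t}^0$ is a complex linear subspace of the $\vertiii{\cdot}_k$-closed set $\cF_{k,t}(\E^{\oplus d_1},\ldots,\E^{\oplus d_k};\E^{\oplus m})$ by Observation~\ref{obs.Ftpq}\ref{item.WOTclosed}, its closure $\cT_{m,k,d,t}$ is a closed complex linear subspace of $\cF_{k,t}$. Pointwise containment then yields adaptedness of every trace $k$-process.
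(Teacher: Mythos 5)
Your proof is correct and takes essentially the same route as the paper's: the paper likewise reduces to the scalar case and to one argument at a time (its key observation is that freezing all but the $i$-th argument of $\Lambda\in\cT^0_{m,k,d,t}$ at adapted tuples yields an element of $\cT^0_{m,1,d_i,t}$, and its Lemma \ref{lem.T0inF} then performs exactly your two-case computation --- $y_i$ landing outside a trace versus inside one --- via the bimodule and tower properties). One small omission: membership in $\cF_{k,t}$ per Definition \ref{def.adapt}\ref{item.klinadap2} requires, besides the conditional-expectation identities you verify, the separate condition $\Lambda(\cA_{1,u}\times\cdots\times\cA_{k,u})\subseteq\cA_{k+1,u}$ for all $u\geq t$, which the paper checks explicitly; for trace $\ast$-polynomial evaluations it is immediate (products, adjoints, and scalar multiples by $\E[\cdot]$ of elements of the unital subalgebra $\cA_u$ stay in $\cA_u$), but it should be stated.
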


\begin{proof}
The only nontrivial assertion of the proposition is that
\[
\cT_{m,k,d,t}^0 \subseteq \cF_{k,t}(\E^{\oplus d_1},\ldots,\E^{\oplus d_k};\E^{\oplus m}).
\]
To prove this, we make the key observation that if $\Lambda \in \cT_{m,k,d,t}^0$, $(\b_1,\ldots,\b_k) \in \cA_t^d$, and $i =1,\ldots,k$, then
\[
\cA^{d_i} \ni \b \mapsto \Lambda[\b_1,\ldots,\b_{i-1},\b,\b_{i+1},\ldots,\b_k] \in \cA^m
\]
belongs to $\cT_{m,1,d_i,t}^0$.
Thus, by definition of our induced filtrations, the desired containment follows from Lemma \ref{lem.T0inF}.
This completes the proof.
\end{proof}

\begin{ex}\label{ex.trpoly}
If $A_1,\ldots,A_8 \colon \R_+ \to \cA$ are adapted, then the process
\[
\R_+ \ni t \mapsto H(t) \coloneqq (x \mapsto A_1(t)xA_2(t) + A_3(t)x^*A_4(t) + \E[A_5(t)x]\,A_6(t) + \E[A_7(t)x^*]\,A_8(t)) \in \mathbb{B}(\cA)
\]
is a trace biprocess.
Indeed, $H(t) \in \cT_t^0$ for all $t \geq 0$.
By Proposition \ref{prop.TkinFk}, $H$ is adapted.

More generally, if $n,m \in \N$, $d = (d_1,\ldots,d_k) \in \N^k$, $P \in (\TrP_{n,k,d}^*)^m$, and $X \colon \R_+ \to \cA^n$ is adapted, then the process $\R_+ \ni t \mapsto \Lambda(t) \coloneqq P(X(t)) \in \mathbb{B}_k(\cA^d;\cA^m)$ is a multivariate trace $k$-process and thus is adapted.
This example, which we shall continue to study throughout the development, is one of the primary motivators for our definitions of the induced filtrations and adaptedness.
The other primary motivators are the proofs of Lemma \ref{lem.elemint}\ref{item.martEint} and Theorem \ref{thm.Itocont} below.
\end{ex}

\subsection{Decomposable processes}\label{sec.NCsemi}

We now define and give examples of FV processes, martingales, and our noncommutative analog of a semimartingale, a decomposable process.

\begin{defi}\label{def.processes}
Let $p \in [1,\infty]$ and $X \colon \R_+ \to L^p(\E)$ be a process.
\begin{enumerate}[label=(\roman*),font=\normalfont]
    \item $X$ is \textbf{$\boldsymbol{L^p}$-finite variation} (\textbf{$\boldsymbol{L^p}$-FV}) if it is adapted and has locally bounded variation with respect to $\norm{\cdot}_p$.
    Write $\FV^p = \FV_{\mathsmaller{\cA}}^p \subseteq C_a(\R_+;L^p(\E))$ for the complex Fr\'echet space of continuous $L^p$-FV processes with the topology induced by the collection $\{X \mapsto \norm{X(0)}_p + V_{L^p}(X : [0,t]) : t \geq 0\}$ of seminorms.\label{item.FV}
    \item $X$ is an \textbf{$\boldsymbol{L^p}$-martingale} if it is adapted and satisfies the martingale property, i.e.,
    \[
    \E[X(t) \mid \cA_s] = X(s) \qquad (0 \leq s \leq t).
    \]
    We shall omit the prefix ``$L^p$-'' when $p=\infty$.
    Write $\M^p = \M_{\mathsmaller{\cA}}^p$ for the space of continuous $L^p$-martingales with the topology of locally uniform convergence.
    (Note that $\M^p$ is a closed, complex-linear subspace of $C_a(\R_+;L^p(\E))$.)
    Also, write $\tbM^p = \tbM_{\mathsmaller{\cA}}^p$ for the closure of $\M^{\infty}$ in $\M^p$.\label{item.mart}
    \item Write\hspace{-0.17mm} $r\hspace{-0.17mm} \coloneqq\hspace{-0.17mm} p\hspace{-0.17mm} \wedge\hspace{-0.17mm} q\hspace{-0.17mm} =\hspace{-0.17mm} \min\{p,\hspace{-0.17mm}q\}$.\hspace{-0.17mm}
    An\hspace{-0.17mm} $L^r$-process\hspace{-0.17mm} $Y\hspace{-0.17mm} \colon\hspace{-0.17mm} \R_+\hspace{-0.17mm} \to\hspace{-0.17mm} L^r(\E)$\hspace{-0.17mm} is\hspace{-0.17mm} a\hspace{-0.17mm} (\textbf{continuous})\hspace{-0.17mm} \textbf{$\boldsymbol{(L^p,\hspace{-0.17mm}L^q)}$-decomposable process} if $Y = Y(0) + M + A$ for some $M \in \M^p$ and $A \in \FV^q$ such that $M(0) = A(0) = 0$.
    (Note that $Y \in C_a(\R_+;L^r(\E))$ in this case.)
    If $p=q$, then we shorten $(L^p,L^q)$ to $L^p$.\label{item.decomp}
\end{enumerate}
\end{defi}
\pagebreak

\begin{rem}[Norm of a martingale increases]\label{rem.norminc}
If $p \in [1,\infty]$, $M$ is an $L^p$-martingale, and $0 \leq s \leq t$, then $\norm{M(s)}_p = \norm{\E[M(t) \mid \cA_s]}_p \leq \norm{M(t)}_p$ by the martingale property, Proposition \ref{prop.condexp}\ref{item.condexpLpbd}, and the comments following Example \ref{ex.randommatricescond}.
In other words,
\begin{equation}
    \sup_{0 \leq s \leq t}\norm{M(s)}_p = \norm{M(t)}_p \qquad (t \geq 0).\label{eq.norminc}
\end{equation}
In particular, a sequence $(M_n)_{n \in \N}$ of $L^p$-martingales converges in $L^p(\E)$ uniformly on compact sets if and only if it converges pointwise in $L^p(\E)$.
Also, a process $M \colon \R_+ \to L^p(\E)$ belongs to $\tbM^p$ if and only if there exists a sequence $(M_n)_{n \in \N}$ of $L^{\infty}$-continuous martingales $\R_+ \to \cA$ such that for all $t \geq 0$, $M_n(t) \to M(t)$ in $L^p(\E)$ as $n \to \infty$.
We caution that $\tbM^p \subsetneq \M^p$ in general;
for example, we argue in Remark \ref{rem.Poisson} that the compensated free Poisson process belongs to $\M^2 \setminus \tbM^2$.
\end{rem}

\begin{ex}[Scalar FV processes]\label{ex.scalarFV}
If $g \colon \R_+ \to \C$ has locally bounded variation, then $A(t) \coloneqq g(t)1$ defines an $L^{\infty}$-FV process.
If, in addition, $g$ is continuous, then $A \in \FV^{\infty}$.
Also, if $X \in C_a(\R_+;L^p(\E))$, then the (Riemann--)Stieltjes integral process $\into X(t)\,\d g(t)$ is $L^p$-FV and continuous if $g$ is continuous.
\end{ex}

\begin{ex}[Classical matrix processes]\label{ex.clmart}
Let $(\cA_n,\tau_n)$ be as in Example \ref{ex.randommatrices}, and suppose $(\sF_t)_{t \geq 0}$ is a classical filtration of $\sF$.
If $(\cA_{n,t})_{t \geq 0} \coloneqq (L^{\infty}(\Om,\sF_t,P;\MnC))_{t \geq 0}$, then $(\cA_n,(\cA_{n,t})_{t \geq 0},\tau_n)$ is a filtered $\mathrm{W}^*$-probability space.
If $X \colon \R_+ \times \Om \to \MnC$ is a classical adapted $L^p$-process, then the induced noncommutative process
\[
\R_+ \ni t \mapsto \tilde{X}(t) \coloneqq X(t,\cdot) \in L^p(\Om,\sF,P;\MnC) = L^p(\cA_n,\tau_n)
\]
is adapted.
By Example \ref{ex.randommatricescond}, if $X$ is a classical $L^p$-martingale, then $\tilde{X}$ is a noncommutative $L^p$-martingale.
If, in addition, $p < \infty$ and $X$ is continuous---i.e.,  $P$-almost surely, the paths of $X$ are continuous---then $\tilde{X} \in \M_{\tau_n}^p$ by the dominated convergence theorem and Doob's maximal inequality (\cite[Thm.\ II.1.7]{RY1999}).
Often, this observation can be upgraded, e.g., with Brownian motion.
\end{ex}

\begin{thm}\label{thm.matBMtilde}
Retain the setup of Example \ref{ex.clmart}, and suppose that $(\sF_t)_{t \geq 0}$ satisfies the usual conditions (page \pageref{p.usual}).
If $X \colon \R_+ \times \Om \to \MnC_{\sa}$ is an $n \times n$ Hermitian Brownian motion (with respect to $\ip{\cdot,\cdot}_{L^2(\tr_n)}$) and $1 \leq p < \infty$, then the noncommutative martingale $\R_+ \ni t \mapsto X(t,\cdot) \in L^p(\cA_n,\tau_n)$ belongs to $\tbM_{\tau_n}^p$.
\end{thm}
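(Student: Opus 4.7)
The plan is to approximate $X$ by operator-norm-stopped Brownian motions $M_N = X^{\tau_N}$, then verify that these yield a locally uniform $L^p$-approximation of $X$ by $L^\infty$-continuous martingales.

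First, for each $N \in \N$, I would set $\tau_N \coloneqq \inf\{t \geq 0 : \|X(t,\cdot)\|_{\mathrm{op}} \geq N\}$ and $M_N(t) \coloneqq X(t \wedge \tau_N)$. Since $X$ has continuous paths and $(\sF_t)_{t \geq 0}$ satisfies the usual conditions, $\tau_N$ is a stopping time and $\|M_N(t)\|_{\mathrm{op}} \leq N$ almost surely. Classical optional stopping gives that $M_N$ is a classical continuous $\MnC$-valued martingale, which transfers to the noncommutative setting by Example \ref{ex.clmart}. Combined with the uniform bound $\|M_N(\cdot)\|_{\mathrm{op}} \leq N$ and a.s.\ path continuity, dominated convergence gives $L^p$-norm continuity of $t \mapsto M_N(t)$ for every $p$; so $M_N \in \M_{\cA_n}^\infty$.

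Next, to show $M_N \to X$ locally uniformly in $L^p$, I note that $X - M_N$ is itself an $L^p$-martingale, so by Eq. \eqref{eq.norminc}
\[
\sup_{s \in [0, T]} \|X(s) - M_N(s)\|_p = \|X(T) - M_N(T)\|_p
\]
for each $T > 0$, reducing matters to pointwise $L^p$-convergence at any fixed $T$. On the event $\{\tau_N < T\}$ we have $X(T) - M_N(T) = X(T) - X(\tau_N)$, and the difference vanishes otherwise. Using the trivial estimate $\tr_n(|A|^p) \leq \|A\|_{\mathrm{op}}^p$ for $A \in \MnC$,
\[
\|X(T) - M_N(T)\|_p^p \leq 2^{p-1}\bigl(\E_P\bigl[\|X(T)\|_{\mathrm{op}}^p \, \mathbf{1}_{\tau_N < T}\bigr] + N^p \, P(\tau_N < T)\bigr).
\]
The first term vanishes by dominated convergence, since $\|X(T)\|_{\mathrm{op}}^p$ is $P$-integrable (entries of $X(T)$ are Gaussian) and $\mathbf{1}_{\tau_N < T} \to 0$ almost surely (matrix Brownian paths are bounded on compact intervals). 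For the second term, Doob's $L^q$-maximal inequality ($q > p$) applied to the submartingale $t \mapsto \|X(t)\|_{\mathrm{HS}}$, which dominates the operator norm, combined with the Gaussian moment bound $\E_P\bigl[\|X(T)\|_{\mathrm{HS}}^q\bigr] \leq C_{n,q} T^{q/2}$, yields $P(\tau_N < T) \leq C_{n,q,T} N^{-q}$, whence $N^p P(\tau_N < T) \to 0$.

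The main obstacle is the verification that $M_N \in \M_{\cA_n}^\infty$: strict essential-supremum continuity in $t$ fails for the stopped process because Brownian paths are not uniformly continuous in $\omega$, so the argument rests on the paper's framework treating $L^\infty$-bounded, $L^p$-continuous stopped martingales as valid elements of $\M^\infty$. Once this definitional point is accommodated, the rest of the proof is a routine combination of martingale $L^p$-norm monotonicity and standard Gaussian concentration for the operator norm of Wigner-type matrices.
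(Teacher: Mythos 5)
There is a genuine gap, and you have put your finger on it yourself in your last paragraph: your stopped process $M_N = X^{\tau_N}$ with $\tau_N = \inf\{t : \|X(t,\cdot)\|_{\mathrm{op}} \geq N\}$ is uniformly bounded but is \emph{not} $L^\infty$-continuous, because $\|M_N(t)-M_N(s)\|_{L^\infty(\cA_n)} = \esssup_\om \|M_N(t,\om)-M_N(s,\om)\|_{\mathrm{op}}$ and the modulus of continuity of Brownian paths is not uniform in $\om$. The paper's framework does \emph{not} accommodate this: $\M^\infty$ is by definition the space of martingales that are continuous as maps $\R_+ \to L^\infty(\E) = \cA$ in the operator norm, and $\tbM^p$ is the closure of $\M^\infty$ in $\M^p$, so the approximants genuinely must be $L^\infty$-continuous. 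With your stopping time the proof does not close; you would only have shown that $X$ is a locally uniform $L^p$-limit of \emph{bounded} $L^p$-continuous martingales, which is not the assertion.

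The fix — and this is what the paper does — is to stop not only when the path leaves a ball but also when its local H\"{o}lder constant gets large. By Kolmogorov's continuity theorem the paths of $X$ are locally $\alpha$-H\"{o}lder for $\alpha \in (0,\tfrac12)$; the running H\"{o}lder constant $C(t,\om) \coloneqq \sup_{0 \leq r < s \leq t}\|X(r,\om)-X(s,\om)\|_{\mathrm{op}}/|r-s|^\alpha$ is adapted and left-continuous, so $\tau_r \coloneqq \inf\{t : \|X(t)\|_{\mathrm{op}} > r\} \wedge \inf\{t : C(t) > r\}$ is a stopping time (using right-continuity of the filtration), and the stopped process satisfies $\|X^{\tau_r}(s,\om)-X^{\tau_r}(t,\om)\|_{\mathrm{op}} \leq r|s-t|^\alpha$ \emph{uniformly in $\om$}, which is exactly $L^\infty$-continuity. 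The remainder of your argument (reduction to pointwise convergence at $T$ via Eq. \eqref{eq.norminc}, then a tail estimate for $P(\tau_N < T)$) is correct and serviceable, though once the approximants are honest elements of $\M^\infty$ one can dispatch the convergence more directly with Doob's $L^p$-maximal inequality applied to $X^{\tau_r}-X$ together with dominated convergence.
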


\begin{proof}
Let $\cV$ be a finite-dimensional normed vector space and $Y \colon \R_+ \times \Om \to \cV$ be an adapted stochastic process.
Let $\alpha \in (0,1]$, and suppose that for every $\om \in \Om$, the path $\R_+ \ni t \mapsto Y(t,\om) \in \cV$ is locally $\alpha$-H\"{o}lder continuous.
Now, if
\[
C(t,\om) \coloneqq \sup_{0 \leq r < s \leq t} \frac{\norm{Y(r,\om) - Y(s,\om)}_{\cV}}{|r-s|^{\alpha}} = \sup_{0 \leq r < s < t} \frac{\norm{Y(r,\om) - Y(s,\om)}_{\cV}}{|r-s|^{\alpha}} \qquad ((t,\om) \in \R_+ \times \Om)
\]
with the interpretation $C(0,\cdot) \equiv 0$, then $C \colon \R_+ \times \Om \to \R_+$ is an adapted, left-continuous stochastic process.
Now, for $r > 0$, define
\[
\tau_r(\om) \coloneqq \inf\{t \geq 0 : \norm{Y(t,\om)}_{\cV} > r\} \wedge \inf\{t \geq 0 : C(t,\om) > r\} \in [0,\infty] \qquad (\om \in \Om).
\]
Since $C$ and $\norm{Y}_{\cV}$ are left-continuous and adapted, $(r,\infty)$ is open, and the filtration is right-continuous, $\tau_r$ is a stopping time.
Moreover, it is easy to see that $\tau_r \nearrow \infty$ pointwise as $r \to \infty$.
Most importantly, the stopped process $Y^{\tau_r}(t,\om) \coloneqq Y(t \wedge \tau_r(\om),\om)$ satisfies
\[
\norm{Y^{\tau_r}(t,\om)}_{\cV} \leq r \; \text{ and } \; \norm{Y^{\tau_r}(s,\om) - Y^{\tau_r}(t,\om)}_{\cV} \leq r|s-t|^{\alpha} \qquad (s,t \geq 0, \; \om \in \Om).
\]
In particular, the map $\R_+ \ni t \mapsto Y^{\tau_r}(t,\cdot) \in L^{\infty}(\Om,\sF,P;\cV)$ is continuous with respect to the $L^{\infty}$ norm.
If, in addition, $Y$ is a classical martingale, then so is $Y^{\tau_r}$.
Finally, if, in addition, $p \in (1,\infty)$ and $Y$ is a classical $L^p$-martingale, then
\[
\E_P\Big[\sup_{0 \leq s \leq t} \norm{Y^{\tau_r}(s,\cdot) - Y(s,\cdot)}_{\cV}^p\Big]^{\frac{1}{p}} \leq \frac{p}{p-1} \E_P\big[\norm{Y^{\tau_r}(t,\cdot) - Y(t,\cdot)}_{\cV}^p\big]^{\frac{1}{p}} \xrightarrow{r \to \infty} 0
\]
by Doob's maximal inequality (\cite[Thm.\ II.1.7]{RY1999}) and the dominated convergence theorem.

Now, apply the previous paragraph with $\cV = \MnC_{\sa}$ (endowed with the operator norm) and $Y = X$, an $n \times n$ Hermitian Brownian motion.
By Kolmogorov's continuity theorem, if $\alpha \in (0,1/2)$, then (a modification of) $X$ has locally $\alpha$-H\"{o}lder continuous paths.
Also, for any $p < \infty$, $X$ is a classical $L^p$-martingale.
By the previous paragraph, if $r > 0$, then the noncommutative martingale $\R_+ \ni t \mapsto X^{\tau_r}(t,\cdot) \in \cA_n$ is $L^{\infty}$-continuous and $X^{\tau_r} \to X$ in $C_a(\R_+;L^p(\tau_n))$ as $r \to \infty$.
We conclude that $X$, viewed as an noncommutative $L^p$-martingale, belongs to $\tbM_{\tau_n}^p$, as claimed.
\end{proof}

\begin{ex}[Free/stationary increments]\label{ex.freeinc}
An adapted process $X \colon \R_+ \to \cA$ has (\textbf{$\boldsymbol{\ast}$-})\textbf{free increments} (\textbf{with respect to $\boldsymbol{(\cA_t)_{t \geq 0}}$}) if $X(t) - X(s)$ is ($\ast$-)free from $\cA_s$ whenever $0 \leq s < t$.
It has \textbf{stationary increments} if the $\ast$-distribution of $X(t) - X(s)$ depends only on $t-s$ whenever $0 \leq s < t$.

First, suppose that an adapted process $M \colon \R_+ \to \cA$ has constant expectation, i.e., $\E[M(t)] = \E[M(0)]$ for all $t \geq 0$, and free increments.
If $0 \leq s \leq t$, then
\begin{align*}
    \E[M(t) \mid \cA_s] & = \E[M(t)-M(s) \mid \cA_s] + \E[M(s) \mid \cA_s] \\
    & = \E[M(t)-M(s)] + M(s) = M(s).
\end{align*}
Thus, $M$ is a martingale.
In particular, if $M$ is also $L^p$-continuous, then $M \in \M^p$.

Next, suppose that an adapted process $X \colon \R_+ \to \cA$ has $\ast$-free and stationary increments.
By the previous paragraph, $M \coloneqq X - \E[X]$ is a martingale.
By \cite[Lem.\ 1(1)]{Anshelevich2002} applied to the real and imaginary parts of $X$, $X$ is $L^p$-continuous whenever $p < \infty$.
We claim that $A \coloneqq \E[X]$ is $L^{\infty}$-FV.
Indeed, let $t > 0$ and $n \in \N$.
Let $\pi_n$ be the partition of $[0,t]$ with $n+1$ evenly spaces points so that $\Delta s = t/n =\vcentcolon \e_n$ whenever $0 < s \in \pi_n$.
If $0 < s \in \pi_n$, then
\begin{align*}
    \E[X(t) - X(0)] & = \sum_{r \in \pi_n} \E[\Delta_r X] = \sum_{0 < r \in \pi_n}\E[X(\e_n) - X(0)] \\
    & = n \, \E[X(\e_n) - X(0)] = n \, \E[\Delta_sX] = n\, \Delta_s\E[X]
\end{align*}
by stationarity (twice).
Thus,
\[
\sum_{s \in \pi_n} |\Delta_s\E[X]| = |\E[X(t) - X(0)]|.
\]
Since $n \in \N$ was arbitrary and $\E[X]$ is continuous, we get that
\[
V_{L^{\infty}}(\E[X] : [0,t]) = |\E[X(t) - X(0)]| < \infty,
\]
as desired.
We conclude that if $p < \infty$, then
\[
X = M+A = X(0) + (M-M(0)) + (A-A(0))
\]
is $(L^p,L^{\infty})$-decomposable.

Important examples of processes with free and stationary increments are the free Brownian motions.
An adapted process $X \colon \R_+ \to \cA$ is a (\textbf{semi})\textbf{circular} (\textbf{free}) \textbf{Brownian motion} if $X(0) = 0$, $X$ has $\ast$-free increments, and $X(t) - X(s) \in \cA_t$ is a (semi)circular element of variance $t-s$ whenever $0 \leq s < t$.
By the first paragraph, (semi)circular Brownian motions are martingales.
Furthermore, (semi)circular Brownian motions are $L^{\infty}$-continuous.
Indeed, if $X \colon \R_+ \to \cA_{\sa}$ is a semicircular Brownian motion, then
\begin{equation}
    \norm{X(t)-X(s)} = 2\sqrt{|t-s|} \qquad (s,t \geq 0); \label{eq.fBMnorm}
\end{equation}
and if $Z \colon \R_+ \to \cA$ is a circular Brownian motion, then $Z = 2^{-1/2}(X + i Y)$, where $X,Y \colon \R_+ \to \cA_{\sa}$ are two (freely independent) semicircular Brownian motions, so $Z$ is $L^{\infty}$-continuous as well.
\end{ex}

More generally, there are $q$-Brownian motions and other $q$-Gaussian martingales.

\begin{ex}[$q$-Gaussian processes]\label{ex.qGauss}
The $q$-Brownian motions ($-1 \leq q \leq 1$) are a family of noncommutative stochastic processes interpolating between the fermionic ($q=-1$), semicircular ($q=0$), and classical ($q=1$) Brownian motions;
see \cite{BKS1997,DonatiMartinS2003} for their definition and basic properties.
If $-1 \leq q < 1$, then the $q$-Brownian motion is an $L^{\infty}$-continuous martingale and, thus, an $L^{\infty}$-decomposable process.
Moreover, if $H_n^{(q)}$ is the $n^{\text{th}}$ $q$-Hermite polynomial (\cite[Def.\ 1.9]{BKS1997}) and $X \colon \R_+ \to \cA$ is the $q$-Brownian motion ($q \neq 1$), then the process $M_n \colon \R_+ \to \cA$ defined by $M_n(0) \coloneqq 0$ and $M_n(t) = t^{n/2}H_n^{(q)}\big(t^{-1/2}X(t)\big)$ for all $t > 0$ is an $L^{\infty}$-continuous martingale and, thus, an $L^{\infty}$-decomposable process.
\pagebreak

The full force of the framework of $q$-Gaussian processes introduced by Bo\.{z}ejko, K\"{u}mmerer, and Speicher in \cite{BKS1997} enables the generalization of the previous paragraph's statements.
Fix $q \in [-1,1)$ and a $q$-Gaussian process $X \colon \R_+ \to \cA$ with covariance function $c \colon \R_+ \times \R_+ \to \R$;
see \cite[Def.\ 3.3]{BKS1997}.
(For the $q$-Brownian motion, $c(s,t) = \min\{s,t\}$.)
By \cite[Prop.\ 3.13]{BKS1997}, $X$ is a martingale if and only if $c(s,t) = c(s,s)$ whenever $0 \leq s \leq t$.
If $c$ is continuous, then $X$ is $L^{\infty}$-continuous.
By \cite[Cor.\ 4.7]{BKS1997}, if the covariance factors as $c(s,t) = g(s)\,f(t)$ for $0 \leq s \leq t$, where $f(s), g(s) > 0$ for all $s > 0$, then the process $M_n$ defined by
\[
M_n(0) \coloneqq 0 \; \text{ and } \; M_n(t) \coloneqq \Bigg(\frac{g(t)}{f(t)}\Bigg)^{\frac{n}{2}}H_n^{(q)}\Bigg(\frac{X(t)}{c(t,t)^{\frac{1}{2}}}\Bigg) \qquad (t > 0)
\]
is a martingale.
\end{ex}

There are also elementary ways to construct new martingales from old.

\begin{nota}\label{nota.stop}
If $a,b \in \R \cup \{\pm \infty\}$, then $a \wedge b \coloneqq \min\{a,b\}$ and $a \vee b \coloneqq \max\{a,b\}$.
If $S$ is a set, $X \colon \R_+ \to S$ is a function, and $t \geq 0$, then $X^t \coloneqq X(\cdot \wedge t) \colon \R_+ \to S$.
\end{nota}

\begin{ex}\label{ex.elem}
If $M$ is an $L^p$-martingale and $t \geq 0$, then the stopped process $M^t$ is also an $L^p$-martingale.
Also, if $p < \infty$, $a \in L^p(\E)$,  and $M_a(t) \coloneqq \E[a\mid \cA_t]$ for all $t \geq 0$, then $M_a$ is an $L^p$-martingale, called Doob's martingale, by the tower property.
(We may include $p=\infty$ in the conditionable case.)
Finally, $n$-tuples of martingales are martingales.
More precisely, let $(\cA_1,(\cA_{1,t})_{t \geq 0},\E_1),\ldots,(\cA_n,(\cA_{n,t})_{t \geq 0},\E_n)$ be $\mathrm{C}^*$-probability spaces, and suppose $(\cA,(\cA_t)_{t \geq 0},\E)$ is the direct sum of these spaces.
If $X_i \colon \R_+ \to L^p(\E_i)$ is adapted for each $i =1,\ldots,n$, then the process $X \coloneqq (X_1,\ldots,X_n) \colon \R_+ \to L^p(\E)$ is adapted as well.
If $X_i$ is an $L^p$-martingale for each $i=1,\ldots,n$, then $X$ is an $L^p$-martingale.
\end{ex}

We end this section by studying decomposable processes more seriously.
We begin by showing that, similar to the classical case, the decomposition $X = X(0) + M + A$ of a decomposable process is unique under mild assumptions.
This is done, again as in the classical case, by establishing a version of the statement, ``A continuous martingale of locally bounded variation is constant.''

\begin{lem}\label{lem.M(t)-M(s)}
If $M \colon \R_+ \to L^2(\E)$ is an $L^2$-martingale, then
\[
\norm{M(t)-M(s)}_2^2 = \norm{M(t)}_2^2 - \norm{M(s)}_2^2 \qquad (0 \leq s \leq t).
\]
\end{lem}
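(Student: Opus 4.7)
The plan is to use the standard Pythagorean identity from Hilbert space geometry: the increment $M(t) - M(s)$ is orthogonal to $L^2(\cA_s,\E)$ in the inner product $\ip{x,y} = \E[x^*y]$. Since $M(s) \in L^2(\cA_s,\E)$, this will give the result immediately upon expansion.

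First, I would expand the squared norm using the noncommutative $L^2$ inner product:
\[
\norm{M(t)-M(s)}_2^2 = \E\bigl[(M(t)-M(s))^*(M(t)-M(s))\bigr] = \norm{M(t)}_2^2 + \norm{M(s)}_2^2 - \E[M(t)^*M(s)] - \E[M(s)^*M(t)].
\]
The content of the proof then reduces to showing $\E[M(t)^*M(s)] = \E[M(s)^*M(t)] = \norm{M(s)}_2^2$.

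To handle, say, $\E[M(t)^*M(s)]$, I would use the tower property (Proposition \ref{prop.condexp}\ref{item.condexpTower}) and the bimodule property (Proposition \ref{prop.condexp}\ref{item.condexpexist}) to write
\[
\E[M(t)^*M(s)] = \E\bigl[\E[M(t)^*M(s) \mid \cA_s]\bigr] = \E\bigl[\E[M(t)^* \mid \cA_s]\,M(s)\bigr],
\]
using that $M(s) \in L^2(\cA_s,\E)$ can be pulled out of the conditional expectation. Since conditional expectation commutes with the $\ast$-operation, $\E[M(t)^* \mid \cA_s] = \E[M(t) \mid \cA_s]^* = M(s)^*$ by the martingale property. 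Hence $\E[M(t)^*M(s)] = \E[M(s)^*M(s)] = \norm{M(s)}_2^2$, and the symmetric computation handles $\E[M(s)^*M(t)]$. Substituting back into the expanded expression yields $\norm{M(t)-M(s)}_2^2 = \norm{M(t)}_2^2 - \norm{M(s)}_2^2$.

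There is no real obstacle here; the argument is a direct translation of the classical orthogonal-increments identity, and the only thing to verify is that the bimodule property applies to elements of $L^2$ rather than just $\cA$. This is part of the standing framework (Proposition \ref{prop.condexp}), so the proof should be essentially two lines once the expansion is written down.
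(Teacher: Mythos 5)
Your proposal is correct and takes essentially the same approach as the paper: both expand the square via the $L^2$ inner product and eliminate the cross terms using the martingale property. The only cosmetic difference is that the paper justifies $\ip{M(t),M(s)}_2 = \ip{M(s),M(s)}_2$ by invoking the fact that $\E[\,\cdot\mid\cA_s]$ is the orthogonal projection onto $L^2(\cA_s,\E)$, which sidesteps the small point you flag yourself — namely that the bimodule property in Proposition \ref{prop.condexp} is stated only for multipliers in $\cB$, so your route technically needs a density/continuity argument to pull the $L^2$ element $M(s)$ out of the conditional expectation.
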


\begin{proof}
Suppose $0 \leq s \leq t$.
By basic properties of conditional expectation, the adaptedness of $M$, and the martingale property,
\begin{align*}
    \norm{M(t)-M(s)}_2^2 & = \norm{M(t)}_2^2 + \norm{M(s)}_2^2 - \ip{M(t),M(s)}_2 - \ip{M(s),M(t)}_2 \\
    & = \norm{M(t)}_2^2 + \norm{M(s)}_2^2 - \ip{\E[M(t) \mid \cA_s], M(s)}_2 - \ip{M(s), \E[M(t) \mid \cA_s] }_2 \\
    & = \norm{M(t)}_2^2 - \norm{M(s)}_2^2,
\end{align*}
as desired.
\end{proof}

\begin{prop}[Continuous FV martingales are constant]\label{prop.FVmart}
If $1/p+1/q \leq 1$ and $M \colon \R_+ \to L^{p \vee q}(\E)$ is an $L^p$-continuous, $L^q$-FV martingale, then $M \equiv M(0)$.
\end{prop}

\begin{proof}
Fix $t \geq 0$ and a partition $\pi$ of $[0,t]$.
Since $1/p+1/q \leq 1$, we have that $p \vee q \geq 2$.
In particular, $M$ is an $L^2$-martingale.
By Lemma \ref{lem.M(t)-M(s)} (twice) and noncommutative H\"{o}lder's inequality,
\begin{align*}
    \norm{M(t) - M(0)}_2^2 & = \norm{M(t)}_2^2 - \norm{M(0)}_2^2 = \sum_{s \in \pi} \big(\norm{M(s)}_2^2 - \norm{M(s_-)}_2^2\big) \\
    & = \sum_{s \in \pi} \norm{M(s) - M(s_-)}_2^2 = \sum_{s \in \pi} \norm{\Delta_sM^* \, \Delta_sM}_1 \\
    & \leq \max_{r \in \pi}\norm{\Delta_rM^*}_p \sum_{s \in \pi}\norm{\Delta_sM}_q \leq \max_{r \in \pi}\norm{\Delta_rM}_p \,V(M : [0,t]) \xrightarrow[\pi \in \cP_{[0,t]}]{|\pi| \to 0} 0.
\end{align*}
Thus, $M(t) = M(0)$.
\end{proof}

\begin{cor}[Uniqueness of decompositions]\label{cor.uniquedecomp}
If $1/p+1/q \leq 1$ and $X \colon \R_+ \to L^{p \wedge q}(\E)$ is an $(L^p,L^q)$-decomposable process, then the decomposition $X = X(0) + M + A$ as in Definition \ref{def.processes}\ref{item.decomp} is unique.
In this case, we call $X^{\mathrm{m}} \coloneqq M$ the \textbf{martingale part} of $X$ and $X^{\mathrm{fv}} \coloneqq A$ the \textbf{FV part} of $X$.
When confusion is possible, we shall use the terms ``$(L^p,L^q)$-martingale part'' and ``$(L^p,L^q)$-FV part.''
\end{cor}

\begin{proof}
If $M,N \in \M^p$, $A,B \in \FV^q$, $A(0) = B(0) = M(0) = N(0) = 0$, and $M + A = X - X(0) = N + B$, then $Y \coloneqq M-N = B - A$ is an $L^p$-continuous, $L^q$-FV martingale.
By Proposition \ref{prop.FVmart}, $Y \equiv Y(0) = 0$.
In particular, $M = N$ and $A = B$, as desired.
\end{proof}

\begin{rem}[Adjoints]\label{rem.adj}
Note that if $M \in \M^p$, then $M^* \in \M^p$;
and if $A \in \FV^q$, then $A^* \in \FV^q$.
In particular, if $X = X(0) + M + A$ is $(L^p,L^q)$-decomposable, then so is $X^* = X(0)^* + M^* + A^*$.
Also, by Corollary \ref{cor.uniquedecomp}, if $1/p + 1/q \leq 1$, then $X^* = X$ if and only if $M^* = M$ and $A^*=A$.
\end{rem}

Here is an example demonstrating that the restriction $1/p + 1/q \leq 1$ is not artificial.

\begin{ex}[Poisson process]\label{ex.Poisson}
Let $(\Om,\sF,(\sF_t)_{t \geq 0},P)$ be a filtered probability space, and suppose $(\cA,(\cA_t)_{t \geq 0},\E) = (L^{\infty}(\Om,\sF,P),(L^{\infty}(\Om,\sF_t,P))_{t \geq 0},\E_P)$.
Fix $\lambda > 0$, and let $X \colon \R_+ \times \Om \to \R$ be a Poisson process with rate $\lambda$, i.e., $X$ is adapted, $X(0,\cdot) = 0$ almost surely, and $X(t,\cdot) - X(s,\cdot)$ is a Poisson random variable with mean $\lambda(t-s)$ that is (classically) $P$-independent of $\sF_s$ whenever $0 \leq s < t$.
We view $X$ as a noncommutative adapted process $\tilde{X} \colon \R_+ \to L^1(\E)$ as in Example \ref{ex.clmart}.
Observe that
\[
\big\|\tilde{X}(t) - \tilde{X}(s)\big\|_1 = \lambda|t-s| \qquad (s,t \geq 0),
\]
so $\tilde{X} \in \FV^1$.
Thus, $\tilde{X}$ is an $(L^{\infty},L^1)$-decomposable process with $(L^{\infty},L^1)$-martingale part $0$ and $(L^{\infty},L^1)$-FV part $\tilde{X}$.
However,
\[
\tilde{X}(t) = \tilde{X}(t) - \E\big[\tilde{X}(t)\big] + \E\big[\tilde{X}(t)\big] = \tilde{X}(t) - \lambda t + \lambda t \qquad  (t > 0)
\]
as well.
Since the compensated Poisson process $M \coloneqq (X(t,\cdot) - \lambda t)_{t \geq 0}$ is a martingale, we conclude that $\tilde{X}$ is an $(L^1,L^{\infty})$-decomposable process with $(L^1,L^{\infty})$-martingale part $M$ and $(L^1,L^{\infty})$-FV part $(\lambda t)_{t \geq 0}$.
The preceding $(L^{\infty},L^1)$- and $(L^1,L^{\infty})$-decompositions of $\tilde{X}$ are both valid $(L^1,L^1)$-decompositions of $\tilde{X}$.
Thus, as an $L^1$-decomposable process, $\tilde{X}$ has two \textit{different} decompositions.
\end{ex}

\begin{rem}\label{rem.cont}
The Poisson process is a prototypical example of a right-continuous process with no continuous modification.
Nevertheless, as a noncommutative process, the Poisson process is $L^1$-continuous;
in fact, it is $L^p$-continuous for all $p < \infty$.
This demonstrates that $L^p$-continuity (with $p < \infty$) is a rather weak form of continuity.
\end{rem}

\section{Stochastic integrals}\label{sec.stochint}

\subsection{Stieltjes integrals}\label{sec.vint}

In this section, we conduct a limited discussion of vector-valued Stieltjes integrals.
At this time, the reader should review Notations \ref{nota.part} and \ref{nota.nota}.
For the remainder of this section, fix $a \in \R$ and $b \in \R \cup \{\infty\}$ such that $a < b$, and write $I \coloneqq [a,b] \cap \R$.

\begin{nota}[Augmented partitions]\label{nota.augpart}
Write $\cP_I^*$ for the set of augmented partitions of $I$, i.e.,
\[
\cP_I^* \coloneqq \{\Pi^* = (\Pi, \ast) : \Pi \in \cP_I \text{ and } \ast \colon \Pi \to I \text{ is such that } t_* \coloneqq \ast(t) \in [t_-,t] \text{ for all } t \in \Pi\}.
\]
Also, if $\Pi^* = (\Pi, \ast) \in \cP_I^*$, then
\[
F^{\Pi^*} = F^{(\Pi,\ast)} \coloneqq 1_{\{a\}} F(a) + \sum_{s \in \Pi} 1_{(s_-,s]}F(s_*) \colon I \to \cV.
\]
If $t_* = \ast(t)$ is always the left endpoint $t_-$ of $[t_-,t]$, then $F^{\Pi} \coloneqq F^{(\Pi,\ast)}$.
\end{nota}

The sets $\cP_I$ and $\cP_I^*$ are frequently directed by refinement.
In this paper, we shall instead direct them by mesh $|\cdot|$, i.e., $\Pi \leq \Pi'$ and $(\Pi,\ast) \leq (\Pi',\ast')$ whenever $|\Pi| \geq |\Pi'|$.
Here is an elementary clarification of what convergence of nets directed by partition mesh means.
We leave the proof to the reader.
\pagebreak

\begin{fact}\label{fact.partlim}
Let $\mathcal{X}$ be a topological space and $x \colon \cP_I \to \mathcal{X}$ be a net.
For $y \in \mathcal{X}$, $\lim_{\Pi \in \cP_I} x(\Pi) = y$ holds if and only if for all open neighborhoods $\cU$ of $y$, there exists a $\delta > 0$ such that $|\Pi| < \delta$ implies $x(\Pi) \in \cU$; which happens if and only if for every sequence $(\Pi_n)_{n \in \N}$ in $\cP_I$ such that $|\Pi_n| \to 0$ as $n \to \infty$, we have $\lim_{n \to \infty}x(\Pi_n) = y$.
In this case, we say that $x(\Pi) \to y$ as $|\Pi| \to 0$.
One can similarly characterize convergence of nets $x^* \colon \cP_I^* \to \mathcal{X}$.
\end{fact}

We now turn to the construction of Riemann--Stieltjes integrals.

\begin{nota}[Elementary integral]\label{nota.elemint}
Write
\begin{align*}
    \sE_I & \coloneqq \{\{a\}\} \cup \{(s,t] : s,t \in I, \, s < t\} \; \text{ and} \\
    \sA_I & \coloneqq \{\text{finite disjoint unions of members of } \sE_I\}.
\end{align*}
Now, let $\cV$ be a vector space and $F \colon I \to \cV$ be a function.
Write $\mu_F^0 \colon \sA_I \to \cV$ for the unique finitely additive function such that $\mu_F^0(\{a\}) = 0$ and $\mu_F^0((s,t]) = F(t) - F(s)$ for all $s,t \in I$ such that $s \leq t$.
Finally, suppose $\cV$ is normed, $\cW$ is another normed vector space, and $S \colon I \to B(\cV;\cW)$ is an $\sA_I$-simple function, i.e.,
\[
S = 1_{\{a\}} T_0 + \sum_{i=1}^n 1_{(s_i,t_i]} T_i
\]
for some $T_0,T_1,\ldots,T_n \in B(\cV;\cW)$ and $s_1,t_1,\ldots,s_n,t_n \in I$.
Then we write
\[
\int_I S\big[\d\mu_F^0\big] = \int_I S(t)\big[\mu_F^0(\d t)\big] \coloneqq \sum_{T \in S(I)} T\mu_F^0(\{t \in I : S(t) = T\}) = \sum_{i=1}^n T_i(F(t_i) - F(s_i))
\]
for the integral of $S$ with respect to $\mu_F^0$ (or ``against $F$'').
In addition, we write
\[
\int_s^t S[\d\mu_F^0] = \int_s^t S(r)[\mu_F^0(\d r)] \coloneqq \int_I (1_{(s,t]}S)[\d\mu_F^0] \qquad (s,t \in I, \; s \leq t).
\]
\end{nota}

Observe that if $F \colon I \to \cV$ is (left-/right-)continuous and $S \colon I \to B(\cV;\cW)$ is $\sA_I$-simple, then the function $\int_a^{\boldsymbol{\cdot}} S[\d\mu_F^0] \colon I \to \cW$ is (left-/right-)continuous.
We also note for later use that if $\cX$ is another normed vector space, $T \colon I \to B(\cW;\cX)$ is $\sA_I$-simple, and $G \coloneqq \int_a^{\boldsymbol{\cdot}} S[\d\mu_F^0]$, then
\begin{equation}
    \int_I T\big[\d\mu_G^0\big] = \int_I TS\big[\d\mu_F^0\big].\label{eq.algsubform}
\end{equation}
We leave it to the reader to verify this.

\begin{lem}[Approximation by step functions]\label{lem.HPi}
Let $\cV$ be a Hausdorff topological vector space and $H \colon I \to \cV$ be a function.
\begin{enumerate}[label=(\roman*),font=\normalfont]
    \item If $H$ has left limits, i.e., $H_-(t) \coloneqq \lim_{s \nearrow t}H(s)$ exists for all $t \in I$ (with the convention $H_-(a) \coloneqq H(a)$), then $H^{\Pi} \to H_-$ pointwise as $|\Pi| \to 0$.
    In particular, if $H$ is left-continuous, then $H^{\Pi} \to H$ pointwise as $|\Pi| \to 0$.
    Also, if $\cV$ is normed, then $\big\|H^{\Pi}(t)\big\|_{\cV} \leq \sup\{\norm{H(s)}_{\cV} : 0 \leq s < t\}$.\label{item.HPipw}
    \item If $H$ is continuous and $\cV$ is normed, then $H^{\Pi^*} \to H$ uniformly on compact subsets of $I$ as $|\Pi| \to 0$.\label{item.HPistar}
    \end{enumerate}
\end{lem}

\begin{proof}
As the reader may easily verify, it suffices to treat the $b < \infty$ case.
Write $\iota \colon I \to \R$ for the inclusion.
Observe that if $\Pi^* = (\Pi,\ast) \in \cP_I^*$ and $t \in (a,b]$, then $H^{\Pi^*} = H \circ \iota^{\Pi^*}$, $0 < t - \iota^{\Pi}(t) \leq |\Pi|$, and $\big|t - \iota^{\Pi^*}(t)\big| \leq |\Pi|$.
Using these observations, the desired results follow easily from the definitions and, in the case of \ref{item.HPistar}, the fact that continuous functions $I \to \cV$ are uniformly continuous.
\end{proof}

\begin{prop}[Riemann--Stieltjes integrals of continuous functions]\label{prop.contRSint}
Suppose that $\cV$ is a normed vector space and $\cW$ is a Banach space.
If $F \colon I \to \cV$ has locally bounded variation, $H \colon I \to B(\cV;\cW)$ is continuous, and $c \in I$, then $H|_{[a,c]}$ is Riemann--Stieltjes $F|_{[a,c]}$-integrable, and
\[
\sum_{t \in \Pi} H(t_*)[F(t \wedge \cdot) - F(t_- \wedge \cdot)] \xrightarrow[\Pi^* \in \cP_I^*]{|\Pi| \to 0} \int_a^{\boldsymbol{\cdot}} H(t)[\d F(t)]
\]
uniformly on compact subsets of $I$.
\end{prop}

\begin{proof}
As the reader may easily verify, it suffices to treat the $b < \infty$ case.
If $\cU$ is a Banach space, write $\ell^{\infty}(I;\cU)$ for the Banach space of bounded functions $I \to \cU$ with the uniform norm.\label{page.ell}
Also, write $\mathbb{S}_I$ for the set of $\sA_I$-simple functions $I \to B(\cV;\cW)$.
If $S \in \mathbb{S}_I$, then there exists a $\Pi \in \cP_I$ such that $S = 1_{\{a\}} S(a) + \sum_{t \in \Pi} 1_{(t_-,t]} S(t)$.
Consequently, if $G \colon I \to \cV$ is any function, then
\begin{align*}
    \bigg\|\int_I S\big[\d\mu_G^0\big] \bigg\|_{\cW} & = \Bigg\| \sum_{t \in \Pi} S(t)[\Delta_tG] \Bigg\|_{\cW} \leq \sum_{t \in \Pi} \norm{S(t)}_{\cV \to \cW} \norm{\Delta_tG}_{\cV} \\
    & \leq \max_{t \in \Pi}\norm{S(t)}_{\cV \to \cW}\sum_{s \in \Pi} \norm{\Delta_sG}_{\cV} \leq V(G : I) \, \norm{S}_{\ell^{\infty}(I;B(\cV;\cW))}.
\end{align*}
It follows that
\[
\sup_{c \in I}\bigg\|\int_a^c S\big[\d\mu_F^0\big]\bigg\|_{\cW} \leq V(F : I) \, \norm{S}_{\ell^{\infty}(I;B(\cV;\cW))}.
\]
In particular, the integral map $\mathbb{S}_I \ni S \mapsto \int_a^{\cdot}S[\d\mu_F^0] \in \ell^{\infty}(I;\cW)$ extends uniquely to a bounded linear map $J \colon \overline{\mathbb{S}_I} \to \ell^{\infty}(I;\cW)$, where $\overline{\mathbb{S}_I}$ is the closure of $\mathbb{S}_I$ in $\ell^{\infty}(I;B(\cV;\cW))$.
Now, if $H \colon I \to B(\cV;\cW)$ is continuous and $\Pi^* \in \cP_I^*$, then $H^{\Pi^*} \in \mathbb{S}_I$, and by Lemma \ref{lem.HPi}\ref{item.HPistar}, $H^{\Pi^*} \to H$ in $\ell^{\infty}(I;B(\cV;\cW))$ as $|\Pi| \to 0$.
Thus, $H \in \overline{\mathbb{S}_I}$, and by the continuity of $J$,
\[
\sum_{t \in \Pi} H(t_*)[F(t \wedge \cdot) - F(t_- \wedge \cdot)] = \int_a^{\cdot} H^{\Pi^*}\big[\d\mu_F^0\big] = J\big[ H^{\Pi^*}\big] \xrightarrow[\Pi^* \in \cP_I^*]{|\Pi| \to 0} J[H]
\]
uniformly.
This completes the proof.
\end{proof}

While the above fact about Riemann--Stieltjes integrals suffices for most situations in practice, general considerations necessitate an understanding of Lebesgue--Stieltjes integrals as well.

\begin{lem}\label{lem.elemintbd}
Let $\cV,\cW$ be normed vector spaces, and suppose $F \colon I \to \cV$ is a right-continuous function of locally bounded variation.
There exists a unique measure $\nu_F \colon \cB_I \to [0,\infty]$ such that $\nu_F(\{a\}) = 0$ and $\nu_F((s,t]) = V(F : [s,t])$ whenever $s,t \in I$ and $s < t$.
Write $\norm{\d F(t)}_{\cV} \coloneqq \nu_F(\d t)$.
If $\mathbb{S}_I$ is the set of $\sA_I$-simple functions $I \to B(\cV;\cW)$, then
\[
\bigg\|\int_I S\big[\d\mu_F^0\big]\bigg\|_{\cW} \leq \int_I \norm{S(t)}_{\cV \to \cW} \, \norm{\d F(t)}_{\cV} \qquad (S \in \mathbb{S}_I).
\]
\end{lem}

\begin{proof}
If $T_F(t) \coloneqq V(F : [a,t])$ for all $t \in I$, then $T_F \colon I \to \R_+$ is right-continuous (continuous if $F$ is), and $T_F(t) - T_F(s) = V(F : [s,t])$ whenever $s,t \in I$ and $s < t$.
The existence and uniqueness of $\nu_F$ follow.
Now, by definition of $\sA_I$, if $A \in \sA_I$, then there are disjoint $A_1,\ldots,A_n \in \sE_I$ such that $A = \bigcup_{i=1}^n A_i$.
Therefore,
\[
\norm{\mu_F^0(A)}_{\cV} \leq \sum_{i=1}^n \norm{\mu_F^0(A_i)}_{\cV} \leq \sum_{i=1}^n \nu_F(A_i) = \nu_F(A).
\]
It follows that if $S \in \mathbb{S}_I$, then
\begin{align*}
    \bigg\|\int_I S\big[\d\mu_F^0\big] \bigg\|_{\cW} & \leq \sum_{T \in S(I)} \norm{T}_{\cV \to \cW}\big\|\mu_F^0(\{t \in I : S(t) = T\})\big\|_{\cV} \\
    & \leq \sum_{T \in S(I)} \norm{T}_{\cV \to \cW}\nu_F(\{t \in I : S(t) = T\}) = \int_I \|S(t)\|_{\cV \to \cW} \, \norm{\d F(t)}_{\cV},
\end{align*}
as desired.
\end{proof}

\begin{thm}[Construction of Lebesgue--Stieltjes integral]\label{thm.LSint}
Suppose $\cV$ is a normed vector space, $\cW$ is a Banach space, and $F \colon I \to \cV$ is a right-continuous function of locally bounded variation.
The integral map $\mathbb{S}_I \ni S \mapsto \int_I S[\d\mu_F^0] \in \cW$ extends uniquely to a bounded linear map $I_F^{\cW} \colon L^1(I,\nu_F;B(\cV;\cW)) \to \cW$ with operator norm at most $1$.
If $H \in L^1(I,\nu_F;B(\cV;\cW))$, then the vector
\[
\int_I H(t)[\d F(t)] = \int_I H[\d F] \coloneqq I_F^{\cW}(H) \in \cW\pagebreak
\]
is the (\textbf{Lebesgue--})\textbf{Stieltjes integral} of $H$ against $F$.
If $H \in L_{\loc}^1(I,\nu_F;B(\cV;\cW))$, then we shall write
\[
\int_s^t H(r)[\d F(r)] = \int_s^t H[\d F] \coloneqq \int_I (1_{(s,t]}H)[\d F] \qquad (s,t \in I, \; s \leq t).
\]
\end{thm}

\begin{proof}
By Lemma \ref{lem.elemintbd} and the completeness of $\cW$, it suffices to show $\mathbb{S}_I$ is dense in $\cX \coloneqq L^1(I,\nu_F;B(\cV;\cW))$.
First, note that if $H \in \cX$, then $1_{[a,c]}H \to H$ in $\cX$ as $c \nearrow b$.
Therefore, we may and do assume $b < \infty$.
Now, since simple functions are dense in $\cX$, it suffices to show that if $E \in \cB_I$ and $T \in B(\cV;\cW)$, then there exists a sequence $(A_n)_{n \in \N}$ in $\sA_I$ such that $1_{A_n}\,T \to 1_E\,T$ in $\cX$ as $n \to \infty$.
Since $\sA_I$ generates $\cB_I$ as a $\sigma$-algebra and $\nu_F(I) < \infty$, if $n \in \N$, then there exists an $A_n \in \sA_I$ such that $\nu_F(A_n \Delta E) < 1/n$.
The sequence $(A_n)_{n \in \N}$ does the trick.
\end{proof}

As an easy consequence of Theorem \ref{thm.LSint} and the definitions, Lebesgue--Stieltjes integral processes have locally bounded variation.

\begin{cor}\label{cor.LSintbdvar}
In the setting of Theorem \ref{thm.LSint}, if $H \in L_{\loc}^1(I,\nu_F;B(\cV;\cW))$ and
\[
G(t) \coloneqq \int_a^t H(s)[\d F(s)] \qquad (t \in I),
\]
then $G$ is right-continuous (continuous if $F$ is), and
\[
V(G : [s,t]) \leq \int_{(s,t]} \norm{H(r)}_{\cV \to \cW} \, \norm{\d F(r)}_{\cV} < \infty \qquad (s,t \in I, \; s < t).
\]
In particular, $G$ has locally bounded variation. \qed
\end{cor}

\subsection{Integration against \texorpdfstring{$L^2$}{}-decomposable processes}\label{sec.L2NCstochint}

We now work toward a definition of stochastic integrals against $L^2$-decomposable processes.
To this end, we introduce a class of ``elementary predictable processes'' and establish some basic properties of ``stochastic integrals'' of such processes.

\begin{defi}[Elementary predictable process]\label{def.EP}
For the duration of this and the following section, fix two filtered $\mathrm{C}^*$-probability spaces $(\cA,(\cA_t)_{t \geq 0},\E = \E_{\mathsmaller{\cA}})$ and $(\cB,(\cB_t)_{t \geq 0},\E_{\mathsmaller{\cB}})$.
Let $p,q \in [1,\infty\ra$ (Convention \ref{conv.cond}).
An \textbf{elementary predictable $\boldsymbol{(L^p;L^q)}$-process} is a map $H \colon \R_+ \to B_1^{p;q} = B(L^p(\E_{\mathsmaller{\cA}});L^q(\E_{\mathsmaller{\cB}}))$ such that
\begin{equation}
    H = 1_{\{0\}}H_0 + \sum_{i=1}^k 1_{(s_i,t_i]}H_i \label{eq.arbdecomp}
\end{equation}
for some times $t_i \geq s_i \geq 0$ and elements $H_0 \in \cF_0^{p;q}$, $H_i \in \cF_{s_i}^{p;q}$ ($i=1,\ldots,k$).
We write $\EP^{p;q} = \EP^{p;q}(\E_{\mathsmaller{\cA}};\E_{\mathsmaller{\cB}})$ for the set of elementary predictable $(L^p;L^q)$-processes.
If $H \colon \R_+ \to \mathbb{B} = \mathbb{B}(\cA;\cB)$ has a decomposition as in \eqref{eq.arbdecomp} such that $H_0 \in \cF_0$ and $H_i \in \cF_{s_i}$ for all $i=1,\ldots,k$, then we write $H \in \EP = \EP(\E_{\mathsmaller{\cA}};\E_{\mathsmaller{\cB}})$.
\end{defi}

\begin{obs}\label{obs.EP}
Let $p,q \in [1,\infty\ra$.
\begin{enumerate}[label=(\roman*),font=\normalfont]
    \item $\EP^{p;q}$ and $\EP$ are complex vector spaces, and $H \in \EP \Rightarrow H \in \EP^{p;p}$.\label{item.EPvecsp}
    \item If $H \in \EP^{p;q}$ and $0 \leq s \leq t$, then $1_{(s,t]}H \in \EP^{p;q}$ and $1_{\{0\} \cup (s,t]}H \in \EP^{p;q}$.
    If $H \in \EP$ and $0 \leq s \leq t$, then $1_{(s,t]}H \in \EP$ and $1_{\{0\} \cup (s,t]}H \in \EP$.\label{item.(s,t]}
    \item If $H \in \EP^{p;q}$, then $H \colon \R_+ \to B_1^{p;q}$ is an $\sA_{\R_+}$-simple function.
    If $H \in \EP$, then $H \colon \R_+ \to \mathbb{B}$ is an $\sA_{\R_+}$-simple function.\label{item.simple}
    \item If $H \in \EP^{p;q}$, then $H$ is adapted, is $\norm{\cdot}_{p;q}$--left-continuous, and has locally bounded variation with respect to $\norm{\cdot}_{p;q}$.
    If $H \in \EP$, then $H$ is adapted, is $\vertiii{\cdot}$--left-continuous, and has locally bounded variation with respect to $\vertiii{\cdot}$.\label{item.EPadapLC}
    \end{enumerate}
\end{obs}

By Observation \ref{obs.EP}\ref{item.simple}, the following definition makes sense.
\pagebreak

\begin{nota}[Elementary stochastic integral]\label{nota.EPint}
Let $p,q \in [1,\infty\ra$ and $X \colon \R_+ \to L^p(\E_{\mathsmaller{\cA}})$ be an arbitrary process.
If $H \in \EP^{p;q}$, then we define
\begin{align*}
    \int_0^{\infty} H(t)[\d X(t)] & = \int_0^{\infty} H[\d X] \coloneqq \int_{\R_+} H\big[\d\mu_X^0\big] \in L^q(\E_{\mathsmaller{\cB}}) \; \text{ and} \\
    \int_s^t H(r)[\d X(r)] & = \int_s^t H[\d X] \coloneqq \int_0^{\infty} (1_{(s,t]}H)[\d X] \qquad (0 \leq s \leq t).
\end{align*}
We set similar notation for arbitrary $X \colon \R_+ \to \cA$ and $H \in \EP$.
\end{nota}

\begin{lem}[Properties of elementary stochastic integral]\label{lem.elemint}
Fix $p,q \in [1,\infty\ra$ (resp., $p=q=\infty$), a process $X \colon \R_+ \to L^p(\E_{\mathsmaller{\cA}})$, and $H \in \EP^{p;q}$ (resp., $H \in \EP$).
\begin{enumerate}[label=(\roman*),font=\normalfont]
    \item If $X$ is adapted, then $\into H[\d X]$ is adapted.\label{item.adapEint}
    \item If $X = M$ is an $L^p$-martingale, then $\into H[\d M]$ is an $L^q$-martingale.\label{item.martEint}
    \item If $(\cC,(\cC_t)_{t \geq 0},\E_{\mathsmaller{\cC}})$ is another $\mathrm{C}^*$-probability space, $r \in [1,\infty\ra$ (resp., $r=\infty$), and $Y \coloneqq \into H[\d X]$, then
    \[
    \into K[\d Y] = \into KH[\d X]
    \]
    for all $K \in \mathrm{EP}^{q;r}(\E_{\mathsmaller{\cB}};\E_{\mathsmaller{\cC}})$ (resp., $K \in \mathrm{EP}(\E_{\mathsmaller{\cB}};\E_{\mathsmaller{\cC}})$).\label{item.subform}
\end{enumerate}
\end{lem}

\begin{proof}
The third item follows from \eqref{eq.algsubform}.
By linearity, it suffices to verify the claims of the first two items when $H = 1_{(u,v]}H_u$, where $0 \leq u < v$ and $H_u \in \cF_u^{p;q}$ (resp., $H_u \in \sF_u$).
(The case $H = 1_{\{0\}}H_0$ is obvious.)
In this case,
\begin{equation}
    \int_0^t H[\d X] = H_u[X(v \wedge t) - X(u \wedge t)] \qquad (t \geq 0). \label{eq.intform}
\end{equation}
By this formula, \ref{item.adapEint} follows from Observation \ref{obs.Ftpq}\ref{item.preservefilt}.

To prove \ref{item.martEint}, suppose $0 \leq s \leq t$, and write $\E_{\mathsmaller{\cA}}^s \coloneqq \E_{\mathsmaller{\cA}}[\,\cdot \mid \cA_s]$ and $\E_{\mathsmaller{\cB}}^s \coloneqq \E_{\mathsmaller{\cB}}[\,\cdot \mid \cB_s]$.
We consider two cases:
$s < u$ and $s \geq u$.
If $s < u$, then
\begin{align*}
    \E_{\mathsmaller{\cB}}^sH_u[M(v \wedge t) - M(u \wedge t)] & = \E_{\mathsmaller{\cB}}^s\E_{\mathsmaller{\cB}}^uH_u[M(v \wedge t) - M(u \wedge t)] \tag{Tower property} \\
    & = \E_{\mathsmaller{\cB}}^sH_u\E_{\mathsmaller{\cA}}^u[M(v \wedge t) - M(u \wedge t)] \tag{$H_u \in \cF_u^{p;q}$  (resp., $H_u \in \cF_u$)} \\
    & = \E_{\mathsmaller{\cB}}^sH_u[M(u \wedge t)-M(u \wedge t)] = 0 \tag{$M^t$ is a martingale} \\
    & = H_u[M(v \wedge s) - M(u \wedge s)]. \tag{$s < u < v$}
\end{align*}
Now, if $s \geq u$, then
\begin{align*}
    \E_{\mathsmaller{\cB}}^sH_u[M(v \wedge t) - M(u \wedge t)] & = H_u\E_{\mathsmaller{\cA}}^s[M(v \wedge t) - M(u \wedge t)] \tag{$H_u \in \cF_u^{p;q}$ (resp., $H_u \in \cF_u$)} \\
    & = H_u[M(v \wedge s) - M(u \wedge s)]. \tag{$M^v$ and $M^u$ are martingales}
\end{align*}
In either case, we conclude from \eqref{eq.intform} that $\E_{\mathsmaller{\cB}}^s\big[\int_0^t H[\d M]\big] = \int_0^sH[\d M]$.
\end{proof}

Next, we extend the definition of this ``elementary'' integral to a much larger space of integrands when $M$ is a (right-)continuous $L^2$-martingale by modifying the classical proof of the It\^{o} isometry.

\begin{lem}\label{lem.fakeDoleans}
If $M \colon \R_+ \to L^2(\E)$ is a right-continuous $L^2$-martingale, then there exists a unique measure $\kappa_M$ on $(\R_+,\cB_{\R_+})$ such that $\kappa_M(\{0\}) = 0$ and $\kappa_M((s,t]) = \norm{M(t)-M(s)}_2^2$ whenever $0 \leq s \leq t$.
\end{lem}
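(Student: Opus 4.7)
The plan is to reduce to the classical Lebesgue--Stieltjes construction applied to the nondecreasing, right-continuous function $F \colon \R_+ \to \R_+$ defined by $F(t) \coloneqq \norm{M(t)}_2^2$. First I would check these two properties of $F$. Monotonicity is immediate from Lemma \ref{lem.M(t)-M(s)}: for $0 \le s \le t$,
\[
F(t) - F(s) = \norm{M(t)}_2^2 - \norm{M(s)}_2^2 = \norm{M(t) - M(s)}_2^2 \ge 0.
\]
Right-continuity of $F$ follows from the $L^2$-right-continuity of $M$ together with the reverse triangle inequality $|\norm{M(t_n)}_2 - \norm{M(t)}_2| \le \norm{M(t_n) - M(t)}_2$: if $t_n \searrow t$, then $F(t_n) \to F(t)$.

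Next I would extend $F$ to $\tilde F \colon \R \to \R_+$ by declaring $\tilde F(s) \coloneqq F(0)$ for $s < 0$. The extension $\tilde F$ is still nondecreasing and right-continuous on $\R$, so the classical Lebesgue--Stieltjes theorem furnishes a unique Borel measure $\tilde\mu$ on $\R$ with $\tilde\mu((s,t]) = \tilde F(t) - \tilde F(s)$ for all $s < t$ in $\R$. Setting $\kappa_M \coloneqq \tilde\mu|_{\cB_{\R_+}}$, the desired identity $\kappa_M((s,t]) = F(t) - F(s) = \norm{M(t) - M(s)}_2^2$ for $0 \le s \le t$ is automatic, while
\[
\kappa_M(\{0\}) = \tilde\mu(\{0\}) = \tilde F(0) - \tilde F(0-) = F(0) - F(0) = 0,
\]
where the middle equality uses right-continuity and the left-limit of $\tilde F$ at $0$.

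For uniqueness I would note that $\kappa_M$ is $\sigma$-finite (each $[0,n]$ has measure at most $F(n) < \infty$) and that the collection $\sA_{\R_+} = \{\{0\}\} \cup \{(s,t] : 0 \le s < t\}$ of Notation \ref{nota.elemint} is a $\pi$-system generating $\cB_{\R_+}$ together with an exhausting sequence of sets of finite measure; Dynkin's $\pi$--$\lambda$ theorem (or equivalently the uniqueness clause of Carathéodory extension) forces any measure satisfying the two prescribed conditions to agree with $\kappa_M$. There is no real obstacle here; the only substantive input is Lemma \ref{lem.M(t)-M(s)}, which is what converts the \emph{squared} $L^2$-increment of the martingale into a genuinely additive nonnegative set function on half-open intervals. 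Without the martingale property this step would fail, as the increments of $\|M(\cdot)\|_2^2$ would in general have indefinite sign.
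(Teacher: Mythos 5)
Your proof is correct and follows essentially the same route as the paper's: both construct the measure as the Lebesgue--Stieltjes measure of the nondecreasing right-continuous function $t \mapsto \norm{M(t)}_2^2$ (the paper normalizes by subtracting $\norm{M(0)}_2^2$ and sets the extension to $0$ on $(-\infty,0)$, which differs from your constant extension only by an irrelevant additive constant), with Lemma \ref{lem.M(t)-M(s)} supplying the identity $\norm{M(t)}_2^2 - \norm{M(s)}_2^2 = \norm{M(t)-M(s)}_2^2$. Your explicit $\pi$--$\lambda$ uniqueness argument is the standard justification the paper leaves implicit.
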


\begin{proof}
Define $F_M \colon \R_+ \to \R_+$ by $F_M(t) \coloneqq \norm{M(t)}_2^2$.
By the right-continuity of $M$, $F_M$ is right-continuous.
By \eqref{eq.norminc}, $F_M$ is non-decreasing.
Therefore, there exists a unique Borel measure $\kappa_M$ on $\R_+$ such that $\kappa_M(\{0\}) = 0$ and
\[
\kappa_M((s,t]) = F_M(t)-F_M(s) = \norm{M(t)}_2^2 - \norm{M(s)}_2^2 = \norm{M(t)-M(s)}_2^2 \qquad (0 \leq s \leq t)
\]
by Lemma \ref{lem.M(t)-M(s)}.
\end{proof}

\begin{thm}[Noncommutative It\^{o} contraction]\label{thm.Itocont}
If $M \colon \R_+ \to L^2(\E_{\mathsmaller{\cA}})$ is a right-continuous $L^2$-martingale and $H \in \EP^{2;2}$, then
\[
\bigg\|\int_0^{\infty} H[\d M] \bigg\|_2 \leq \Bigg(\int_{(0,\infty)} \norm{H}_{2;2}^2\,\d\kappa_M\Bigg)^{\frac{1}{2}}.
\]
\end{thm}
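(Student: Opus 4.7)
The plan is to mimic the classical proof of the It\^{o} isometry for simple integrands, exploiting the fact that the notion of adaptedness in Definition \ref{def.adapt}\ref{item.klinadap1} has been tailored precisely so that the orthogonality-of-increments argument goes through. First, I would show that by intersecting the intervals appearing in the decomposition of $H \in \EP^{2;2}$ (Eq. \eqref{eq.arbdecomp}), one may write
\[
H = 1_{\{0\}}H_0 + \sum_{j=1}^n 1_{(u_{j-1},u_j]}\tilde{H}_j,
\]
where $0 \leq u_0 < u_1 < \cdots < u_n$ and $\tilde{H}_j \in \cF_{u_{j-1}}^{2;2}$; this uses Observation \ref{obs.Ftpq}\ref{item.grow}, i.e.\ that the $\cF_t^{2;2}$ grow in $t$. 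Since $\kappa_M(\{0\}) = 0$, the $H_0$ term plays no role, and Notation \ref{nota.EPint} gives
\[
\int_0^{\infty} H[dM] = \sum_{j=1}^n \tilde{H}_j[\Delta M_j], \qquad \Delta M_j \coloneqq M(u_j) - M(u_{j-1}).
\]

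Next, I would expand $\big\|\sum_j \tilde{H}_j[\Delta M_j]\big\|_2^2$ as a double sum of inner products and prove that the off-diagonal terms vanish. The key observation is that for $j < k$, one has $X_j \coloneqq \tilde{H}_j[\Delta M_j] \in L^2(\cB_{u_j},\E_{\mathsmaller{\cB}}) \subseteq L^2(\cB_{u_{k-1}}, \E_{\mathsmaller{\cB}})$ by Observation \ref{obs.Ftpq}\ref{item.preservefilt} (since $\Delta M_j \in L^2(\cA_{u_j},\E)$ by the adaptedness of $M$). Since conditional expectation onto a $\mathrm{C}^*$-subalgebra $\cB_0 \subseteq \cB$ is, on $L^2$, the orthogonal projection onto $L^2(\cB_0,\E_{\mathsmaller{\cB}})$, the inner product $\ip{X_j, \tilde{H}_k[\Delta M_k]}_2$ equals $\ip{X_j, \E_{\mathsmaller{\cB}}[\tilde{H}_k[\Delta M_k] \mid \cB_{u_{k-1}}]}_2$. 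Now the defining property of $\tilde{H}_k \in \cF_{u_{k-1}}^{2;2}$ applied with $u = u_{k-1}$ and $b = \Delta M_k \in L^2(\cA_{u_k},\E) \supseteq L^2(\cA_{u_{k-1}},\E)$ yields
\[
\E_{\mathsmaller{\cB}}[\tilde{H}_k[\Delta M_k] \mid \cB_{u_{k-1}}] = \tilde{H}_k[\E[\Delta M_k \mid \cA_{u_{k-1}}]] = \tilde{H}_k[0] = 0
\]
by the martingale property of $M$. Hence the cross terms vanish and $\big\|\int_0^{\infty} H[dM]\big\|_2^2 = \sum_{j=1}^n \|\tilde{H}_j[\Delta M_j]\|_2^2$.

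Finally, I would bound each diagonal term by $\|\tilde{H}_j[\Delta M_j]\|_2^2 \leq \|\tilde{H}_j\|_{2;2}^2\,\|\Delta M_j\|_2^2 = \|\tilde{H}_j\|_{2;2}^2\,\kappa_M((u_{j-1},u_j])$, using Lemma \ref{lem.fakeDoleans}. Summing over $j$ and recognizing the result as the integral of the simple function $\|H\|_{2;2}^2$ against $\kappa_M$ on $(0,\infty)$ gives the desired inequality. There is no real obstacle here: the whole argument is the classical orthogonal-increments proof, and the step that would ordinarily need the most care — using the adaptedness of $\tilde{H}_k$ to commute it past the conditional expectation of $\Delta M_k$ — has been built into the definition of $\cF_t^{2;2}$, so it becomes a one-line application.
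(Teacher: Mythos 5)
Your proposal is correct and follows essentially the same route as the paper's proof: reduce to a common refinement of the time intervals, expand the $L^2$-norm squared, kill the cross terms by combining the orthogonal-projection characterization of conditional expectation with the defining property of $\cF_{u_{k-1}}^{2;2}$ and the martingale property, and bound the diagonal terms via $\norm{\tilde H_j}_{2;2}$ and Lemma \ref{lem.fakeDoleans}. No gaps.
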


\begin{proof}
Let $H \in \EP^{2;2}$.
It is easy to see that there exist times $0 = t_0 < \cdots < t_k < \infty$ and elements $H_0 \in \cF_0^{2;2}$, $H_i \in \cF_{t_{i-1}}^{2;2}$ ($i=1,\ldots,k$) such that
\[
H = 1_{\{0\}}H_0 + \sum_{i=1}^k 1_{(t_{i-1},t_i]}H_i.
\]
Now, writing $\Delta_i M \coloneqq M(t_i) - M(t_{i-1})$,
\begin{align*}
    \bigg\|\int_0^{\infty} H[\d M]\bigg\|_2^2 & = \Bigg\la \sum_{i=1}^k H_i[\Delta_i M], \sum_{j=1}^k H_j[\Delta_j M]\Bigg\ra_2 \\
    & = \sum_{i=1}^k\norm{H_i[\Delta_i M]}_2^2 + \sum_{i \neq j} \ip{H_i[\Delta_i M], H_j[\Delta_j M]}_2.
\end{align*}
We claim that the second term above vanishes.
Of course, it suffices to show that if $1 \leq i < j \leq k$, then $\ip{H_i[\Delta_i M], H_j[\Delta_j M]}_2 = 0$.
To this end, note that $t_{i-1} < t_i \leq t_{j-1} < t_j$ in this case, which yields $H_i[\Delta_i M] \in L^2(\cB_{t_i},\E_{\mathsmaller{\cB}}) \subseteq L^2(\cB_{t_{j-1}},\E_{\mathsmaller{\cB}})$.
Therefore, by definition of $\cF_{t_{j-1}}^{2;2}$ and the martingale property of~$M$,
\begin{align*}
   \ip{H_i[\Delta_i M], H_j[\Delta_j M]}_2 & = \ip{H_i[\Delta_i M], \E_{\mathsmaller{\cB}}[H_j[\Delta_j M]\mid \cB_{t_{j-1}}]}_2 \\
    & = \ip{H_i[\Delta_i M], H_j\E_{\mathsmaller{\cA}}[\Delta_j M\mid \cA_{t_{j-1}}]}_2 = 0,
\end{align*}
as claimed.
We conclude that
\begin{align*}
    \bigg\|\int_0^{\infty} H[\d M]\bigg\|_2^2 & = \sum_{i=1}^k\norm{H_i[\Delta_i M]}_2^2 \leq \sum_{i=1}^k\norm{H_i}_{2;2}^2\norm{\Delta_i M}_2^2 \\
    & = \sum_{i=1}^k\norm{H_i}_{2;2}^2\kappa_M((t_{i-1},t_i]) = \int_{(0,\infty)}\norm{H}_{2;2}^2\,\d\kappa_M,
\end{align*}
as desired.
\end{proof}

\begin{rem}\label{rem.Dol}
Note that $\kappa_M$ corresponds to the time marginal of the Dol\'{e}ans measure from the classical case (Section \ref{sec.phil}).
Since this time marginal is all we are able to construct in the noncommutative case, we get the contraction in Theorem \ref{thm.Itocont} instead of the isometry from the classical case.
However, once we study noncommutative quadratic variation, we shall see that the It\^{o} isometry in the form of \eqref{eq.clII} does have a noncommutative analog (Corollary \ref{cor.NCII}).
\end{rem}

As we hinted earlier, Theorem \ref{thm.Itocont} allows us to extend the stochastic integral against $M$.
Actually, we shall use this development to extend the elementary stochastic integral against $L^2$-decomposable processes.

\begin{nota}\label{nota.kappa}
If $X = X(0) + M + A \colon \R_+ \to L^2(\E)$ is $L^2$-decomposable, then we define
\[
\kappa_X(\d t) \coloneqq \kappa_M(\d t) + \norm{\d A(t)}_2,
\]
where $\kappa_A(\d t) = \norm{\d A(t)}_2$ is from Lemma \ref{lem.elemintbd}, and $\kappa_M$ is from Lemma \ref{lem.fakeDoleans}.
\end{nota}

\begin{cor}\label{cor.L2bd}
If $X = X(0) + M + A  \colon \R_+ \to L^2(\E_{\mathsmaller{\cA}})$ is $L^2$-decomposable and $H \in \EP^{2;2}$, then
\[
\bigg\|\int_0^{\infty} H[\d X] \bigg\|_2 \leq \Bigg(\int_{(0,\infty)} \norm{H}_{2;2}^2\,\d\kappa_M\Bigg)^{\frac{1}{2}} + \int_{(0,\infty)} \norm{H}_{2;2} \, \d\kappa_A.
\]
\end{cor}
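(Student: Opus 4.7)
The plan is to split the integral along the decomposition $X = M + A$ and bound each piece using the machinery already developed. Since the elementary integral from Notation \ref{nota.EPint} is defined as $\int_{\R_+} H[d\mu_X^0]$ and the finitely additive set function $\mu_X^0$ from Notation \ref{nota.elemint} is clearly linear in its defining function $X$ (it is built from differences $X(t) - X(s)$), we immediately get
\[
\int_0^{\infty} H[dX] = \int_0^{\infty} H[dM] + \int_0^{\infty} H[dA].
\]
Applying the triangle inequality in $L^2(\E_{\mathsmaller{\cB}})$ reduces the corollary to bounding the two summands separately.

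For the martingale piece, I would directly invoke Theorem \ref{thm.Itocont} (the noncommutative It\^{o} contraction). The hypothesis there requires $M$ to be a right-continuous $L^2$-martingale, which holds because $M \in \M^2 \subseteq C_a(\R_+;L^2(\E))$ is continuous by Definition \ref{def.ncmartingale}. This yields the $L^2(\kappa_M)$-type term on the right side of the target inequality.

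For the FV piece, I would appeal to Theorem \ref{thm.LSint}\ref{item.elemintbd}. Since $A \in \mathbb{FV}^2$ is (right-)continuous with locally bounded variation in $L^2(\E)$, the hypotheses of that theorem apply with $\cV = L^2(\E_{\mathsmaller{\cA}})$, $\cW = L^2(\E_{\mathsmaller{\cB}})$, and $F = A$. Because $H \in \EP^{2;2}$ is $\sA_{\R_+}$-simple (viewed as a map into $B_1^{2;2}$), Theorem \ref{thm.LSint}\ref{item.elemintbd} gives
\[
\Bigg\|\int_0^{\infty} H[dA]\Bigg\|_2 = \Bigg\|\int_{\R_+} H[d\mu_A^0]\Bigg\|_2 \leq \int_{\R_+} \norm{H}_{2;2}\,d\nu_A.
\]
Since $\nu_A(\{0\}) = 0$ by construction and $\nu_A = \kappa_A$ by Definition \ref{def.dg} and Notation \ref{nota.kappa}, the right-hand side equals $\int_{(0,\infty)} \norm{H}_{2;2}\,d\kappa_A$, which completes the bound.

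There is no real obstacle here; the corollary is essentially a bookkeeping consequence of the two previously established contraction/bound results, with the only mild subtlety being to confirm that $\mu_X^0$ is additive in $X$ and that the $\{0\}$-mass of $\nu_A$ does not affect the stated form of the inequality.
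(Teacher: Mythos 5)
Your proof is correct and follows exactly the route the paper intends: its proof of this corollary is simply ``Combine Theorems \ref{thm.LSint} and \ref{thm.Itocont},'' i.e.\ split $\int_0^\infty H[dX]$ along $X = M + A$ via additivity of $\mu_X^0$, apply the It\^{o} contraction to the martingale part, and apply the Lebesgue--Stieltjes bound to the FV part. You have merely made explicit the bookkeeping (linearity of $\mu_X^0$ in $X$, the vanishing of $\nu_A$ at $\{0\}$) that the paper leaves to the reader.
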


\begin{proof}
Combine Theorems \ref{thm.LSint} and \ref{thm.Itocont}.
\end{proof}

Note that each $H \in \EP^{2;2}$ is a compactly supported simple map $\R_+ \to B_1^{2;2}$.
In particular, if $X$ is $L^2$-decomposable, then we may consider the equivalence class of $H$ in $L_{\loc}^1(\R_+,\kappa_X;B_1^{2;2})$.
Also, note that $\kappa_M \ll \kappa_X$ and $\kappa_A \ll \kappa_X$.
Therefore, the following definition makes sense.
\pagebreak

\begin{defi}[Stochastically integrable processes]\label{def.integrands}
Let $X = X(0) + M + A \colon \R_+ \to L^2(\E_{\mathsmaller{\cA}})$ be an $L^2$-decomposable process.
Write $\cL_X^{\cB} = \cL_X$ for the set of $H \in L_{\loc}^1(\R_+,\kappa_X;B_1^{2;2})$ such that
\[
\norm{H}_{X,\cB,t} = \norm{H}_{X,t} \coloneqq \Bigg(\int_{(0,t]} \norm{H(s)}_{2;2}^2 \, \kappa_M(\d s)\Bigg)^{\frac{1}{2}} + \int_{(0,t]} \norm{H(s)}_{2;2} \, \kappa_A(\d s) < \infty \qquad (t \geq 0).
\]
The set $\cL_X$ is a complex-linear subspace of $L_{\loc}^1(\R_+,\kappa_X;B_1^{2;2})$.
We endow $\cL_X$ with the topology induced by the collection $\{\norm{\cdot}_{X,t} : t \geq 0\}$ of seminorms, which makes it into a complex Fr\'{e}chet space.
Also, write $\mathcal{EP}^{2;2} \subseteq \cL_X$ (resp., $\mathcal{EP}$) for the set of $\kappa_X$-a.e.\ equivalence classes of members of $\mathrm{EP}^{2;2}$ (resp., $\EP$).
Finally, define $\cI^{\cB}(X) = \cI(X)$ (resp., $\tilde{\cI}^{\cB}(X) = \tilde{\cI}(X)$) to be the closure of $\mathcal{EP}^{2;2}$ (resp., $\mathcal{EP}$) in $\cL_X$.
The members of $\cI(X)$ are called \textbf{stochastically $\boldsymbol{X}$-integrable processes}. 
\end{defi}

Note that if $H,K \in \EP^{2;2}$ and $H=K$ $\kappa_X$-a.e., then $H=K$ $\kappa_M$-a.e.\ and $\kappa_A$-a.e.
Therefore, by Corollary \ref{cor.L2bd}, $\into H[\d X] = \into K[\d X]$.
In particular, stochastic integration against $X$ is well defined as a complex-linear map $\mathcal{EP}^{2;2} \to C_a(\R_+; L^2(\E_{\mathsmaller{\cB}}))$.
Also, observe that if $H \in \cI(X)$ and $0 \leq s \leq t$, then $1_{(s,t]}H \in \cI(X)$.

\begin{thm}[Extension of stochastic integral]\label{thm.stochint}
Let $X = X(0) + M + A \colon \R_+ \to L^2(\E_{\mathsmaller{\cA}})$ be an $L^2$-decomposable process.
\begin{enumerate}[label=(\roman*),font=\normalfont]
    \item The stochastic integral map $\mathcal{EP}^{2;2} \ni H \mapsto \into H [\d X] \in C_a(\R_+;L^2(\E_{\mathsmaller{\cB}}))$ extends uniquely to a continuous linear map $I_X^{\cB} = I_X \colon \cI(X) \to C_a(\R_+;L^2(\E_{\mathsmaller{\cB}}))$.
    Moreover, if $H \in \cI(X)$, then\label{item.stochintcont}
    \[
    \norm{I_X(H)(t)}_2 \leq \norm{H}_{X,t} \qquad (t \geq 0).
    \]
    \item If $0 \leq s \leq t$ and $H \in \cI(X)$, then\label{item.stochintinc}
    \[
    I_X(H)(t) - I_X(H)(s) = I_X(1_{(s,t]}H)(t) = I_X(1_{\{0\} \cup (s,t]}H)(t).
    \]
    \item If $H \in \cI(X)$, then $H \in \cI(M)$, $H \in \cI(A) \subseteq L_{\loc}^1(\R_+,\nu_A;B_1^{2;2})$, $I_M(H) \in \M_{\mathsmaller{\cB}}^2$, and
    \[
    I_X(H) = I_M(H) + \into H(t)[\d A(t)].
    \]
    (To be clear, $\into H(t)[\d A(t)]$ is a Lebesgue--Stieltjes integral.)
    In other words, $I_X(H)$ is $L^2$-decomposable with $I_X(H)^{\mathrm{m}} = I_M(H)$ and $I_X(H)^{\mathrm{fv}} = \into H(t)[\d A(t)] = I_A(H)$ (Corollary \ref{cor.uniquedecomp}).\label{item.stochintsemimart}
\end{enumerate}
Henceforth, if $H \in \cI(X)$, then we shall write
\begin{align*}
    \into H[\d X] & = \into H(t)[\d X(t)] \coloneqq I_X(H) \; \text{ and} \\
    \int_s^t H[\d X] & = \int_s^t H(r)[\d X(r)] \coloneqq \int_0^t (1_{(s,t]} H)[\d X] \qquad (0 \leq s \leq t).
\end{align*}
\end{thm}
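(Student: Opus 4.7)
The plan is to extend the elementary integral from $\mathcal{EP}^{2;2}$ to $\cI(X)$ by density, inheriting the contraction bound of Corollary \ref{cor.L2bd}, and then to transfer the decomposition $X = M + A$ through the limit. The key preliminary observation is that for $H \in \EP^{2;2}$ and $0 \leq s \leq t$, the truncation $1_{(0,s]}H$ still lies in $\EP^{2;2}$, so Corollary \ref{cor.L2bd} applied to $1_{(0,s]}H$ gives
\[
\Bigg\|\int_0^s H[dX]\Bigg\|_2 \leq \|H\|_{X,s} \leq \|H\|_{X,t}.
\]
Taking the supremum over $s \in [0,t]$ shows that the elementary integral $\mathcal{EP}^{2;2} \ni H \mapsto \into H[dX]$ is continuous into the Fréchet space $C_a(\R_+; L^2(\E_{\mathsmaller{\cB}}))$, with target-correctness given by Lemma \ref{lem.elemint}\ref{item.(R)CofEint}--\ref{item.adapEint}. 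Because the bound depends only on the $\kappa_X$-a.e.\ equivalence class, the map descends to $\mathcal{EP}^{2;2} \subseteq \cI(X)$ and extends uniquely to the promised $I_X$, delivering part \ref{item.stochintcont} together with the pointwise bound.

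Part \ref{item.stochintinc} is immediate: on $\mathcal{EP}^{2;2}$, the identity $I_X(H)(t) - I_X(H)(s) = I_X(1_{(s,t]}H)(t)$ is exactly Lemma \ref{lem.elemint}\ref{item.stot}, and the variant with $1_{\{0\} \cup (s,t]}H$ coincides with it since $\mu_X^0(\{0\}) = 0$ by Notation \ref{nota.elemint}. Both extend to $\cI(X)$ by the continuity proved in the first step.

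For part \ref{item.stochintsemimart}, one reads off from Notation \ref{nota.kappa} and Definition \ref{def.integrands} the additivity $\|H\|_{X,t} = \|H\|_{M,t} + \|H\|_{A,t}$. Since $\kappa_M, \kappa_A \leq \kappa_X$, the natural coarsening $\cL_X \to \cL_M, \cL_A$ is a continuous linear inclusion, and by this additivity it restricts to continuous inclusions $\cI(X) \hookrightarrow \cI(M)$ and $\cI(X) \hookrightarrow \cI(A) \subseteq L_{\loc}^1(\R_+, \nu_A; B_1^{2;2})$. On $\mathcal{EP}^{2;2}$ the elementary integral is additive in the integrator, so $\into H[dX] = I_M(H) + I_A(H)$, where $I_A(H)$ coincides with the Lebesgue--Stieltjes integral $\into H(t)[dA(t)]$ of Definition \ref{def.dg} on common $\sA_{\R_+}$-simple integrands. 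Both sides being continuous in $H$, the identity extends to $\cI(X)$. Finally, approximating $H$ in $\cI(M)$ by $H_n \in \mathcal{EP}^{2;2}$, each $I_M(H_n) \in \M_{\mathsmaller{\cB}}^2$ by Lemma \ref{lem.elemint}\ref{item.martEint}, and pointwise $L^2$-limits of $L^2$-martingales are $L^2$-martingales (noted just after Definition \ref{def.ncmartingale}), so $I_M(H) \in \M_{\mathsmaller{\cB}}^2$. The FV part $\into H(t)[dA(t)]$ has locally bounded variation by Corollary \ref{cor.LSintbdvar}; its $L^2$-continuity follows from the fact that $\kappa_A = \nu_A$ is atomless, a direct consequence of the $L^2$-continuity of $A$.

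The main work is thus not conceptual but the uniform-in-$s \in [0,t]$ bound displayed above, which upgrades the single-integral estimate of Corollary \ref{cor.L2bd} to the seminorm that defines the Fréchet topology on $C_a(\R_+; L^2(\E_{\mathsmaller{\cB}}))$. Once that bound is in hand, everything else reduces to density together with the black-box properties of the elementary integral collected in Lemma \ref{lem.elemint} and the atomlessness of $\kappa_M$ and $\kappa_A$ that lets one transfer $L^2$-continuity through the limit.
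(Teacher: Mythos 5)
Your proposal is correct and takes essentially the same route as the paper, whose proof is a one-line citation of precisely the ingredients you deploy (Lemma \ref{lem.elemint}, Corollary \ref{cor.L2bd}, Theorem \ref{thm.LSint}, and Corollary \ref{cor.LSintbdvar}); you have simply written out the details, including the uniform-in-$s$ upgrade of Corollary \ref{cor.L2bd} via truncation and the additivity $\norm{H}_{X,t} = \norm{H}_{M,t} + \norm{H}_{A,t}$, both of which are the intended arguments.
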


\begin{proof}
This follows straightforwardly from Lemma \ref{lem.elemint}, Corollary \ref{cor.L2bd}, Theorem \ref{thm.LSint}, and Corollary \ref{cor.LSintbdvar}.
\end{proof}

\begin{rem}\label{rem.RC1}
A similar result holds if we only assume that $M$ and $A$ are right-continuous.
In this case, however, the seminorms and spaces in Definition \ref{def.integrands} depend on the decomposition $X = X(0) + M + A$, not just on $X$ itself.
While it would certainly be worthwhile to study noncommutative processes that are only right-continuous, all the examples of present interest to us are continuous.
We therefore restrict our general development to the continuous case.
\end{rem}

We end this section by showing why we introduced the space $\tilde{\cI}(X)$.

\begin{defi}\label{def.tildesemi}
Let $p \geq 2$.
An $L^p$-decomposable process $X \colon \R_+ \to L^p(\E)$ is called \textbf{$\boldsymbol{\tilde{L}^p}$-decomposable} if $X^{\mathrm{m}} \in \tbM^p$, i.e., the martingale part of $X$ is locally uniformly $L^p$-approximable by $L^{\infty}$-continuous martingales.
\end{defi}

\begin{prop}\label{prop.stochintLtilde}
If $X$ is $\tilde{L}^2$-decomposable and $H \in \tilde{\cI}(X)$, then $\into H[\d X]$ is $\tilde{L}^2$-decomposable.
\end{prop}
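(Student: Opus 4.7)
The plan is to write $X = M + A$ with $M \in \tbM_{\cA}^2$ its martingale part (unique by Corollary \ref{cor.uniquedecomp}) and $A$ its FV part, and recall from Theorem \ref{thm.stochint}\ref{item.stochintsemimart} that $\into H[dX]$ is automatically $L^2$-decomposable with martingale part $I_M(H)$. Thus my only remaining task is to show $I_M(H) \in \tbM_{\cB}^2$, i.e.\ that it lies in the closure of $\M_{\cB}^\infty$ in $\M_{\cB}^2$. Using the estimate $\sup_{0 \leq s \leq t} \|I_M(H)(s)\|_2 \leq \|H\|_{M,t} \leq \|H\|_{X,t}$ supplied by Theorem \ref{thm.stochint}\ref{item.stochintcont} together with Eq. \eqref{eq.norminc}, the map $I_M \colon \cL_X \to \M_{\cB}^2$ is continuous; since $\tilde{\cI}(X)$ is by definition the $\cL_X$-closure of $\mathcal{EP}$ and $\tbM_{\cB}^2$ is by definition closed in $\M_{\cB}^2$, it suffices to verify the claim for $H \in \EP$.

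The key elementary input — which is precisely why $\tilde{\cI}(X)$ was defined as the closure of $\mathcal{EP}$ rather than the larger $\mathcal{EP}^{2;2}$ — is that for $H \in \EP$ and $N \in \M_{\cA}^\infty$, $I_N(H)$ lies in $\M_{\cB}^\infty$. The martingale property is Lemma \ref{lem.elemint}\ref{item.martEint}, and $L^\infty$-continuity is immediate from the explicit formula $I_N(H)(t) = \sum_{i=1}^k H_i[N(t_i \wedge t) - N(s_i \wedge t)]$ once we recall that each coefficient $H_i \in \cF_{s_i} \subseteq \mathbb{B}(\cA;\cB)$ is bounded as a map $\cA \to \cB$ and that $N$ is itself $L^\infty$-continuous. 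Here elements of $\EP^{2;2}$ would fail the analogous step because they are only $L^2$-bounded.

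To upgrade to $M \in \tbM_{\cA}^2$, I would pick $M_n \in \M_{\cA}^\infty$ converging to $M$ locally uniformly in $L^2$ and use the same explicit formula together with $\|H_i\|_{2;2} \leq \vertiii{H_i}_1 < \infty$ to obtain
\[
\sup_{0 \leq s \leq t} \|I_{M_n}(H)(s) - I_M(H)(s)\|_2 \leq 2 \sum_{i=1}^k \vertiii{H_i}_1 \sup_{0 \leq s \leq t} \|(M_n - M)(s)\|_2 \xrightarrow[n \to \infty]{} 0,
\]
yielding $I_M(H) \in \tbM_{\cB}^2$ for every $H \in \EP$. The continuity remark from the first paragraph then propagates this to every $H \in \tilde{\cI}(X)$, finishing the proof. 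I do not expect a genuine obstacle: the spaces $\tbM^2$ and $\tilde{\cI}(X)$ are engineered precisely so this three-layer approximation (bounded martingale with elementary predictable integrand, then $\tbM^2$-martingale with elementary predictable integrand, then arbitrary integrand in $\tilde{\cI}(X)$) closes up cleanly.
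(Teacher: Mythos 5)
Your proof is correct and follows essentially the same route as the paper's: identify the martingale part via Theorem \ref{thm.stochint}\ref{item.stochintsemimart}, show $\into H[dM]\in\tbM_{\mathsmaller{\cB}}^2$ for $H\in\EP$ by approximating $M$ by $M_n\in\M_{\mathsmaller{\cA}}^{\infty}$ and using the explicit formula for the elementary integral together with Lemma \ref{lem.elemint}, then pass to general $H\in\tilde{\cI}(X)$ by continuity of the stochastic integral map and closedness of $\tbM_{\mathsmaller{\cB}}^2$. The only (immaterial) difference is the order of the two approximation layers.
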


\begin{proof}
Let $M \coloneqq X^{\mathrm{m}}$.
By Theorem \ref{thm.stochint}\ref{item.stochintsemimart}, $\into H[\d M]$ is the martingale part of $\into H[\d X]$, so the goal is to show that $\into H[\d M] \in \tbM_{\mathsmaller{\cB}}^2$ whenever $H \in \tilde{\cI}(X)$.
First, suppose $H \in \EP$, and fix a sequence $(M_n)_{n \in \N}$ in $\M_{\mathsmaller{\cA}}^{\infty}$ converging in $\M_{\mathsmaller{\cA}}^2$ to $M$.
If $n \in \N$, then $\into H[\d M_n] \in \M_{\mathsmaller{\cB}}^{\infty}$ by Lemma \ref{lem.elemint}.
Also, if $H$ is decomposed as in \eqref{eq.arbdecomp}, then
\[
\into H[\d M_n] = \sum_{i=1}^k H_i[M_n^{t_i} - M_n^{s_i}] \; \text{ and } \; \into H[\d M] = \sum_{i=1}^k H_i[M^{t_i} - M^{s_i}],
\]
which makes clear that $\into H[\d M_n] \to \into H[\d M]$ in $\M_{\mathsmaller{\cB}}^2$ as $n \to \infty$.
In particular, $\into H[\d M] \in \tbM_{\mathsmaller{\cB}}^2$.
For general $H \in \tilde{\cI}(X)$, let $(H_n)_{n \in \N}$ be a sequence in $\mathcal{EP}$ converging to $H$ in $\cL_X$.
By construction of the stochastic integral, $\into H_n[\d M] \to \into H[\d M]$ in $\M_{\mathsmaller{\cB}}^2$ as $n \to \infty$.
Since we already know that $\into H_n[\d M] \in \tbM_{\mathsmaller{\cB}}^2$ for all $n \in \N$, we conclude that $\into H[\d M] \in \tbM_{\mathsmaller{\cB}}^2$.
\end{proof}

\subsection{Tools to calculate stochastic integrals}\label{sec.RS1}

In this section, we prove two additional facts that help identify or calculate certain stochastic integrals in practice:
a substitution formula (Theorem \ref{thm.subform}) and an expression for certain stochastic integrals as limits of left-endpoint Riemann--Stieltjes sums (Proposition \ref{prop.LERSapprox}).
To begin, note that if $X$ is $L^2$-decomposable, $H \in \cI(X)$, and $Y_H \coloneqq \into H[\d X]$, then $Y_H$ is $L^2$-decomposable by Theorem \ref{thm.stochint}\ref{item.stochintsemimart}.
In particular, we can consider integrals $\into K[\d Y_H]$.
The following result says that $\into K[\d Y_H] = \into KH[\d X]$ frequently holds, as ``$\d Y_H = H[\d X]$'' suggests.

\begin{thm}[Substitution formula]\label{thm.subform}
Fix an $L^2$-decomposable process $X = X(0) + M + A \colon \R_+ \to L^2(\E_{\mathsmaller{\cA}})$, another $\mathrm{C}^*$-probability space $(\cC,(\cC_t)_{t \geq 0},\E_{\mathsmaller{\cC}})$, a stochastically $X$-integrable process $H \in \cI(X)$, and a strongly measurable map $K \colon \R_+ \to B(L^2(\E_{\mathsmaller{\cB}});L^2(\E_{\mathsmaller{\cC}}))$.
Also, write $Y_H \coloneqq \into H[\d X]$.
If there exists a sequence $(K_n)_{n \in \N}$ in $\mathrm{EP}^{2;2}(\E_{\mathsmaller{\cB}};\E_{\mathsmaller{\cC}})$ such that
\[
\int_{(0,t]} \norm{K_n-K}_{2;2}^2\norm{H}_{2;2}^2\,\d\kappa_M + \int_{(0,t]} \norm{K_n-K}_{2;2}\norm{H}_{2;2}\,\d\kappa_A \xrightarrow{n \to \infty} 0 \qquad (t \geq 0),
\]
then $K \in \cI^{\cC}(Y_H)$, $KH \in \cI^{\cC}(X)$, and
\[
\into K[\d Y_H] = \into KH[\d X].
\]
\end{thm}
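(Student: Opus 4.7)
The plan is to reduce the identity to the elementary predictable case via Lemma \ref{lem.elemint}\ref{item.subform}, then pass to the limit in two stages using the approximation hypothesis on $(K_n)$ and the continuity of the stochastic integrals. The first preparatory task is to compare the measures associated with $Y$ to those associated with $X$. By Theorem \ref{thm.stochint}\ref{item.stochintsemimart}, $Y = M_Y + A_Y$ with $M_Y \coloneqq \into H[dM] \in \tbM_{\cB}^2$ and $A_Y \coloneqq \into H[dA] \in \mathbb{FV}_{\cB}^2$. Applying Theorem \ref{thm.Itocont} to $1_{(s,t]}H$ gives $\kappa_{M_Y}((s,t]) = \|M_Y(t)-M_Y(s)\|_2^2 \leq \int_{(s,t]}\|H\|_{2;2}^2\,d\kappa_M$, and Corollary \ref{cor.LSintbdvar} gives $\kappa_{A_Y}((s,t]) \leq \int_{(s,t]}\|H\|_{2;2}\,d\kappa_A$. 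A standard monotone-class argument promotes these to the measure-domination statements $\int \phi\, d\kappa_{M_Y} \leq \int \phi\,\|H\|_{2;2}^2\, d\kappa_M$ and $\int \phi\,d\kappa_{A_Y} \leq \int \phi\,\|H\|_{2;2}\, d\kappa_A$ for all nonnegative Borel $\phi$ on $\R_+$.

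Using these, I would verify the two integrability assertions. For any Bochner measurable $F \colon \R_+ \to B(L^2(\E_{\cB});L^2(\E_{\cC}))$, the measure bounds and sub-multiplicativity $\|FH\|_{2;2}\leq\|F\|_{2;2}\|H\|_{2;2}$ yield
\[
\max\{\|F\|_{Y,t},\,\|FH\|_{X,t}\} \leq \left(\int_{(0,t]}\|F\|_{2;2}^2\|H\|_{2;2}^2\, d\kappa_M\right)^{\!1/2} + \int_{(0,t]}\|F\|_{2;2}\|H\|_{2;2}\, d\kappa_A.
\]
Taking $F = K_n - K$, the hypothesis forces $\|K_n-K\|_{Y,t}\to 0$ and $\|K_nH-KH\|_{X,t}\to 0$ for every $t$. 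Since $K_n\in\EP^{2;2}$, the first convergence places $K$ in the $\cL_Y$-closure of $\mathcal{EP}^{2;2}$, i.e. $K \in \cI^{\cC}(Y)$. To show $KH\in\cI^{\cC}(X)$, pick an elementary approximation $H_m \to H$ in $\cL_X$ (possible since $H\in\cI^{\cC}(X)$); after refining partitions, $K_nH_m\in\EP^{2;2}$, and the essential boundedness of $K_n$ gives $K_nH_m\to K_nH$ in $\cL_X$ as $m\to\infty$. A diagonal choice $m=m(n)$ then produces an elementary approximation of $KH$ in $\cL_X$.

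Next I would establish the identity for each fixed $K_n$. With $Y_m \coloneqq \into H_m[dX]$, Lemma \ref{lem.elemint}\ref{item.subform} gives $\into K_n[dY_m] = \into K_nH_m[dX]$. As $m\to\infty$, writing $K_n = 1_{\{0\}}K_{n,0} + \sum_i 1_{(s_i,t_i]}K_{n,i}$, the left-hand integral equals $\sum_i K_{n,i}[Y_m(t_i\wedge\cdot)-Y_m(s_i\wedge\cdot)]$, which converges pointwise in $L^2(\E_{\cB})$ to $\into K_n[dY]$ because $Y_m(t)\to Y(t)$ in $L^2$ (continuity of $I_X$). On the right, $\|K_nH_m-K_nH\|_{X,t}\to 0$ by essential boundedness of $K_n$ together with $H_m\to H$ in $\cL_X$, so Theorem \ref{thm.stochint}\ref{item.stochintcont} forces $\into K_nH_m[dX] \to \into K_nH[dX]$. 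Hence $\into K_n[dY] = \into K_nH[dX]$ for every $n$. Sending $n\to\infty$ and invoking continuity of $I_Y$ and $I_X$ on $\cI^{\cC}(Y)$ and $\cI^{\cC}(X)$ respectively, both sides converge to $\into K[dY]$ and $\into KH[dX]$, giving the desired identity.

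The main technical obstacle is the measure comparison in the first paragraph: extending the interval estimates from Theorem \ref{thm.Itocont} and Corollary \ref{cor.LSintbdvar} to honest inequalities between Borel measures, with a Borel-measurable density $\|H\|_{2;2}$, and then tracking integrability classes carefully as we switch between $\kappa_M,\kappa_A,\kappa_{M_Y},\kappa_{A_Y}$. A secondary bookkeeping nuisance is ensuring that $K_nH_m$ genuinely belongs to $\EP^{2;2}$ after common refinement of the partitions of $K_n$ and $H_m$ (and handling the $t=0$ contributions), but this is routine once the partitions are aligned.
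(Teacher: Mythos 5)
Your proposal is correct and follows essentially the same route as the paper's proof: the measure-domination estimates $\kappa_{M_Y}(E) \leq \int_E \|H\|_{2;2}^2\,d\kappa_M$ and $\kappa_{A_Y}(E) \leq \int_E \|H\|_{2;2}\,d\kappa_A$, the elementary case via Lemma \ref{lem.elemint}\ref{item.subform}, and then the two-stage limit (first $H_m \to H$ for fixed elementary $K_n$, then $K_n \to K$). Your explicit diagonal argument showing $KH \in \cI^{\cC}(X)$ via $K_nH_m$ is in fact slightly more careful than the paper's "in particular" at that step, but the underlying argument is the same.
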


\begin{proof}
We begin with an observation.
If $H \in \cI(X)$ and $0 \leq s \leq t$, then
\[
\kappa_{I_M(H)}((s,t])= \norm{I_M(H)(t) - I_M(H)(s)}_2^2 = \bigg\|\int_s^t H[\d M]\bigg\|_2^2 \leq \int_{(s,t]} \norm{H}_{2;2}^2 \,\d\kappa_M 
\]
by Theorem \ref{thm.stochint}\ref{item.stochintcont}--\ref{item.stochintinc}.
Also, by Corollary \ref{cor.LSintbdvar},
\[
\kappa_{I_A(H)}((s,t]) = V_{L^2(\E_{\mathsmaller{\cB}})}(I_A(H) : [s,t]) \leq \int_{(s,t]} \norm{H}_{2;2}\,\d\kappa_A.
\]
It follows from the monotone class theorem that $\kappa_{I_M(H)}(E) \leq \int_E \norm{H}_{2;2}^2 \,\d\kappa_M$ and $\kappa_{I_A(H)}(E) \leq \int_E \norm{H}_{2;2} \,\d\kappa_A$ for all $E \in \cB_{\R_+}$.

Now, suppose $H \in \mathrm{EP}^{2;2}(\E_{\mathsmaller{\cA}};\E_{\mathsmaller{\cB}})$ and $L \in \mathrm{EP}^{2;2}(\E_{\mathsmaller{\cB}};\E_{\mathsmaller{\cC}})$.
Also, decompose $L$ in the usual way as $L = 1_{\{0\}} L_0 + \sum_{i=1}^k 1_{(s_i,t_i]}L_i$.
By definition and Lemma \ref{lem.elemint}\ref{item.subform},
\begin{equation}
    \sum_{i=1}^kL_i[Y_H(t_i \wedge t) - Y_H(s_i \wedge t)] = \int_0^t L[\d Y_H] = \int_0^t LH[\d X] \qquad (t \geq 0). \label{eq.subform}
\end{equation}
By an elementary limiting argument using Theorem \ref{thm.stochint}\ref{item.stochintcont} (separately for the left-hand side and the right-hand side), \eqref{eq.subform} extends to all $H \in \cI^{\cB}(X)$.

Finally, fix $H \in \cI^{\cB}(X)$, and let $K$ be as in the statement.
First, note that the hypothesis implies that if $t \geq 0$, then $\norm{K_nH - KH}_{X,\cC,t} \to 0$ as $n \to \infty$.
In particular, $KH \in \cI^{\cC}(X)$, and $I_X^{\cC}(K_nH) \to I_X^{\cC}(KH)$ in $C_a(\R_+;L^2(\E_{\mathsmaller{\cC}}))$ as $n \to \infty$.
Next, we use the observation in the first paragraph to see that if $t \geq 0$, then
\[
\norm{K_n - K}_{Y_H,\cC,t} \leq \Bigg(\int_{(0,t]} \norm{K_n-K}_{2;2}^2\norm{H}_{2;2}^2\,\d\kappa_M\Bigg)^{\frac{1}{2}} + \int_{(0,t]} \norm{K_n-K}_{2;2}\norm{H}_{2;2}\,\d\kappa_A \xrightarrow{n \to \infty} 0.
\]
In particular, $K \in \cI^{\cC}(Y_H)$, and $I_{Y_H}^{\cC}(K_n) \to I_{Y_H}^{\cC}(K)$ in $C_a(\R_+;L^2(\E_{\mathsmaller{\cC}}))$ as $n \to \infty$.
Since we already know that $I_{Y_H}^{\cC}(K_n) = I_X^{\cC}(K_nH)$ for all $n \in \N$, this completes the proof.
\end{proof}

\begin{ex}[LCLB and LLLB]\label{ex.LLLB}
If $K \colon \R_+ \to B(L^2(\E_{\mathsmaller{\cB}});L^2(\E_{\mathsmaller{\cC}}))$ is adapted and \textbf{LCLB}, i.e., $K$ is \textbf{l}eft-\textbf{c}ontinuous and \textbf{l}ocally \textbf{b}ounded, then $K$ satisfies the hypotheses of Theorem \ref{thm.subform}.
More generally, we claim that if $K$ is \textbf{LLLB}, i.e., $K$ has \textbf{l}eft \textbf{l}imits and is \textbf{l}ocally \textbf{b}ounded, then $K_-$ satisfies the hypotheses of Theorem \ref{thm.subform}.
(Recall from Notation \ref{nota.nota}\ref{item.l/rlim} that $K_-$ is the left limit function $K_-(t) \coloneqq K(t-)$.)
Indeed, fix a sequence $(\Pi_n)_{n \in \N}$ of partitions of $\R_+$ such that $|\Pi_n| \to 0$ as $n \to \infty$, and define
\[
K_n \coloneqq 1_{[0,n]}K^{\Pi_n} = \sum_{s \in \Pi_n} 1_{(s_- \wedge n,s \wedge n]} K(s_-) \qquad (n \in \N).
\]
Since $K$ is adapted, $K_n \in \mathrm{EP}^{2;2}(\E_{\mathsmaller{\cB}};\E_{\mathsmaller{\cC}})$.
Now, for $t \geq 0$, let $\rho$ be a finite Borel measure on $[0,t]$.
Since $K$ is LLLB, Lemma \ref{lem.HPi}\ref{item.HPipw} and the dominated convergence theorem imply that if $p \in [1,\infty)$, then $K_n \to K_-$ in $L^p([0,t],\rho;B(L^2(\E_{\mathsmaller{\cB}});L^2(\E_{\mathsmaller{\cC}})))$ as $n \to \infty$.
Applying this with the measures $\rho(\d s) = \norm{H(s)}_{2;2}^2\,\kappa_M(\d s)$ and $\rho(\d s) = \norm{H(s)}_{2;2} \, \kappa_A(\d s)$, we conclude that
\[
\int_{(0,t]} \norm{K_n-K_-}_{2;2}^2\norm{H}_{2;2}^2\,\d\kappa_M + \int_{(0,t]} \norm{K_n-K_-}_{2;2}\norm{H}_{2;2}\,\d\kappa_A \xrightarrow{n \to \infty} 0,
\]
as claimed.
\end{ex}

Next comes our result on left-endpoint Riemann--Stieltjes approximations of stochastic integrals.

\begin{nota}\label{nota.bLplim}
We shall write $\mathbb{L}^p\text{-}\lim$ to indicate that a given limit of functions $\R_+ \to L^p$ is uniform on compact subsets of $\R_+$.
\end{nota}

\begin{prop}\label{prop.LERSapprox}
Let $X \colon \R_+ \to L^2(\E_{\mathsmaller{\cA}})$ be an $L^2$-decomposable process.
If $H \colon \R_+ \to B_1^{2;2}$ is adapted and LLLB, then $H_- \in \cI(X)$, and
\[
\into H_-[\d X] = \mathbb{L}^2\text{-}\lim_{\Pi \in \cP_{\R_+}}\sum_{t \in \Pi} H(t_-)[X(t \wedge \cdot) - X(t_- \wedge \cdot)]. \numberthis\label{eq.lRSform}
\]
If, in addition, $H(t) \in \cF_t$ for all $t \geq 0$, then $H_- \in \tilde{\cI}(X)$.
\end{prop}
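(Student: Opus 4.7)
The plan is to identify an explicit sequence in $\mathcal{EP}^{2;2}$ (or in $\mathcal{EP}$ under the additional hypothesis) that converges to $H_-$ in $\cL_X$, and then deduce the limit formula from the continuity of the stochastic integral map. The strategy is essentially the one outlined in Example \ref{ex.LLLB}, packaged together with Theorem \ref{thm.stochint}\ref{item.stochintcont}.

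First, I would fix an arbitrary sequence $(\Pi_n)_{n \in \N}$ of partitions of $\R_+$ with $|\Pi_n| \to 0$ and define the elementary approximant
\[
H_n \coloneqq 1_{\{0\}} H(0) + \sum_{s \in \Pi_n} 1_{(s_- \wedge n,\, s \wedge n]} H(s_-).
\]
Adaptedness of $H$ gives $H(0) \in \cF_0^{2;2}$ and $H(s_-) \in \cF_{s_-}^{2;2} \subseteq \cF_{s_- \wedge n}^{2;2}$ for every $s \in \Pi_n$, so $H_n \in \mathrm{EP}^{2;2}$; under the additional assumption $H(s) \in \cF_s$ for all $s \geq 0$, the same calculation places $H_n$ in $\mathrm{EP}$.

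Next, I would argue that $H_n \to H_-$ in $\cL_X$. Fix $T \geq 0$ and let $n \geq T$; on $[0,T]$, $H_n$ coincides with $H^{\Pi_n}$ (Notation \ref{nota.nota}\ref{item.augpart}), which by Proposition \ref{prop.HPi}\ref{item.HPipw} converges pointwise on $(0,T]$ to $H_-$ as $|\Pi_n| \to 0$. Because $H$ is locally bounded, Proposition \ref{prop.HPi}\ref{item.HPiLp} applied to the finite measures $\kappa_M|_{(0,T]}$ (with $p=2$) and $\kappa_A|_{(0,T]}$ (with $p=1$) yields
\[
\int_{(0,T]} \|H_n - H_-\|_{2;2}^2\, d\kappa_M \to 0 \quad \text{and} \quad \int_{(0,T]} \|H_n - H_-\|_{2;2}\, d\kappa_A \to 0,
\]
so $\|H_n - H_-\|_{X,T} \to 0$. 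Hence $H_- \in \cI(X)$, and analogously $H_- \in \tilde{\cI}(X)$ when $H(s) \in \cF_s$ for all $s$.

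Finally, the limit formula follows from Theorem \ref{thm.stochint}\ref{item.stochintcont}: since $H_n \to H_-$ in $\cL_X$, the integral process $\into H_n[dX]$ converges to $\into H_-[dX]$ uniformly on compact time intervals in $L^2$. Writing out Notation \ref{nota.EPint} for the elementary integral gives, for every $t \leq n$,
\[
\int_0^t H_n[dX] = \sum_{s \in \Pi_n} H(s_-)\bigl[X(s \wedge t) - X(s_- \wedge t)\bigr],
\]
so the approximating sums converge locally uniformly in $L^2$ to $\into H_-[dX]$. Since the sequence $(\Pi_n)$ was arbitrary, Lemma \ref{lem.partlim} upgrades this sequential statement to the net limit displayed in \eqref{eq.lRSform}. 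The only potentially delicate point is the $\cL_X$-convergence in the second step, but this is essentially the computation already carried out in Example \ref{ex.LLLB}, so no genuinely new obstacle arises.
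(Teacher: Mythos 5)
Your proposal is correct and follows essentially the same route as the paper's proof: approximate $H_-$ by the truncated left-endpoint elementary predictable processes $1_{[0,n]}H^{\Pi_n}$, verify convergence in $\cL_X$ via Proposition \ref{prop.HPi}\ref{item.HPiLp} applied to $\kappa_M$ and $\kappa_A$, and then combine continuity of $I_X$ with the explicit formula for elementary integrals (the paper works with the net over all partitions directly rather than passing through sequences and Lemma \ref{lem.partlim}, but this is cosmetic). The one blemish is the asserted inclusion $\cF_{s_-}^{2;2} \subseteq \cF_{s_-\wedge n}^{2;2}$, which points the wrong way when $s_- > n$ since the induced filtrations are increasing; however, those terms have empty intervals $(s_-\wedge n, s\wedge n]$ and contribute nothing, so the argument stands.
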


\begin{proof}
Let $H \colon \R_+ \to B_1^{2;2}$ be adapted and LLLB.
If $\Pi$ is a partition of $\R_+$ and $t \geq 0$, then
\begin{align*}
    1_{[0,t]}H^{\Pi} & = 1_{\{0\}} H(0) + \sum_{s \in \Pi} 1_{(s_- \wedge t,s \wedge t]} H(s_-) \\
    & = 1_{\{0\}}H(0) + \sum_{s \in \Pi : s_- < t} 1_{(s_-,s \wedge t]} H(s_-) \in \mathcal{EP}^{2;2} \subseteq \cI(X) \numberthis\label{eq.HPit}
\end{align*}
because $H$ is adapted.
(If $H(t) \in \cF_t$ for all $t \geq 0$, then $1_{[0,t]}H^{\Pi} \in \mathcal{EP}$.)
Taking $t \to \infty$, we conclude that $H^{\Pi} \in \cI(X)$ (resp., $H^{\Pi} \in \tilde{\cI}(X)$).

Next, we show that $H^{\Pi} \to H_-$ in $\cL_X$ as $|\Pi| \to 0$, from which it follows that $H_- \in \cI(X)$ (resp., $H_- \in \tilde{\cI}(X)$) and, by continuity of the stochastic integral map, that
\[
\into H_-[\d X] = \mathbb{L}^2\text{-}\lim_{\Pi \in \cP_{\R_+}}\into H^{\Pi}[\d X].
\]
To this end, note that if $t \geq 0$ and $\pi \coloneqq (\Pi \cap [0,t]) \cup \{t\} \in \cP_{[0,t]}$, then
\[
H^{\Pi}|_{[0,t]} = (H|_{[0,t]})^{\pi}
\]
by inspecting \eqref{eq.HPit}.
Therefore, $\norm{H^{\Pi} - H_-}_{X,t} \to 0$ as $|\Pi| \to 0$ by two applications of Lemma \ref{lem.HPi}\ref{item.HPipw} and the dominated convergence theorem.
Since $t \geq 0$ was arbitrary, we conclude that $H^{\Pi} \to H_-$ in $\cL_X$ as $|\Pi| \to 0$, as claimed.

Finally, we compute $\into H^{\Pi}[\d X]$.
If $t \geq 0$, then the first paragraph and the definition of integrals of elementary predictable processes yield
\[
\int_0^t H^{\Pi}[\d X] = \sum_{s \in \Pi : s_- < t} H(s_-)[X(s \wedge t) - X(s_-)] = \sum_{s \in \Pi} H(s_-)[X(s \wedge t) - X(s_- \wedge t)].
\]
This completes the proof.
\end{proof}

In particular, if $H \colon \R_+ \to \mathbb{B}$ is a $\norm{\cdot}_{2;2}$-LCLB trace biprocess, then $H \in \tilde{\cI}(X)$, and we can compute its stochastic integral against $X$ as a limit of left-endpoint Riemann--Stieltjes sums.
Here is a very common example of this kind.

\begin{ex}\label{ex.elemtrbipinteg}
In this example, we assume $(\cA,(\cA_t)_{t \geq 0},\E=\E_{\mathsmaller{\cA}}) = (\cB,(\cB_t)_{t \geq 0},\E_{\mathsmaller{\cB}})$.
By Example \ref{ex.trpoly}, if $A_1,\ldots,A_8 \colon \R_+ \to \cA$ are adapted and $L^{\infty}$-LCLB (resp., $L^{\infty}$-continuous), then the process
\[
\R_+ \ni t \mapsto H(t) \coloneqq (x \mapsto A_1(t)xA_2(t) + A_3(t)x^*A_4(t) + \E[A_5(t)x]\,A_6(t) + \E[A_7(t)x^*]\,A_8(t)) \in \mathbb{B}(\cA)
\]
is adapted and $\vertiii{\cdot}$-LCLB (resp., $\vertiii{\cdot}$-continuous).
By Proposition \ref{prop.LERSapprox}, if $X \colon \R_+ \to L^2(\E)$ is an $L^2$-decomposable process, then $H$ is stochastically $X$-integrable; and if $t \geq 0$, then
\begin{align*}
    \int_0^t H(s)[\d X(s)] & =\vcentcolon \int_0^t A_1(s)\,\d X(s)\,A_2(s) + \int_0^t A_3(s)\,\d X^*(s)\,A_4(s) \\
    & \hspace{10mm} + \int_0^t\E[A_5(s)\,\d X(s)]\,A_6(s) + \int_0^t\E[A_7(s)\,\d X^*(s)]\,A_8(s) \\
    & = L^2\text{-}\lim_{\pi \in \cP_{[0,t]}} \sum_{s \in \pi} \big(A_1(s_-)\,\Delta_sX \,A_2(s_-) + A_3(s_-)\,\Delta_sX \,A_4(s_-) \\
    & \hspace{30mm} + \E[A_5(s_-)\,\Delta_sX]\,A_6(s_-) + \E[A_7(s_-)\,\Delta_s X^*]\,A_8(s_-)\big).
\end{align*}
Interestingly, if $X = M \in \M^2$, then the last two terms vanish.
Indeed, if $\e \in \{1,\ast\}$, then
\begin{align*}
    \int_0^t \E[A_1(s)\,\d M^{\e}(s)]\,A_2(s) & = L^2\text{-}\lim_{\pi \in \cP_{[0,t]}}\sum_{s \in \pi}\E[A_1(s_-) \,\Delta_sM^{\e}] \,A_2(s_-) \\
    & = L^2\text{-}\lim_{\pi \in \cP_{[0,t]}}\sum_{s \in \pi}\E[\E[A_1(s_-) \,\Delta_sM^{\e} \mid \cA_{s_-} ]]\,A_2(s_-) \\
    & = L^2\text{-}\lim_{\pi \in \cP_{[0,t]}}\sum_{s \in \pi}\E[A_1(s_-) \,\E[\Delta_sM \mid \cA_{s_-}]^{\e}] \,A_2(s_-) = 0
\end{align*}
because $A_1$ is adapted and $M$ is a martingale.

More generally, if $n,m,d \in \N$, $P \in (\TrP_{n,1,d}^*)^m$, and $\mathbf{X} \colon \R_+ \to \cA^n$ is adapted and $L^{\infty}$-LCLB (resp., $L^{\infty}$-continuous), then the linear process $\R_+ \ni t \mapsto P(\mathbf{X}(t)) \in \mathbb{B}(\cA^d;\cA^m)$ is a $\vertiii{\cdot}$-LCLB (resp., $\vertiii{\cdot}$-continuous) multivariate trace biprocess.
In this case, if $\mathbf{Y} \colon \R_+ \to L^2(\E^{\oplus d})$ is $L^2$-decomposable and $t \geq 0$, then
\[
\int_0^t P(\mathbf{X}(s),\d\mathbf{Y}(s)) = L^2\text{-}\lim_{\pi \in \cP_{[0,t]}} \sum_{s \in \pi} P(\mathbf{X}(s_-),\Delta_s\mathbf{Y}) \in L^2(\E^{\oplus m}) = L^2(\E)^{\oplus m}
\]
by Proposition \ref{prop.LERSapprox}.
\end{ex}

\section{Quadratic covariation}\label{sec.QC}

For the duration of this section, fix three filtered $\mathrm{C}^*$-probability spaces $(\cA,(\cA_t)_{t \geq 0},\E = \E_{\mathsmaller{\cA}})$, $(\cB,(\cB_t)_{t \geq 0},\E_{\mathsmaller{\cB}})$, and $(\cC,(\cC_t)_{t \geq 0},\E_{\mathsmaller{\cC}})$.
\pagebreak

\subsection{It\^{o} product rule and reduction to martingales}\label{sec.NCIPR}

In this section, we prove a kind of noncommutative It\^{o} product rule (Theorem \ref{thm.NCIPR}) and reduce the task of constructing quadratic covariation integrals $\int_0^t \Lambda(s)[\d X(s), \d Y(s)]$ for pairs $(X,Y)$ of decomposable processes to the task of constructing these integrals when $X$ and $Y$ are martingales.
To begin, we set notation for quadratic Riemann--Stieltjes sums.

\begin{nota}[Quadratic Riemann--Stieltjes sums]\label{nota.qRS}
Fix $p,q,r \in [1,\infty]$ and processes $X \colon \R_+ \to L^p(\E_{\mathsmaller{\cA}})$ and $Y \colon \R_+ \to L^q(\E_{\mathsmaller{\cB}})$.
If $\Pi$ is a partition of $\R_+$ and $\Lambda \colon \R_+ \to B_2^{p,q;r} = B_2(L^p(\E_{\mathsmaller{\cA}}) \times L^q(\E_{\mathsmaller{\cB}});L^r(\E_{\mathsmaller{\cC}}))$ is a bilinear process, then we write
\[
\mathrm{RS}_{\Pi}^{X,Y}(\Lambda)(t) \coloneqq \sum_{s \in \Pi}\Lambda(s_-)\big[\Delta_sX^t, \Delta_sY^t\big] \in L^r(\E_{\mathsmaller{\cC}}) \qquad (t \geq 0).
\]
Recall that $X^t = X(\cdot \wedge t)$ and $Y^t = Y(\cdot \wedge t)$.
\end{nota}

Loosely speaking, $\int_0^t \Lambda(s)[\d X(s), \d Y(s)]$ will be defined as $\lim_{\Pi \in \cP_{\R_+}}\mathrm{RS}_{\Pi}^{X,Y}(\Lambda)(t)$.
Essential to making this work is the following result, which should be viewed as a generalization of the ``free It\^{o} product rule'' (\cite[Thm.\ 4.2.1]{BS1998} or \cite[Thm.\ 3.2.5]{NikitopoulosIto}).
In the theorem below, is $\int_0^t \d\Lambda(s)[X(s),Y(s)]$ the Riemann--Stieltjes integral of the integrand $t \mapsto (T \mapsto T[X(t),Y(t)])$ against the integrator $t \mapsto \Lambda(t)$.

\begin{nota}\label{nota.Q}
Write $\mathrm{Q}_0 = \mathrm{Q}_0(\cA \times \cB;\cC)$ for the set of adapted, bilinear processes $\Lambda \colon \R_+ \to \mathbb{B}_2 = \mathbb{B}_2(\cA \times \cB; \cC)$ that are left-continuous and have locally bounded variation with respect to $\vertiii{\cdot}_2$ (Notation \ref{nota.bddlin}).
Note that such processes are $\vertiii{\cdot}_2$-LCLB.
\end{nota}

\begin{thm}[Noncommutative It\^{o} product rule]\label{thm.NCIPR}
If $X \colon \R_+ \to \cA$ and $Y \colon \R_+ \to \cB$ are $L^{\infty}$-decomposable processes and $\Lambda \in \mathrm{Q}_0$, then
\begin{equation}
\begin{split}
    \mathbb{L}^2\text{-}\lim_{\Pi \in \cP_{\R_+}} \mathrm{RS}_{\Pi}^{X,Y}(\Lambda) = \Lambda[X&, Y] - \Lambda(0)[X(0), Y(0)] - \into \d\Lambda(t)[X(t), Y(t)] \\
    &  - \into \Lambda(t)[\d X(t), Y(t)] - \into \Lambda(t)[X(t), \d Y(t)].
\end{split}\label{eq.QC1}
\end{equation}
Review Notation \ref{nota.bLplim} for the meaning of $\mathbb{L}^2$ above.
\end{thm}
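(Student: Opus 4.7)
The plan is to establish Eq.~\eqref{eq.QC1} via a summation-by-parts argument. Fix $T > 0$ and $\Pi \in \cP_{[0,T]}$, and write $\Delta_t X$, $\Delta_t Y$, $\Delta_t \Lambda$ for the increments. The bilinearity of each $\Lambda(t)$ gives the algebraic identity
\begin{align*}
    \Delta_t\big(\Lambda[X,Y]\big) &= \Lambda(t_-)[\Delta_t X, \Delta_t Y] + \Lambda(t_-)[\Delta_t X, Y(t_-)] + \Lambda(t_-)[X(t_-), \Delta_t Y] \\
    &\quad + \Delta_t\Lambda[X(t_-), Y(t_-)] + \Delta_t\Lambda[\Delta_t X, Y(t_-)] \\
    &\quad + \Delta_t\Lambda[X(t_-), \Delta_t Y] + \Delta_t\Lambda[\Delta_t X, \Delta_t Y].
\end{align*}
Summing over $t \in \Pi$, the left-hand side telescopes to $\Lambda(T)[X(T), Y(T)] - \Lambda(0)[X(0), Y(0)]$, and the first summand on the right is exactly $\mathrm{RS}_{\Pi}^{X,Y}(\Lambda)(T)$. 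The task is to identify the limits of the three ``first-order'' sums and to show that the three remaining ``cross'' sums vanish as $|\Pi| \to 0$.

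For the two first-order stochastic-integral sums I would introduce the auxiliary linear-map-valued processes $H(t) \coloneqq \Lambda(t)[\,\cdot\,, Y(t)]$ and $K(t) \coloneqq \Lambda(t)[X(t),\,\cdot\,]$. Since $\vertiii{\Lambda(t)}_2 < \infty$ and $X, Y$ are $L^\infty$-continuous, both are $\norm{\cdot}_{2;2}$-LCLB. Adaptedness is the key point: for $u \geq t$ and $x \in L^2(\cA_u)$, using $\Lambda(t) \in \cF_{2,t}$ and $Y(t) \in \cB_t \subseteq \cB_u$,
\[
\E_{\mathsmaller{\cC}}[H(t)x \mid \cC_u] = \Lambda(t)\big[x, \E_{\mathsmaller{\cB}}[Y(t) \mid \cB_u]\big] = \Lambda(t)[x, Y(t)] = H(t)\,\E_{\mathsmaller{\cA}}[x \mid \cA_u],
\]
so $H(t) \in \cF_t^{2;2}$, and symmetrically for $K$. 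Proposition~\ref{prop.LERSapprox} then yields
\[
\sum_{t \in \Pi}\Lambda(t_-)[\Delta_t X^T, Y(t_-)] \to \int_0^T \Lambda(t)[dX(t), Y(t)]
\]
in $L^2(\cC)$, with the analogous convergence for the $dY$-sum. For the remaining first-order sum I would view $t \mapsto (\Xi \mapsto \Xi[X(t), Y(t)])$ as a continuous map $\R_+ \to B(\mathbb{B}_2; L^2(\cC))$ (operator norm bounded by $\norm{X(t)}_{\infty}\norm{Y(t)}_{\infty}$) and invoke Corollary~\ref{cor.contRSint} against the $\vertiii{\cdot}_2$-FV integrator $\Lambda$ to obtain $\sum_{t \in \Pi}\Delta_t\Lambda[X(t_-), Y(t_-)] \to \int_0^T d\Lambda(t)[X(t), Y(t)]$ locally uniformly in $T$.

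The three cross sums all vanish in $L^2$ by an elementary modulus-of-continuity estimate; for instance,
\[
\Bigg\|\sum_{t \in \Pi}\Delta_t\Lambda[\Delta_t X, Y(t_-)]\Bigg\|_2 \leq \Big(\max_{t \in \Pi}\norm{\Delta_t X}_{\infty}\Big)\,\Big(\sup_{s \in [0,T]}\norm{Y(s)}_{\infty}\Big)\,V_{\mathbb{B}_2}(\Lambda:[0,T]),
\]
which tends to $0$ by uniform $L^\infty$-continuity of $X$ on $[0,T]$; the other two are bounded analogously using $\max_{t \in \Pi}\norm{\Delta_t Y}_\infty$ and/or the product of both maxima. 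Rearranging the telescoped identity then delivers Eq.~\eqref{eq.QC1} pointwise in $T$. Upgrading to the required $\mathbb{L}^2$-local-uniform convergence is automatic, since every estimate depends on $T$ only through the $L^\infty$-moduli and sup norms of $X,Y$ and the total $\vertiii{\cdot}_2$-variation of $\Lambda$, all finite and monotone on compact intervals, and the limits from Proposition~\ref{prop.LERSapprox} and Corollary~\ref{cor.contRSint} are themselves locally uniform.

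The main obstacle will be purely organizational bookkeeping: isolating the seven terms of the bilinear expansion and keeping the adaptedness argument for $H$ and $K$ straight, as this is the precise place where the definition of $\cF_{2,t}$ (conditional expectation passing through each argument separately, conditioned on the other being already in the filtration at time $t$) is forced upon us; without it, one cannot reduce the stochastic-integral sums to left-endpoint Riemann--Stieltjes approximations of genuine stochastic integrals.
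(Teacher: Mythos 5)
Your proposal follows the paper's proof essentially step for step: the same telescoping bilinear expansion (the paper merely merges your two terms $\Delta_t\Lambda[\Delta_tX,Y(t_-)]$ and $\Delta_t\Lambda[\Delta_tX,\Delta_tY]$ into the single term $\Delta_t\Lambda[\Delta_tX,Y(t)]$), the same identification of the two stochastic-integral sums via Proposition \ref{prop.LERSapprox} applied to $H(t)=\Lambda(t)[\cdot,Y(t)]$ and $K(t)=\Lambda(t)[X(t),\cdot]$, the same use of Corollary \ref{cor.contRSint} for the $d\Lambda$ sum, and the same modulus-of-continuity-times-variation bound for the cross terms; the locally uniform upgrade is likewise handled in the paper exactly as you indicate, by running the identity with $\pi=(\Pi\cap[0,t])\cup\{t\}$.

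The one step that does not do what it claims is your adaptedness verification for $H$. You fix $x\in L^2(\cA_u,\E_{\mathsmaller{\cA}})$ and derive $\E_{\mathsmaller{\cC}}[H(t)x\mid\cC_u]=H(t)\,\E_{\mathsmaller{\cA}}[x\mid\cA_u]$; but for such $x$ the right-hand side is just $H(t)x$, so the identity you establish is vacuous, whereas membership in $\cF_t^{2;2}$ requires it for \emph{all} $x\in L^2(\cA,\E_{\mathsmaller{\cA}})$. Moreover, the first equality in your chain invokes the branch of the definition of $\cF_{2,t}$ in which the \emph{first} slot is frozen in the time-$u$ filtration and the second slot is free, which is the wrong branch for this purpose. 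The correct one-line argument is the mirror image: since $Y(t)\in\cB_t\subseteq\cB_u$, the second slot of $\Lambda(t)\in\cF_{2,t}$ is occupied by an element of the time-$u$ filtration, so the defining property gives, for \emph{arbitrary} $x\in L^2(\cA,\E_{\mathsmaller{\cA}})$, $\E_{\mathsmaller{\cC}}[\Lambda(t)[x,Y(t)]\mid\cC_u]=\Lambda(t)[\E_{\mathsmaller{\cA}}[x\mid\cA_u],Y(t)]$, i.e. $H(t)\in\cF_t^{2;2}$; symmetrically $K(t)\in\cF_t^{2;2}$ using $X(t)\in\cA_u$. With that repair the argument is sound and matches the paper's.
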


\begin{proof}
If $t \geq 0$ and $\pi$ is a partition of $[0,t]$, then
\begin{align*}
    \delta_t & \coloneqq \Lambda(t)[X(t), Y(t)] - \Lambda(0)[X(0), Y(0)] = \sum_{s \in \pi}\big(\Lambda(s)[X(s), Y(s)] - \Lambda(s_-)[X(s_-), Y(s_-)]\big) \\
    & = \sum_{s \in \pi}\big((\Lambda(s_-) + \Delta_s\Lambda)[X(s_-) + \Delta_sX, Y(s_-) + \Delta_sY] - \Lambda(s_-)[X(s_-), Y(s_-)]\big) \\
    & = \sum_{s \in \pi}\big(\Lambda(s_-)[\Delta_sX, Y(s_-)] + \Lambda(s_-)[X(s_-), \Delta_sY] + \Lambda(s_-)[\Delta_sX, \Delta_sY] \\
    & \hspace{13mm} + \Delta_s\Lambda[X(s_-), Y(s_-)] + \Delta_s\Lambda[\Delta_sX, Y(s_-)] + \Delta_s\Lambda[X(s_-), \Delta_sY] + \Delta_s\Lambda[\Delta_sX,\Delta_sY]\big).
\end{align*}
Now, let $\Pi$ be a partition of $\R_+$.
Applying the above with $\pi = (\Pi \cap [0,t]) \cup \{t\}$ yields
\begin{align*}
    \mathrm{RS}_{\Pi}^{X,Y}(\Lambda)(t) & = \Lambda(t)[X(t), Y(t)] - \Lambda(0)[X(0), Y(0)] - \sum_{s \in \Pi}\Delta_s\Lambda^t[X(s_-), Y(s_-)] \\
    & \hspace{12mm} - \sum_{s \in \Pi} \Lambda(s_-)\big[\Delta_sX^t, Y(s_-)\big] - \sum_{s \in \Pi} \Lambda(s_-)\big[X(s_-), \Delta_sY^t\big] \\
    & \hspace{12mm} - \underbrace{\sum_{s \in \Pi} \big(\Delta_s\Lambda^t\big[\Delta_sX^t,\Delta_sY^t\big] + \Delta_s\Lambda^t\big[\Delta_sX^t, Y(s_-)\big] + \Delta_s\Lambda^t\big[X(s_-), \Delta_sY^t\big]\big).}_{=\vcentcolon \,\e_{_{\Pi}}(t)}
\end{align*}
Let us now investigate each term above.

Fix $p,q,r \in [1,\infty]$ with $1/p+1/q \leq 1/r$.
First, note that if $t \geq 0$, then
\begin{align*}
    \sup_{0 \leq s \leq t}\norm{\e_{\Pi}(s)}_r & \leq \bigg( \sup_{u,v \leq t : |u-v| \leq |\Pi|}\norm{X(u) - X(v)}_p \sup_{u,v \leq t : |u-v| \leq |\Pi|}\norm{Y(u) - Y(v)}_q \\
    & \hspace{15mm} + \sup_{u,v \leq t : |u-v| \leq |\Pi|}\norm{X(u) - X(v)}_p \sup_{0 \leq s \leq t}\norm{Y(s)}_q \\
    & \hspace{15mm} + \sup_{0 \leq s \leq t}\norm{X(s)}_p \sup_{u,v \leq t : |u-v| \leq |\Pi|}\norm{Y(u) - Y(v)}_q\bigg)\, V_{B_2^{p,q;r}}(\Lambda : [0,t]) \xrightarrow[\Pi \in \cP_{\R_+}]{|\Pi| \to 0} 0
\end{align*}
because $X$ is $L^p$-continuous, $Y$ is $L^q$-continuous, and $\Lambda$ has locally bounded variation with respect to $\norm{\cdot}_{p,q;r} \leq \vertiii{\cdot}_2$.
Second, for the same reasons, Proposition \ref{prop.contRSint} says that
\[
\mathbb{L}^r\text{-}\lim_{\Pi \in \cP_{\R_+}}\sum_{t \in \Pi}(\Lambda(t \wedge \cdot) - \Lambda(t_- \wedge \cdot))[X(t_-), Y(t_-)] = \into \d\Lambda(t)[X(t), Y(t)].
\]
Finally, $\Lambda \colon \R_+ \to \mathbb{B}_2$ is adapted and $\norm{\cdot}_{2,\infty;2} \leq \vertiii{\cdot}_2$-LCLB.
In addition, $Y$ is adapted and $L^{\infty}$-continuous.
Thus, the linear process
\[
\R_+ \ni t \mapsto H(t) \coloneqq \Lambda(t)[\cdot, Y(t)] \in B(L^2(\E_{\mathsmaller{\cA}});L^2(\E_{\mathsmaller{\cC}}))
\]
is adapted and $\norm{\cdot}_{2;2}$-LCLB.
We then get from Proposition \ref{prop.LERSapprox} that $H \in \cI(X)$ and
\[
\mathbb{L}^2\text{-}\lim_{\Pi \in \cP_{\R_+}}\sum_{t \in \Pi}\Lambda(t_-)[X(t \wedge \cdot) - X(t_- \wedge \cdot), Y(t_-)] = \into H(t)[\d X(t)] = \into \Lambda(t)[\d X(t),Y(t)].
\]
Similarly, the linear process $\R_+ \ni t \mapsto \Lambda(t)[X(t),\cdot] \in B(L^2(\E_{\mathsmaller{\cB}});L^2(\E_{\mathsmaller{\cC}}))$ belongs to $\cI(Y)$, and
\[
\mathbb{L}^2\text{-}\lim_{\Pi \in \cP_{\R_+}}\sum_{t \in \Pi}\Lambda(t_-)[X(t_-), Y(t \wedge \cdot) - Y(t_- \wedge \cdot)] = \into \Lambda(t)[X(t),\d Y(t)].
\]
Putting it all together, we obtain \eqref{eq.QC1}.
\end{proof}

The most important takeaway from the result above is that $\mathrm{RS}_{\Pi}^{X,Y}(\Lambda)$ does, in fact, have a limit as $|\Pi| \to 0$ when $X$, $Y$, and $\Lambda$ are sufficiently nice.
However, the assumptions on $\Lambda$ in this noncommutative It\^{o} product rule are too strong for most applications.
Namely, in order to prove It\^{o}'s formula, we need to weaken the bounded variation assumption on $\Lambda$ substantially.
To begin this process, we show that we only need to treat the case when the decomposable processes of interest are martingales.

\begin{prop}\label{prop.QCofFV}
Fix $p,q,r \in [1,\infty]$ and processes $X \colon \R_+ \to L^p(\E_{\mathsmaller{\cA}})$ and $Y \colon \R_+ \to L^q(\E_{\mathsmaller{\cB}})$.
Suppose $\Lambda \colon \R_+ \to B_2^{p,q;r}$ is locally $\norm{\cdot}_{p,q;r}$-bounded.
If 
\begin{enumerate}[label=(\roman*),font=\normalfont]
    \item $X$ is $L^p$-continuous and $Y$ is $L^q$-FV, or
    \item $X$ is $L^p$-FV and $Y$ is $L^q$-continuous,
\end{enumerate}
then
\[
\mathbb{L}^r\text{-}\lim_{\Pi^* \in \cP_{\R_+}^*}\sum_{t \in \Pi} \Lambda(t_*)[X(t \wedge \cdot) - X(t_- \wedge \cdot), Y(t \wedge \cdot) - Y(t_- \wedge \cdot)] = 0.
\]
In particular, $\mathbb{L}^r\text{-}\lim_{\Pi \in \cP_{\R_+}}\mathrm{RS}_{\Pi}^{X,Y}(\Lambda) = 0$.
\end{prop}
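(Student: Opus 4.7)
The plan is to handle case (i) — $X$ is $L^p$-continuous and $Y$ is $L^q$-FV — and deduce case (ii) by the entirely symmetric argument obtained from swapping the two arguments of $\Lambda$ (which is only assumed to be real bilinear, with no preferred argument). The concluding claim that $\mathbb{L}^r\text{-}\lim_{|\Pi| \to 0}\mathrm{RS}_{\Pi}^{X,Y}(\Lambda) = 0$ then follows from the general augmented-partition statement by specializing to $\xi(s) = s_-$, i.e., $s_* = s_-$.

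Fixing $T \geq 0$, I would bound $\sum_{s \in \Pi}\Lambda(s_*)[\Delta_s X^t, \Delta_s Y^t]$ uniformly in $t \in [0,T]$ via the triangle inequality followed by the trilinear estimate
\[
\|\Lambda(s_*)[\Delta_s X^t, \Delta_s Y^t]\|_r \leq \|\Lambda(s_*)\|_{p,q;r}\,\|\Delta_s X^t\|_p\,\|\Delta_s Y^t\|_q.
\]
I would then pull out the sup of $\|\Lambda(s_*)\|_{p,q;r}$ over $s_* \in [0,T+|\Pi|]$ (finite by local $\|\cdot\|_{p,q;r}$-boundedness of $\Lambda$, uniformly in $\Pi$ once $|\Pi| \leq 1$, say) and the sup of $\|\Delta_s X^t\|_p$ over $s \in \Pi$ (bounded by the $L^p$-modulus of continuity $\omega_X(|\Pi|;[0,T])$ of $X$ on $[0,T]$). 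What remains is the telescoping-type sum $\sum_{s \in \Pi}\|\Delta_s Y^t\|_q \leq V_{L^q(\E_{\mathsmaller{\cB}})}(Y : [0,T])$, which is finite by the $L^q$-FV hypothesis on $Y$. Combining these three factors gives a bound of the form $C_{\Lambda,Y,T}\cdot \omega_X(|\Pi|;[0,T])$, independent of $t \in [0,T]$.

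The final step is to let $|\Pi| \to 0$: since $X$ is $L^p$-continuous on the compact interval $[0,T]$, it is uniformly $L^p$-continuous there, so $\omega_X(|\Pi|;[0,T]) \to 0$. The desired locally uniform $L^r$-convergence to $0$ follows.

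I do not anticipate a serious obstacle — the argument is a direct trilinear estimate exploiting the asymmetry between continuity (one small factor per subinterval, which collectively shrink with $|\Pi|$) and finite variation (increments are summable). The only mild bookkeeping subtleties are that summands with $s_- \geq t$ vanish automatically because then $\Delta_s X^t = \Delta_s Y^t = 0$, and for the remaining indices one only has $s_* \in [0, t + |\Pi|]$ rather than $[0,t]$, which is precisely why local $\|\cdot\|_{p,q;r}$-boundedness of $\Lambda$ on all of $\R_+$ (not just on $[0,T]$) is the right hypothesis.
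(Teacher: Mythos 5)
Your proof is correct and follows essentially the same route as the paper's: the same termwise trilinear estimate, with the sum controlled by the product of the local bound on $\Lambda$, the $L^p$-modulus of continuity of the continuous factor, and the total variation of the FV factor (the paper simply writes the two cases together as a single $\min$ of the two symmetric bounds). Your bookkeeping remark that $s_*$ may lie slightly beyond $t$ when $s_- < t \leq s$ is a careful touch that the paper's choice of constant $C_t = \sup_{0 \leq s \leq t}\norm{\Lambda(s)}_{p,q;r}$ quietly glosses over.
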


\begin{proof}
Let $t \geq 0$, and define $C_t \coloneqq \sup\{\norm{\Lambda(s)}_{p,q;r} : 0 \leq s \leq t\}< \infty$.
If $\Pi^* \in \cP_{\R_+}^*$, then
\begin{align*}
    \Bigg\|\sum_{s \in \Pi} \Lambda(s_*)\big[\Delta_sX^t, \Delta_sY^t\big]\Bigg\|_r & \leq \sum_{s \in \Pi}\big\|\Lambda(s_*)\big[\Delta_sX^t, \Delta_sY^t\big]\big\|_r \leq C_t\sum_{s \in \Pi}\big\|\Delta_sX^t\big\|_p \big\|\Delta_sY^t\big\|_q \\
    & \leq C_t \min\Big\{V_{L^p(\E_{\mathsmaller{\cA}})}(X : [0,t])\sup_{u,v \leq t : |u-v| \leq |\Pi|} \norm{Y(u) - Y(v)}_q, \\
    & \hspace{22.5mm} V_{L^q(\E_{\mathsmaller{\cB}})}(Y : [0,t])\sup_{u,v \leq t : |u-v| \leq |\Pi|} \norm{X(u) - X(v)}_p\Big\}.
\end{align*}
In either case, the result follows.
\end{proof}

\begin{cor}\label{cor.reducetomart}
Let $p,q,r \in [1,\infty]$.
If $X \in C(\R_+;L^p(\E_{\mathsmaller{\cA}}))$, $Y \in C(\R_+;L^q(\E_{\mathsmaller{\cB}}))$, $A \in \mathbb{FV}_{\mathsmaller{\cA}}^p$, $B \in \mathbb{FV}_{\mathsmaller{\cB}}^q$, and $\Lambda \colon \R_+ \to B_2^{p,q;r}$ is locally $\norm{\cdot}_{p,q;r}$-bounded, then
\[
\mathbb{L}^r\text{-}\lim_{\Pi \in \cP_{\R_+}}\Big(\mathrm{RS}_{\Pi}^{X+A,Y+B}(\Lambda) - \mathrm{RS}_{\Pi}^{X,Y}(\Lambda)\Big) = 0.
\]
\end{cor}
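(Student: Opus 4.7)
The plan is to exploit bilinearity of $\Lambda(s_-)$ to decompose the difference $\mathrm{RS}_{\Pi}^{X+A,Y+B}(\Lambda) - \mathrm{RS}_{\Pi}^{X,Y}(\Lambda)$ into a sum of three quadratic Riemann--Stieltjes sums, each of which falls into the regime of Proposition \ref{prop.QCofFV}. Since $\Delta_s(X+A)^t = \Delta_s X^t + \Delta_s A^t$ and similarly for $Y+B$, bilinearity of $\Lambda(s_-)$ yields
\[
\Lambda(s_-)\big[\Delta_s(X+A)^t,\Delta_s(Y+B)^t\big] = \Lambda(s_-)\big[\Delta_sX^t,\Delta_sY^t\big] + \Lambda(s_-)\big[\Delta_sX^t,\Delta_sB^t\big] + \Lambda(s_-)\big[\Delta_sA^t,\Delta_sY^t\big] + \Lambda(s_-)\big[\Delta_sA^t,\Delta_sB^t\big].
\]
Summing over $s \in \Pi$ and subtracting $\mathrm{RS}_{\Pi}^{X,Y}(\Lambda)(t)$, I get the identity
\[
\mathrm{RS}_{\Pi}^{X+A,Y+B}(\Lambda) - \mathrm{RS}_{\Pi}^{X,Y}(\Lambda) = \mathrm{RS}_{\Pi}^{X,B}(\Lambda) + \mathrm{RS}_{\Pi}^{A,Y}(\Lambda) + \mathrm{RS}_{\Pi}^{A,B}(\Lambda),
\]
valid as equality of $L^r(\E_{\mathsmaller{\cC}})$-valued processes.

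Next, I would apply Proposition \ref{prop.QCofFV} (with the ``left-endpoint'' augmentation $\xi(t) = t_-$, i.e.\ $t_* = t_-$) to each of the three cross terms on the right-hand side. For $\mathrm{RS}_{\Pi}^{X,B}(\Lambda)$, the process $X$ is $L^p$-continuous and $B \in \mathbb{FV}_{\mathsmaller{\cB}}^q$ is $L^q$-FV, so case (i) of the proposition applies. For $\mathrm{RS}_{\Pi}^{A,Y}(\Lambda)$, the process $A \in \mathbb{FV}_{\mathsmaller{\cA}}^p$ is $L^p$-FV and $Y$ is $L^q$-continuous, so case (ii) applies. For $\mathrm{RS}_{\Pi}^{A,B}(\Lambda)$, note that $A \in \mathbb{FV}_{\mathsmaller{\cA}}^p \subseteq C_a(\R_+;L^p(\E_{\mathsmaller{\cA}}))$ is in particular $L^p$-continuous while $B$ is $L^q$-FV, so case (i) applies again (or case (ii) after swapping roles). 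In each application, the local $\norm{\cdot}_{p,q;r}$-boundedness hypothesis on $\Lambda$ is exactly the standing assumption of the corollary.

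Each of the three limits thus vanishes in $\mathbb{L}^r$, i.e.\ locally uniformly in $L^r(\E_{\mathsmaller{\cC}})$, so summing gives $\mathbb{L}^r\text{-}\lim_{|\Pi| \to 0}\big(\mathrm{RS}_{\Pi}^{X+A,Y+B}(\Lambda) - \mathrm{RS}_{\Pi}^{X,Y}(\Lambda)\big) = 0$. There is no substantial obstacle here: the argument is purely algebraic bookkeeping followed by three invocations of a proposition proved just above. The only minor subtlety to check is that Proposition \ref{prop.QCofFV}'s conclusion for the general augmented-partition sum implies the conclusion for the specific left-endpoint sum, which is immediate since $(\Pi,\xi)$ with $\xi(t) = t_-$ is a legitimate augmented partition.
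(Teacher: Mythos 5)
Your proof is correct and takes essentially the same approach as the paper: both decompose the difference by bilinearity into cross terms and kill each one with Proposition \ref{prop.QCofFV}. The only cosmetic difference is that the paper groups the cross terms as $\mathrm{RS}_{\Pi}^{X,B}(\Lambda) + \mathrm{RS}_{\Pi}^{A,Y+B}(\Lambda)$ (two applications of the proposition) whereas you use three separate terms, which is equally valid since $A \in \mathbb{FV}_{\mathsmaller{\cA}}^p$ is in particular $L^p$-continuous.
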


\begin{proof}
If $\Pi$ is a partition of $\R_+$, then
\[
\mathrm{RS}_{\Pi}^{X+A,Y+B}(\Lambda) = \mathrm{RS}_{\Pi}^{X,Y}(\Lambda) + \mathrm{RS}_{\Pi}^{X,B}(\Lambda) + \mathrm{RS}_{\Pi}^{A,Y+B}(\Lambda).
\]
By Proposition \ref{prop.QCofFV},
\[
\mathbb{L}^r\text{-}\lim_{\Pi \in \cP_{\R_+}}\mathrm{RS}_{\Pi}^{X,B}(\Lambda) = \mathbb{L}^r\text{-}\lim_{\Pi \in \cP_{\R_+}}\mathrm{RS}_{\Pi}^{A,Y+B}(\Lambda) = 0.
\]
The result follows.
\end{proof}

\subsection{Construction of quadratic covariation}\label{sec.QVextend}

We now use the results of the previous section to start building a general definition of quadratic covariation of a pair of $\tilde{L}^2$-decomposable processes (Definition \ref{def.tildesemi}).

\begin{lem}\label{lem.cheapbd}
If $M \colon \R_+ \to L^2(\E_{\mathsmaller{\cA}})$ and $N \colon \R_+ \to L^2(\E_{\mathsmaller{\cB}})$ are $L^2$-martingales and $\Lambda \colon \R_+ \to B_2^{2,2;1}$ is arbitrary, then
\[
\sup_{0 \leq s \leq t}\Big\|\mathrm{RS}_{\Pi}^{M,N}(\Lambda)(s)\Big\|_1 \leq \sup_{0 \leq s \leq t}\norm{\Lambda(s)}_{2,2;1} \norm{M(t) - M(0)}_2 \norm{N(t) - N(0)}_2 \qquad (\Pi \in \cP_{\R_+}, \; t \geq 0).
\]
\end{lem}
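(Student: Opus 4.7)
The plan is straightforward: apply the triangle inequality, then noncommutative Hölder, then Cauchy--Schwarz, then use the orthogonality of martingale increments (Lemma \ref{lem.M(t)-M(s)}) to telescope, and finally use Eq. \eqref{eq.norminc}. Throughout, I would exploit the fact noted in Example \ref{ex.elem} that the stopped process $M^s$ (resp.\ $N^s$) is still an $L^2$-martingale.

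Fix $s \in [0,t]$ and a partition $\Pi$ of $\R_+$. First I would estimate
\[
\big\|\mathrm{RS}_{\Pi}^{M,N}(\Lambda)(s)\big\|_1 \leq \sum_{u \in \Pi} \big\|\Lambda(u_-)\big[\Delta_u M^s,\Delta_u N^s\big]\big\|_1 \leq \sup_{0 \leq r \leq t}\norm{\Lambda(r)}_{2,2;1}\sum_{u \in \Pi}\big\|\Delta_u M^s\big\|_2 \big\|\Delta_u N^s\big\|_2,
\]
using the definition of $\norm{\cdot}_{2,2;1}$ in the second inequality. Then Cauchy--Schwarz on the (finitely supported) sum gives
\[
\sum_{u \in \Pi}\big\|\Delta_u M^s\big\|_2 \big\|\Delta_u N^s\big\|_2 \leq \Bigg(\sum_{u \in \Pi}\big\|\Delta_u M^s\big\|_2^2\Bigg)^{\frac{1}{2}}\Bigg(\sum_{u \in \Pi}\big\|\Delta_u N^s\big\|_2^2\Bigg)^{\frac{1}{2}}.
\]

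Next I would handle each of the two sums of squared increments. Since $M^s \in \M^2$, Lemma \ref{lem.M(t)-M(s)} implies that $\|\Delta_u M^s\|_2^2 = \|M^s(u)\|_2^2 - \|M^s(u_-)\|_2^2$ for every $u \in \Pi$. The sum therefore telescopes: only finitely many terms are nonzero (those with $u_- < s$), and they collapse to
\[
\sum_{u \in \Pi}\big\|\Delta_u M^s\big\|_2^2 = \|M^s(\infty)\|_2^2 - \|M^s(0)\|_2^2 = \|M(s)\|_2^2 - \|M(0)\|_2^2 = \|M(s) - M(0)\|_2^2,
\]
where the final equality is again Lemma \ref{lem.M(t)-M(s)}. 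Identically for $N$.

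Finally, $M - M(0)$ is also an $L^2$-martingale, so Eq. \eqref{eq.norminc} gives $\|M(s) - M(0)\|_2 \leq \|M(t) - M(0)\|_2$ for $s \leq t$, and similarly for $N$. Chaining all of the above and taking a supremum over $s \in [0,t]$ on the left-hand side yields the stated inequality. There is no real obstacle here; the only point requiring care is bookkeeping the $u = 0$ and ``$u$ past $s$'' terms of the partition sum (for which $\Delta_u M^s = 0$) so the telescoping is valid.
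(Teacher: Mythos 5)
Your proof is correct and follows essentially the same route as the paper's: triangle inequality and the $\norm{\cdot}_{2,2;1}$ bound, Cauchy--Schwarz on the increment sum, telescoping via Lemma \ref{lem.M(t)-M(s)}, and Eq. \eqref{eq.norminc} applied to the martingale $M - M(0)$ (resp.\ $N - N(0)$) to pass from $s$ to $t$. The bookkeeping of the vanishing terms past $s$ is handled correctly, so there is nothing to add.
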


\begin{proof}
Write $C_t \coloneqq \sup\{\norm{\Lambda(s)}_{2,2;1} : 0 \leq s \leq t\}$.
If $\Pi$ is a partition of $\R_+$, then
\begin{align*}
    \Big\|\mathrm{RS}_{\Pi}^{M,N}(\Lambda)(t)\Big\|_1 & \leq \sum_{s \in \Pi} \norm{\Lambda(s_-)}_{2,2;1} \norm{\Delta_sM^t}_2 \norm{\Delta_sN^t}_2 \\
    & \leq C_t \Bigg(\sum_{s \in \Pi} \norm{\Delta_sM^t}_2^2\Bigg)^{\frac{1}{2}} \Bigg(\sum_{s \in \Pi} \norm{\Delta_sN^t}_2^2\Bigg)^{\frac{1}{2}} \\
    & = C_t \norm{M(t) - M(0)}_2 \norm{N(t) - N(0)}_2
\end{align*}
by the Cauchy--Schwarz inequality and Lemma \ref{lem.M(t)-M(s)}.
Thus,
\begin{align*}
    \sup_{0 \leq s \leq t}\Big\|\mathrm{RS}_{\Pi}^{M,N}(\Lambda)(s)\Big\|_1 & \leq C_t\sup_{0 \leq s \leq t} \norm{M(s) - M(0)}_2 \sup_{0 \leq s \leq t}\norm{N(s) - N(0)}_2 \\
    & = C_t\norm{M(t) - M(0)}_2\norm{N(t) - N(0)}_2
\end{align*}
by \eqref{eq.norminc}.
\end{proof}

\begin{thm}[QC construction I]\label{thm.QC1}
Write $\mathrm{Q} = \mathrm{Q}(\cA \times \cB ; \cC)$ for the closure of $\mathrm{Q}_0$ in the complex Fr\'{e}chet space $\ell_{\loc}^{\infty}(\R_+;B_2^{2,2;1})$ of locally $\norm{\cdot}_{2,2;1}$-bounded maps $\R_+ \to B_2^{2,2;1}$ with the topology of uniform convergence on compact sets.
\begin{enumerate}[label=(\roman*),font=\normalfont]
    \item The trilinear map
    \[
    \M_{\mathsmaller{\cA}}^{\infty} \times \M_{\mathsmaller{\cB}}^{\infty} \times \mathrm{Q}_0 \ni (M,N,\Lambda) \mapsto \mathbb{L}^1\text{-}\lim_{\Pi \in \cP_{\R_+}}\mathrm{RS}_{\Pi}^{M,N}(\Lambda) \in C_a(\R_+;L^1(\E_{\mathsmaller{\cC}}))
    \]
    extends uniquely to a continuous trilinear map $C \colon \tbM_{\mathsmaller{\cA}}^2 \times \tbM_{\mathsmaller{\cB}}^2 \times \mathrm{Q} \to C_a(\R_+;L^1(\E_{\mathsmaller{\cC}}))$ such that for all $(M,N,\Lambda) \in \tbM_{\mathsmaller{\cA}}^2 \times \tbM_{\mathsmaller{\cB}}^2 \times \mathrm{Q}$ and $t \geq 0$,\label{item.cheapQC}
    \begin{equation}
        \sup_{0 \leq s \leq t} \norm{C[M,N,\Lambda](s)}_1 \leq \sup_{0 \leq s \leq t} \norm{\Lambda(s)}_{2,2;1} \norm{M(t) - M(0)}_2 \norm{N(t) - N(0)}_2. \label{eq.Qbd}
    \end{equation}
    \item If $X \colon \R_+ \to L^2(\E_{\mathsmaller{\cA}})$ and $Y \colon \R_+ \to L^2(\E_{\mathsmaller{\cB}})$ are $\tilde{L}^2$-decomposable and $\Lambda \in \mathrm{Q}$ (e.g., if $\Lambda \colon \R_+ \to \mathbb{B}_2$ is adapted and $\norm{\cdot}_{2,2;1}$-continuous), then\label{item.cheapRSsumQC}
    \[
    \mathbb{L}^1\text{-}\lim_{\Pi \in \cP_{\R_+}}\mathrm{RS}_{\Pi}^{X,Y}(\Lambda) = C\big[X^{\mathrm{m}},Y^{\mathrm{m}},\Lambda\big].
    \]
    Recall that $X^{\mathrm{m}}$ is the martingale part of $X$ and $Y^{\mathrm{m}}$ is the martingale part of $Y$.
\end{enumerate}
\end{thm}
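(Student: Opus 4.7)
The plan for part (i) is to extend the trilinear map by continuity using the uniform bound from Lemma \ref{lem.cheapbd}. First, for $(M,N,\Lambda) \in \M_{\mathsmaller{\cA}}^{\infty} \times \M_{\mathsmaller{\cB}}^{\infty} \times \mathrm{Q}_0$, the processes $M$ and $N$ are $L^{\infty}$-decomposable (with zero FV part), so Theorem \ref{thm.NCIPR} (the Noncommutative It\^{o} Product Rule) guarantees that $\mathbb{L}^2\text{-}\lim_{|\Pi|\to 0} \mathrm{RS}_{\Pi}^{M,N}(\Lambda)$ exists; since $\norm{\cdot}_1 \leq \norm{\cdot}_2$, this limit also converges in $C_a(\R_+; L^1(\E_{\mathsmaller{\cC}}))$. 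The bound $\sup_{0 \leq s \leq t} \norm{C[M,N,\Lambda](s)}_1 \leq \sup_{0 \leq s \leq t}\norm{\Lambda(s)}_{2,2;1} \norm{M(t)-M(0)}_2\norm{N(t)-N(0)}_2$ then follows by passing to the limit in Lemma \ref{lem.cheapbd}. On the dense subset, the map is jointly continuous: the norm of $C_a(\R_+;L^1)$ on compact time intervals, the topology of $\tbM^2$ given by $M \mapsto \norm{M(t)}_2$ (recall Eq. \eqref{eq.norminc}), and the $\ell_{\loc}^{\infty}(\R_+;B_2^{2,2;1})$ topology on $\mathrm{Q}$ are precisely what the estimate above controls.

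Extension then proceeds by the standard trilinear extension-by-continuity argument. Given approximating sequences $M_n \to M$, $N_n \to N$, $\Lambda_n \to \Lambda$ in the respective topologies, I will split differences via trilinearity, e.g.\ $C[M_n,N_n,\Lambda_n] - C[M_m,N_m,\Lambda_m]$ decomposes into three terms each of which is bounded by \eqref{eq.Qbd} applied to a single ``varying'' factor; this shows $\{C[M_n,N_n,\Lambda_n]\}$ is Cauchy in $C_a(\R_+;L^1(\E_{\mathsmaller{\cC}}))$ and that the limit is independent of the approximating sequences. The bound \eqref{eq.Qbd} itself survives to the closure by continuity.

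For part (ii), I first reduce from $(X,Y)$ to their martingale parts $(M,N)$: writing $X = M+A$ and $Y = N+B$ with $A,B \in \mathbb{FV}^2$, Corollary \ref{cor.reducetomart} (applied with $p=q=2$, $r=1$, using that $\Lambda \in \mathrm{Q} \subseteq \ell_{\loc}^{\infty}(\R_+;B_2^{2,2;1})$ is locally bounded) yields $\mathbb{L}^1\text{-}\lim_{|\Pi|\to 0}\big(\mathrm{RS}_{\Pi}^{X,Y}(\Lambda) - \mathrm{RS}_{\Pi}^{M,N}(\Lambda)\big) = 0$. It remains to show $\mathbb{L}^1\text{-}\lim_{|\Pi|\to 0}\mathrm{RS}_{\Pi}^{M,N}(\Lambda) = C[M,N,\Lambda]$ for $M \in \tbM_{\mathsmaller{\cA}}^2$, $N \in \tbM_{\mathsmaller{\cB}}^2$, and $\Lambda \in \mathrm{Q}$. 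I plan the standard three-epsilon argument: fix $t \geq 0$ and $\varepsilon > 0$, choose approximating $M_n,N_n,\Lambda_n$ in the dense subset, and bound
\begin{align*}
\sup_{0 \leq s \leq t}\Big\|\mathrm{RS}_{\Pi}^{M,N}(\Lambda)(s) - C[M,N,\Lambda](s)\Big\|_1 & \leq \sup_{0 \leq s \leq t}\Big\|\mathrm{RS}_{\Pi}^{M,N}(\Lambda)(s) - \mathrm{RS}_{\Pi}^{M_n,N_n}(\Lambda_n)(s)\Big\|_1 \\
& \hspace{5mm} + \sup_{0 \leq s \leq t}\Big\|\mathrm{RS}_{\Pi}^{M_n,N_n}(\Lambda_n)(s) - C[M_n,N_n,\Lambda_n](s)\Big\|_1 \\
& \hspace{5mm} + \sup_{0 \leq s \leq t}\Big\|C[M_n,N_n,\Lambda_n](s) - C[M,N,\Lambda](s)\Big\|_1.
\end{align*}
The first term is bounded uniformly in $\Pi$ by trilinearity and Lemma \ref{lem.cheapbd}, and vanishes as $n \to \infty$; the third vanishes as $n \to \infty$ by the continuity of $C$ established in part (i); and the middle term (for fixed $n$) vanishes as $|\Pi| \to 0$ by Theorem \ref{thm.NCIPR}.

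There is no serious obstacle here beyond bookkeeping: the theorem is an abstract extension-by-continuity statement, so once Theorem \ref{thm.NCIPR} supplies the existence of the limit on the dense core and Lemma \ref{lem.cheapbd} supplies the uniform bound needed to extend, everything else is routine. The mildly subtle point is ensuring, in the final three-epsilon argument, that the ``first term'' can be controlled \emph{uniformly in $\Pi$} before one chooses $|\Pi|$ small, which is exactly what the estimate in Lemma \ref{lem.cheapbd} makes possible.
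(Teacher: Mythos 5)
Your proposal is correct and follows essentially the same route as the paper: Lemma \ref{lem.cheapbd} plus Theorem \ref{thm.NCIPR} on the dense core, extension by completeness and trilinearity for (i), and Corollary \ref{cor.reducetomart} followed by the same three-epsilon argument for (ii). The only point you leave unaddressed is the parenthetical in (ii) — that an adapted, $\norm{\cdot}_{2,2;1}$-continuous $\Lambda$ actually lies in $\mathrm{Q}$ — which the paper verifies by noting $\Lambda^{\Pi} \in \mathrm{Q}_0$ and $\Lambda^{\Pi} \to \Lambda$ in $\ell_{\loc}^{\infty}(\R_+;B_2^{2,2;1})$ via Proposition \ref{prop.HPi}\ref{item.HPistar}.
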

\pagebreak

\begin{proof}
We take each item in turn.

\ref{item.cheapQC} If $(M,N,\Lambda) \in \M_{\mathsmaller{\cA}}^{\infty} \times \M_{\mathsmaller{\cB}}^{\infty} \times \mathrm{Q}_0$ and $t \geq 0$, then\vspace{-0.35mm}
\[
\sup_{0 \leq s \leq t} \bigg\|L^1\text{-}\lim_{\Pi \in \cP_{\R_+}} \mathrm{RS}_{\Pi}^{M,N}(\Lambda)(s)\bigg\|_1 \leq \sup_{0 \leq s \leq t} \norm{\Lambda(s)}_{2,2;1} \norm{M(t) - M(0)}_2 \norm{N(t) - N(0)}_2\vspace{-0.35mm}
\]
by Lemma \ref{lem.cheapbd}.
(Recall that the existence of this limit is guaranteed by Theorem \ref{thm.NCIPR}.)
From this bound, the claims of this item follow from the completeness of $C_a(\R_+;L^1(\E_{\mathsmaller{\cC}}))$ and elementary limiting arguments.

\ref{item.cheapRSsumQC} Let $M \coloneqq X^{\mathrm{m}}$ and $N \coloneqq Y^{\mathrm{m}}$.
By Corollary \ref{cor.reducetomart},
\[
\mathbb{L}^1\text{-}\lim_{\Pi \in \cP_{\R_+}}\big(\mathrm{RS}_{\Pi}^{X,Y}(\Lambda) - \mathrm{RS}_{\Pi}^{M,N}(\Lambda)\big) = 0,\vspace{-0.35mm}
\]
so it suffices to prove that
\begin{equation}
    \mathbb{L}^1\text{-}\lim_{\Pi \in \cP_{\R_+}}\mathrm{RS}_{\Pi}^{M,N}(\Lambda) = C[M,N,\Lambda]. \vspace{-0.35mm}\label{eq.Qgoal}
\end{equation}
To this end, fix a partition $\Pi$ of $\R_+$, a sequence $(M_n)_{n \in \N}$ in $\M_{\mathsmaller{\cA}}^{\infty}$ converging in $\M_{\mathsmaller{\cA}}^2$ to $M$, a sequence $(N_n)_{n \in \N}$ in $\M_{\mathsmaller{\cB}}^{\infty}$ converging in $\M_{\mathsmaller{\cB}}^2$ to $N$, and a sequence $(\Lambda_n)_{n \in \N}$ in $\mathrm{Q}_0$ converging in $\ell_{\loc}^{\infty}(\R_+ ; B_2^{2,2;1})$ to $\Lambda$.
Then\vspace{-0.35mm}
\begin{align*}
    \mathrm{RS}_{\Pi}^{M,N}(\Lambda) - C[\Lambda,M,N] & = \mathrm{RS}_{\Pi}^{M,N}(\Lambda) - \mathrm{RS}_{\Pi}^{M_n,N_n}(\Lambda_n)\vspace{-0.35mm} \\
    & \hspace{5mm} + \mathrm{RS}_{\Pi}^{M_n,N_n}(\Lambda_n) - C[M_n,N_n,\Lambda_n]\vspace{-0.35mm} \\
    & \hspace{5mm} + C[M_n,N_n,\Lambda_n] - C[M,N,\Lambda]\vspace{-0.35mm}
\end{align*}
for all $n \in \N$.
Now, let $t \geq 0$ and $\e > 0$.
By the trilinearity of $\mathrm{RS}_{\Pi}^{U,V}(\Lambda)$ in $(U,V,\Lambda)$, Lemma \ref{lem.cheapbd}, the trilinearity of $C$, and \eqref{eq.Qbd}, there exists an $m \in \N$ such that $n \geq m$ implies\vspace{-0.35mm}
\begin{align*}
    \sup_{\Pi_0 \in \cP_{\R_+}}\sup_{0 \leq s \leq t}\Big\|\mathrm{RS}_{\Pi_0}^{M,N}(\Lambda)(s) - \mathrm{RS}_{\Pi_0}^{M_n,N_n}(\Lambda_n)(s)\Big\|_1 & < \frac{\e}{3} \; \text{ and\vspace{-0.35mm}} \\
    \sup_{0 \leq s \leq t}\norm{C[M_n,N_n,\Lambda_n](s) - C[M,N,\Lambda](s)}_1 & < \frac{\e}{3}.\vspace{-0.35mm}
\end{align*}
For this fixed $m$, there exists a $\delta > 0$ such that $|\Pi| < \delta$ implies\vspace{-0.35mm}
\[
\sup_{0 \leq s \leq t}\Big\|\mathrm{RS}_{\Pi}^{M_m,N_m}(\Lambda_m)(s) - C[M_m,N_m,\Lambda_m](s)\Big\|_1 < \frac{\e}{3}.\vspace{-0.35mm}
\]
Putting it all together, we obtain that $|\Pi| < \delta$ implies\vspace{-0.35mm}
\[
\sup_{0 \leq s \leq t}\Big\|\mathrm{RS}_{\Pi}^{M,N}(\Lambda)(s) - C[M,N,\Lambda](s)\Big\|_1 < \frac{\e}{3} + \frac{\e}{3} + \frac{\e}{3} = \e.\vspace{-0.35mm}
\]
Since $\e > 0$ and $t \geq 0$ were arbitrary, this completes the proof of \eqref{eq.Qgoal}.

To complete the proof of this item, we explain the parenthetical in the statement.
If $\Lambda \colon \R_+ \to \mathbb{B}_2$ is adapted and $\Pi$ is a partition of $\R_+$, then $\Lambda^{\Pi}  = 1_{\{0\}} \Lambda(0) + \sum_{t \in \Pi} 1_{(t_-,t]} \Lambda(t_-) \in \mathrm{Q}_0$.
If $\Lambda$ is also $\norm{\cdot}_{2,2;1}$-continuous, then $\Lambda^{\Pi} \to \Lambda$ in $\ell_{\loc}^{\infty}(\R_+;B_2^{2,2;1})$ as $|\Pi| \to 0$ by Lemma \ref{lem.HPi}\ref{item.HPistar}.
Thus, $\Lambda \in \mathrm{Q}$, as desired.
\end{proof}

Using Theorem \ref{thm.QC1}, we make the following preliminary definition.

\begin{defi}[Quadratic covariation I]\label{def.QC1}
If $X \colon \R_+ \to L^2(\E_{\mathsmaller{\cA}})$ and $Y \colon \R_+ \to L^2(\E_{\mathsmaller{\cB}})$ are $\tilde{L}^2$-decomposable and $\Lambda \in \mathrm{Q}$, then we define\vspace{-0.55mm}
\begin{align*}
    \llbracket X, Y \rrbracket^{\Lambda} & = \into \Lambda(t)[\d X(t), \d Y(t)] = \into \Lambda[\d X, \d Y] \\
    & \coloneqq \mathbb{L}^1\text{-}\lim_{\Pi \in \cP_{\R_+}}\mathrm{RS}_{\Pi}^{X,Y}(\Lambda) = C\big[X^{\mathrm{m}},Y^{\mathrm{m}},\Lambda\big] \in C_a(\R_+;L^1(\E_{\mathsmaller{\cC}}))
\end{align*}
to be the \textbf{$\boldsymbol{\Lambda}$-quadratic covariation of $\boldsymbol{X}$ and $\boldsymbol{Y}$}.
\end{defi}

Observe that the map $\mathrm{Q} \ni \Lambda \mapsto \llbracket X, Y \rrbracket^{\Lambda} \in C_a(\R_+;L^1(\E_{\mathsmaller{\cC}}))$ is complex linear.
Also, note that if $X$ and $Y$ are $L^{\infty}$-decomposable and $\Lambda \in \mathrm{Q}_0$, then\vspace{-0.35mm}
\[
\llbracket X, Y \rrbracket^{\Lambda} = \Lambda[X,Y] - \Lambda(0)[X(0), Y(0)] - \into \d\Lambda(t)[X(t), Y(t)]  - \into \Lambda(t)[\d X(t), Y(t)] - \into \Lambda(t)[X(t), \d Y(t)]\vspace{-0.35mm}
\]
by Theorem \ref{thm.NCIPR}.
Now, here is a very common example, similar to Example \ref{ex.elemtrbipinteg}, which we shall upgrade in the next section (Example \ref{ex.elemtrtripQCupgrade}) and expand on in special cases in Section \ref{sec.QCexs}.
\pagebreak

\begin{ex}\label{ex.elemtrtripQC}
In this example, we assume $(\cA,(\cA_t)_{t \geq 0},\E = \E_{\mathsmaller{\cA}}) = (\cB,(\cB_t)_{t \geq 0},\E_{\mathsmaller{\cB}}) = (\cC,(\cC_t)_{t \geq 0}, \E_{\mathsmaller{\cC}})$.
Fix, for each $i = 1,\ldots,15$, an adapted process $A_i \colon \R_+ \to \cA$.
Define\vspace{-0.2mm}
\begin{align*}
    \Lambda(t)[x,y] & \coloneqq A_1(t)xA_2(t)yA_3(t) + \E[A_4(t)xA_5(t)y]A_6(t) + \E[A_7(t)x]\E[A_8(t)y]A_9(t) \\
    & \hspace{15mm} + \E[A_{10}(t)x]A_{11}(t)yA_{12}(t) + A_{13}(t)xA_{14}(t)\E[A_{15}(t)y]\vspace{-0.2mm}
\end{align*}
for all $t \geq 0$ and $x,y \in \cA$.
If $A_1,\ldots,A_{15} \colon \R_+ \to \cA$ are adapted, then $\Lambda$ is a (complex-bilinear) trace triprocess.
In particular, by Proposition \ref{prop.TkinFk}, $\Lambda \colon \R_+ \to \mathbb{B}_2(\cA)$ is adapted.
Moreover, if, in addition, $A_1,\ldots,A_{15}$ are $L^{\infty}$-continuous, then $\Lambda$ is $\vertiii{\cdot}_2$-continuous.
In this case, Theorem \ref{thm.QC1}\ref{item.cheapRSsumQC} says that $\Lambda \in \mathrm{Q}$ and\vspace{-0.2mm}
\[
\int_0^t\Lambda(s)[\d X(s), \d Y(s)] = L^1\text{-}\lim_{\pi \in \cP_{[0,t]}}\sum_{s \in \pi}\Lambda(s_-)[\Delta_sX, \Delta_sY]\vspace{-0.2mm}
\]
for all $\tilde{L}^2$-decomposable processes $X,Y \colon \R_+ \to L^2(\E)$.
Also, since $\llbracket X, Y \rrbracket^{\Lambda} = \llbracket X^{\mathrm{m}}, Y^{\mathrm{m}} \rrbracket^{\Lambda}$, Theorem \ref{thm.QC1}\ref{item.cheapRSsumQC} implies that\vspace{-0.2mm}
\begin{align*}
    \int_0^t \E[A_7(s)\,\d X(s)]\E[A_8(s)\,\d Y(s)]A_9(s) & = \int_0^t \E[A_{10}(s)\,\d X(s)]A_{11}(s) \,\d Y(s) \, A_{12}(s) \\
    & = \int_0^t A_{13}(s)\,\d X(s) \, A_{14}(s)\E[A_{15}(s)\,\d Y(s)] = 0\vspace{-0.2mm}
\end{align*}
by a calculation similar to the one in Example \ref{ex.elemtrbipinteg}.

More generally, if $n,m,d_1,d_2 \in \N$, $P \in (\TrP_{n,2,(d_1,d_2)}^*)^m$, and $\mathbf{X} \colon \R_+ \to \cA^n$ is adapted, then the bilinear process $\R_+ \ni t \mapsto P(\mathbf{X}(t)) \in \mathbb{B}_2(\cA^{d_1} \times \cA^{d_2};\cA^m)$ is a multivariate trace triprocess.
If, in addition, $\mathbf{X}$ is $L^{\infty}$-continuous, then $\Lambda$ is $\vertiii{\cdot}_2$-continuous.
In this case, if $\mathbf{Y} \colon \R_+ \to L^2(\E^{\oplus d_1})$ and $\mathbf{Z} \colon \R_+ \to L^2(\E^{\oplus d_2})$ are $\tilde{L}^2$-decomposable, then\vspace{-0.2mm}
\[
\int_0^tP(\mathbf{X}(s),\d\mathbf{Y}(s),\d\mathbf{Z}(s)) = L^1\text{-}\lim_{\pi \in \cP_{[0,t]}}\sum_{s \in \pi}P(\mathbf{X}(s_-),\Delta_s\mathbf{Y},\Delta_s\mathbf{Z}) \in L^1(\E^{\oplus m}) = L^1(\E)^{\oplus m}\vspace{-0.2mm}
\]
by Theorem \ref{thm.QC1}\ref{item.cheapRSsumQC}.
\end{ex}

In particular, the ``cheap'' quadratic covariation in Definition \ref{def.QC1} already allows us to consider interesting examples.
It is also sufficient to support interesting applications, e.g., continuous-time noncommutative Burkholder--Davis--Gundy inequalities for $p \in [2,\infty)$ and It\^{o}'s formula.
However, for general considerations, it is desirable to extend the definition of $\llbracket X, Y \rrbracket^{\Lambda}$ to a larger class of bilinear processes $\Lambda$.

\begin{lem}\label{lem.QCL1bd}
Let $X \colon \R_+ \to L^2(\E_{\mathsmaller{\cA}})$ and $Y \colon \R_+ \to L^2(\E_{\mathsmaller{\cB}})$ be $\tilde{L}^2$-decomposable, $\Lambda \in \mathrm{Q}$, and $t \geq 0$.
Also, write $M \coloneqq X^{\mathrm{m}}$, $N \coloneqq Y^{\mathrm{m}}$, and $\kappa_{M,N} \coloneqq (\kappa_M+\kappa_N)/2$ (Lemma \ref{lem.fakeDoleans}).\vspace{-0.2mm}
\begin{enumerate}[label=(\roman*),font=\normalfont]
    \item $\displaystyle\Bigg\|\sum_{s \in \pi} \Lambda(s_*)[\Delta_sM, \Delta_sN]\Bigg\|_1 \leq \int_0^t \norm{\Lambda}_{2,2;1}^{\pi^*}\,\d\kappa_{M,N}$ for all $\pi^* \in \cP_{[0,t]}^*$.\label{item.RSbd}\vspace{-0.2mm}
    \item $\displaystyle\big\|\llbracket X, Y \rrbracket^{\Lambda}(t)\big\|_1 \leq \int_0^t\norm{\Lambda}_{2,2;1}\,\d\kappa_{M,N}$.\label{item.QCbd}\vspace{-0.2mm}
\end{enumerate}
Note that since $M$ and $N$ are $L^2$-continuous, the measure $\kappa_{M,N}$ is non-atomic.
In particular, the notation $\int_s^t \boldsymbol{\cdot} \, \d\kappa_{M,N}$ is not ambiguous.
\end{lem}
\begin{proof}
We take both items in turn.

\ref{item.RSbd} We have\vspace{-0.2mm}
\begin{align*}
    \Bigg\|\sum_{s \in \pi}\Lambda(s_*)[\Delta_sM, \Delta_sN]\Bigg\|_1 & \leq \sum_{s \in \Pi}\norm{\Lambda(s_*)[\Delta_sM, \Delta_sN]}_1 \leq \sum_{s \in \pi}\norm{\Lambda(s_*)}_{2,2;1}\norm{\Delta_sM}_2\norm{\Delta_sN}_2 \\
    & \leq \frac{1}{2}\sum_{s \in \pi}\norm{\Lambda(s_*)}_{2,2;1}\big(\norm{\Delta_sM}_2^2 + \norm{\Delta_sN}_2^2\big) \\
    & = \sum_{s \in \pi}\norm{\Lambda(s_*)}_{2,2;1}\kappa_{M,N}((s_-,s]) = \int_0^t\norm{\Lambda}_{2,2;1}^{\pi^*} \,\d\kappa_{M,N},
\end{align*}
as claimed.
\pagebreak

\ref{item.QCbd} Note that $\Lambda$ is $\norm{\cdot}_{2,2;1}$-LCLB, so Lemma \ref{lem.HPi}\ref{item.HPipw} and the dominated convergence theorem yield that $\norm{\Lambda}_{2,2;1}^{\pi} \to \norm{\Lambda}_{2,2;1}$ in $L^1([0,t],\kappa_{M,N})$ as $|\pi| \to 0$.
Thus, by Theorem \ref{thm.QC1}\ref{item.cheapRSsumQC} and the first item,
\begin{align*}
    \big\|\llbracket X,Y \rrbracket^{\Lambda}(t)\big\|_1 & = \lim_{\pi \in \cP_{[0,t]}}\Bigg\|\sum_{s \in \pi}\Lambda(s_-)[\Delta_sM, \Delta_sN]\Bigg\|_1 \\
    & \leq \lim_{\pi \in \cP_{[0,t]}}\int_0^t\norm{\Lambda}_{2,2;1}^{\pi} \,\d\kappa_{M,N} = \int_0^t\norm{\Lambda}_{2,2;1} \,\d\kappa_{M,N},
\end{align*}
as claimed.
\end{proof}

\begin{nota}\label{nota.QXY}
Write $\mathcal{Q}$ for the set of equivalences classes in $L_{\loc}^1(\R_+,\kappa_{M,N};B_2^{2,2;1})$ of members of $\mathrm{Q}$ and $\mathcal{Q}(X,Y)$ for the closure of $\mathcal{Q}$ in $L_{\loc}^1(\R_+,\kappa_{M,N};B_2^{2,2;1})$.
\end{nota}

By Lemma \ref{lem.QCL1bd}\ref{item.QCbd} and the linearity of the map $\mathrm{Q} \ni \Lambda \mapsto \llbracket X, Y \rrbracket^{\Lambda} \in C_a(\R_+;L^1(\E_{\mathsmaller{\cC}}))$, if $\Lambda_1, \Lambda_2 \in \mathrm{Q}$ and $\Lambda_1 = \Lambda_2$ $\kappa_{M,N}$-a.e., then
\[
\int_0^t \Lambda_1[\d X, \d Y] = \int_0^t \Lambda_2[\d X, \d Y] = \int_0^t (1_{(0,T]}\Lambda_2)[\d X, \d Y] \qquad (T \geq t \geq 0).
\]
In particular, $\llbracket X, Y \rrbracket^{\Lambda} \in C_a(\R_+;L^1(\E_{\mathsmaller{\cC}}))$ is well defined for $\Lambda \in \mathcal{Q}$.

\begin{thm}[QC construction II]\label{thm.QC2}
Let $X \colon \R_+ \to L^2(\E_{\mathsmaller{\cA}})$ and $Y \colon \R_+ \to L^2(\E_{\mathsmaller{\cB}})$ be $\tilde{L}^2$-decomposable processes.
The map $\mathcal{Q} \ni \Lambda \mapsto \llbracket X, Y \rrbracket^{\Lambda} \in C_a(\R_+;L^1(\E_{\mathsmaller{\cC}}))$ extends uniquely to a continuous complex-linear map $\llbracket X, Y \rrbracket = \llbracket X, Y \rrbracket_{\cC} \colon \mathcal{Q}(X,Y) \to C_a(\R_+;L^1(\E_{\mathsmaller{\cC}}))$.
Let $\Lambda \in \mathcal{Q}(X,Y)$, $M \coloneqq X^{\mathrm{m}}$, and $N \coloneqq Y^{\mathrm{m}}$.
The map $\llbracket X, Y \rrbracket$ satisfies the following properties.
\begin{enumerate}[label=(\roman*),font=\normalfont]
    \item $\displaystyle\big\|\llbracket X, Y \rrbracket(\Lambda)(t)\big\|_1 \leq \int_0^t \norm{\Lambda}_{2,2;1}\,\d\kappa_{M,N}$ for all $t \geq 0$.\label{item.QCbd2}
    \item $\llbracket X, Y \rrbracket(\Lambda)(t) - \llbracket X, Y \rrbracket(\Lambda)(s) = \llbracket X, Y \rrbracket(1_{(s,t]}\Lambda)(t)$ whenever $0 \leq s < t$.\label{item.QCinc}
    \item $\llbracket X, Y \rrbracket(\Lambda)$ is $L^1$-FV.\label{item.QCFV}
    \item $\mathcal{Q}(X,Y) = \mathcal{Q}(M,N)$, and $\llbracket X, Y \rrbracket = \llbracket M, N \rrbracket$.\label{item.QCmart}
\end{enumerate}
\end{thm}

\begin{proof}
By Lemma \ref{lem.QCL1bd}\ref{item.QCbd}, the map $\mathcal{Q} \ni \Lambda \mapsto \llbracket X, Y \rrbracket^{\Lambda} \in C_a(\R_+;L^1(\E_{\mathsmaller{\cC}}))$ is (complex-linear and) continuous.
The existence and uniqueness of the continuous complex-linear extension $\llbracket X, Y \rrbracket$ from $\mathcal{Q}(X,Y)$ to $C_a(\R_+;L^1(\E_{\mathsmaller{\cC}}))$ then follows from the completeness of $C_a(\R_+;L^1(\E_{\mathsmaller{\cC}}))$.
Item \ref{item.QCbd2} then follows from an elementary limiting argument.
We take each of the remaining items in turn.

\ref{item.QCinc} By another elementary limiting argument using the first item, it suffices to prove the desired identity assuming $\Lambda \in \mathrm{Q}$.
To this end, let $\pi$ be a partition of $[0,t]$ such that $s \in \pi$.
Also, write $s_+$ for the member of $\pi$ to the right of $s$, i.e., $s_+ = \min\{r \in \pi : s < r\} \in \pi$. 
Then
\begin{align*}
    \sum_{r \in \pi} \Lambda(r_-)[\Delta_rX, \Delta_rY] & = \sum_{r \in \pi \cap [0,s]} \Lambda(r_-)[\Delta_rX, \Delta_rY] + \sum_{r \in \pi \cap (s,t]}  \Lambda(r_-)[\Delta_rX, \Delta_rY] \\
    & = \sum_{r \in \pi \cap [0,s]} \Lambda(r_-)[\Delta_rX, \Delta_rY] + \sum_{r \in \pi}  (1_{(s,t]}\Lambda)(r_-)[\Delta_rX, \Delta_rY] + \Lambda(s)[\Delta_{s_+}X, \Delta_{s_+}Y].
\end{align*}
If $|\pi| \to 0$, then $s_+ \searrow s$.
Thus, by the continuity of $X$ and $Y$, $\Lambda(s)[\Delta_{s_+}X, \Delta_{s_+}Y] \to 0$ in $L^1(\E_{\mathsmaller{\cC}})$ as $|\pi| \to 0$.
It therefore follows, by taking $|\pi| \to 0$, from Theorem \ref{thm.QC1} and Definition \ref{def.QC1} that
\[
\int_0^t \Lambda[\d X, \d Y] = \int_0^s \Lambda[\d X, \d Y] + \int_0^t (1_{(s,t]} \Lambda)[\d X, \d Y],
\]
as desired.

\ref{item.QCFV} Let $s,t \geq 0$ be such that $s < t$.
By the previous two items,
\[
\big\|\llbracket X, Y\rrbracket^{\Lambda}(t) - \llbracket X, Y \rrbracket^{\Lambda}(s)\big\|_1 = \Bigg\|\int_0^t (1_{(s,t]}\Lambda)[\d X, \d Y]\Bigg\|_1 \leq \int_s^t \norm{\Lambda}_{2,2;1} \, \d\kappa_{M,N}.\pagebreak
\]
In particular,
\begin{align*}
    V_{L^1(\E_{\mathsmaller{\cC}})}\Big(\llbracket X, Y \rrbracket^{\Lambda} : [0,t]\Big) & = \sup_{\pi \in \cP_{[0,t]}} \sum_{r \in \pi} \big\|\Delta_r\llbracket X, Y \rrbracket^{\Lambda}\big\|_1 \\
    & \leq \sup_{\pi \in \cP_{[0,t]}} \sum_{r \in \pi} \int_{r_-}^r \norm{\Lambda}_{2,2;1} \, \d\kappa_{M,N} \\
    & = \int_0^t \norm{\Lambda}_{2,2;1} \, \d\kappa_{M,N} < \infty,
\end{align*}
so this item is proven.

\ref{item.QCmart} This item is clear from the definitions.
\end{proof}

Finally, we arrive at the general definition.

\begin{defi}[Quadratic covariation II]\label{def.QC2}
For $\Lambda \in \mathcal{Q}(X,Y)$, we define
\[
\llbracket X, Y \rrbracket^{\Lambda} = \into \Lambda[\d X, \d Y] = \into \Lambda(t)[\d X(t), \d Y(t)] \coloneqq \llbracket X, Y \rrbracket(\Lambda) \in C_a(\R_+;L^1(\E_{\mathsmaller{\cC}}))
\]
to be the \textbf{$\boldsymbol{\Lambda}$-quadratic covariation of $\boldsymbol{X}$ and $\boldsymbol{Y}$}, where $\llbracket X, Y \rrbracket$ is as in Theorem \ref{thm.QC2}.
\end{defi}

\subsection{Tools to calculate quadratic covariations}\label{sec.RS2}

In this section, we prove three additional facts about (certain) quadratic covariations that help with their explicit calculation in practice:
a formula for quadratic covariations of stochastic integrals (Theorem \ref{thm.QCSI}) and two expressions for $\into \Lambda[\d X, \d Y]$ as a limit of left-endpoint quadratic Riemann--Stieltjes sums (Proposition \ref{prop.LEQRSapprox} and Theorem \ref{thm.NCcondQC}).

Let $X = X(0) + M + A \colon \R_+ \to L^2(\E_{\mathsmaller{\cA}})$ and $Y = Y(0) + N + B \colon \R_+ \to L^2(\E_{\mathsmaller{\cB}})$ be $\tilde{L}^2$-decomposable processes for the duration of this section.
By Proposition \ref{prop.stochintLtilde}, if $H \in \tilde{\cI}(X)$, $K \in \tilde{\cI}(Y)$, $U \coloneqq \into H[\d X]$, and $V \coloneqq \into K[\d Y]$, then $U$ and $V$ are $\tilde{L}^2$-decomposable.
Therefore, the construction from the previous section enables us to consider quadratic covariation integrals $\into \Lambda[\d U, \d V]$.
The next result shows that it is frequently true that the bilinear process $\Lambda[H,K] \coloneqq (t \mapsto \Lambda(t)[H(t)[\cdot],K(t)[\cdot]])$ belongs to $\mathcal{Q}(X,Y)$ and $\into \Lambda[\d U, \d V] = \into \Lambda[H[\d X],K[\d Y]]$ holds, as ``$\d U = H[\d X]$'' and ``$\d V = K[\d Y]$'' suggest.

\begin{thm}[QC of stochastic integrals]\label{thm.QCSI}
Fix two more filtered $\mathrm{C}^*$-probability spaces $(\cD,(\cD_t)_{t \geq 0},\E_{\mathsmaller{\cD}})$ and $(\cE,(\cE_t)_{t \geq 0},\E_{\mathsmaller{\cE}})$. 
Suppose $H \colon \R_+ \to B(L^2(\E_{\mathsmaller{\cA}});L^2(\E_{\mathsmaller{\cD}}))$ and $K \colon \R_+ \to B(L^2(\E_{\mathsmaller{\cB}});L^2(\E_{\mathsmaller{\cE}}))$ are strongly measurable maps and that there exist sequences $(H_n)_{n \in \N}$ and $(K_n)_{n \in \N}$ in $\EP(\E_{\mathsmaller{\cA}};\E_{\mathsmaller{\cD}})$ and $\EP(\E_{\mathsmaller{\cB}};\E_{\mathsmaller{\cE}})$, respectively, such that for all $t \geq 0$,
\begin{equation}
    \int_0^t (\norm{H - H_n}_{2;2}^2 + \norm{K - K_n}_{2;2}^2)\,\d\kappa_{M,N} + \int_0^t\norm{H - H_n}_{2;2} \,\d\kappa_A + \int_0^t \norm{K - K_n}_{2;2}\,\d\kappa_B \xrightarrow{n \to \infty} 0. \label{eq.QCSI1}
\end{equation}
(In this case, we have $H \in \tilde{\cI}^{\cD}(X)$ and $K \in \tilde{\cI}^{\cE}(Y)$.)
Now, write $U \coloneqq \into H[\d X]$ and $V \coloneqq \into K[\d Y]$.
If $\Lambda \colon \R_+ \to B_2(L^2(\E_{\mathsmaller{\cD}}) \times L^2(\E_{\mathsmaller{\cE}});L^1(\E_{\mathsmaller{\cC}}))$ is strongly measurable and there exists a sequence $(\Lambda_n)_{n \in \N}$ in $\mathrm{Q}(\cD \times \cE;\cC)$ such that for all $t \geq 0$,
\begin{equation}
    \int_0^t \norm{\Lambda - \Lambda_n}_{2,2;1} (\norm{H}_{2;2}^2 + \norm{K}_{2;2}^2)\,\d\kappa_{M,N} \xrightarrow{n \to \infty} 0, \label{eq.QCSI2}
\end{equation}
then $\Lambda \in \mathcal{Q}(U,V)$, $\Lambda[H,K] \in \mathcal{Q}(X,Y)$, and $\llbracket U,V \rrbracket^{\Lambda} = \llbracket X,Y \rrbracket^{\Lambda[H,K]}$, i.e.,
\begin{equation}
    \into \Lambda(t)[\d U(t), \d V(t)] = \into \Lambda(t)\big[H(t)[\d X(t)],K(t)[\d Y(t)]\big]. \label{eq.QCSI3}
\end{equation}
\end{thm}

\begin{proof}
By Theorems \ref{thm.stochint}\ref{item.stochintsemimart} and \ref{thm.QC2}\ref{item.QCmart}, it suffices to assume $(X,Y) = (M,N)$ so that $U = \into H[\d M]$ and $V = \into K[\d N]$.
Recall from the proof of Theorem \ref{thm.subform} that
\[
\kappa_U(E) \leq \int_E \norm{H}_{2;2}^2\,\d\kappa_M \; \text{ and } \; \kappa_V(E) \leq \int_E \norm{K}_{2;2}^2\,\d\kappa_N \qquad (E \in \cB_{\R_+}).\pagebreak
\]
Thus,
\[
\kappa_{U,V}(E) \leq \frac{1}{2}\Bigg(\int_E \norm{H}_{2;2}^2\,\d\kappa_M + \int_E \norm{K}_{2;2}^2\,\d\kappa_N\Bigg) \leq \int_E(\norm{H}_{2;2}^2 + \norm{K}_{2;2}^2)\,\d\kappa_{M,N} \qquad (E \in \cB_{\R_+}).
\]
From this and \eqref{eq.QCSI2}, it easily follows that if $\Lambda$ is as in the statement, then $\Lambda \in \mathcal{Q}(U,V)$.

Proving that $\Lambda[H,K] \in \mathcal{Q}(M,N)$ takes a bit more work.
If $n,m \in \N$, then $\Lambda_n[H_m,K_m] \in \mathrm{Q}(\cA \times \cB ; \cC)$, as we encourage the reader to check.
Also, writing $C_t^n \coloneqq \sup\{\norm{\Lambda_n(s)}_{2,2;1} : 0 \leq s \leq t\}$,
\begin{align*}
    \int_0^t \norm{\Lambda_n[H_m,K_m] - \Lambda_n[H,K]}_{2,2;1} &\,\d\kappa_{M,N} \leq \int_0^t \norm{\Lambda_n[H_m - H,K_m]}_{2,2;1}\,\d\kappa_{M,N} \\
    & \hspace{20mm} + \int_0^t \norm{\Lambda_n[H,K_m - K]}_{2,2;1}\,\d\kappa_{M,N} \\
    & \leq C_t^n\int_0^t \norm{H_m - H}_{2;2} \norm{K_m}_{2;2} \,\d\kappa_{M,N} \\
    & \hspace{6.5mm} + C_t^n\int_0^t \norm{H}_{2;2} \norm{K_m - K}_{2;2}\,\d\kappa_{M,N} \\
    & \leq C_t^n\Bigg(\int_0^t \norm{H_m - H}_{2;2}^2 \,\d\kappa_{M,N}\Bigg)^{\frac{1}{2}}\Bigg(\int_0^t \norm{K_m}_{2;2}^2 \,\d\kappa_{M,N}\Bigg)^{\frac{1}{2}} \\
    & \hspace{6.5mm} + C_t^n\Bigg(\int_0^t \norm{H}_{2;2}^2\,\d\kappa_{M,N}\Bigg)^{\frac{1}{2}}\Bigg(\int_0^t \norm{K_m - K}_{2;2}^2\,\d\kappa_{M,N}\Bigg)^{\frac{1}{2}} \xrightarrow{m \to \infty} 0
\end{align*}
by the Cauchy--Schwarz inequality and \eqref{eq.QCSI1}.
We conclude that $\Lambda_n[H,K] \in \mathcal{Q}(M,N)$.
Since, by \eqref{eq.QCSI2}, we also have that
\begin{align*}
    \int_0^t \norm{\Lambda_n[H,K] - \Lambda[H,K]}_{2,2;1}\,\d\kappa_{M,N} & \leq \int_0^t \norm{\Lambda_n - \Lambda}_{2,2;1} \norm{H}_{2;2} \norm{K}_{2;2} \, \d\kappa_{M,N} \\
    & \leq \frac{1}{2}\int_0^t \norm{\Lambda_n - \Lambda}_{2,2;1} (\norm{H}_{2;2}^2 + \norm{K}_{2;2}^2) \, \d\kappa_{M,N} \xrightarrow{n \to \infty} 0,
\end{align*}
we conclude that $\Lambda[H,K] \in \mathcal{Q}(M,N)$.

To prove \eqref{eq.QCSI3}, we do some reductions.
As the reader may verify by making use of $(\Lambda_n)_{n \in \N}$, it suffices to prove the formula when $\Lambda \in \mathrm{Q}(\cD \times \cE ; \cC)$.
Next, we argue that it also suffices to prove the formula when $H \in \EP(\E_{\mathsmaller{\cA}};\E_{\mathsmaller{\cD}})$ and $K \in \EP(\E_{\mathsmaller{\cB}};\E_{\mathsmaller{\cE}})$.
Indeed, write $U_n \coloneqq \into H_n[\d M]$ and $V_n \coloneqq \into K_n[\d N]$.
Suppose $\into \Lambda[\d U_n, \d V_n] = \into \Lambda[H_n[\d M],K_n[\d N]]$ for all $n \in \N$.
Since $(U_n,V_n,\Lambda) \to (U,V,\Lambda)$  in $\tbM_{\mathsmaller{\cD}}^2 \times \tbM_{\mathsmaller{\cE}}^2 \times \mathrm{Q}(\cD \times \cE ; \cC)$ as $n \to \infty$ by \eqref{eq.QCSI1}, Theorem \ref{thm.QC1} ensures that $\into \Lambda[\d U_n, \d V_n] \to \into \Lambda[\d U, \d V]$ in $C_a(\R_+;L^1(\E_{\mathsmaller{\cC}}))$ as $n \to \infty$.
Also, as we showed in the previous paragraph, $\Lambda[H_n,K_n] \to \Lambda[H,K]$ in $\mathcal{Q}(M,N)$ as $n \to \infty$ so that $\into \Lambda[H_n[\d M],K_n[\d N]] \to \into \Lambda[H[\d M],K[\d N]]$ in $C_a(\R_+;L^1(\E_{\mathsmaller{\cC}}))$ as $n \to \infty$.
Thus, \eqref{eq.QCSI3} holds.

Therefore, it remains to prove \eqref{eq.QCSI3} when $\Lambda \in \mathrm{Q}(\cD \times \cE ; \cC)$, $H \in \EP(\E_{\mathsmaller{\cA}};\E_{\mathsmaller{\cD}})$, and $K \in \EP(\E_{\mathsmaller{\cB}};\E_{\mathsmaller{\cE}})$.
To this end, recall that $\Lambda[H,K] \in \mathrm{Q}(\cA \times \cB ; \cC)$ in this case.
Now, let $\Pi$ be a partition of $\R_+$.
Observe that if $s \in \Pi$ and $t \geq 0$, then
\[
H(s_-)\big[\Delta_sM^t\big] = \int_{s_- \wedge t}^{s \wedge t} H^{\Pi}[\d M] = \Delta_s I_M\big(H^{\Pi}\big)^t \; \text{ and } \; K(s_-)\big[\Delta_sN^t\big] = \Delta_s I_M\big(K^{\Pi}\big)^t.
\]
Thus,
\[
\mathrm{RS}_{\Pi}^{M,N}(\Lambda[H,K]) = \mathrm{RS}_{\Pi}^{I_M(H^{\Pi}), I_M(K^{\Pi})}(\Lambda).
\]
Therefore,
\begin{align*}
    \mathrm{RS}_{\Pi}^{U,V}(\Lambda) - \mathrm{RS}_{\Pi}^{M,N}(\Lambda[H,K]) & = \mathrm{RS}_{\Pi}^{I_M(H), I_M(K)}(\Lambda) - \mathrm{RS}_{\Pi}^{I_M(H^{\Pi}), I_M(K^{\Pi})}(\Lambda) \\
    & = \mathrm{RS}_{\Pi}^{I_M(H) - I_M(H^{\Pi}), I_M(K)}(\Lambda) + \mathrm{RS}_{\Pi}^{I_M(H^{\Pi}), I_M(K) - I_M(K^{\Pi})}(\Lambda).
\end{align*}
By Lemma \ref{lem.cheapbd} and (the proof of) Proposition \ref{prop.LERSapprox}---recall that $H$ and $K$ are $\vertiii{\cdot}$-LCLB---we conclude that if $C_t \coloneqq \sup\{\norm{\Lambda(s)}_{2,2;1} : 0 \leq s \leq t\}$, then
\begin{align*}
    \sup_{0 \leq s \leq t}\Big\|\mathrm{RS}_{\Pi}^{U,V}(\Lambda)(s) - \mathrm{RS}_{\Pi}^{M,N}(\Lambda[H,K])(s)\Big\|_1 & \leq C_t\big\|I_M\big(H - H^{\Pi}\big)(t)\big\|_2 \norm{I_M(K)(t)}_2 \\
    & \hspace{8mm} + C_t\big\|I_M\big(H^{\Pi}\big)(t)\big\|_2\big\|I_M\big(K - K^{\Pi}\big)(t)\big\|_2 \xrightarrow[\Pi \in \cP_{\R_+}]{|\Pi| \to 0} 0.
\end{align*}
Since
\[
\mathbb{L}^1\text{-}\lim_{\Pi \in \cP_{\R_+}}\Big( \mathrm{RS}_{\Pi}^{U,V}(\Lambda) - \mathrm{RS}_{\Pi}^{M,N}(\Lambda[H,K])\Big) = \into \Lambda[\d U, \d V] - \into \Lambda[H[\d M],K[\d N]]
\]
as well, this completes the proof.
\end{proof}

\begin{ex}\label{ex.LLLB2}
By arguments like those in Example \ref{ex.LLLB}, if $J \colon \R_+ \to \mathbb{B}(\cA;\cD)$ and $L \colon \R_+ \to \mathbb{B}(\cB;\cE)$ are adapted and $\norm{\cdot}_{2;2}$-LLLB, and $\Xi \colon \R_+ \to \mathbb{B}_2(\cD \times \cE ; \cC)$ is adapted and $\norm{\cdot}_{2,2;1}$-LLLB, then the triple $(H,K,\Lambda) \coloneqq (J_-,L_-,\Xi_-)$ satisfies the hypotheses of Theorem \ref{thm.QCSI}.
Also, note that if $M=N$ or $M=N^*$, then the first hypothesis, namely, \eqref{eq.QCSI1}, is merely the requirement that $H \in \tilde{\cI}^{\cD}(X)$ and $K \in \tilde{\cI}^{\cE}(Y)$.
\end{ex}

Next, we prove another result on the convergence of left-endpoint quadratic Riemann--Stieltjes sums to quadratic covariations.

\begin{prop}\label{prop.LEQRSapprox}
If $\Lambda \colon \R_+ \to \mathbb{B}_2$ is adapted and $\norm{\cdot}_{2,2;1}$-LLLB, then $\Lambda_- \in \mathcal{Q}(X,Y)$, and
\[
\mathbb{L}^1\text{-}\lim_{\Pi \in \cP_{\R_+}}\mathrm{RS}_{\Pi}^{X,Y}(\Lambda) = \into \Lambda_-[\d X, \d Y] = \into \Lambda(t-)[\d X(t), \d Y(t)].
\]
If $\Lambda$ is continuous with respect to $\norm{\cdot}_{2,2;1}$, then we may use any evaluation points (not just the left endpoints) in the quadratic Riemann--Stieltjes sums.
\end{prop}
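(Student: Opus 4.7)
The strategy mirrors that of Proposition \ref{prop.LERSapprox}, lifted to the bilinear setting. For each partition $\Pi$ of $\R_+$, I would consider the left-endpoint simple approximation $\Lambda^\Pi$ (Notation \ref{nota.nota}\ref{item.augpart}). I claim $\Lambda^\Pi \in \mathrm{Q}_0$: adaptedness is inherited from $\Lambda$ since $\cF_{s_-}^{2,2;1}$ is $\norm{\cdot}_{2,2;1}$-closed (Observation \ref{obs.Ftpq}\ref{item.WOTclosed}) and $\Lambda(s_-)$ is a left limit of adapted values; left-continuity is immediate from the definition; and local bounded variation in $\vertiii{\cdot}_2$ is guaranteed by the LLLB hypothesis and the finite number of jumps on any compact interval. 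Because $M$ and $N$ are $L^2$-continuous, the measures $\kappa_M$, $\kappa_N$ — and hence $\kappa_{M,N}$ — are non-atomic and locally finite, so Proposition \ref{prop.HPi}\ref{item.HPiLp} applied to the bounded, left-limited restriction $\Lambda|_{[0,t]}$ with the finite measure $\kappa_{M,N}|_{[0,t]}$ yields $\Lambda^\Pi \to \Lambda_-$ in $L_{\loc}^1(\R_+,\kappa_{M,N};B_2^{2,2;1})$ as $|\Pi| \to 0$. This places $\Lambda_- \in \mathcal{Q}(X,Y)$, and the continuity of the quadratic covariation map established in Theorem \ref{thm.QC2} then delivers
\[
\llbracket X, Y \rrbracket^{\Lambda^\Pi} \to \llbracket X, Y \rrbracket^{\Lambda_-} \quad \text{in } C_a(\R_+; L^1(\E_{\mathsmaller{\cC}})) \text{ as } |\Pi| \to 0.
\]

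What remains is to show that $\mathrm{RS}_\Pi^{X,Y}(\Lambda)$ shares the same limit, i.e.\ that $\llbracket X, Y \rrbracket^{\Lambda^\Pi} - \mathrm{RS}_\Pi^{X,Y}(\Lambda) \to 0$ in $C_a(\R_+; L^1(\E_{\mathsmaller{\cC}}))$. By the additivity in Theorem \ref{thm.QC2}\ref{item.QCinc}, $\llbracket X, Y \rrbracket^{\Lambda^\Pi}(t) = \sum_{s \in \Pi} \llbracket X, Y \rrbracket^{1_{(s_-, s]}\Lambda(s_-)}(t)$, and I would evaluate each constant-coefficient summand via the noncommutative It\^{o} product rule (Theorem \ref{thm.NCIPR}) combined with the bilinear expansion
\[
T[X(s), Y(s)] - T[X(s_-), Y(s_-)] = T[\Delta_s X, \Delta_s Y] + T[\Delta_s X, Y(s_-)] + T[X(s_-), \Delta_s Y]
\]
applied to $T = \Lambda(s_-)$. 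Summing over $s \in \Pi$ then produces
\[
\llbracket X, Y \rrbracket^{\Lambda^\Pi}(t) = \mathrm{RS}_\Pi^{X,Y}(\Lambda)(t) + \mathcal{E}_\Pi(t),
\]
where $\mathcal{E}_\Pi(t)$ is the difference between the left-endpoint Riemann--Stieltjes sums $\sum_s \Lambda(s_-)[\Delta_s X^t, Y(s_- \wedge t)]$ (and its $X$-analogue) and the stochastic integrals $\into \Lambda^\Pi[dX, Y]$, $\into \Lambda^\Pi[X, dY]$. Since $X$ and $Y$ are $L^2$-continuous, both sums converge — via Proposition \ref{prop.LERSapprox} applied to the adapted LLLB linear processes $t \mapsto \Lambda(t)[\,\cdot\,, Y(t)]$ and $t \mapsto \Lambda(t)[X(t), \,\cdot\,]$ (cf.\ Example \ref{ex.LLLB}) — to the common limits $\into \Lambda_-[dX, Y]$ and $\into \Lambda_-[X, dY]$, so $\mathcal{E}_\Pi \to 0$ in $C_a(\R_+; L^1(\E_{\mathsmaller{\cC}}))$ as $|\Pi| \to 0$.

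The main technical obstacle will be the reduction to $L^\infty$-decomposable $X$ and $Y$ required to invoke Theorem \ref{thm.NCIPR} in the form stated: this is handled by approximating the martingale parts in $\tbM^2$ by $L^\infty$-martingales, applying the It\^{o} product rule to the approximants, and passing to the limit using the trilinear continuity of Theorem \ref{thm.QC1}\ref{item.cheapQC} together with the continuity of the stochastic integral (Theorem \ref{thm.stochint}). The concluding assertion — that continuous $\Lambda$ permits arbitrary evaluation points — follows by replacing Proposition \ref{prop.HPi}\ref{item.HPiLp} with its augmented-partition variant \ref{item.HPistar}: when $\Lambda$ is $\norm{\cdot}_{2,2;1}$-continuous, $\Lambda^{(\Pi, \xi)} \to \Lambda = \Lambda_-$ uniformly on compact intervals, and the preceding argument carries through with arbitrary augmented partitions $(\Pi, \xi) \in \cP_{\R_+}^*$ in place of left endpoints.
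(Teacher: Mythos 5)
Your first step --- $\Lambda^{\Pi}\in\mathrm{Q}_0$, $\Lambda^{\Pi}\to\Lambda_-$ in $L_{\loc}^1(\R_+,\kappa_{M,N};B_2^{2,2;1})$ via Proposition \ref{prop.HPi}\ref{item.HPiLp}, hence $\Lambda_-\in\mathcal{Q}(X,Y)$ and $\llbracket X,Y\rrbracket^{\Lambda^{\Pi}}\to\llbracket X,Y\rrbracket^{\Lambda_-}$ --- is exactly the paper's. (Minor point: the value of $\Lambda^{\Pi}$ on $(s_-,s]$ is $\Lambda$ evaluated \emph{at} the partition point $s_-$, not a left limit, so adaptedness of $\Lambda^{\Pi}$ follows directly from Observation \ref{obs.Ftpq}\ref{item.grow}; no closedness of $\cF_{k,t}^{p_1,\ldots,p_k;p}$ is needed.)

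The second step has a genuine gap. Your plan identifies the error $\mathcal{E}_{\Pi}$ as the discrepancy between left-endpoint Riemann--Stieltjes sums such as $\sum_{s}\Lambda(s_-)[\Delta_sX^t,Y(s_-\wedge t)]$ and stochastic integrals such as $\into\Lambda^{\Pi}[dX,Y]$, and argues $\mathcal{E}_{\Pi}\to0$ because both converge to the common limit $\into\Lambda_-[dX,Y]$ by Proposition \ref{prop.LERSapprox}. But Proposition \ref{prop.LERSapprox} requires the linear process $H(r)=\Lambda(r)[\,\cdot\,,Y(r)]$ to be $\norm{\cdot}_{2;2}$-LLLB, and controlling $\norm{H(r)-H(r')}_{2;2}$ requires control of $\Lambda$ in the $\norm{\cdot}_{2,\infty;2}$ norm (as in the proof of Theorem \ref{thm.NCIPR}, where $\Lambda\in\mathrm{Q}_0$ supplies $\vertiii{\cdot}_2$-bounds). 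The hypothesis here is only that $\Lambda$ is $\norm{\cdot}_{2,2;1}$-LLLB: the values $\Lambda(r)$ lie in $\mathbb{B}_2$, but neither the left limits nor the local boundedness are assumed with respect to any norm stronger than $\norm{\cdot}_{2,2;1}$. Consequently the processes $r\mapsto\Lambda(r)[\,\cdot\,,Y(r)]$ and $r\mapsto\Lambda(r)[X(r),\,\cdot\,]$ need not be $\norm{\cdot}_{2;2}$-LLLB, the integrals $\into\Lambda_-[dX,Y]$ and $\into\Lambda_-[X,dY]$ need not exist, and the "common limit" argument collapses. This is not a repairable technicality: the whole point of the weak hypothesis is that the quadratic covariation survives limits under which the individual It\^{o}-product-rule terms do not. (A secondary issue: the constant process $r\mapsto\Lambda(s_-)$ is not adapted on $[0,s_-)$, so you cannot apply Theorem \ref{thm.QC2}\ref{item.QCinc} or Theorem \ref{thm.NCIPR} to it directly; you must work with $1_{(s_-,s]}\Lambda(s_-)\in\mathrm{Q}_0$, and then the $\into d\Lambda^{\Pi}[X,Y]$ term reappears and must be tracked through the jumps of the piecewise-constant integrator.)

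The paper's proof avoids the product rule entirely. After reducing to $(X,Y)=(M,N)$ via Corollary \ref{cor.reducetomart}, it runs an $\frac{\e}{3}$-argument: pick $\Xi\in\mathrm{Q}$ with $\int_0^t\norm{\Lambda_--\Xi}_{2,2;1}\,d\kappa_{M,N}$ small, bound $\big\|\mathrm{RS}_{\Pi}^{M,N}(\Lambda-\Xi)(s)\big\|_1\leq\int_0^s\norm{\Lambda-\Xi}_{2,2;1}^{\Pi}\,d\kappa_{M,N}$ uniformly in $\Pi$ by Lemma \ref{lem.QCL1bd}\ref{item.RSbd}, invoke Theorem \ref{thm.QC1}\ref{item.cheapRSsumQC} for the middle term $\mathrm{RS}_{\Pi}^{M,N}(\Xi)-\llbracket M,N\rrbracket^{\Xi}$, and use Theorem \ref{thm.QC2}\ref{item.QCbd2} for $\llbracket M,N\rrbracket^{\Xi-\Lambda_-}$. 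The uniform-in-$\Pi$ estimate of Lemma \ref{lem.QCL1bd}\ref{item.RSbd} is the quantitative input your argument is missing, and it is exactly what lets one pass from $\norm{\cdot}_{2,2;1}$-control alone to convergence of the Riemann--Stieltjes sums. Your treatment of the final claim (arbitrary evaluation points under $\norm{\cdot}_{2,2;1}$-continuity) would be fine once the main convergence is established; the paper instead gives a direct estimate comparing $\Lambda(r_*)$ with $\Lambda(r_-)$ term by term.
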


\begin{proof}
Let $\Pi$ be a partition of $\R_+$.
We have seen already that if $\Lambda \colon \R_+ \to \mathbb{B}_2$ is adapted, then $\Lambda^{\Pi} \in \mathrm{Q}_0$.
Now, if $\Lambda$ is also $\norm{\cdot}_{2,2;1}$-LLLB, then Lemma \ref{lem.HPi}\ref{item.HPipw} and the dominated convergence theorem yield that $\Lambda^{\Pi} \to \Lambda_-$ in $L_{\loc}^1(\R_+,\kappa_{M,N};B_2^{2,2;1})$ as $|\Pi| \to 0$.
Thus, $\Lambda_- \in \mathcal{Q}(X,Y)$.

For the second claim, as usual, it suffices to treat the case $(X,Y) = (M,N)$.
To this end, suppose $\Xi \in \mathrm{Q}$ and $t \geq s \geq 0$.
By Lemma \ref{lem.QCL1bd}\ref{item.RSbd} and Theorem \ref{thm.QC2}\ref{item.QCbd2},
\begin{align*}
    \e_{\Pi}(s) & \coloneqq \Big\|\mathrm{RS}_{\Pi}^{M,N}(\Lambda)(s) - \llbracket M, N \rrbracket^{\Lambda_-}(s)\Big\|_1 \\
    & \leq  \Big\|\mathrm{RS}_{\Pi}^{M,N}(\Lambda - \Xi)(s)\Big\|_1 + \Big\|\mathrm{RS}_{\Pi}^{M,N}(\Xi)(s) - \llbracket M, N \rrbracket^{\Xi}(s)\Big\|_1  + \big\|\llbracket M, N \rrbracket^{\Xi - \Lambda_-}(s)\big\|_1 \\
    & \leq \int_0^s\big(\norm{\Lambda - \Xi}_{2,2;1}^{\Pi} + \norm{\Lambda_- - \Xi}_{2,2;1}\big)\,\d\kappa_{M,N} + \Big\|\mathrm{RS}_{\Pi}^{M,N}(\Xi)(s) - \llbracket M, N \rrbracket^{\Xi}(s)\Big\|_1.
\end{align*}
Now, let $\e > 0$.
Since $\Lambda_- \in \mathcal{Q}(M,N)$, we can choose a $\Xi \in \mathrm{Q}$ so that
\[
\int_0^t\norm{\Lambda_- - \Xi}_{2,2;1}\,\d\kappa_{M,N} < \frac{\e}{3}.
\]
Next, since $\norm{\Lambda - \Xi}_{2,2;1}$ is LLLB, $\norm{\Lambda - \Xi}_{2,2;1}^{\Pi} \to (\norm{\Lambda-\Xi}_{2,2;1})_- = \norm{\Lambda_- - \Xi_-}_{2,2;1} = \norm{\Lambda_- - \Xi}_{2,2;1}$ in $L_{\loc}^1(\R_+,\kappa_{M,N})$ as $|\Pi| \to 0$.
In particular, there exists a $\delta > 0$ such that $|\Pi| < \delta$ implies
\[
\int_0^t \norm{\Lambda - \Xi}_{2,2;1}^{\Pi} \, \d\kappa_{M,N} < \frac{\e}{3}.
\]
Finally, by Theorem \ref{thm.QC1}\ref{item.cheapRSsumQC}, we may shrink $\delta$ so that $|\Pi| < \delta$ also implies
\[
\sup_{0 \leq s \leq t}\Big\|\mathrm{RS}_{\Pi}^{M,N}(\Xi)(s) - \llbracket M, N \rrbracket^{\Xi}(s)\Big\|_1 < \frac{\e}{3}.
\]
Putting it all together, we conclude that if $|\Pi| < \delta$, then
\[
\sup_{0 \leq s \leq t}\e_{\Pi}(s) = \sup_{0 \leq s \leq t} \Big\|\mathrm{RS}_{\Pi}^{M,N}(\Lambda)(s) - \llbracket M, N \rrbracket^{\Lambda_-}(s)\Big\|_1 < \frac{\e}{3} + \frac{\e}{3} + \frac{\e}{3} = \e.
\]
This completes the proof of the second claim.
\pagebreak

For the third claim, note that if $\Lambda$ is $\norm{\cdot}_{2,2;1}$-continuous, $t \geq 0$, and $\Pi^* \in \cP_{\R_+}^*$, then
\begin{align*}
    \sup_{0 \leq s \leq t}\delta_{\Pi^*}(s) & \coloneqq \sup_{0 \leq s \leq t }\Bigg\| \sum_{r \in \Pi} \Lambda(r_*)[\Delta_rM^s, \Delta_rN^s] - \mathrm{RS}_{\Pi}^{M,N}(\Lambda)(s)\Bigg\|_1 \\
    & = \sup_{0 \leq s \leq t }\Bigg\|\sum_{r \in \Pi} (\Lambda(r_*) - \Lambda(r_-))[\Delta_rM^s, \Delta_rN^s]\Bigg\|_1 \\
    & \leq \sup_{0 \leq s \leq t}\sum_{r \in \Pi} \norm{\Lambda(r_*) - \Lambda(r_-)}_{2,2;1} \norm{\Delta_rM^s}_2 \norm{\Delta_rN^s}_2 \\
    & \leq \sup_{u,v \leq t : |u-v| \leq |\Pi|}\norm{\Lambda(u)-\Lambda(v)}_{2,2;1}\sup_{0 \leq s \leq t}\sum_{r \in \Pi}\frac{\norm{\Delta_rM^s}_2^2 + \norm{\Delta_rN^s}_2^2}{2} \\
    & = \kappa_{M,N}((0,t])\sup_{u,v \leq t : |u-v| \leq |\Pi|}\norm{\Lambda(u)-\Lambda(v)}_{2,2;1} \xrightarrow[\Pi^* \in \cP_{\R_+}^*]{|\Pi| \to 0} 0
\end{align*}
by the $\norm{\cdot}_{2,2;1}$-continuity of $\Lambda$.
This completes the proof.
\end{proof}

Already, the above result can be useful for calculating quadratic covariations, but the next result is the true to key to most calculations of interest, as we see in the next section.

\begin{thm}\label{thm.NCcondQC}
If $\Lambda$ is as in Proposition \ref{prop.LEQRSapprox}, then
\[
\mathbb{L}^1\text{-}\lim_{\Pi \in \cP_{\R_+}} \sum_{t \in \Pi} \E_{\mathsmaller{\cC}}[\Lambda(t_-)[X(t \wedge \cdot) - X(t_- \wedge \cdot), Y(t \wedge \cdot) - Y(t_- \wedge \cdot)] \mid \cC_{t_-}] = \into \Lambda(t-)[\d X(t), \d Y(t)].
\]
\end{thm}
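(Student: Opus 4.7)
The plan is to reduce to the martingale case and then exploit the "orthogonality" of the summands $Z_t(s) \coloneqq \Lambda(t_-)[\Delta_tM^s,\Delta_tN^s] - \E_{\mathsmaller{\cC}}[\Lambda(t_-)[\Delta_tM^s,\Delta_tN^s] \mid \cC_{t_-}]$, which satisfy $\E_{\mathsmaller{\cC}}[Z_t(s) \mid \cC_{t_-}] = 0$. Throughout, write $X = M+A$ and $Y = N+B$, fix $T \geq 0$, and denote by $Y_\Pi(s)$ the partial sum on the left-hand side of the target identity.

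First, I would reduce to the case $(X,Y) = (M,N)$. Expanding $\Delta_t X^s = \Delta_t M^s + \Delta_t A^s$ and $\Delta_t Y^s = \Delta_t N^s + \Delta_t B^s$ splits $Y_\Pi(s)$ into an $(M,N)$-piece plus three cross terms involving $A$ or $B$. For each such cross term, the bound in the proof of Proposition \ref{prop.QCofFV} gives $\sum_{t \in \Pi} \norm{\Lambda(t_-)[\Delta_t\square^s,\Delta_t\blacksquare^s]}_1 \to 0$ as $|\Pi| \to 0$, uniformly in $s \in [0,T]$, whenever at least one of $\square,\blacksquare$ is $A$ or $B$. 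Since $\|\E_{\mathsmaller{\cC}}[\,\cdot \mid \cC_{t_-}]\|_1 \leq \|\cdot\|_1$, the corresponding conditional sums are controlled by the same term-by-term bound. Combined with Corollary \ref{cor.reducetomart}, this reduces the goal to showing $\mathbb{L}^1\text{-}\lim_{|\Pi|\to 0}\sum_{t\in\Pi}Z_t = 0$ when $X = M$, $Y=N$, and then invoking Proposition \ref{prop.LEQRSapprox}.

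The heart of the argument is the following approximation + $L^2$-orthogonality scheme. Using that $M \in \tbM_{\mathsmaller{\cA}}^2$ and $N \in \tbM_{\mathsmaller{\cB}}^2$, pick sequences $(M_k) \subseteq \M_{\mathsmaller{\cA}}^{\infty}$ and $(N_k) \subseteq \M_{\mathsmaller{\cB}}^{\infty}$ with $M_k \to M$ in $\M_{\mathsmaller{\cA}}^2$ and $N_k \to N$ in $\M_{\mathsmaller{\cB}}^2$; let $Z_t^{(k)}(s)$ be defined like $Z_t(s)$ but with $M_k,N_k$. For the approximated problem, $\Lambda(t_-)[\Delta_tM_k^s,\Delta_tN_k^s] \in L^2(\cC_t,\E_{\mathsmaller{\cC}})$ since $\Delta_t M_k, \Delta_t N_k \in \cA, \cB$ and $\vertiii{\Lambda(t_-)}_2 < \infty$. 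For partition points $r < t$ with $r \leq t_-$, the identity $\cF_{2,t_-}$ implies $Z_r^{(k)}(s) \in L^2(\cC_{t_-},\E_{\mathsmaller{\cC}})$, so
\[
\big\la Z_r^{(k)}(s),Z_t^{(k)}(s)\big\ra_2 = \E_{\mathsmaller{\cC}}\Big[\big(Z_r^{(k)}(s)\big)^*\,\E_{\mathsmaller{\cC}}\big[Z_t^{(k)}(s) \,\big|\, \cC_{t_-}\big]\Big] = 0.
\]
Pythagoras then gives
\[
\Bigg\|\sum_{t\in\Pi} Z_t^{(k)}(s)\Bigg\|_2^2 = \sum_{t\in\Pi}\big\|Z_t^{(k)}(s)\big\|_2^2 \leq 4\sup_{u \leq T}\vertiii{\Lambda(u)}_2^2 \cdot \sup_{t \in \Pi}\|\Delta_tM_k^s\|_\infty^2 \cdot \sum_{t \in \Pi}\|\Delta_tN_k^s\|_2^2,
\]
where the last step uses $\vertiii{\Lambda}_2$-boundedness across H\"{o}lder exponents $(\infty,2;2)$. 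By Lemma \ref{lem.M(t)-M(s)} the last sum equals $\|N_k(s)-N_k(0)\|_2^2$, and by $L^\infty$-continuity of $M_k$ on $[0,T]$, $\sup_{t\in\Pi}\|\Delta_tM_k^s\|_\infty \to 0$ as $|\Pi| \to 0$, uniformly in $s \in [0,T]$. Hence, for each fixed $k$, $\sup_{s \leq T}\|\sum_t Z_t^{(k)}(s)\|_1 \leq \sup_{s \leq T}\|\sum_t Z_t^{(k)}(s)\|_2 \to 0$ as $|\Pi|\to 0$.

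It remains to control the approximation error uniformly in $\Pi$. Writing $Z_t(s) - Z_t^{(k)}(s)$ as a sum of two terms by the bilinearity of $\Lambda(t_-)$, using $\|\E_{\mathsmaller{\cC}}[\cdot \mid \cC_{t_-}]\|_1 \leq \|\cdot\|_1$ on each, the operator bound $\norm{\Lambda(t_-)}_{2,2;1} \leq \vertiii{\Lambda(t_-)}_2$, and Cauchy--Schwarz with Lemma \ref{lem.M(t)-M(s)}, I obtain
\[
\sup_{s \leq T}\Bigg\|\sum_{t\in\Pi}\!\big(Z_t(s) - Z_t^{(k)}(s)\big)\Bigg\|_1 \leq 2\sup_{u \leq T}\vertiii{\Lambda(u)}_2 \big(\norm{(M-M_k)(T)}_2 \norm{N(T)}_2 + \norm{M_k(T)}_2\norm{(N-N_k)(T)}_2\big),
\]
which tends to $0$ as $k \to \infty$ uniformly in $\Pi$. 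Combining this with the previous step via a $3\e$-argument (in the triangle inequality with $Y_\Pi - \mathrm{RS}_\Pi^{M,N}(\Lambda) = -\sum_t Z_t$ and its $L^\infty$-approximated counterpart) yields $\sup_{s \leq T}\|Y_\Pi(s) - \mathrm{RS}_\Pi^{M,N}(\Lambda)(s)\|_1 \to 0$. Proposition \ref{prop.LEQRSapprox} and Theorem \ref{thm.QC2}\ref{item.QCmart} then give the claimed limit.

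The main obstacle is that the summands $\Lambda(t_-)[\Delta_tM^s,\Delta_tN^s]$ only live in $L^1$, so genuine $L^2$-orthogonality of martingale differences is unavailable directly; the workaround is to pass to $L^\infty$-continuous martingale approximants $M_k,N_k$, use $L^2$-orthogonality there, and then handle the $L^1$-approximation error using the global $\vertiii{\cdot}_2$-bound on $\Lambda$ together with the Cauchy--Schwarz/Pythagoras identity for martingale increments.
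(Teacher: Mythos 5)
Your strategy is the same as the paper's: center each summand by its conditional expectation, exploit the resulting $L^2$-orthogonality of the centered increments via Pythagoras, and transfer back from $L^\infty$-continuous martingale approximants to $M,N \in \tbM^2$ by an $\e/3$-argument. The reduction to $(M,N)$ and the final approximation-error estimate are essentially fine (though in the latter you should bound by $\sup_{u \leq T}\norm{\Lambda(u)}_{2,2;1}$, which is finite by hypothesis, rather than by $\sup_{u \leq T}\vertiii{\Lambda(u)}_2$).

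There is, however, a genuine gap in the key Pythagoras step. Your estimate
\[
\Bigg\|\sum_{t \in \Pi} Z_t^{(k)}(s)\Bigg\|_2^2 \leq 4\sup_{u \leq T}\vertiii{\Lambda(u)}_2^2 \cdot \sup_{t \in \Pi}\big\|\Delta_t M_k^s\big\|_{\infty}^2 \cdot \sum_{t \in \Pi}\big\|\Delta_t N_k^s\big\|_2^2
\]
requires $\Lambda$ to be \emph{locally bounded} in $\norm{\cdot}_{2,\infty;2}$ (equivalently, up to a factor of $2$, in $\vertiii{\cdot}_2$). The hypothesis of Proposition \ref{prop.LEQRSapprox} only says that $\Lambda$ is $\norm{\cdot}_{2,2;1}$-LLLB: each value $\Lambda(t)$ lies in $\mathbb{B}_2$, so $\vertiii{\Lambda(t)}_2 < \infty$ pointwise, but $\sup_{0 \leq u \leq T}\vertiii{\Lambda(u)}_2$ may well be infinite, and the finite quantity $\max_{t \in \Pi}\vertiii{\Lambda(t_-)}_2$ can blow up as $|\Pi| \to 0$. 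So the product (constant) $\times$ (modulus of continuity of $M_k$) is indeterminate, and step A fails as written. There is no way to repair this using only the $\norm{\cdot}_{2,2;1}$-norm, because landing the summands in $L^2$ forces a H\"{o}lder exponent pattern like $(2,\infty;2)$. The paper closes the gap with a third approximation layer: it replaces $\Lambda$ by the step processes $\Lambda_n = \Lambda^{\Pi_n} \in \mathrm{Q}_0$, which take only finitely many $\mathbb{B}_2$-values on each compact and are therefore $\vertiii{\cdot}_2$-locally bounded; the orthogonality/Pythagoras argument is run only for $\Xi_n = \Lambda_n - \tilde{\Lambda}_n$ together with $M_m, N_m \in \M^{\infty}$, while the error $\mathrm{RS}_{\Pi}^{M_m,N_m}(\Xi - \Xi_n)$ is controlled purely through $\int_0^t \norm{\Lambda - \Lambda_n}_{2,2;1}^{\Pi}\,d\kappa_{M_m,N_m}$ using Lemma \ref{lem.QCL1bd}\ref{item.RSbd} and Proposition \ref{prop.HPi}\ref{item.HPiLp}. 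You need this extra layer (or else you must strengthen the hypothesis on $\Lambda$, which would weaken the theorem and break its use in the paper's applications).
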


\begin{proof}
Define
\[
\tilde{\Lambda}(t)[x,y] \coloneqq \E_{\mathsmaller{\cC}}[\Lambda(t)[x,y] \mid \cC_t] \in L^1(\E_{\mathsmaller{\cC}}) \qquad (t \geq 0, \; x \in L^2(\E_{\mathsmaller{\cA}}), \; y \in L^2(\E_{\mathsmaller{\cB}})).
\]
In this notation, our goal is to prove
\[
\mathbb{L}^1\text{-}\lim_{\Pi \in \cP_{\R_+}}\mathrm{RS}_{\Pi}^{X,Y}\big(\tilde{\Lambda}\big) = \into \Lambda_-[\d X, \d Y].
\]
By Proposition \ref{prop.LEQRSapprox}, this is equivalent to
\[
\mathbb{L}^1\text{-}\lim_{\Pi \in \cP_{\R_+}}\Big( \mathrm{RS}_{\Pi}^{X,Y}(\Lambda) - \mathrm{RS}_{\Pi}^{X,Y}\big(\tilde{\Lambda}\big)\Big) = \mathbb{L}^1\text{-}\lim_{\Pi \in \cP_{\R_+}}\mathrm{RS}_{\Pi}^{X,Y}\big(\Lambda - \tilde{\Lambda}\big) = 0.
\]
We first prove that
\[
\mathbb{L}^1\text{-}\lim_{\Pi \in \cP_{\R_+}} \mathrm{RS}_{\Pi}^{M,N}(\Lambda - \tilde{\Lambda}) = 0 \qquad \big(M \in \M_{\mathsmaller{\cA}}^{\infty}, \; N \in \M_{\mathsmaller{\cB}}^{\infty}, \; \Lambda \in \mathrm{Q}_0\big).
\]
To this end, let $M \in \M_{\mathsmaller{\cA}}^{\infty}$, $N \in \M_{\mathsmaller{\cB}}^{\infty}$, $\Lambda \in \mathrm{Q}_0$, and $t \geq 0$.
Also, write $\Xi \coloneqq \Lambda - \tilde{\Lambda}$.
Observe the following.
\begin{enumerate}[leftmargin=2\parindent]
\itemsep0em
    \item If $u \geq t$ and $(x,y) \in (\cA_u \times L^2(\cB_u,\E_{\mathsmaller{\cB}})) \cup (L^2(\cA_u, \E_{\mathsmaller{\cA}}) \times \cB_u)$, then $\Xi(t)[x,y] \in L^2(\cC_u, \E_{\mathsmaller{\cC}})$.
    \item $\norm{\Xi(t)}_{2,\infty;2} \vee \norm{\Xi(t)}_{\infty,2;2} \leq 2 \vertiii{\Lambda(t)}_2$.
    \item If $(x,y) \in L^2(\E_{\mathsmaller{\cA}}) \times L^2(\E_{\mathsmaller{\cB}})$, then $\E_{\mathsmaller{\cC}}[\Xi(t)[x,y] \mid \cC_t] = 0$.
\end{enumerate}
With these in mind, let $\Pi$ be a partition of $\R_+$, and define
\[
C_{r,s}(t) \coloneqq \E_{\mathsmaller{\cC}}[\Xi(r_-)[\Delta_r M^t, \Delta_r N^t]^* \Xi(s_-)[\Delta_s M^t, \Delta_s N^t]] \qquad (r,s \in \Pi).
\]
We claim that if $r \neq s$, then $C_{r,s}(t) = 0$.
Indeed, if $r < s$, in which case $r \leq s_-$, then
\begin{align*}
    C_{r,s}(t) & = \E_{\mathsmaller{\cC}}\big[\E_{\mathsmaller{\cC}}\big[\Xi(r_-)[\Delta_r M^t, \Delta_r N^t]^* \Xi(s_-)[\Delta_s M^t, \Delta_s N^t] \mid \cC_{s_-}\big]\big] \\
    & = \E_{\mathsmaller{\cC}}\big[\Xi(r_-)[\Delta_r M^t, \Delta_r N^t]^*\E_{\mathsmaller{\cC}}\big[ \Xi(s_-)[\Delta_s M^t, \Delta_s N^t] \mid \cC_{s_-}\big]\big] = 0.
\end{align*}
If $s < r$, then $C_{r,s}(t) = \overline{C_{s,r}(t)} = 0$ as well.
Writing $K_t \coloneqq \sup\{\vertiii{\Lambda(s)}_2 : 0 \leq s \leq t\} < \infty$, it follows that
\begin{align*}
    \Big\|\mathrm{RS}_{\Pi}^{M,N}(\Xi)(t)\Big\|_2^2 & = \Bigg\|\sum_{s \in \Pi} \Xi(s_-)[\Delta_s M^t, \Delta_s N^t]\Bigg\|_2^2 \\
    & = \sum_{r,s \in \Pi} \E_{\mathsmaller{\cC}}\big[\Xi(r_-)[\Delta_r M^t, \Delta_r N^t]^* \Xi(s_-)[\Delta_s M^t, \Delta_s N^t]\big] \\
    & = \sum_{s \in \Pi} \big\|\Xi(s_-)[\Delta_s M^t, \Delta_s N^t]\big\|_2^2 \leq \sum_{s \in \Pi} \norm{\Xi(s_-)}_{2,\infty;2}^2 \norm{\Delta_sM^t}_2^2 \norm{\Delta_sN^t}_{\infty}^2 \\
    & \leq 4K_t^2 \max_{s \in \Pi}\norm{\Delta_sN^t}_{\infty}^2\sum_{r \in \Pi}\norm{\Delta_rM^t}_2^2 = 4K_t^2\max_{s \in \Pi}\norm{\Delta_sN^t}_{\infty}^2 \norm{M(t) - M(0)}_2^2.
\end{align*}
Thus,
\[
\sup_{0 \leq s \leq t}\Big\|\mathrm{RS}_{\Pi}^{M,N}(\Xi)(s)\Big\|_2 \leq 2K_t\sup_{r,s \leq t : |r-s| \leq |\Pi|}\norm{N(r) - N(s)}_{\infty} \norm{M(t) - M(0)}_2 \xrightarrow[\Pi \in \cP_{\R_+}]{|\Pi| \to 0} 0
\]
by the $L^{\infty}$-continuity of $N$.

To complete the proof, we conduct one more ``$\frac{\e}{3}$ argument.''
Let $(M,N) \in \tbM_{\mathsmaller{\cA}}^2 \times \tbM_{\mathsmaller{\cB}}^2$ and $\Lambda$ be as in the statement.
Also, fix a sequence $(M_n)_{n \in \N}$ in $\M_{\mathsmaller{\cA}}^{\infty}$ converging to $M$ in $\M_{\mathsmaller{\cA}}^2$ and a sequence $(N_n)_{n \in \N}$ in $\M_{\mathsmaller{\cB}}^{\infty}$ converging to $N$ in $\M_{\mathsmaller{\cB}}^2$.
Finally, let $(\Pi_n)_{n \in \N}$ be a sequence of partitions of $\R_+$ such that $|\Pi_n| \to 0$ as $n \to \infty$.
If $\Lambda_n \coloneqq \Lambda^{\Pi_n} \in \mathrm{Q}_0$ and $\Xi_n \coloneqq \Lambda_n - \widetilde{\Lambda}_n$, then $\norm{\Xi - \Xi_n}_{2;2,1} \leq 2\norm{\Lambda - \Lambda_n}_{2,2;1}$, and
\[
\mathrm{RS}_{\Pi}^{M,N}(\Xi) = \mathrm{RS}_{\Pi}^{M,N}(\Xi) - \mathrm{RS}_{\Pi}^{M_m,N_m}(\Xi) + \mathrm{RS}_{\Pi}^{M_m,N_m}(\Xi - \Xi_n) + \mathrm{RS}_{\Pi}^{M_m,N_m}(\Xi_n) \qquad (n,m \in \N).
\]
Let $\e > 0$ and $t \geq 0$.
By Lemma \ref{lem.cheapbd}, there exists an $m \in \N$ such that
\[
\sup_{\Pi \in \cP_{\R_+}}\sup_{0 \leq s \leq t}\Big\|\mathrm{RS}_{\Pi}^{M,N}(\Xi)(s) - \mathrm{RS}_{\Pi}^{M_m,N_m}(\Xi)(s)\Big\|_1 < \frac{\e}{3}.
\]
For this fixed $m$, note that
\[
\sup_{0 \leq s \leq t}\Big\|\mathrm{RS}_{\Pi}^{M_m,N_m}(\Xi - \Xi_n)(s)\Big\|_1 \leq \int_0^t \norm{\Xi - \Xi_n}_{2,2;1}^{\Pi} \,d \kappa_{M_m,N_m} \leq 2\int_0^t \norm{\Lambda - \Lambda_n}_{2,2;1}^{\Pi} \,\d\kappa_{M_m,N_m}.
\]
Since $\Lambda$ is $\norm{\cdot}_{2,2;1}$-LLLB, $\Lambda_n \to \Lambda_-$ in $L_{\loc}^1(\R_+,\kappa_{M_m,N_m};B_2^{2,2;1})$ as $n \to \infty$, so there exists an $n \in \N$ such~that
\[
\int_0^t \norm{\Lambda_- - \Lambda_n}_{2,2;1} \,\d\kappa_{M_m,N_m} < \frac{\e}{6}.
\]
Similarly, since $\norm{\Lambda - \Lambda_n}_{2,2;1}$ is LLLB, $\norm{\Lambda - \Lambda_n}_{2,2;1}^{\Pi} \to (\norm{\Lambda - \Lambda_n}_{2,2;1})_- = \norm{\Lambda_- - (\Lambda_n)_-}_{2,2;1} = \norm{\Lambda_- - \Lambda_n}_{2,2;1}$ in $L_{\loc}^1(\R_+,\kappa_{M_m,N_m};B_2^{2,2;1})$ as $|\Pi| \to 0$.
Therefore, there exists a $\delta > 0$ such that $|\Pi| < \delta$ implies
\[
\int_0^t \norm{\Lambda - \Lambda_n}_{2,2;1}^{\Pi} \,\d\kappa_{M_m,N_m} < \frac{\e}{6}.
\]
Thus, for these fixed $n$ and $m$, $|\Pi| < \delta$ implies
\[
\sup_{0 \leq s \leq t}\Big\|\mathrm{RS}_{\Pi}^{M_m,N_m}(\Xi - \Xi_n)(s)\Big\|_1 < \frac{\e}{3}.
\]
Finally, by the previous paragraph, we can shrink $\delta$ so that $|\Pi| < \delta$ also implies
\[
\sup_{0 \leq s \leq t}\Big\|\mathrm{RS}_{\Pi}^{M_m,N_m}(\Xi_n)(s)\Big\|_1 < \frac{\e}{3}.
\]
Putting it all together, we conclude that $|\Pi| < \delta$ implies
\[
\sup_{0 \leq s \leq t}\Big\|\mathrm{RS}_{\Pi}^{M,N}(\Xi)(s)\Big\|_1 < \frac{\e}{3} + \frac{\e}{3} + \frac{\e}{3} = \e.
\]
This completes the proof.
\end{proof}

\begin{rem}[Application and interpretation]\label{rem.Poisson}
This remark may be safely skipped on a first reading.
Let $Z \colon \R_+ \to \cA_{\sa}$ be a free Poisson process with rate $\lambda > 0$, i.e., $Z(0) = 0$, $Z$ has free increments, and the distribution of $Z(t) - Z(s)$ is free Poisson\footnote{See \cite[Def.\ 12.12]{NS2006} for the definition of the free Poisson distribution, but beware of the typo:
In (12.14), $\lambda \tilde{\nu}$ should be $\tilde{\nu}$.} with jump size one and rate $\lambda(t-s)$ whenever $0 \leq s < t$.
Now, let $p \in [1,\infty)$, and define
\[
\mathring{Z}(t) \coloneqq Z(t) - \E[Z(t)] = Z(t) - \lambda t \qquad (t \geq  0)
\]
to be the compensated free Poisson process with rate $\lambda$.
By Example \ref{ex.freeinc}, the process $\mathring{Z}$ is a martingale.
By \cite[Lem.\ 1(1)]{Anshelevich2002}, $Z$ is $L^p$-continuous.
Thus, $Z(t) = \mathring{Z}(t) + \lambda t$ is $(L^p,L^{\infty})$-decomposable.
We claim that $\mathring{Z} \not\in \tbM^2$, i.e., $Z$ is not $\tilde{L}^2$-decomposable.
Indeed, \cite[Cor.\ 4]{Anshelevich2000} says that
\[
L^{\infty}\text{-}\lim_{\pi \in \cP_{[0,t]}}\sum_{s \in \pi} (\Delta_s Z)^2 = Z(t) \qquad (t \geq 0).
\]
On the other hand,
\[
\sum_{s \in \pi} \E[(\Delta_s Z)^2 \mid \cA_{s_-}] = \sum_{s \in \pi}\E[(\Delta_s Z)^2] = \sum_{s \in \pi}(1+\lambda \Delta s)\lambda\Delta s \xrightarrow[\pi \in \cP_{[0,t]}]{|\pi| \to 0} \lambda t \qquad (t \geq 0)
\]
in $\cA$ by Lemma \ref{lem.condexpcalc}\ref{item.freeinc} below and the fact that $\E[(Z(t) - Z(s))^2] = (1+\lambda(t-s))\lambda(t-s)$ whenever $0 \leq s < t$.
Since $Z \neq (\lambda t)_{t \geq 0}$, it follows from Theorem \ref{thm.NCcondQC} that $Z$ cannot be $\tilde{L}^2$-decomposable, as claimed.

What is going on conceptually in the previous paragraph is rather subtle and requires further comments on classical stochastic analysis to explain.
Let $(\Om,\sF,(\sF_t)_{t \geq 0},P)$ be a filtered probability space satisfying the usual conditions, and let $U$ and $V$ be classical semimartingales with jumps, i.e., $U = U_0 + M_U+A_U$ and $V=V_0 + M_V+A_V$ for some RCLL (right-continuous with left limits) local martingales $M_U,M_V$ and RCLL FV processes $A_U,A_V$ with $(M_U)_0 = (M_V)_0 = (A_U)_0 = (A_V)_0 = 0$ almost surely.
As in the continuous case,
\[
L^0\text{-}\lim_{\pi \in \cP_{[0,t]}}\sum_{s \in \pi} \Delta_sU \, \Delta_sV = U_tV_t - U_0V_0 - \int_0^t U_{s-}\,\d V_s - \int_0^t V_{s-} \,\d U_s \qquad (t \geq 0),
\]
and $[U,V] \coloneqq UV - U_0V_0 - \into U_{s-}\,\d V_s - \into V_{s-} \,\d U_s$ is an RCLL FV process called the quadratic covariation of $U$ and $V$ (\cite[\S{II.6}]{Protter2005}).
If the variation process of $[U,V]$ is locally integrable, then there is another kind of quadratic variation.
Indeed, in this case, there exists a unique-up-to-indistinguishability predictable RCLL FV process $C$ with $C_0 = 0$ such that $[U,V] - C$ is an RCLL local martingale (\cite[\S{III.5}]{Protter2005}).
We write $\la U,V \ra\hspace{-0.25mm} \coloneqq\hspace{-0.25mm} C$ and call $\la U,V \ra$ the \textbf{predictable} or \textbf{conditional quadratic covariation} of $U$ and $V$.
In special situations (see, e.g., \cite[\S{VI.31}]{RW2}), one may compute $\la U, V \ra_t$ as an appropriate limit of $\sum_{s \in \pi} \E_P[\Delta_s U \, \Delta_sV \mid \sF_{s_-}]$ as $|\pi| \to 0$.
Also, if $U$ and $V$ are continuous semimartingales, then $[U,V] = \la U, V \ra$.
Accordingly, we should conceptualize Theorem \ref{thm.NCcondQC} as the statement that ``for $\tilde{L}^2$-decomposable processes, the noncommutative quadratic covariation agrees with the noncommutative predictable quadratic covariation,'' and we should interpret the condition that $(M,N) \in \tbM_{\mathsmaller{\cA}}^2 \times \tbM_{\mathsmaller{\cB}}^2$ as a kind of continuity.
Finally, a prototypical example of when $[U,V]$ differs from $\la U,V \ra$ is when $U=V=$ a classical Poisson process.
Therefore, it is reasonable to expect the same phenomenon---and, consequently, the same lack of ``continuity''---from the free Poisson process.
This is what we witness in the previous paragraph.
\end{rem}

The previous two results allow us to upgrade Example \ref{ex.elemtrtripQC}.

\begin{ex}\label{ex.elemtrtripQCupgrade}
By Proposition \ref{prop.LEQRSapprox}, all the statements in Example \ref{ex.elemtrtripQC} remain true when ``continuous'' is replaced with ``LCLB.''
In this case, we also have the limiting expressions
\begin{align*}
    \int_0^t\Lambda(s)[\d X(s), \d Y(s)] & = L^1\text{-}\lim_{\pi \in \cP_{[0,t]}}\sum_{s \in \pi}\E[\Lambda(s_-)[\Delta_sX, \Delta_sY] \mid \cC_{s_-}] \, \text{ and} \\
    \int_0^t P(\mathbf{X}(s),\d\mathbf{Y}(s),\d\mathbf{Z}(s)) & = L^1\text{-}\lim_{\pi \in \cP_{[0,t]}}\sum_{s \in \pi}\E^{\oplus m}[P(\mathbf{X}(s_-),\Delta_s\mathbf{Y},\Delta_s\mathbf{Z}) \mid \cA_{s_-}^m]
\end{align*}
by Theorem \ref{thm.NCcondQC}.
\end{ex}

As mentioned above, in many cases of interest, Theorem \ref{thm.NCcondQC} also allows us to find explicit formulas for the remaining uncalculated quadratic covariations in the previous example (e.g., $\into A(t)\,\d X(t)\,B(t)\,\d Y(t)\,C(t)$).
We undertake some such calculations in Section \ref{sec.QCexs}.

\subsection{Application: Burkholder--Davis--Gundy inequalities}\label{sec.BDG}

In this section, we use our theory of quadratic covariation to extend the discrete-time noncommutative (NC) Burkholder--Davis--Gundy (BDG) inequalities of Pisier--Xu \cite{PX1997} to the present continuous-time setting when $p \geq 2$.
As a consequence, we obtain $L^p$-norm estimates on stochastic integrals.
We begin by recalling the discrete-time NC BDG inequalities.

\begin{thm}[Discrete-time NC BDG inequalities \cite{PX1997}]\label{thm.discBDG}
There exist increasing families $(\alpha_p)_{p \geq 2}$ and $(\beta_p)_{p \geq 2}$ of strictly positive constants such that the following holds.
If $p \in [2,\infty)$, $(\cM,\tau)$ is a $\mathrm{C}^*$-probability space, $N \in \N$, $(\cM_n)_{n = 0}^N$ is a finite filtration  of $\cM$, and $x \colon \{0,\ldots,N\} \to L^p(\tau)$ is a discrete-time $L^p$-martingale, then
\[
\alpha_p^{-1}\norm{x}_{\cH^p(\cM)} \leq \max_{0 \leq n \leq N}\norm{x_n}_p = \norm{x_N}_p \leq \beta_p\norm{x}_{\cH^p(\cM)},
\]
where
\[
\norm{x}_{\cH^p(\cM)} \coloneqq \max\Bigg\{\Bigg\|x_0^*x_0 + \sum_{n=1}^N (x_n - x_{n-1})^*(x_n - x_{n-1})\Bigg\|_{\frac{p}{2}}^{\frac{1}{2}}, \Bigg\|x_0x_0^* + \sum_{n=1}^N (x_n - x_{n-1})(x_n - x_{n-1})^*\Bigg\|_{\frac{p}{2}}^{\frac{1}{2}}\Bigg\}.
\]
Furthermore,
\[
\norm{x}_{\cH^2(\cM)}^2 = \tau\bigg[x_0^*x_0 + \sum_{n=1}^N (x_n - x_{n-1})^*(x_n - x_{n-1})\bigg] = \tau\bigg[x_0x_0^* + \sum_{n=1}^N (x_n - x_{n-1})(x_n - x_{n-1})^*\bigg].
\]
\end{thm}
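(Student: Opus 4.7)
The plan is to handle the $p=2$ case by direct orthogonality and then to import the column/row Hardy-space machinery of Pisier--Xu for $p > 2$. For $p=2$, set $dx_n \coloneqq x_n - x_{n-1}$ for $1 \leq n \leq N$ and $dx_0 \coloneqq x_0$. The martingale property forces $\E[dx_n \mid \cM_{n-1}] = 0$ for $n \geq 1$, and for $m < n$ the bimodule property of conditional expectation plus the tower property give
\[
\tau(dx_m^* \, dx_n) = \tau\bigl(\E[dx_m^* \, dx_n \mid \cM_{n-1}]\bigr) = \tau\bigl(dx_m^* \, \E[dx_n \mid \cM_{n-1}]\bigr) = 0,
\]
and similarly $\tau(dx_n^* \, dx_m) = 0$. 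Expanding $x_N^* x_N = \sum_{m,n} dx_m^* \, dx_n$ and applying $\tau$ collapses the off-diagonal terms, so
\[
\norm{x_N}_2^2 = \tau\Bigl(x_0^* x_0 + \sum_{n=1}^N dx_n^* \, dx_n\Bigr),
\]
and analogously for the row version. The equality $\max_{0 \leq n \leq N}\norm{x_n}_p = \norm{x_N}_p$ is the discrete-time analogue of Eq.~\eqref{eq.norminc} and follows verbatim from Proposition~\ref{prop.condexp}\ref{item.condexpLpbd} applied finitely many times. This settles $p=2$ with $\alpha_2 = \beta_2 = 1$.

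For $p > 2$ I would follow Pisier--Xu \cite{PX1997}: introduce the column/row Hardy norms $\norm{x}_{\cH^p_C} \coloneqq \bigl\|(\sum_n dx_n^* \, dx_n)^{1/2}\bigr\|_p$ and $\norm{x}_{\cH^p_R} \coloneqq \bigl\|(\sum_n dx_n \, dx_n^*)^{1/2}\bigr\|_p$, so that $\norm{x}_{\cH^p(\cM)} = \max\{\norm{x}_{\cH^p_C}, \norm{x}_{\cH^p_R}\}$ (up to the $n=0$ term). The upper bound would come from duality: write $\norm{x_N}_p = \sup\{|\tau(x_N \, y)| : y \in L^q(\tau), \norm{y}_q \leq 1\}$ with $q = p/(p-1) \in (1,2)$, decompose $y$ via its martingale differences $dy_n = \E[y \mid \cM_n] - \E[y \mid \cM_{n-1}]$, and use the orthogonality identity $\tau(x_N \, y) = \sum_n \tau(dx_n \, dy_n)$ together with noncommutative H\"{o}lder at the pair $(p/2, q/2)$ applied separately to the column and row square functions, invoking the noncommutative Khintchine/Burkholder inequalities in $L^q$ to bound $\norm{y}_{\cH^q(\cM)}$ by $\norm{y}_q$. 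The lower bound would use the noncommutative Stein inequality (boundedness of conditional expectations on $L^p(\cM; \ell^2_C)$ and $L^p(\cM; \ell^2_R)$ for $p > 2$) combined with Doob's noncommutative maximal inequality, which together let one dominate $\norm{x}_{\cH^p(\cM)}$ by $\norm{x_N}_p$.

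The main obstacle is precisely the $p > 2$ half: noncommutativity forces both column and row square functions to appear in the max (unlike the commutative case, where a single square function suffices), and carefully setting up the column/row $L^p$-modules with their duality pairings and proving the noncommutative Stein inequality are the heart of the Pisier--Xu machinery. A critical feature of this argument to highlight is that the constants $\alpha_p, \beta_p$ depend only on $p$, not on $(\cM,\tau)$ or $N$; this universality is exactly what makes it possible to upgrade to the continuous-time Theorem~\ref{thm.NCBDG} by taking suprema over partitions.
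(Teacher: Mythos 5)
Your $p=2$ argument is correct and is essentially the standard orthogonality computation (note that $\tau(dx_m^*\,dx_n)=\la dx_n, dx_m\ra_2 = 0$ for $m<n$ follows most cleanly from the fact that $\E[\cdot\mid\cM_{n-1}]$ is the orthogonal projection onto $L^2(\cM_{n-1},\tau)$, which sidesteps extending the bimodule property of Proposition \ref{prop.condexp}\ref{item.condexpexist} from $\cM_{n-1}$ to $L^2(\cM_{n-1},\tau)$). For $p>2$, however, you are attempting to re-derive the Pisier--Xu theorem itself, and what you give is an outline of their strategy (duality with $L^q$, $q\in(1,2)$, noncommutative Khintchine/Burkholder, the Stein inequality, Doob's maximal inequality) that explicitly defers the hard steps to ``the heart of the Pisier--Xu machinery.'' The paper does not attempt this at all: it simply quotes \cite{PX1997} as a black box. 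So your route is more ambitious but, as written, is not a proof of the $p>2$ half.

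The genuine gap is the one piece of mathematical content the paper actually supplies for this statement and which your proposal omits entirely: the theorem here is stated for an arbitrary $\mathrm{C}^*$-probability space $(\cM,\tau)$, whereas \cite{PX1997} states and proves the result only for von Neumann algebras (this is flagged in Remark \ref{rem.BDG}). The Pisier--Xu machinery you invoke --- column/row Hardy spaces $\cH_C^p$, $\cH_R^p$, the noncommutative Stein inequality, duality of Hardy spaces --- is built on von Neumann algebra $L^p$-theory and does not literally apply in the $\mathrm{C}^*$-setting. The paper's resolution, via Appendix \ref{sec.CstarLp}, is to pass to the GNS representation: the $\sigma$-weak closure $\cM''$ of $\cM$ carries a normal faithful tracial state extending $\tau$, the maps $\iota_p$ identify $L^p(\cM,\tau)$ isometrically with $L^p(\cM'',\tau)$, and the conditional expectations onto the $\mathrm{W}^*$-closures of the $\cM_n$ restrict to those onto the $\cM_n$; hence a discrete $L^p$-martingale in the $\mathrm{C}^*$-sense is one in the $\mathrm{W}^*$-sense, both sides of the inequality are unchanged, and the $\mathrm{W}^*$-theorem of \cite{PX1997} transfers verbatim. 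You should either include this reduction or restrict your argument to the $\mathrm{W}^*$-case and then cite it. Your closing observation that $\alpha_p,\beta_p$ depend only on $p$ is correct and is indeed essential for the continuous-time upgrade.
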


\begin{rem}\label{rem.BDG}
In truth, the result quoted from \cite{PX1997} is stated and proven only in the $\mathrm{W}^*$ setting.
However, it is easy to see from the development in Appendix \ref{sec.CstarLp} that the result in the $\mathrm{W}^*$ setting actually implies the result stated above in the $\mathrm{C}^*$ setting.
Also, \cite{PX1997} contains a similar result for $p \in (1,2)$ with a different norm.
At this time, we are unable to adapt this regime to our continuous-time setting.
\end{rem}

Aside from our theory of quadratic covariation, the key to transferring the result above over to the continuous-time setting is a short list of basic facts about noncommutative $L^p$ convergence.

\begin{lem}\label{lem.basicLp}
Suppose $1 \leq p < q < \infty$.
\begin{enumerate}[label=(\roman*),font=\normalfont]
    \item Let $(a_n)_{n \in \N}$ be a sequence in $L^p(\E)$ and $a \in L^1(\E)$.
    If $\liminf_{n \to \infty} \norm{a_n}_p < \infty$ and $a_n \to a$ in $L^1(\E)$ as $n \to \infty$, then $a \in L^p(\E)$, and $\norm{a}_p \leq \liminf_{n \to \infty} \norm{a_n}_p$.\label{item.Fatou}
    \item Let $(a_n)_{n \in \N}$ be a sequence in $L^q(\E)$ and $a \in L^1(\E)$.
    If $\sup\{\norm{a_n}_q : n \in \N\} < \infty$ and $a_n \to a$ in $L^1(\E)$ as $n \to \infty$, then $a \in L^q(\E)$, and $a_n \to a$ in $L^p(\E)$ as $n \to \infty$.\label{item.Lrbdd}
    \item If $a \in L^q(\E)$, then $\lim_{p \nearrow q}\norm{a}_p = \norm{a}_q$.\label{item.limofpnorm}
\end{enumerate}
\end{lem}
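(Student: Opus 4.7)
The plan is to handle the three parts separately, using noncommutative $L^p$ duality and reflexivity for (i), an interpolation inequality for (ii), and spectral calculus with dominated convergence for (iii).

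For part (i), the $p=1$ case is immediate since $L^1$-convergence gives $\|a\|_1 = \lim \|a_n\|_1$. For $p \in (1,\infty)$, I would first pass to a subsequence along which $\|a_{n_k}\|_p \to \liminf_n \|a_n\|_p < \infty$. Since $L^p(\E) \cong L^{p'}(\E)^*$ with $1/p + 1/p' = 1$, the space $L^p(\E)$ is reflexive; Banach--Alaoglu then yields a further subsequence (not relabeled) with $a_{n_k} \rightharpoonup \tilde a$ weakly in $L^p(\E)$ and $\|\tilde a\|_p \leq \liminf_k \|a_{n_k}\|_p$. For any $b \in \cA \subseteq L^{p'}(\E)$, one has $\E[b a_{n_k}] \to \E[b \tilde a]$ by the weak convergence, and also $|\E[b(a_{n_k} - a)]| \leq \|b\|_\infty \|a_{n_k} - a\|_1 \to 0$, forcing $\E[b(\tilde a - a)] = 0$ for every $b \in \cA$. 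The $L^1$ dual characterization $\|x\|_1 = \sup\{|\E[bx]| : b \in \cA,\, \|b\|_\infty \leq 1\}$, which extends from $\cA$ to all of $L^1(\E)$ by density, then implies $\tilde a = a$ in $L^1(\E)$, so $a \in L^p(\E)$ with the stated bound.

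For part (ii), the $p=1$ case is trivial given (i). For $p \in (1, q)$, set $\theta = (1 - 1/p)/(1 - 1/q) \in (0, 1)$, so that $1/p = (1-\theta) + \theta/q$. The noncommutative $L^p$-interpolation inequality (see, e.g., the standard three-line / Riesz--Thorin argument in \cite{daSilva2018}) gives $\|x\|_p \leq \|x\|_1^{1-\theta} \|x\|_q^\theta$. By (i), $a \in L^q(\E)$ with $\|a\|_q \leq \sup_n \|a_n\|_q$, hence $\|a_n - a\|_q \leq 2 \sup_n \|a_n\|_q$ is uniformly bounded; combined with $\|a_n - a\|_1 \to 0$ and $1 - \theta > 0$, this yields $\|a_n - a\|_p \to 0$.

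For part (iii), use the spectral theorem for $|a|$ (in the $\mathrm{W}^*$-case directly; in the $\mathrm{C}^*$-case, pass to the $\mathrm{W}^*$-completion of Appendix \ref{sec.CstarLp}) to write $\|a\|_r^r = \E[|a|^r] = \int_0^\infty t^r \, \mu(dt)$ for $r \in [1, q]$, where $\mu$ is the pushforward of $\E \circ P^{|a|}$ to $[0, \infty)$. For $p \in [1, q]$ the integrand $t^p$ is dominated by $1 + t^q$, which is $\mu$-integrable because $\int_0^\infty t^q \, \mu(dt) = \|a\|_q^q < \infty$, and $t^p \to t^q$ pointwise as $p \nearrow q$. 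Dominated convergence then gives $\|a\|_p^p \to \|a\|_q^q$, and taking $r$-th roots finishes the claim. The main obstacle in the whole lemma is the identification $\tilde a = a$ in part (i), which hinges on extending the $L^1$-$\cA$ duality formula beyond its initial statement for $a \in \cA$; the remaining parts are essentially bookkeeping once the right noncommutative tool (interpolation or functional calculus) is in hand.
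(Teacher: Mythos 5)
Your proposal is correct, but it routes around the paper's argument in each part, so let me compare. For (i), the paper argues directly: for $b \in \cA$ one has $|\E[ab]| = \lim_n |\E[a_n b]| \leq \norm{b}_{p'} \liminf_n \norm{a_n}_p$, and then the duality $L^p(\E) \cong L^{p'}(\E)^*$ (Proposition \ref{prop.dual}) immediately identifies $a$ as an element of $L^p(\E)$ with the stated norm bound; your detour through reflexivity, Banach--Alaoglu, and weak lower semicontinuity reaches the same conclusion but spends extra effort on the identification $\tilde a = a$, which the direct duality argument gets for free. For (ii), the paper proves from scratch the truncation bound $\norm{b}_p^p \leq M^{p-1}\norm{b}_1 + M^{-(q-p)}\norm{b}_q^q$ for $b \in \cA$ by splitting the spectral integral of $|b|$ at level $M$, extends it to $L^q(\E)$ by density, and runs a Cauchy-sequence argument; your invocation of the log-convexity inequality $\norm{x}_p \leq \norm{x}_1^{1-\theta}\norm{x}_q^{\theta}$ is the optimized-over-$M$ form of the same bound, so the two are equivalent in substance — yours is shorter but imports the interpolation inequality as a black box, whereas the paper keeps the argument self-contained. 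For (iii), the paper proves the claim for $a \in \cA$ (splitting the spectral integral at $1$ and using dominated plus monotone convergence) and then extends to $L^q(\E)$ by an $\e/2$-argument using $\norm{\cdot}_p \leq \norm{\cdot}_q$; you instead apply dominated convergence directly to the spectral distribution of $|a|$ for general $a \in L^q(\E)$, which is fine but relies on realizing elements of $L^q(\E)$ as $\tau$-measurable operators with a spectral distribution — a piece of machinery the paper's Appendix \ref{sec.CstarLp} does not explicitly develop (it defines $L^p(\cA,\E)$ as an abstract completion), though it is available through the identification with $L^q(\cM,\tau)$ and the standard theory in \cite{daSilva2018}. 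In short: everything you wrote is sound, and what the paper's versions buy is self-containedness relative to its own minimal $L^p$ setup, at the cost of a few more lines.
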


\begin{proof}
We take each item in turn.

\ref{item.Fatou}
If $p=1$, then this is obvious, so assume $p > 1$.
If $b \in \cA$, then
\[
|\E[ab]| = \lim_{n \to \infty}|\E[a_nb]| \leq \liminf_{n \to \infty}(\norm{a_n}_p\norm{b}_{p'}) = \norm{b}_{p'}\liminf_{n \to \infty}\norm{a_n}_p,
\]
where $1/p+1/p' = 1$.
Since $1 < p,p' < \infty$, duality for noncommutative $L^p$ spaces tells us that $a \in L^p(\E)$ and $\norm{a}_p \leq \liminf_{n \to \infty} \norm{a_n}_p$, as desired.

\ref{item.Lrbdd} First, observe $a \in L^q(\E)$ and $\norm{a}_q \leq \liminf_{n \to \infty}\norm{a_n}_q \leq \sup\{\norm{a_n}_q : n \in \N\} < \infty$ by the first item.
For the second claim, the case $p=1$ is obvious, so we assume $p > 1$.
If $b \in \cA$ and $M > 0$, then
\begin{align*}
    \norm{b}_p^p & = \E[|b|^p] = \int_{\R_+} x^p \,\mu_{|b|}(\d x) = \int_{[0,M]} x^p \,\mu_{|b|}(\d x) + \int_{(M,\infty)} x^p \,\mu_{|b|}(\d x) \\
    & \leq M^{p-1}\int_{[0,M]} x \,\mu_{|b|}(\d x) + \frac{1}{M^{q-p}}\int_{(M,\infty)} x^q \,\mu_{|b|}(\d x) \\
    & \leq M^{p-1}\int_{\R_+} x \,\mu_{|b|}(\d x) + \frac{1}{M^{q-p}}\int_{\R_+} x^q \,\mu_{|b|}(\d x) = M^{p-1}\norm{b}_1 + \frac{1}{M^{q-p}}\norm{b}_q^q.
\end{align*}
By density, this inequality extends to all $b \in L^q(\E)$.
Letting $C \coloneqq \sup\{\norm{a_n}_q : n \in \N\}$, this gives
\[
\norm{a_n-a_m}_p^p \leq M^{p-1}\norm{a_n-a_m}_1 + \frac{2C^q}{M^{q-p}} \qquad (n,m \in \N).\pagebreak
\]
Now, let $\e > 0$, and choose an $M > 0$ such that $2C^q/M^{q-p} < \e^p/2$ and an $N \in \N$ such that $m,n \geq N$ implies $\norm{a_n-a_m}_1 < \e^p/(2M^{p-1})$.
Then $m,n \geq N$ also implies $\norm{a_n - a_m}_p < \e$.
We have just shown that $(a_n)_{n \in \N}$ is Cauchy and therefore convergent in $L^p(\E)$.
By the uniqueness of $L^p$ limits, we conclude that $a_n \to a$ in $L^p(\E)$ as $n \to \infty$, as desired.

\ref{item.limofpnorm} First, suppose $a \in \cA$ is non-zero.
(The $a=0$ case is obvious.)
Note that
\begin{align*}
    \norm{a}_p^p & = \int_{[0,1]} x^p \, \mu_{|a|}(\d x) + \int_{(1,\infty)} x^p \, \mu_{|a|}(\d x) \xrightarrow{p \nearrow q} \int_{[0,1]} x^q \, \mu_{|a|}(\d x) + \int_{(1,\infty)} x^q \, \mu_{|a|}(\d x) = \norm{a}_q^q
\end{align*}
by the dominated convergence theorem for the first term and the monotone convergence theorem for the second.
Now, the function $f \colon (0,\infty) \times [0,\infty) \to \R$ defined by $f(x,p) \coloneqq x^p$ is jointly continuous.
Therefore,
\[
\lim_{p \nearrow q}\norm{a}_p = \lim_{p \nearrow q}f\big(p^{-1},\norm{a}_p^p\big) = f\big(q^{-1},\norm{a}_q^q\big) = \norm{a}_q,
\]
as desired.
An elementary ``$\frac{\e}{2}$ argument'' then extends this identity to all $a \in L^q(\E)$.
\end{proof}

\begin{thm}[Continuous-time NC BDG Inequalities]\label{thm.BDG}
Let $(\alpha_p)_{p \geq 2}$ and $(\beta_p)_{p \geq 2}$ be as in Theorem \ref{thm.discBDG}.
Also, let $M \colon \R_+ \to L^p(\E)$ be an $L^p$-martingale and $t \geq 0$.
Suppose there exists a sequence $P = (\pi_n)_{n \in \N}$ in $\cP_{[0,t]}$ such that $|\pi_n| \to 0$ as $n \to \infty$ and the limits
\begin{align*}
    [M^*,M]_t^P & \coloneqq L^1\text{-}\lim_{n \to \infty}\sum_{s \in \pi_n} \Delta_sM^* \, \Delta_sM \in L^1(\cA_t,\E) \; \text{ and} \\
    [M,M^*]_t^P & \coloneqq L^1\text{-}\lim_{n \to \infty}\sum_{s \in \pi_n} \Delta_sM \, \Delta_sM^* \in L^1(\cA_t,\E)
\end{align*}
exist.
Then $[M^*,M]_t^P, [M,M^*]_t^P \in L^{p/2}(\cA_t,\E)$, and
\[
\alpha_p^{-1}\norm{M}_{\cH_t^p(\cA)} \leq \norm{M(t)}_p = \sup_{0 \leq s \leq t} \norm{M(s)}_p \leq \beta_p\norm{M}_{\cH_t^p(\cA)},
\]
where
\[
\norm{M}_{\cH_t^p(\cA)} \coloneqq \max\bigg\{\Big\|M(0)^*M(0) + [M^*,M]_t^P\Big\|_{\frac{p}{2}}^{\frac{1}{2}}, \Big\|M(0)M(0)^* + [M,M^*]_t^P\Big\|_{\frac{p}{2}}^{\frac{1}{2}}\bigg\}.
\]
Furthermore, $\norm{M(t)}_2^2 = \E\big[M^*(0)M(0) + [M^*,M]_t^P\big] = \E\big[M(0)M^*(0) + [M,M^*]_t^P\big]$.
\end{thm}

\begin{proof}
We leave the $p=2$ case, i.e., the last sentence of the statement, to the reader and assume $p > 2$ throughout the proof.
If $\pi \in \cP_{[0,t]}$, then the discrete-time process $\pi \ni s \mapsto M_{\pi}(s) \coloneqq M(s) \in L^p(\E)$ is an $L^p$-martingale with respect to the filtration $(\cA_s)_{s \in \pi}$.
By the discrete-time NC BDG inequalities,
\begin{equation}
    \alpha_p^{-1}\norm{M_{\pi}}_{\mathcal{H}^p(\cA)} \leq \max_{s \in \pi}\norm{M(s)}_p  \leq \beta_p\norm{M_{\pi}}_{\mathcal{H}^p(\cA)}. \label{eq.partBDG}
\end{equation}
Since $t \in \pi \subseteq [0,t]$,
\[
\max_{s \in \pi}\norm{M(s)}_p = \norm{M(t)}_p = \sup_{0 \leq s \leq t} \norm{M(s)}_p
\]
by \eqref{eq.norminc}.
Since the right-hand side is independent of $\pi$, we conclude from the first inequality in \eqref{eq.partBDG} and the definition of $\norm{\cdot}_{\cH^p(\cA)}$ that the sequences
\[
\Bigg(M(0)^*M(0) + \sum_{s \in \pi_n}\Delta_sM^* \, \Delta_sM\Bigg)_{n \in \N} \; \text{ and } \; \Bigg(M(0)M(0)^* + \sum_{s \in \pi_n}\Delta_sM \, \Delta_sM^*\Bigg)_{n \in \N}
\]
are bounded in $L^{p/2}(\cA_t,\E)$.
Since $M(0)^*M(0),M(0)M(0)^* \in L^{p/2}(\cA_t,\E)$, the sequences
\[
\Bigg(\sum_{s \in \pi_n}\Delta_sM^* \, \Delta_sM\Bigg)_{n \in \N} \; \text{ and } \; \Bigg(\sum_{s \in \pi_n}\Delta_sM \, \Delta_sM^*\Bigg)_{n \in \N}
\]
are bounded in $L^{p/2}(\cA_t,\E)$.
Since the left sequence converges in $L^1(\cA_t,\E)$ to $[M^*,M]_t^P$ and the right sequence converges in $L^1(\cA_t,\E)$ to $[M,M^*]_t^P$, Lemma \ref{lem.basicLp}\ref{item.Lrbdd} yields that
\[
\big[M^{\e_1},M^{\e_2}\big]_t^P \in L^{\frac{p}{2}}(\cA_t,\E) \; \text{ and } \; \big[M^{\e_1},M^{\e_2}\big]_t^P = L^{\frac{q}{2}}\text{-}\lim_{n \to \infty}\sum_{s \in \pi_n}\Delta_s M^{\e_1} \, \Delta_s M^{\e_2}\pagebreak
\]
whenever $(\e_1,\e_2) \in \{(\ast,1),(1,\ast)\}$ and $q \in [2,p)$.
Therefore, two applications of Lemma \ref{lem.basicLp}\ref{item.Fatou} and the first inequality in \eqref{eq.partBDG} yield
\[
\alpha_p^{-1}\norm{M}_{\cH_t^p(\cA)} \leq \alpha_p^{-1}\liminf_{n \to \infty}\norm{M_{\pi_n}}_{\cH_t^p(\cA)} \leq \norm{M(t)}_p,
\]
as required.
Finally, let $q \in (2,p)$.
If $\pi \in \cP_{[0,t]}$, then \eqref{eq.partBDG} says
\[
\norm{M(t)}_q \leq \beta_q \norm{M_{\pi}}_{\cH^q(\cA)} \leq \beta_p \norm{M_{\pi}}_{\cH^q(\cA)}
\]
because $(\beta_r)_{r \geq 2}$ is increasing.
Therefore, taking $\pi = \pi_n$ and $n \to \infty$, we obtain
\[
\norm{M(t)}_q \leq \beta_p \norm{M}_{\cH_t^q(\cA)}.
\]
Three applications of Lemma \ref{lem.basicLp}\ref{item.limofpnorm} then yield
\[
\norm{M(t)}_p = \lim_{q \nearrow p}\norm{M(t)}_q \leq \beta_p\lim_{q \nearrow p}\norm{M}_{\cH_t^q(\cA)} = \beta_p\norm{M}_{\cH_t^p(\cA)},
\]
as desired.
This completes the proof.
\end{proof}

\begin{ex}\label{ex.M2tilde}
By Theorem \ref{thm.QC1}, if $p \geq 2$ and $M \colon \R_+ \to L^p(\E)$ is an $L^p$-martingale such that $M \in \tbM^2$, e.g., if $M \in \tbM^p$, then $M$ satisfies the hypotheses of Theorem \ref{thm.BDG} with any sequence $P = (\pi_n)_{n \in \N}$ in $\cP_{[0,t]}$ such that $|\pi_n| \to 0$ as $n \to \infty$, in which case $[M^*,M]_t^P = \int_0^t \d M^*(s)\,\d M(s)$ and $[M,M^*]_t^P = \int_0^t \d M(s)\,\d M^*(s)$.
Thus, Theorem \ref{thm.NCBDG} is proven.
\end{ex}

Since we have computed quadratic variations of stochastic integrals in Theorem \ref{thm.QCSI}, we get the following equivalence of $L^p$ norms of stochastic integrals.

\begin{thm}\label{thm.LpnormSI}
Let $2 \leq p < \infty$ and $\alpha_p,\beta_p \in (0,\infty)$ be the constants from Theorem \ref{thm.discBDG}.
Suppose $M \in \tbM^2$ and $H \in \tilde{\cI}(M)$ are such that $\int_0^t H[\d M] \in L^p(\cB_t,\E_{\mathsmaller{\cB}})$ for all $t \geq 0$.
If $t \geq 0$, then
\[
\int_0^t H(s)[\d M(s)]^*H[\d M(s)], \, \int_0^t H(s)[\d M(s)]H[\d M(s)]^* \in L^{\frac{p}{2}}(\cB_t,\E_{\mathsmaller{\cB}}),
\]
and
\[
\alpha_p^{-1}\norm{H}_{\cH_{t,M}^p(\cB)} \leq \Bigg\|\int_0^t H(s)[\d M(s)]\Bigg\|_p \leq \beta_p\norm{H}_{\cH_{t,M}^p(\cB)},
\]
where
\[
\norm{H}_{\cH_{t,M}^p(\cB)} \coloneqq \max\Bigg\{\Bigg\|\int_0^t H(s)[\d M(s)]^*H[\d M(s)]\Bigg\|_{\frac{p}{2}}^{\frac{1}{2}}, \Bigg\|\int_0^t H(s)[\d M(s)]H[\d M(s)]^*\Bigg\|_{\frac{p}{2}}^{\frac{1}{2}}\Bigg\}.
\]
\end{thm}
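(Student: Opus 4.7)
The strategy is to reduce Theorem \ref{thm.LpnormSI} to Theorem \ref{thm.BDG} applied to the process $N \coloneqq \into H[dM]$, and then to rewrite the resulting quadratic variations in terms of $H$ and $M$ via Theorem \ref{thm.QCSI}. More precisely, first I would observe that by Theorem \ref{thm.stochint}\ref{item.stochintsemimart} and Proposition \ref{prop.stochintLtilde}, $N$ is $\tilde{L}^2$-decomposable with martingale part equal to itself (since $M$ has no FV part), so $N \in \tbM_{\cB}^2$, and $N(0) = 0$. Because $N$ is an $L^2$-martingale and each $N(t) \in L^p(\cB_t,\E_{\mathsmaller{\cB}})$ by hypothesis, Proposition \ref{prop.condexp}\ref{item.condexpLpbd} upgrades the martingale identity to $L^p$, so $N$ is an $L^p$-martingale in the sense of Definition \ref{def.ncmartingale}. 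Example \ref{ex.M2tilde} then ensures the hypotheses of Theorem \ref{thm.BDG} hold for $N$, which immediately yields $\int_0^t dN^*\,dN, \, \int_0^t dN\,dN^* \in L^{p/2}(\cB_t,\E_{\mathsmaller{\cB}})$ together with
\[
\alpha_p^{-1}\norm{N}_{\cH_t^p(\cB)} \leq \norm{N(t)}_p \leq \beta_p \norm{N}_{\cH_t^p(\cB)},
\]
and the $N(0)=0$ remark collapses $\norm{N}_{\cH_t^p(\cB)}$ to the desired maximum.

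The remaining step is to identify $\int_0^t dN^*(s)\,dN(s)$ with $\int_0^t H(s)[dM(s)]^* H(s)[dM(s)]$, and analogously for the other term. For this, I would introduce the two constant bilinear processes on $\cB \times \cB \to \cB$ given by $\Lambda[x,y] \coloneqq x^*y$ and $\Xi[x,y] \coloneqq xy^*$. These are real bilinear, lie in $\mathbb{B}_2(\cB \times \cB;\cB)$, and are adapted: the bimodule property of conditional expectation together with the $\ast$-closure of each $\cB_u$ gives $\Lambda,\Xi \in \cF_{2,0}(\E_{\mathsmaller{\cB}},\E_{\mathsmaller{\cB}};\E_{\mathsmaller{\cB}})$, and as constant processes they trivially preserve the filtration and have zero variation with respect to $\vertiii{\cdot}_2$. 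Hence $\Lambda,\Xi \in \mathrm{Q}_0(\cB \times \cB;\cB) \subseteq \mathrm{Q}$.

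I would then invoke Theorem \ref{thm.QCSI} with $(X,Y)=(M,M)$, $K = H$, and $\Lambda$ (or $\Xi$) above. The approximation hypotheses are cheap to verify: since $M$'s FV part is zero, the integrals against $\kappa_A,\kappa_B$ in Eq. \eqref{eq.QCSI1} vanish and $\kappa_{M,N} = \kappa_M$, so the requirement becomes $\int_0^t \norm{H-H_n}_{2;2}^2\,d\kappa_M \to 0$, which is exactly the statement $H_n \to H$ in $\cL_M$ provided by $H \in \tilde{\cI}(M)$; and for Eq. \eqref{eq.QCSI2} one simply takes $\Lambda_n \equiv \Lambda$. The theorem thus yields
\[
\into \Lambda[dN,dN] = \into \Lambda\bigl[H[dM],H[dM]\bigr],
\]
and Proposition \ref{prop.LEQRSapprox} (applied to the $\vertiii{\cdot}_2$-continuous constant process $\Lambda$) identifies the left-hand side as the $L^1$-limit of $\sum_{s \in \pi}\Delta_sN^*\,\Delta_sN$, i.e.\ $\int_0^t dN^*(s)\,dN(s)$, while the right-hand side is by definition $\int_0^t H(s)[dM(s)]^*H(s)[dM(s)]$. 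The argument for $\Xi$ is identical. Substituting back into the BDG inequality for $N$ and using $\norm{N(t)}_p = \big\|\int_0^t H[dM]\big\|_p$ completes the proof.

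The only genuine obstacle here is notational bookkeeping; no new analysis is required beyond the already-established tools. The subtlest check is that the constant bilinear maps $\Lambda$ and $\Xi$ satisfy both adaptedness conditions of Definition \ref{def.adapt}\ref{item.klinadap1} — but this reduces to the identity $\E_{\mathsmaller{\cB}}[x^*y \mid \cB_u] = x^*\E_{\mathsmaller{\cB}}[y \mid \cB_u] = \E_{\mathsmaller{\cB}}[x\mid \cB_u]^* y$ for $x$ (or $y$) in $\cB_u$, which follows directly from Proposition \ref{prop.condexp}\ref{item.condexpexist}.
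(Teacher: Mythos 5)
Your proposal is correct and follows essentially the same route as the paper's proof: define $N \coloneqq \into H[dM]$, apply the continuous-time BDG inequalities (Theorem \ref{thm.BDG}) to $N$, and identify $\into dN^*\,dN$ and $\into dN\,dN^*$ with the corresponding integrals in $H$ and $M$ via Theorem \ref{thm.QCSI}. The only (immaterial) difference is that you run Theorem \ref{thm.QCSI} with the real-bilinear maps $[x,y]\mapsto x^*y$ and $[x,y]\mapsto xy^*$ on the pair $(N,N)$, whereas the paper uses the observation $\big(\into H[dM]\big)^* = \into H[dM]^*$; your verification of the adaptedness and approximation hypotheses is a welcome filling-in of details the paper leaves implicit.
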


\begin{proof}
Define $N \coloneqq \into H[\d M]$.
Since $N \in \tbM_{\mathsmaller{\cB}}^2$ by Proposition \ref{prop.stochintLtilde} and $N$ is an $L^p$-martingale by assumption, the continuous-time NC BDG inequalities give
\[
\alpha_p^{-1}\norm{N}_{\cH_t^p(\cB)} \leq \norm{N(t)}_p \leq \beta_p\norm{N}_{\cH_t^p(\cB)}.
\]
Now, since it is easy to see that $\big(\into H[\d M]\big)^* = \into H[\d M]^*$, Theorem \ref{thm.QCSI} says
\begin{align*}
    \into \d N_t^*\,\d N_t & = \into H(t)[\d M(t)]^*H(t)[\d M(t)] \; \text{ and} \\
    \into \d N_t\,\d N_t^* &= \into H(t)[\d M(t)]H(t)[\d M(t)]^*.
\end{align*}
Thus, $\norm{N}_{\cH_t^p(\cB)} = \norm{H}_{\cH_{t,M}^p(\cB)}$, which completes the proof.
\end{proof}

\begin{ex}
If $H \in \EP$ and $M \colon \R_+ \to L^p(\E)$ is an $L^p$-martingale belonging to $\tbM^2$, then $\into H[\d M]$ satisfies the hypotheses of Theorem \ref{thm.LpnormSI}.
\end{ex}
\pagebreak

Finally, recall that our development of the stochastic integral used an ``It\^{o} contraction'' to give a bound on the $L^2$ norm of the stochastic integral.
This was enough to construct the integral of a large class of integrands, but it left a conceptual gap.
Specifically, we were left with no It\^{o} isometry.
The $p=2$ case of Theorem \ref{thm.LpnormSI} fills this gap by providing us a noncommutative analog of \eqref{eq.clII}.

\begin{cor}[Noncommutative It\^{o} isometry]\label{cor.NCII}
If $M \in \tbM^2$ and $H \in \tilde{\cI}(M)$, then
\[
\Bigg\|\int_0^t H(s)[\d M(s)]\Bigg\|_2^2 = \E\Bigg[ \int_0^t H(s)[\d M(s)]^*\,H(s)[\d M(s)] \Bigg] \qquad (t \geq 0).
\]
\end{cor}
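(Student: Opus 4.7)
The plan is to invoke the $p=2$ case of Theorem \ref{thm.LpnormSI}, in which the constants collapse to $\alpha_2 = \beta_2 = 1$ and the two-sided inequality becomes an equality. All hypotheses of that theorem are at hand: $M \in \tbM_{\mathsmaller{\cA}}^2$ and $H \in \tilde{\cI}(M)$ are given, and $\int_0^t H[dM] \in L^2(\cB_t,\E_{\mathsmaller{\cB}})$ for every $t \geq 0$ by the $p=2$ conclusion of Theorem \ref{thm.stochint}\ref{item.stochintcont}. Applying Theorem \ref{thm.LpnormSI} yields
\[
\Bigg\|\int_0^t H(s)[dM(s)]\Bigg\|_2^2 \;=\; \|H\|_{\cH^2_{t,M}(\cB)}^2 \;=\; \max\{\|Q_+(t)\|_1,\, \|Q_-(t)\|_1\},
\]
where $Q_+(t) \coloneqq \int_0^t H(s)[dM(s)]^*\, H(s)[dM(s)]$ and $Q_-(t) \coloneqq \int_0^t H(s)[dM(s)]\, H(s)[dM(s)]^*$.

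The remaining step is to identify each entry of the max with $\E_{\mathsmaller{\cB}}[Q_+(t)]$. From the construction inside Theorem \ref{thm.LpnormSI} (ultimately through the proof of Theorem \ref{thm.BDG}), both $Q_\pm(t)$ arise as $L^{p/2} = L^1$-limits of Riemann sums whose summands $H(s_-)[\Delta_sM]^{\varepsilon_1}\, H(s_-)[\Delta_sM]^{\varepsilon_2}$, with $(\varepsilon_1,\varepsilon_2) \in \{(\ast,1),(1,\ast)\}$, are positive. Since the positive cone is $L^1$-closed, $Q_\pm(t) \geq 0$, and hence $\|Q_\pm(t)\|_1 = \E_{\mathsmaller{\cB}}[Q_\pm(t)]$. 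Traciality of $\E_{\mathsmaller{\cB}}$ applied termwise (using $\E[ab] = \E[ba]$ with $a = H(s_-)[\Delta_sM]^*$ and $b = H(s_-)[\Delta_sM]$) makes the expectations of the two partial sums coincide, and passing to the $L^1$-limit yields $\E_{\mathsmaller{\cB}}[Q_+(t)] = \E_{\mathsmaller{\cB}}[Q_-(t)]$. Substituting this into the first display gives the claimed identity.

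No substantive obstacle is anticipated: the argument is a clean unpacking of the $p=2$ case of Theorem \ref{thm.LpnormSI}, combined with two essentially trivial observations -- positivity of the noncommutative quadratic variation and traciality of $\E_{\mathsmaller{\cB}}$ -- which together identify the max of two $L^1$-norms with the single expectation appearing in the statement of the corollary.
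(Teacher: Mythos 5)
Your argument is correct and follows essentially the same route as the paper, which deduces the corollary from Theorem \ref{thm.LpnormSI} together with the $p=2$ sentence of Theorem \ref{thm.BDG}; your positivity-plus-traciality step is just an explicit unpacking of that $p=2$ sentence (whose proof the paper leaves to the reader). The only cosmetic imprecision is that the relevant Riemann sums are $\sum_s \Delta_sN^{\e_1}\,\Delta_sN^{\e_2}$ for $N = \into H[dM]$, identified with your $H$-expressions only after Theorem \ref{thm.QCSI}, but positivity and traciality apply equally to those sums, so nothing is lost.
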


\begin{proof}
This follows from (the proof of) Theorem \ref{thm.LpnormSI} and the last sentence in Theorem \ref{thm.BDG}.
\end{proof}

\subsection{Examples}\label{sec.QCexs}

In this section, we demonstrate how Theorem \ref{thm.NCcondQC} can be used to compute quadratic covariations.
The key method of our examples will be to turn knowledge of ``noncommutative conditional covariances'' like $\E[(M(t) - M(s)) a (N(t) - N(s)) \mid \cA_s]$ into formulas for $\into \Lambda(t)[\d M(t), \d N(t)]$ when $\Lambda$ is a trace triprocess.
Specifically, when $M$ and $N$ have some kind of independent increments condition, it often happens that
\[
\frac{\E[(M(t) - M(s))a (N(t) - N(s)) \mid \cA_s]}{\E[(M(t) - M(s))(N(t) - N(s))]}
\]
is independent of $s$ and $t$ in an appropriate sense.
Our goal is to show that the latter property makes it possible to compute $\into \Lambda(t)[\d M(t), \d N(t)]$ explicitly for many trace triprocesses $\Lambda$.

\begin{lem}\label{lem.complexDoleans}
If $M,N \colon \R_+ \to L^2(\E)$ are right-continuous $L^2$-martingales, then there exists a unique complex Borel measure $\mu_{M,N}$ on $\R_+$ such that $\mu_{M,N}(\{0\}) = 0$ and
\[
\mu_{M,N}((s,t]) = \E[(M(t) - M(s))(N(t) - N(s))] \qquad (0 \leq s \leq t).
\]
Of course, $\mu_{M,N} = \mu_{N,M}$ and $\mu_{M,M^*} = \kappa_M = \kappa_{M^*}$ (Lemma \ref{lem.fakeDoleans}).
\end{lem}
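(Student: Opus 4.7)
The plan is to reduce existence of $\mu_{M,N}$ to the standard Lebesgue--Stieltjes construction applied to a suitably nice scalar function on $\R_+$. Specifically, I would set
\[
F(t) \coloneqq \E[M(t)N(t)] - \E[M(0)N(0)], \quad t \geq 0,
\]
and show that $F \colon \R_+ \to \C$ vanishes at $0$, is right-continuous, has locally bounded variation, and satisfies the increment identity $F(t) - F(s) = \E[(M(t)-M(s))(N(t)-N(s))]$ for $0 \leq s \leq t$. Granted these properties, the measure $\mu_{M,N}$ (and its uniqueness) will follow by decomposing $F$ into real/imaginary parts and then into Jordan differences, and invoking the positive-measure construction already used in Lemma \ref{lem.fakeDoleans}.

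The key algebraic step is the increment identity. Expanding the right-hand side, the heart of the argument is showing $\E[M(s)N(t)] = \E[M(s)N(s)] = \E[M(t)N(s)]$ for $s \leq t$. Since the inner product on $L^2(\E)$ is $\ip{a,b}_2 = \E[a^*b]$, I have $\E[M(s)N(t)] = \ip{M(s)^*, N(t)}_2$. As $M(s)^* \in L^2(\cA_s,\E)$ (the $\ast$-operation is an $L^2$-isometry by traciality), and $\E[\cdot \mid \cA_s]$ coincides with orthogonal projection onto $L^2(\cA_s,\E)$, the martingale property $\E[N(t) \mid \cA_s] = N(s)$ gives $\ip{M(s)^*, N(t)}_2 = \ip{M(s)^*, N(s)}_2 = \E[M(s)N(s)]$. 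Symmetrically $\E[M(t)N(s)] = \E[M(s)N(s)]$, so the four terms in the expansion collapse to $\E[M(t)N(t)] - \E[M(s)N(s)] = F(t) - F(s)$.

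Regularity of $F$ is then straightforward. Right-continuity follows from the $L^2$-right-continuity of $M$ and $N$ together with noncommutative Cauchy--Schwarz. For local bounded variation -- the step I expect to be the main obstacle -- fix $t \geq 0$ and a partition $\pi$ of $[0,t]$; two applications of Cauchy--Schwarz give
\[
\sum_{r \in \pi} |\Delta_r F| \leq \sum_{r \in \pi} \norm{\Delta_r M}_2 \norm{\Delta_r N}_2 \leq \Bigg(\sum_{r \in \pi} \norm{\Delta_r M}_2^2\Bigg)^{\!1/2}\Bigg(\sum_{r \in \pi} \norm{\Delta_r N}_2^2\Bigg)^{\!1/2},
\]
and telescoping via Lemma \ref{lem.M(t)-M(s)} yields $\sum_{r \in \pi}\norm{\Delta_r M}_2^2 = \norm{M(t)}_2^2 - \norm{M(0)}_2^2 = \norm{M(t) - M(0)}_2^2$, and similarly for $N$. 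Hence $V(F : [0,t]) \leq \norm{M(t)-M(0)}_2\norm{N(t)-N(0)}_2 < \infty$; Lemma \ref{lem.M(t)-M(s)} is tailor-made for this.

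Finally, writing $F = F_1 + iF_2$ with each $F_j \colon \R_+ \to \R$ right-continuous of locally bounded variation, I would decompose $F_j = G_j - H_j$ with $G_j, H_j$ nondecreasing, right-continuous, and vanishing at $0$ (Jordan decomposition). The construction in the proof of Lemma \ref{lem.fakeDoleans} then produces locally finite positive Borel measures $\nu_{G_j}, \nu_{H_j}$ on $\R_+$ with the prescribed values on intervals $(s,t]$ and vanishing on $\{0\}$, and I would set $\mu_{M,N} \coloneqq (\nu_{G_1} - \nu_{H_1}) + i(\nu_{G_2} - \nu_{H_2})$. Uniqueness follows because any two candidates agree on the $\pi$-system $\{\{0\}\} \cup \{(s,t] : 0 \leq s < t\}$ generating $\cB_{\R_+}$, so a standard Dynkin argument finishes the job. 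The identity $\mu_{M,N} = \mu_{N,M}$ is immediate from traciality of $\E$ on the generating $\pi$-system, and $\mu_{M,M^*}((s,t]) = \E[|M(t)-M(s)|^2] = \norm{M(t)-M(s)}_2^2 = \kappa_M((s,t])$ gives $\mu_{M,M^*} = \kappa_M$; replacing $M$ by $M^*$ and using once more that $\ast$ is an $L^2$-isometry yields $\kappa_{M^*} = \kappa_M$.
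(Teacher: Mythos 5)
Your proof is correct, but it takes a genuinely different route from the paper's. The paper disposes of the lemma in three lines by polarization: it writes $\E[(M(t)-M(s))(N(t)-N(s))] = \ip{N(t)-N(s),\,M^*(t)-M^*(s)}_2 = \frac{1}{4}\sum_{k=0}^3 i^k \kappa_{N+i^kM^*}((s,t])$, so that $\mu_{M,N} \coloneqq \frac{1}{4}\sum_{k=0}^3 i^k\kappa_{N+i^kM^*}$ is exhibited outright as a linear combination of the four positive measures already supplied by Lemma \ref{lem.fakeDoleans} (each $N+i^kM^*$ being a right-continuous $L^2$-martingale); additivity, right-continuity, and local finiteness then come for free. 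You instead build the distribution function $F(t) = \E[M(t)N(t)]-\E[M(0)N(0)]$ by hand, which obliges you to reprove the bilinear orthogonality-of-increments identity (the two-martingale analogue of Lemma \ref{lem.M(t)-M(s)}, which the paper sidesteps by polarizing the quadratic case) and to establish locally bounded variation via the double Cauchy--Schwarz estimate before running the Jordan/Lebesgue--Stieltjes machinery. Both arguments are sound; the paper's is shorter and reuses existing infrastructure, while yours is more self-contained and makes the underlying martingale orthogonality explicit. One cosmetic slip: with the paper's convention $\ip{a,b}_2 = \E[b^*a]$, one has $\E[M(s)N(t)] = \ip{N(t), M(s)^*}_2$ rather than $\ip{M(s)^*,N(t)}_2$ (the latter is the complex conjugate, via traciality); since $\E[\,\cdot\mid\cA_s]$ is a self-adjoint projection fixing $M(s)^*$, the collapse $\E[M(s)N(t)]=\E[M(s)N(s)]$ goes through under either reading, so nothing breaks.
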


\begin{proof}
By the polarization identity, if $0 \leq s \leq t$, then
\begin{align*}
    \E[(M(t) - M(s))(N(t) - N(s))] & = \ip{ N(t) - N(s), M^*(t) - M^*(s) }_2 \\
    & = \frac{1}{4}\sum_{k=0}^3 i^k\norm{N(t) - N(s) + i^k(M^*(t) - M^*(s))}_2^2 \\
    & = \frac{1}{4}\sum_{k=0}^3 i^k\kappa_{N + i^kM^*}((s,t]).
\end{align*}
This takes care of existence, and uniqueness is standard.
\end{proof}

In what follows, $\potimes$ is the Banach space projective tensor product over $\C$;
see \cite[\S2.2]{NikitopoulosIto} for a concise review.
Observe that the maps $\#_k,\#_k^{\E} \colon \cA^{\otimes(k+1)} \to \mathbb{B}_k(\cA)$ (Notation \ref{nota.tens}) extend uniquely to bounded complex-linear maps $\cA^{\potimes(k+1)} \to \mathbb{B}_k(\cA)$, which we notate the same way.
Now, for $k \in \N$, a map $U \colon \R_+ \to \cA^{\potimes k}$ is called \textbf{adapted} if $U(t) \in \cA_t^{\potimes k} \subseteq \cA^{\potimes k}$ for all $t \geq 0$.

\begin{lem}\label{lem.tensadap}
Let $k \in \N$.
If $t \geq 0$ and $u \in \cA_t^{\potimes(k+1)}$, then $\#_k(u) \in \cT_{k,t}^{\C}$, and $\#_k^{\E}(u) \in \cT_{k,t}^{\C}$.
In particular, if $U \colon \R_+ \to \cA^{\potimes(k+1)}$ is adapted, then $\#_k(U)$ and $\#_k^{\E}(U)$ are complex--$k$-linear trace $k$-processes.
\end{lem}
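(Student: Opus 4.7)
The plan is to reduce to elementary tensors, use the given evaluation framework for trace $\ast$-polynomials, and then close up by density in the projective tensor product.

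First, I would handle elementary tensors $u = a_1 \otimes \cdots \otimes a_{k+1}$ with each $a_i \in \cA_t$. For such $u$, the map $\#_k(u)$ is literally given by $(b_1,\ldots,b_k) \mapsto a_1 b_1 \cdots a_k b_k a_{k+1}$, which is the evaluation $P_{\mathsmaller{(\cA,\E)}}(a_1,\ldots,a_{k+1})$ of the trace $\ast$-polynomial $P(x_1,\ldots,x_{k+1})[y_1,\ldots,y_k] \coloneqq x_1 y_1 x_2 y_2 \cdots x_k y_k x_{k+1} \in \TrP_{k+1,k,(1,\ldots,1)}^{*,\C}$. Similarly, $\#_k^{\E}(u)$ is the evaluation of $Q(x_1,\ldots,x_{k+1})[y_1,\ldots,y_k] \coloneqq \tr(x_1 y_1 \cdots x_k y_k)\, x_{k+1}$, which also lies in $\TrP_{k+1,k,(1,\ldots,1)}^{*,\C}$. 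Hence both $\#_k(u)$ and $\#_k^{\E}(u)$ belong to $\cT_{k,t}^{\C,0}$, and by complex linearity the same holds for every element of the algebraic tensor product $\cA_t^{\otimes (k+1)}$.

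Next, I would upgrade this to the projective tensor product by approximation. The key estimate is the cross-norm bound
\[
\vertiii{\#_k(a_1 \otimes \cdots \otimes a_{k+1})}_k \vee \vertiii{\#_k^{\E}(a_1 \otimes \cdots \otimes a_{k+1})}_k \leq \norm{a_1}_{\infty}\cdots \norm{a_{k+1}}_{\infty},
\]
which follows from iterated noncommutative H\"{o}lder (in the $\#_k^{\E}$ case, after first bounding $|\E[a_1 b_1 \cdots a_k b_k]|$ by $\norm{a_1 b_1 \cdots a_k b_k}_1$). By the universal property of $\potimes$, this yields bounded complex linear extensions $\#_k,\#_k^{\E} \colon \cA^{\potimes(k+1)} \to \mathbb{B}_k(\cA)$ that agree with the ones defined earlier on elementary tensors. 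For $u \in \cA_t^{\potimes(k+1)}$, choose a sequence $(u_n)_{n \in \N}$ in $\cA_t^{\otimes(k+1)}$ with $u_n \to u$ in projective norm. By the first paragraph, $\#_k(u_n),\#_k^{\E}(u_n) \in \cT_{k,t}^{\C,0}$, and continuity gives $\#_k(u_n) \to \#_k(u)$ and $\#_k^{\E}(u_n) \to \#_k^{\E}(u)$ in $\vertiii{\cdot}_k$. Therefore $\#_k(u), \#_k^{\E}(u) \in \cT_{k,t}^{\C}$, which is the $\vertiii{\cdot}_k$-closure of $\cT_{k,t}^{\C,0}$.

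Finally, for the ``in particular'' clause, an adapted process $U \colon \R_+ \to \cA^{\potimes(k+1)}$ satisfies $U(t) \in \cA_t^{\potimes(k+1)}$ for every $t \geq 0$, so the previous paragraph gives $\#_k(U)(t), \#_k^{\E}(U)(t) \in \cT_{k,t}^{\C}$ for all $t$, confirming that both $\#_k(U)$ and $\#_k^{\E}(U)$ are complex $k$-linear trace $k$-processes per Definition \ref{def.trkpr}. The only mild technical point in this plan is verifying the cross-norm bound for $\#_k^{\E}$; everything else is bookkeeping.
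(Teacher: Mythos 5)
Your proof is correct and follows essentially the same route as the paper's: approximate $u \in \cA_t^{\potimes(k+1)}$ by elements of the algebraic tensor product $\cA_t^{\otimes(k+1)}$, observe their images under $\#_k$ and $\#_k^{\E}$ are evaluations of complex $k$-linear trace $\ast$-polynomials and hence lie in $\cT_{k,t}^{\C,0}$, and conclude by the cross-norm bound and the $\vertiii{\cdot}_k$-closedness of $\cT_{k,t}^{\C}$. The extra detail you supply (the explicit polynomials $P$, $Q$ and the H\"{o}lder estimate for the $\#_k^{\E}$ case) is exactly what the paper leaves to the reader, and it checks out.
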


\begin{proof}
Define $\Xi \coloneqq \#_k(u)$.
Since $u \in \cA_t^{\potimes (k+1)}$, there exist sequences $(a_n^1)_{n \in \N},\ldots,(a_n^{k+1})_{n \in \N}$ in $\cA_t$ such~that
\[
\sum_{n = 1}^{\infty} \big\|a_n^1\big\| \cdots \big\|a_n^{k+1}\big\| < \infty \; \text{ and } \; u = \sum_{n = 1}^{\infty} a_n^1 \otimes \cdots \otimes a_n^{k+1}.
\]
If $u_N \coloneqq \sum_{n=1}^N a_n^1 \otimes \cdots \otimes a_n^{k+1} \in \cA_t^{\otimes (k+1)}$ and $\Xi_N \coloneqq \#_k(u_N) \in \mathbb{B}_k(\cA)$ for all $N \in \N$, then $\Xi_N \in \cT_{k,t}^{\C,0}$, and $\vertiii{\Xi - \Xi_N}_k \leq \norm{u - u_N}_{\cA_t^{\potimes (k+1)}} \to 0$ as $N \to \infty$.
Thus, $\Xi \in \cT_{k,t}^{\C}$, as desired.
The argument for $\Xi = \#_k^{\E}(u)$ is similar, so we leave it to the reader.
\end{proof}

\begin{nota}\label{nota.magic}
For a bounded complex-linear map $\Gamma \colon \cA \to L^1(\E)$, write $\cM_{\Gamma} \colon \cA \potimes \cA \potimes \cA \to L^1(\E)$ and $\cM_{\Gamma}^{\E} \colon \cA \potimes \cA \potimes \cA \to \cA$ for the bounded complex-linear maps determined respectively by
\[
\cM_{\Gamma}(a \otimes b \otimes c) = a \,\Gamma(b)\,c \; \text{ and } \; \cM_{\Gamma}^{\E}(a \otimes b \otimes c) = \E[a\,\Gamma(b)]\,c \qquad (a,b,c \in \cA).
\]
\end{nota}

\begin{thm}\label{thm.magicQC}
Let $M,N \in \tbM^2$, and assume that there is a bounded complex-linear map $\Gamma \colon \cA \to L^1(\E)$ such that
\begin{equation}
    \E[(M(t) - M(s))a(N(t) - N(s)) \mid \cA_s] = \Gamma(a) \, \mu_{M,N}((s,t]) \qquad (0 \leq s < t, \; a \in \cA_s). \label{eq.Gammahyp}
\end{equation}
If $U \colon \R_+ \to \cA^{\potimes 3}$ is adapted and $\norm{\cdot}_{\cA^{\potimes 3}}$-LCLB, then\label{item.tensQC}
\begin{align*}
    \into U\sh_2^{\mathsmaller{(\E)}}[\d M(t), \d N(t)] & = \into \mathcal{M}_{\Gamma}^{(\E)}(U(t)) \, \mu_{M,N}(\d t).
\end{align*}
To be clear, the above is shorthand for two identities:
one with $\sh_2$ on the left-hand side and $\cM_{\Gamma}$ on the right-hand side, and one with $\sh_2^{\mathsmaller{\E}}$ on the left-hand side and $\mathcal{M}_{\Gamma}^{\E}$ on the right-hand side.
\end{thm}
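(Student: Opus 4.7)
The plan is to apply Theorem \ref{thm.NCcondQC} to the bilinear process $\Lambda \coloneqq U\sh_2^{(\E)}$, then evaluate the resulting conditional-expectation sums explicitly using the hypothesis \eqref{eq.Gammahyp}, and finally recognize the limit as a Lebesgue--Stieltjes integral against $\mu_{M,N}$. To apply Theorem \ref{thm.NCcondQC}, I first check that $\Lambda$ is an adapted, $\vertiii{\cdot}_2$-LCLB bilinear process: adaptedness follows from Lemma \ref{lem.tensadap} combined with Proposition \ref{prop.TkinFk}, while the $\vertiii{\cdot}_2$-LCLB property comes from the fact that $\#_2$ and $\#_2^{\E}$ extend (by the universal property of $\potimes$) to contractive $\C$-linear maps $\cA^{\potimes 3} \to (\mathbb{B}_2(\cA),\vertiii{\cdot}_2)$, composed with the LCLB hypothesis on $U$ in $\cA^{\potimes 3}$. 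Since $\Lambda$ is left-continuous, $\Lambda_- = \Lambda$, and Theorem \ref{thm.NCcondQC} delivers
\[
\into U\sh_2^{(\E)}[dM,dN] = \mathbb{L}^1\text{-}\lim_{|\Pi|\to 0}\sum_{s \in \Pi}\E\big[U(s_-)\sh_2^{(\E)}[\Delta_s M^{\boldsymbol{\cdot}},\Delta_s N^{\boldsymbol{\cdot}}]\,\big|\,\cA_{s_-}\big].
\]

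The core calculation is on elementary tensors. For $a,b,c \in \cA_{s_-}$, the $\cA_{s_-}$-bimodule property of $\E[\,\cdot\mid\cA_{s_-}]$ combined with \eqref{eq.Gammahyp} gives
\[
\E\big[a\,\Delta_s M\,b\,\Delta_s N\,c \,\big|\,\cA_{s_-}\big] = a\,\Gamma(b)\,c\cdot \mu_{M,N}((s_-,s]) = \mathcal{M}_\Gamma(a\otimes b\otimes c)\cdot\mu_{M,N}((s_-,s]),
\]
and taking an outer state (using that $c\in\cA_{s_-}$ commutes past the scalar factor) yields the $\mathcal{M}_\Gamma^{\E}$ analogue. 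Real-linearity extends both identities to $\cA_{s_-}^{\otimes 3}$, and a density argument then extends them to every $u \in \cA_{s_-}^{\potimes 3}$: both sides are bounded $\C$-linear functions of $u$ into $L^1(\E)$ (resp., $\cA$), since $\#_2^{(\E)}$ is $\vertiii{\cdot}_2$-contractive and $\E[\,\cdot\mid\cA_{s_-}]$ is $L^1$-contractive, while $\mathcal{M}_\Gamma^{(\E)}$ is bounded on $\cA^{\potimes 3}$ by construction. Substituting $u = U(s_-)$ and summing over $\Pi$ recasts the conditional-expectation sum as
\[
\sum_{s \in \Pi}\mathcal{M}_\Gamma^{(\E)}(U(s_-))\,\mu_{M,N}((s_-,s\wedge\boldsymbol{\cdot}]) = \int_0^{\boldsymbol{\cdot}}\mathcal{M}_\Gamma^{(\E)}(U^\Pi)\,d\mu_{M,N}.
\]

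To conclude, I would check that the right-hand side converges locally uniformly to $\into \mathcal{M}_\Gamma^{(\E)}(U(s))\,\mu_{M,N}(ds)$ as $|\Pi|\to 0$. The measure $\mu_{M,N}$ from Lemma \ref{lem.complexDoleans} is a locally finite complex Borel measure: by Cauchy--Schwarz applied to finite partitions, $|\mu_{M,N}|((0,c]) \leq \kappa_M((0,c])^{1/2}\kappa_N((0,c])^{1/2} \leq \kappa_{M,N}((0,c]) < \infty$. Since $U$ is left-continuous and locally bounded in $\cA^{\potimes 3}$, the integrand $\mathcal{M}_\Gamma^{(\E)}(U)$ is left-continuous and locally bounded in $L^1(\E)$ (resp., $\cA$), so $\mathcal{M}_\Gamma^{(\E)}(U^\Pi) \to \mathcal{M}_\Gamma^{(\E)}(U)$ pointwise under a uniform local bound, and dominated convergence with respect to $|\mu_{M,N}|$ yields the required Bochner-integral convergence. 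Combined with the conditional-quadratic-covariation identity from the first paragraph, this proves the theorem.

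The main obstacle is the density step that extends the elementary-tensor identity to $u \in \cA_{s_-}^{\potimes 3}$; once the contractive/boundedness estimates for $\#_2^{(\E)}$ and $\mathcal{M}_\Gamma^{(\E)}$ on the projective tensor product are in hand this is routine, but it is the only place where the abstract projective tensor norm enters nontrivially. Everything else is a direct application of Theorem \ref{thm.NCcondQC}, the hypothesis \eqref{eq.Gammahyp}, and standard left-endpoint Riemann-to-Lebesgue--Stieltjes convergence against a locally finite complex measure.
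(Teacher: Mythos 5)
Your proposal is correct and follows essentially the same route as the paper's proof: reduce via Theorem \ref{thm.NCcondQC} (after noting, via Lemma \ref{lem.tensadap}, that $\#_2^{(\E)}(U)$ is an adapted $\vertiii{\cdot}_2$-LCLB trace triprocess), evaluate the conditional expectations on elementary tensors using the bimodule property and \eqref{eq.Gammahyp}, extend to $\cA_{s_-}^{\potimes 3}$ by boundedness of both sides, and pass to the Lebesgue--Stieltjes limit via Proposition \ref{prop.HPi}\ref{item.HPiLp} applied to the LCLB integrand $\mathcal{M}_{\Gamma}^{(\E)}(U)$ against $\mu_{M,N}$ (decomposed into locally finite positive measures). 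The only cosmetic difference is that you verify local finiteness of $|\mu_{M,N}|$ by Cauchy--Schwarz, whereas the paper gets it for free from the polarization construction in Lemma \ref{lem.complexDoleans}.
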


\begin{proof}
First, observe that \eqref{eq.Gammahyp} implies
\begin{equation}
     \E\big[u \sh_2^{\mathsmaller{(\E)}}[M(t) - M(s), N(t) - N(s)] \mid \cA_s\big] = \cM_{\Gamma}^{(\E)}(u) \,\mu_{M,N}((s,t]) \qquad (s < t, \; u \in \cA_s^{\potimes 3}). \label{eq.EGamma}
\end{equation}
Now, if $U \colon \R_+ \to \cA^{\potimes 3}$ is adapted and $\norm{\cdot}_{\cA^{\potimes 3}}$-LCLB, then $\#_2^{(\E)}(U) \colon \R_+ \to \mathbb{B}_k(\cA)$ is a $\vertiii{\cdot}_2$-LCLB trace triprocess by Lemma \ref{lem.tensadap}.
Consequently, if $t \geq 0$, then
\begin{align*}
    \int_0^t U(s)\sh_2^{\mathsmaller{(\E)}}[\d M(s), \d N(s)] & = L^1\text{-}\lim_{\pi \in \cP_{[0,t]}}\sum_{s \in \pi} \E\big[U(s_-)\sh_2^{\mathsmaller{(\E)}}[\Delta_sM, \Delta_sN] \mid \cA_{s_-}\big] \tag{Thm.\ \ref{thm.NCcondQC}}\\
    & = L^1\text{-}\lim_{\pi \in \cP_{[0,t]}}\sum_{s \in \pi} \mathcal{M}_{\Gamma}^{(\E)}(U(s_-)) \,\mu_{M,N}((s_-,s]) \tag{Eq.\ \eqref{eq.EGamma}} \\
    & = L^1\text{-}\lim_{\pi \in \cP_{[0,t]}}\int_0^t \mathcal{M}_{\Gamma}^{(\E)}(U)^{\pi} \,\d\mu_{M,N} = \int_0^t \mathcal{M}_{\Gamma}^{(\E)}(U)\,\d\mu_{M,N}. \tag{Lem.\ \ref{lem.HPi}\ref{item.HPipw}, DCT}
\end{align*}
Note that Lemma \ref{lem.HPi}\ref{item.HPipw} applies (after writing $\mu_{M,N}$ as a complex-linear combination of locally finite positive measures) in the last line because $\mathcal{M}_{\Gamma}^{(\E)}(U) \colon \R_+ \to L^1(\E)$ is LCLB.
\end{proof}

Next, we make an observation that will allow us in certain situations to upgrade the formulas in Theorem \ref{thm.magicQC} to formulas for $\into \Lambda(t)[\d M(t), \d N(t)]$ when $\Lambda$ is a more general trace triprocess.
Though the statement is somewhat technical, the result below is simple in spirit:
Certain ``trace terms'' vanish when one plugs martingale increments into them.

\begin{lem}\label{lem.T2s}
Let $s \geq 0$ and $\Xi \in \cT_{2,s}^0$.
For each $\e = (\e_1,\e_2) \in \mathcal{S} \coloneqq \{1,\ast\}^2$, there exist $u_{\e}^1,u_{\e}^2,v_{\e}^1,v_{\e}^2 \in \cA_s^{\otimes 3}$ such that for all $L^2$-martingales $M,N \colon \R_+ \to L^2(\E)$ and all $t \geq s$,
\begin{align*}
    \Xi[M(&t) - M(s), N(t) - N(s)] \\
    & = \sum_{\e \in \mathcal{S}} \Big( u_{\e}^1\sh_2[(M(t) - M(s))^{\e_1}, (N(t) - N(s))^{\e_2}] + u_{\e}^2\sh_2[(N(t) - N(s))^{\e_1}, (M(t) - M(s))^{\e_2}] \\
    & \hspace{12.5mm} + v_{\e}^1\sh_2^{\mathsmaller{\E}}[(M(t) - M(s))^{\e_1}, (N(t) - N(s))^{\e_2}] + v_{\e}^2\sh_2^{\mathsmaller{\E}}[(N(t) - N(s))^{\e_1}, (M(t) - M(s))^{\e_2}]\Big).
\end{align*}
Moreover, if $\Xi \in \cT_{2,s}^{\C,0}$, then we may take $u_{\e}^1 = u_{\e}^2 = v_{\e}^1 = v_{\e}^2 = 0$ for $\e \in \{(1,\ast),(\ast,1),(\ast,\ast)\}$.
\end{lem}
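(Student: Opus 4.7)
The plan is to unravel the definition $\Xi = P(\mathbf{a})$ for some $\mathbf{a} \in \cA_s^n$ and $P \in \TrP_{n,2,(1,1)}^*$, then to classify the monomial terms of $P$ by where the two ``linear slots'' $y_1^{\e_1}, y_2^{\e_2}$ (with $\e_j \in \{1,\ast\}$ specifying whether $y_j$ or $y_j^*$ occurs) sit inside the product $\tr(P_1)\cdots\tr(P_\ell)\,P_0$. By $\C$-linearity, it suffices to prove the claim for a single such monomial term.

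The decisive observation is a vanishing principle. If one of the two linear slots, say $y_1^{\e_1}$, appears inside some trace factor $P_i$ while the other linear slot lies elsewhere (either in $P_0$ or in a different trace factor), then after evaluation the term contains an isolated factor of the form $\E[r_0\,(M(t)-M(s))^{\e_1}\,r_1]$ with $r_0, r_1 \in \cA_s$. By the tower property, adaptedness of $r_0, r_1$, and the martingale property of $M$ (applied either directly or to $M^*$, which is again a martingale),
\[
\E\bigl[r_0\,(M(t)-M(s))^{\e_1}\,r_1\bigr] = \E\bigl[r_0\,\E[(M(t)-M(s))^{\e_1}\mid\cA_s]\,r_1\bigr] = 0.
\]
Consequently only two configurations contribute: either (i) both $y_1^{\e_1}$ and $y_2^{\e_2}$ lie in $P_0$, or (ii) they both lie in the same trace factor $P_i$.

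In configuration (i), $P_0$ has the form $N_0\,y_1^{\e_1}\,N_1\,y_2^{\e_2}\,N_2$ or the swapped ordering $N_0\,y_2^{\e_2}\,N_1\,y_1^{\e_1}\,N_2$, with $N_0, N_1, N_2$ monomials in $\x$; after evaluation at $\mathbf{a}$ and absorbing the scalar prefactor $\prod_j \tr(P_j(\mathbf{a})) \in \C$ into the first tensor leg, the term becomes $u\sh_2[(M(t)-M(s))^{\e_1},(N(t)-N(s))^{\e_2}]$ or its swapped analogue, with $u \in \cA_s\otimes\cA_s\otimes\cA_s$. These contribute to $u_\e^1$ and $u_{(\e_2,\e_1)}^2$, respectively. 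In configuration (ii), the surviving trace factor evaluates to $\E[r_0\,y_1^{\e_1}\,r_1\,y_2^{\e_2}\,r_2]$ (or swapped), which has three $\cA_s$-elements around the two martingale increments rather than the two permitted by $\sh_2^{\mathsmaller{\E}}$; traciality of $\E$ lets us cycle $r_2$ to the front, rewriting the factor as $\E[(r_2 r_0)\,y_1^{\e_1}\,r_1\,y_2^{\e_2}]$. Multiplying by the remaining $q \in \cA_s$ produces an expression of the form $v\sh_2^{\mathsmaller{\E}}[(M(t)-M(s))^{\e_1},(N(t)-N(s))^{\e_2}]$ or its swapped analogue, with $v \in \cA_s\otimes\cA_s\otimes\cA_s$; these contribute to $v_\e^1$ and $v_{(\e_2,\e_1)}^2$. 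Collecting over all surviving monomials and grouping by $\e \in \mathcal{S}$ yields the required decomposition. For the final assertion, in the complex bilinear case the polynomial $P$ contains no $y_j^*$, so only $\e = (1,1)$ arises, and the other three slots in $\mathcal{S}$ contribute zero tensors.

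The main obstacle is purely organizational: correctly tracking the four sign patterns $\e \in \mathcal{S}$ in parallel across the two orderings of the arguments in cases (i) and (ii), and making sure every complex scalar trace prefactor is absorbed into an element of $\cA_s$ so that the final tensors land in $\cA_s^{\otimes 3}$ rather than merely $\cA^{\otimes 3}$. The substantive content is the one-line vanishing argument above, which rests solely on the martingale property at time $s$; everything else is algebraic bookkeeping of trace $\ast$-polynomials under the insertion maps $\sh_2$ and $\sh_2^{\mathsmaller{\E}}$.
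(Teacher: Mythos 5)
Your proposal is correct and follows essentially the same route as the paper: the paper writes out an explicit 72-term general form for elements of $\cT_{2,s}^0$ and discards the terms containing an isolated factor $\E[a\,(M(t)-M(s))^{\e}]$ via exactly your one-line martingale-property computation, leaving precisely the $\sh_2$ and $\sh_2^{\mathsmaller{\E}}$ terms. Your more abstract monomial classification (including the traciality cycling to reduce three surrounding $\cA_s$-elements to two inside the trace factor) is just a different bookkeeping of the same argument.
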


\begin{proof}
For each $i =1,\ldots,72$, let $a_i \in \cA$.
Now, define
\begin{align*}
    \Xi[x,y] & \coloneqq a_1xa_2ya_3 + a_4x^*a_5ya_6 + a_7xa_8y^*a_9 + a_{10}x^*a_{11}y^*a_{12} \\
    & \hspace{7.5mm} + a_{13}ya_{14}xa_{15} + a_{16}y^*a_{17}xa_{18} + a_{19}ya_{20}x^*a_{21} + a_{22}y^*a_{23}x^*a_{24} \\
    & \hspace{7.5mm} + \E[a_{25}x]a_{26}ya_{27} + \E[a_{28}x^*]a_{29}ya_{30} + \E[a_{31}x]a_{32}y^*a_{33} + \E[a_{34}x^*]a_{35}y^*a_{36} \\
    & \hspace{7.5mm} + \E[a_{37}y]a_{38}xa_{39} + \E[a_{40}y^*]a_{41}xa_{42} + \E[a_{43}y]a_{44}x^*a_{45} + \E[a_{46}y^*]a_{47}x^*a_{48} \\
    & \hspace{7.5mm} + \E[a_{49}x]\E[a_{50}y]a_{51} + \E[a_{52}x^*]\E[a_{53}y]a_{54} + \E[a_{55}x]\E[a_{56}y^*]a_{57} + \E[a_{58}x^*]\E[a_{59}y^*]a_{60} \\
    & \hspace{7.5mm} + \E[a_{61}xa_{62}y]a_{63} + \E[a_{64}x^*a_{65}y]a_{66} + \E[a_{67}xa_{68}y^*]a_{69} + \E[a_{70}x^*a_{71}y^*]a_{72}.
\end{align*}
Then $\cT_{2,s}^0 = \spn\{\Xi$ as above with $a_1,\ldots,a_{72} \in \cA_s\}$ by definition of trace $\ast$-polynomials and traciality.
Therefore, it suffices to prove the lemma for $\Xi$ as above with $a_1,\ldots,a_{72} \in \cA_s$.
In this case, we define
\begin{align*}
    \tilde{\Xi}[x,y] & \coloneqq a_1xa_2ya_3 + a_4x^*a_5ya_6 + a_7xa_8y^*a_9 + a_{10}x^*a_{11}y^*a_{12} \\
    & \hspace{7.5mm} + a_{13}ya_{14}xa_{15} + a_{16}y^*a_{17}xa_{18} + a_{19}ya_{20}x^*a_{21} + a_{22}y^*a_{23}x^*a_{24} \\
    & \hspace{7.5mm} + \E[a_{61}xa_{62}y]a_{63} + \E[a_{64}x^*a_{65}y]a_{66} + \E[a_{67}xa_{68}y^*]a_{69} + \E[a_{70}x^*a_{71}y^*]a_{72}.
\end{align*}
If $M,N \colon \R_+ \to L^2(\E)$ are $L^2$-martingales, then
\[
\Xi[M(t) - M(s), N(t) - N(s)] = \tilde{\Xi}[M(t) - M(s), N(t) - N(s)] \qquad (t \geq s)
\]
by the martingale property.
More explicitly, if $a \in \cA_s$, $\e \in \{1,\ast\}$, and $0 \leq s < t$, then
\[
\E[a(M(t) - M(s))^{\e}] =\E[\E[a(M(t) - M(s))^{\e} \mid \cA_s]] = \E[a \, \E[M(t) - M(s) \mid \cA_s]^{\e}] = 0.
\]
Thus, all the terms in $\Xi$ with $a_{25},\ldots,a_{60}$ vanish when one plugs in $(x,y) = (M(t) - M(s),N(t) - N(s))$.
Unraveling the notation, we see that we have achieved our goal.
The final sentence follows from almost the same proof, except that one leaves out any term with $x^*$ or $y^*$ in it.
\end{proof}

\begin{prop}\label{prop.QCoffree}
Let $M,N \in \tbM^2$.
\begin{enumerate}[label=(\roman*),font=\normalfont]
    \item Suppose
    \begin{equation}
        \E[(M(t) - M(s))a(N(t) - N(s)) \mid \cA_s] = 0  \qquad (0 \leq s < t, \; a \in \cA_s).  \label{eq.freeinchyp1}
    \end{equation}
    If $\Lambda \colon \R_+ \to \mathbb{B}_2(\cA)$ is a $\norm{\cdot}_{2,2;1}$-LCLB complex-bilinear trace triprocess, then\label{item.QCoffreeC}
    \[
    \into \Lambda(t)[\d M(t), \d N(t)] \equiv 0.
    \]
    \item Suppose
    \begin{equation}
        \E[(M(t) - M(s))^{\e}a(N(t) - N(s)) \mid \cA_s] = 0 \qquad (0 \leq s < t, \; a \in \cA_s, \; \e \in \{1,\ast\}). \label{eq.freeinchyp2}
    \end{equation}
    If $\Lambda \colon \R_+ \to \mathbb{B}_2(\cA)$ is a $\norm{\cdot}_{2,2;1}$-LCLB trace triprocess, then\label{item.QCoffreeR}
    \[
    \into \Lambda(t)[\d M(t), \d N(t)] \equiv 0.
     \]
\end{enumerate}
\end{prop}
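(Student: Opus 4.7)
The plan is to reduce each statement to a pointwise vanishing of a conditional expectation and then apply Lemma \ref{lem.T2s}. Since any $\vertiii{\cdot}_2$-LCLB trace triprocess is adapted (Proposition \ref{prop.TkinFk}) and $\norm{\cdot}_{2,2;1}$-LCLB, Theorem \ref{thm.NCcondQC} gives
\[
\int_0^t \Lambda(s)[dM(s), dN(s)] = L^1\text{-}\lim_{|\pi| \to 0} \sum_{s \in \pi} \E\bigl[\Lambda(s_-)[\Delta_s M, \Delta_s N] \,\big|\, \cA_{s_-}\bigr],
\]
so it suffices to prove $\E\bigl[\Lambda(r)[M(t) - M(r), N(t) - N(r)] \,\big|\, \cA_r\bigr] = 0$ for every $0 \leq r < t$. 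In part (i), $\Lambda(r) \in \cT_{2,r}^{\C}$, and in part (ii), $\Lambda(r) \in \cT_{2,r}$; since these are $\vertiii{\cdot}_2$-closures of $\cT_{2,r}^{\C,0}$ and $\cT_{2,r}^{0}$, respectively, I would pick approximants $\Xi_n$ in the relevant smaller space. Noncommutative H\"older and $L^1$-contractivity of $\E[\,\cdot\mid \cA_r]$ then reduce the problem to proving that the analogous vanishing holds with $\Lambda(r)$ replaced by an arbitrary $\Xi$ in $\cT_{2,r}^{\C,0}$ (resp., $\cT_{2,r}^{0}$).

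The next step is to invoke Lemma \ref{lem.T2s}, which decomposes $\Xi[M(t) - M(r), N(t) - N(r)]$ as a finite sum of terms of the shape $a A^{\e_1} b B^{\e_2} c$ or $\E[a A^{\e_1} b B^{\e_2}]\,c$, where $a, b, c \in \cA_r$, $(A, B)$ is either $(M(t) - M(r), N(t) - N(r))$ or its reverse, and $(\e_1, \e_2) \in \{1, *\}^2$ --- with only $(\e_1, \e_2) = (1, 1)$ appearing in the $\C$-bilinear case. Because $a, c \in \cA_r$, the conditional expectation given $\cA_r$ of each summand simplifies to $a \, \E[A^{\e_1} b B^{\e_2} \mid \cA_r] \, c$ or $\E[a A^{\e_1} b B^{\e_2}]\,c$, respectively, so each vanishes as soon as the single scalar factor $\E[A^{\e_1} b B^{\e_2} \mid \cA_r]$ vanishes.

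It therefore remains to check, from the hypotheses, that $\E[A^{\e_1} b B^{\e_2} \mid \cA_r] = 0$ for every $b \in \cA_r$ and every relevant $(A, B)$ and $(\e_1, \e_2)$. For part (i), only $(\e_1, \e_2) = (1, 1)$ is needed; the case $(A, B) = (M(t) - M(r), N(t) - N(r))$ is exactly \eqref{eq.freeinchyp1}, and the reversed case follows from a single cycle-under-the-trace step: writing $\Delta M := M(t) - M(r)$ and $\Delta N := N(t) - N(r)$, for every $d \in \cA_r$ one has $\E[d\, \Delta N\, b\, \Delta M] = \E[b\, \Delta M\, d\, \Delta N] = \E\bigl[b\, \E[\Delta M\, d\, \Delta N \mid \cA_r]\bigr] = 0$. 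For part (ii), a combination of \eqref{eq.freeinchyp2}, the $*$-operation, and the same trace-cycling trick delivers all sixteen vanishing identities indexed by $(A, B) \in \{(\Delta M, \Delta N),(\Delta N, \Delta M)\}$ and $(\e_1, \e_2) \in \{1, *\}^2$. I expect the main obstacle to be purely organizational: enumerating these cases, matching each to the correct summand type produced by Lemma \ref{lem.T2s}, and performing the short cycle-plus-adjoint manipulation needed in each. Once this bookkeeping is complete, the proof follows.
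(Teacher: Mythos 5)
Your proposal is correct and follows essentially the same route as the paper's proof: reduce via Theorem \ref{thm.NCcondQC} to the pointwise vanishing of $\E[\Xi[\Delta M,\Delta N]\mid\cA_s]$, pass from $\cT_{2,s}$ to $\cT_{2,s}^0$ by density, decompose via Lemma \ref{lem.T2s}, and kill each summand using the hypothesis together with the bimodule property, trace-cycling (through the duality characterization of conditional expectation), and adjoints. The only difference is cosmetic ordering — the paper establishes the sixteen vanishing identities before invoking Lemma \ref{lem.T2s}, while you invoke the lemma first — which does not affect correctness.
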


\begin{proof}
One can prove this result from Theorem \ref{thm.magicQC} and Lemma \ref{lem.T2s}, but we present a proof from scratch, i.e., with no reference to topological tensor products. 
Also, we present only the proof of \ref{item.QCoffreeR} since the proof of \ref{item.QCoffreeC} is similar and easier.

Suppose $0 \leq s < t$, $a,b,c \in \cA_s$, and $\e,\e_1,\e_2 \in \{1,\ast\}$.
First, we claim that
\[
\E[b(M(t) - M(s))^{\e_1}a(N(t) - N(s))^{\e_2} c\mid \cA_s] = \E[b(N(t) - N(s))^{\e_1}a(M(t) - M(s))^{\e_2}c \mid \cA_s] = 0.
\]
Since the conditional expectation $\E[\cdot \mid \cA_s]$ is an $\cA_s$-$\cA_s$ bimodule map, it suffices to treat the $b=c=1$ case.
To this end, note that if $d \in \cA_s$, then
\begin{align*}
    \E[(N(t) - N(s)) a (M(t) - M(s))^{\e} d] & = \E[a (M(t) - M(s))^{\e} d(N(t) - N(s))] \\
    & = \E[a \, \E[(M(t) - M(s))^{\e} d(N(t) - N(s)) \mid \cA_s]] = 0
\end{align*}
by \eqref{eq.freeinchyp2}.
Thus, $\E[(N(t) - N(s)) a (M(t) - M(s))^{\e} \mid \cA_s] = 0$.
Also,
\begin{align*}
    \E[(N(t) - N(s))^* a (M(t) - M(s))^{\e} \mid \cA_s] & = \E[(N(t) - N(s))^* (a^*)^* (M(t) - M(s))^{\e} \mid \cA_s] \\
    & = \E[((M(t) - M(s))^{\e})^*a^*(N(t) - N(s)) \mid \cA_s]^* = 0
\end{align*}
by \eqref{eq.freeinchyp2}.
This covers the cases $(\e_1,\e_2) \in \{(1,\ast),(1,1),(\ast,\ast),(\ast,1)\}$.
The cases $(\e_1,\e_2) \in \{(\ast,1),(1,1)\}$ are precisely the hypothesis \eqref{eq.freeinchyp2}, so the claim is proven.
Next, since $\E = \E \circ \E[\cdot \mid \cA_s]$, this implies
\[
\E[b(M(t) - M(s))^{\e_1}a(N(t) - N(s))^{\e_2} c] = \E[b(N(t) - N(s))^{\e_1}a(M(t) - M(s))^{\e_2}c] = 0
\]
as well.
Putting all this together with Lemma \ref{lem.T2s}, we conclude that
\begin{equation}
    \E[\Xi[M(t) - M(s), N(t) - N(s)] \mid \cA_s] = 0 \qquad (\Xi \in \cT_{2,s}^0).\label{eq.Om}\pagebreak
\end{equation}
By density, \eqref{eq.Om} holds for all $\Xi \in \cT_{2,s}$ as well.
To complete the proof, we apply Theorem \ref{thm.NCcondQC} (and Proposition \ref{prop.TkinFk}):
If $\Lambda \colon \R_+ \to \mathbb{B}_2(\cA)$ is a $\norm{\cdot}_{2,2;1}$-LCLB trace triprocess, then
\[
\int_0^t\Lambda(s)[\d M(s), \d N(s)] = L^1\text{-}\lim_{\pi \in \cP_{[0,t]}} \sum_{s \in \pi} \E[\Lambda(s_-)[\Delta_sM, \Delta_s N] \mid \cA_{s_-}] = 0
\]
by what we just proved.
\end{proof}

Before diving into specific examples, we prove one more general result.

\begin{thm}\label{thm.newmagicQC}
Let $M, N \in \tbM^2$, and suppose there exists an $r > 0$ such that
\begin{equation}
    \mu_{M,N}((t,t+r]) = \E[(M(t+r) - M(t))(N(t+r) - N(t))] \neq 0 \qquad (t \geq 0). \label{eq.nondeg}
\end{equation}
In this case, write
\[
\E_{M,N}[\Lambda](t) \coloneqq \frac{\E[\Lambda(t)[M(t+r) - M(t), N(t+r) - N(t)] \mid \cA_t]}{\E[(M(t+r) - M(r))(N(t+r) - N(t))]} \in L^1(\E) \qquad (t \geq 0)
\]
whenever $\Lambda \colon \R_+ \to B_2^{2,2;1}$.
\begin{enumerate}[label=(\roman*),font=\normalfont]
    \item Suppose the pairs $(M,N)$ and $(N,M)$ both satisfy the hypotheses of Theorem \ref{thm.magicQC} (with possibly different maps $\Gamma$).
    If $\Lambda \colon \R_+ \to \mathbb{B}_2(\cA)$ is a $\norm{\cdot}_{2,2;1}$-LCLB complex-bilinear trace triprocess, then $\E_{M,N}[\Lambda] \in L_{\loc}^1(\R_+,\mu_{M,N};L^1(\E))$, and\label{item.newmagicC}
    \begin{equation}
    \into \Lambda(t)[\d M(t), \d N(t)] = \into \E_{M,N}[\Lambda](t) \, \mu_{M,N}(\d t).\label{eq.EMNQC}
    \end{equation}
    \item Suppose that $(M^*,N^*) = (M,N)$ and the pair $(M,N)$ satisfies the hypotheses of Theorem \ref{thm.magicQC}.
    If $\Lambda \colon \R_+ \to \mathbb{B}_2(\cA)$ is a $\norm{\cdot}_{2,2;1}$-LCLB trace triprocess, then $\E_{M,N}[\Lambda] \in L_{\loc}^1(\R_+,\mu_{M,N};L^1(\E))$, and \eqref{eq.EMNQC} holds.\label{item.newmagicR}
\end{enumerate}
\end{thm}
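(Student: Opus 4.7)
The plan is to reduce the desired integral identity to a pointwise conditional-expectation identity and then transfer it to the full integral via Theorem \ref{thm.NCcondQC}. The core step will be the pointwise identity
\[
\E\big[\Lambda(t)[M(u) - M(t), N(u) - N(t)] \,\big|\, \cA_t\big] = \E_{M,N}[\Lambda](t) \, \mu_{M,N}((t, u]) \qquad (\ast)
\]
for all $t \geq 0$ and $u > t$. Taking $u = t + r$ recovers the definition of $\E_{M,N}[\Lambda](t)$, which makes sense because $\mu_{M,N}((t, t+r]) \neq 0$; the content of $(\ast)$ is that the same normalization works for \emph{every} $u > t$.

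To prove $(\ast)$, I would first treat $\Lambda(t) \in \cT_{2,t}^{(\C),0}$ using Lemma \ref{lem.T2s}: this decomposes $\Lambda(t)[M(u) - M(t), N(u) - N(t)]$ into elementary pieces of the form $u^i \sh_2^{\mathsmaller{(\E)}}[(M(u) - M(t))^{\e_1}, (N(u) - N(t))^{\e_2}]$ with $u^i \in \cA_t^{\otimes 3}$ and $\e_1, \e_2 \in \{1, *\}$ (possibly with $M$ and $N$ swapped). The identity underlying Theorem \ref{thm.magicQC} -- specifically Eqs. \eqref{eq.MGamma}--\eqref{eq.EGamma} from its proof, applied to $(M, N)$ with map $\Gamma$ and to $(N, M)$ with map $\Gamma'$ -- will evaluate the conditional expectation of each such piece as $\cM_{\Gamma}^{(\E)}(u^i) \,\mu_{M,N}((t, u])$ or $\cM_{\Gamma'}^{(\E)}(u^i) \, \mu_{N,M}((t, u])$, a factor of $\mu_{M,N}((t, u])$ (using $\mu_{N,M} = \mu_{M,N}$ by traciality) times an element of $L^1(\E)$ independent of $u$. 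For part (i), $\C$-bilinearity of $\Lambda$ restricts Lemma \ref{lem.T2s} to the $\e = (1,1)$ terms, matching precisely the two hypotheses on $(M, N)$ and $(N, M)$. For part (ii), self-adjointness collapses all $\e$-combinations to the same term, and the hypothesis on $(N, M)$ will be automatic: setting $\Gamma'(a) := \Gamma(a^*)^*$ and using self-adjointness together with traciality to verify that $\mu_{M,N}$ is real-valued yields $\E[(N(u) - N(t)) a (M(u) - M(t)) \mid \cA_t] = \Gamma'(a)\, \mu_{M,N}((t, u])$. Summing the elementary contributions will give $(\ast)$ on $\cT_{2,t}^{(\C),0}$, and $\vertiii{\cdot}_2$-continuity of both sides in $\Lambda(t)$ (with division by the nonzero $\mu_{M,N}((t, t+r])$ passing to the limit) will extend $(\ast)$ to all of $\cT_{2,t}^{(\C)}$.

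With $(\ast)$ established, Theorem \ref{thm.NCcondQC} will finish the proof:
\[
\int_0^T \Lambda[dM, dN] = L^1\text{-}\lim_{|\pi| \to 0} \sum_{s \in \pi} \E[\Lambda(s_-)[\Delta_s M, \Delta_s N] \mid \cA_{s_-}] = L^1\text{-}\lim_{|\pi| \to 0} \sum_{s \in \pi} \E_{M,N}[\Lambda](s_-) \, \mu_{M,N}((s_-, s]),
\]
where the second equality is $(\ast)$ with $(t, u) = (s_-, s)$. Since $\Lambda$ is $\norm{\cdot}_{2,2;1}$-LCLB, $M$ and $N$ are $L^2$-continuous, and $\mu_{M,N}((t, t+r])$ is continuous and nonvanishing (hence locally bounded below in modulus), $\E_{M,N}[\Lambda] \colon \R_+ \to L^1(\E)$ will itself be LCLB, so a variant of Proposition \ref{prop.HPi}\ref{item.HPiLp} will identify the right-hand side as $\int_0^T \E_{M,N}[\Lambda] \, d\mu_{M,N}$; local $\mu_{M,N}$-integrability of $\E_{M,N}[\Lambda]$ will follow from the same estimates. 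The hard part will be the density extension coupled with the final Riemann--Stieltjes identification: ensuring that the $\vertiii{\cdot}_2$-closure preserves the universal form of $(\ast)$ and that the resulting left-endpoint sums converge to the Lebesgue--Stieltjes integral against the complex measure $\mu_{M,N}$, where the nonvanishing hypothesis on $\mu_{M,N}((t, t+r])$ is essential for controlling the normalization throughout the approximation.
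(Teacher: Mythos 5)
Your proposal is correct and follows essentially the same route as the paper's proof: your identity $(\ast)$ is the paper's Eq.~\eqref{eq.inv2}, obtained exactly as you describe from Lemma \ref{lem.T2s} together with the (algebraic versions of) Eqs.~\eqref{eq.MGamma}--\eqref{eq.EGamma} from the proof of Theorem \ref{thm.magicQC}, and the conclusion is then drawn from Theorem \ref{thm.NCcondQC} and Proposition \ref{prop.HPi}\ref{item.HPiLp}. The only cosmetic difference is at the very end: the paper first applies Theorem \ref{thm.NCcondQC} to the piecewise-constant approximations $\Lambda^{\Pi}$ (for which the LCLB property of $\E_{M,N}[\Lambda^{\Pi}]$ is immediate from the continuity statement \eqref{eq.cont}) and then passes to $\Lambda$ by dominated convergence, whereas you apply Theorem \ref{thm.NCcondQC} to $\Lambda$ directly and must verify that $\E_{M,N}[\Lambda]$ itself is LCLB, which indeed follows from the same invariance identity.
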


\begin{rem}\label{rem.M=N*}
If $N=M^*$, then $\mu_{M,N} = \mu_{M,M^*} = \kappa_M$.
Moreover,
\[
\E_{M,M^*}[\Lambda](t) = \E[\Lambda(t)[e(t), e(t)^*] \mid \cA_t],
\]
where $e(t) \coloneqq \norm{M(t+r) - M(t)}_2^{-1}(M(t+r) - M(t))$.
Therefore, when $N=M^*$ in Theorem \ref{thm.newmagicQC}, the formulas read $\into \Lambda(t)[\d M(t), \d M^*(t)] = \into \E[\Lambda(t)[e(t),e(t)^*] \mid \cA_t] \, \kappa_M(\d t)$.
\end{rem}

\begin{proof}
Once again, it is possible to prove this result using Theorem \ref{thm.magicQC} and Lemma \ref{lem.T2s}, but we shall present a proof that does not rely on any topological tensor products.
Also, we present only the proof of \ref{item.newmagicR} since the proof of \ref{item.newmagicC} is similar and easier.

We begin with some technical observations.
Specifically, \eqref{eq.Gammahyp} implies $(M,N) = (M^*,N^*)$ satisfies the following invariance and continuity properties:
If $r \geq 0$, $0 \leq s < t$, $0 \leq u < v$, $s \leq u$, and $\Xi \in \cT_{2,s} \subseteq \cT_{2,u}$,~then
\begin{align*}
    \begin{split}
    \mu_{M,N}((u,v]) \, &\E[\Xi[M(t) - M(s) , N(t) - N(s)] \mid \cA_s] \\
    & = \mu_{M,N}((s,t]) \, \E[\Xi[M(v) - M(u), N(v) - N(u)] \mid \cA_u] \, \text{ and}
    \end{split}\numberthis\label{eq.inv} \\
    [s,\infty) \ni t \mapsto \,& g_{\Xi}(t) \coloneqq \E[\Xi[M(t +r) - M(t), M(t+r) - M(t)] \mid \cA_t] \in L^1(\E) \text{ is continuous.} \numberthis\label{eq.cont}
\end{align*}
To prove \eqref{eq.inv} and \eqref{eq.cont}, it suffices, by an easy limiting argument, to treat the case $\Xi \in \cT_{2,s}^0$.
For such $\Xi$, one can use Lemma \ref{lem.T2s}, (the purely algebraic version of) \eqref{eq.EGamma}, and $(M^*,N^*) = (M,N)$ to prove \eqref{eq.inv} and \eqref{eq.cont}.
We leave the details to the reader.

We now begin in earnest.
Suppose, in addition, that there is some $r > 0$ such that \eqref{eq.nondeg} holds.
Then we may divide by $\mu_{M,N}((s,s+r])$ in \eqref{eq.inv} with $(u,v) = (s,s+r)$ to see that
\begin{equation}
\begin{split}
    \E[\Xi[M(t) - M&(s), N(t) - N(s)] \mid \cA_s] \\
    & = \frac{\E[\Xi[M(s+r) - M(s), N(s+r) - N(s)] \mid \cA_s]}{\mu_{M,N}((s,s+r])} \, \mu_{M,N}((s,t]) \quad (s < t, \; \Xi \in \cT_{2,s}). 
\end{split}\label{eq.inv2}
\end{equation}
Now, let $\Lambda \colon \R_+ \to \mathbb{B}_2(\cA)$ be a $\norm{\cdot}_{2,2;1}$-LCLB trace triprocess and $\Pi \in \cP_{\R_+}$.
By examining $\E_{M,N}[\Lambda^{\Pi}]$ on each interval $(t_-,t]$ with $t \in \Pi$, \eqref{eq.cont} implies that $\E_{M,N}[\Lambda^{\Pi}] \colon \R_+ \to L^1(\E)$ is LCLB.
Consequently, if $t \geq 0$ and $\Om \coloneqq \Lambda^{\Pi}$, then\vspace{-0.2mm}
\begin{align*}
    \int_0^t \Om(s)[\d M(s), \d N(s)] & = L^1\text{-}\lim_{\pi \in \cP_{[0,t]}}\sum_{s \in \pi}\E[\Om(s_-)[\Delta_sM, \Delta_sN] \mid \cA_{s_-}] \tag{Thm.\ \ref{thm.NCcondQC}} \\
    & = L^1\text{-}\lim_{\pi \in \cP_{[0,t]}}\sum_{s \in \pi}\E_{M,N}[\Om](s_-) \,\mu_{M,N}((s_-,s]) \tag{Eq.\ \eqref{eq.inv2}} \\
    & = L^1\text{-}\lim_{\pi \in \cP_{[0,t]}} \int_0^t \E_{M,N}[\Om]^{\pi} \,\d\mu_{M,N} = \int_0^t \E_{M,N}[\Om] \,\d\mu_{M,N}. \tag{Lem.\ \ref{lem.HPi}\ref{item.HPipw}}
\end{align*}
Finally, since $\Lambda$ is $\norm{\cdot}_{2,2;1}$-LCLB, $\Lambda^{\Pi} \to \Lambda$ pointwise and in $L_{\loc}^1(\R_+,\kappa_{M,N};B_2^{2,2;1})$ as $|\Pi| \to 0$ by Lemma \ref{lem.HPi}\ref{item.HPipw} and the dominated convergence theorem.
Therefore,\vspace{-0.2mm}
\[
\mathbb{L}^1\text{-}\lim_{\Pi \in \cP_{\R_+}}\into \Lambda^{\Pi}[\d M, \d N] = \into \Lambda[\d M, \d N],\vspace{-0.2mm}
\]
and $\E_{M,N}[\Lambda^{\Pi}] \to \E_{M,N}[\Lambda]$ pointwise as $|\Pi| \to 0$.
Finally, observe that if\vspace{-0.2mm}
\[
C_t \coloneqq \sup_{0 \leq s \leq t} \frac{\norm{M(s+r) - M(s)}_2\norm{N(s+r) - N(s)}_2}{|\E[(M(s+r) - M(s))(N(s+r) - N(s))]|} < \infty,\vspace{-0.2mm}
\]
then\vspace{-0.2mm}
\[
\sup_{\Pi \in \cP_{\R_+}}\sup_{0 \leq s \leq t}\big\|\E_{M,N}\big[\Lambda^{\Pi}\big](s)\big\|_1 \leq C_t\sup_{\Pi \in \cP_{\R_+}}\sup_{0 \leq s \leq t}\big\|\Lambda^{\Pi}(s)\big\|_{2,2;1} \leq C_t\sup_{0 \leq s \leq t}\norm{\Lambda(s)}_{2,2;1} < \infty.\vspace{-0.2mm}
\]
Thus, by the dominated convergence theorem (and Fact \ref{fact.partlim}), $\E_{M,N}[\Lambda] \in L_{\loc}^1(\R_+,\mu_{M,N};L^1(\E))$, and $\E_{M,N}[\Lambda^{\Pi}] \to \E_{M,N}[\Lambda]$ in $L_{\loc}^1(\R_+,\mu_{M,N};L^1(\E))$ as $|\Pi| \to 0$, from which it follows that\vspace{-0.2mm}
\[
\mathbb{L}^1\text{-}\lim_{\Pi \in \cP_{\R_+}}\into \E_{M,N}\big[\Lambda^{\Pi}\big] \,\d\mu_{M,N} = \into \E_{M,N}[\Lambda] \,\d\mu_{M,N}.\vspace{-0.2mm}
\]
In the end, we finally get $\into \Lambda[\d M, \d N] = \into \E_{M,N}[\Lambda]\,\d\mu_{M,N}$, as desired.
\end{proof}

We now give several examples of pairs $(M,N)$ satisfying the hypotheses of the results above.

\begin{lem}\label{lem.condexpcalc}
Suppose $0 \leq s < t$ and $n \in \N$.
\begin{enumerate}[label=(\roman*),font=\normalfont]
    \item If $x,y \in \cA$ are centered and $\{x,y\}$ is free from $\cA_s$, then $\E[xay \mid \cA_s] = \E[a] \,  \E[xy]$ for all $a \in \cA_s$.\label{item.freeinc}
    \item Suppose that $(\cA_n,(\cA_{n,t})_{t \geq 0},\tau_n)$ is as in Example \ref{ex.clmart} and $x,y \in L^2(\tau_n)$ are such that $(x,y)$ is classically $P$-independent of $\sF_s$.
    If $\cE \subseteq \MnC$ is an $\la \cdot, \cdot \ra_{L^2(\tr_n)}$-orthogonal basis for $\MnC$ and $z_e \coloneqq \tr_n(e^*z)/\tr_n(e^*e)$ for all $z \in \MnC$ and $e \in \cE$, then\label{item.classinc}\vspace{-0.2mm}
    \[
    \tau_n[xay \mid \cA_{n,s}] = \sum_{e,f \in \cE} \E_P[x_ey_f]\,eaf = \sum_{e \in \cE} \E_P[xy_e] \, ae \qquad (a \in \cA_{n,s}).\vspace{-0.2mm}
    \]
\end{enumerate}
\end{lem}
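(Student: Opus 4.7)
The plan is to prove each part separately by reducing the conditional-expectation identity to a moment identity via the defining orthogonality property from Proposition \ref{prop.condexp}\ref{item.condexpexist}: namely, $\E[xay\mid\cA_s]$ is the unique $b\in L^1(\cA_s,\E)$ such that $\E[b_0 b]=\E[b_0\,xay]$ for all $b_0\in\cA_s$.

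For part \ref{item.freeinc}, the candidate $b=\E[a]\,\E[xy]$ is a scalar (hence in $\cA_s$) and satisfies $\E[b_0 b]=\E[b_0]\,\E[a]\,\E[xy]$, so it suffices to prove $\E[b_0\,xay]=\E[b_0]\,\E[a]\,\E[xy]$ for every $b_0\in\cA_s$. First I would decompose $a=\E[a]+a^\circ$ and $b_0=\E[b_0]+b_0^\circ$ with $a^\circ,b_0^\circ$ centered, and expand $\E[b_0\,xay]$ into four terms. The only surviving term is $\E[b_0]\,\E[a]\,\E[xy]$; the other three reduce to $\E[x a^\circ y]$, $\E[b_0^\circ\,xy]=\E[b_0^\circ\,(xy)^\circ]$, and $\E[b_0^\circ\,x a^\circ y]$, each of which is an alternating centered product with factors drawn from the free algebras $\cA_s$ and $\mathrm{alg}(x,y)$, and therefore vanishes by the standard alternating-centered-product characterization of free independence. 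I expect the only mild obstacle to be the bookkeeping of which pieces are centered and that the algebras alternate as required.

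For part \ref{item.classinc}, I would first invoke the identification from Example \ref{ex.clmart} that $\tau_n[\,\cdot\mid\cA_{n,s}]$ is the classical conditional expectation $\E_P[\,\cdot\mid\sF_s]$ applied entrywise to the matrix-valued random variable. Expanding $x=\sum_{e\in\cE}x_e\,e$ and $y=\sum_{f\in\cE}y_f\,f$ in the prescribed orthogonal basis gives $xay=\sum_{e,f\in\cE} x_e y_f\,eaf$. Since the matrix $eaf$ is $\sF_s$-measurable while the scalar random variable $x_e y_f$ is $P$-independent of $\sF_s$, pulling $eaf$ out of the conditional expectation and applying classical independence yields the first equality $\tau_n[xay\mid\cA_{n,s}]=\sum_{e,f\in\cE}\E_P[x_e y_f]\,eaf$.

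For the second equality it then suffices to verify the purely algebraic identity $\sum_{e,f\in\cE}\E_P[x_e y_f]\,eaf=\sum_{e\in\cE}\E_P[x\,y_e]\,ae$: substituting $\E_P[x\,y_e]=\sum_{f\in\cE}\E_P[x_f y_e]\,f$ into the right-hand side and swapping the dummy indices $e\leftrightarrow f$ recovers the left-hand side. This last step is routine bookkeeping and presents no real obstacle; the substance of part \ref{item.classinc} is the compatibility between the $\MnC$-valued conditional expectation and classical independence.
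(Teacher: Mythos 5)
Your proof is correct and follows essentially the same route as the paper: part \ref{item.classinc} is exactly the paper's argument (basis expansion, the bimodule property of the entrywise classical conditional expectation, and classical independence), differing only in that you expand both $x$ and $y$ at once and derive the single-sum form algebraically, whereas the paper expands $y$ first. For part \ref{item.freeinc}, the paper leaves the verification "to the reader," and your centering decomposition of $a$ and $b_0$ together with the alternating-centered-product characterization of freeness is precisely the intended argument and is correct.
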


\begin{proof}
The first item is an easy exercise in using the definition of free independence, so we leave it to the reader.
For the second, since $z = \sum_{e \in \cE} z_e\,e$ for all $z \in \MnC$, if $a \in \cA_{n,s} = L^{\infty}(\Om,\sF_s,P;\MnC)$, then\vspace{-0.2mm}
\begin{align*}
    \tau_n[xay \mid \cA_{n,s}] & = \E_P[xay \mid \sF_s] = \sum_{e \in \cE} \E_P[xay_ee \mid \sF_s] = \sum_{e \in \cE} \E_P[xy_e \mid \sF_s] \, ae \vspace{-0.2mm}\\
    & = \sum_{e \in \cE} \E_P[xy_e]\,ae = \sum_{e,f \in \cE} \E_P[x_eey_f] \, af = \sum_{e,f \in \cE} \E_P[x_ey_f] \, eaf,\vspace{-0.2mm}
\end{align*}
where we used the independence assumption in the fourth equality.
\end{proof}

We end this section with some examples.
A process $X = (X_1,\ldots,X_n) \colon \R_+ \to \cA^n$ is said to have \textbf{jointly} (\textbf{$\boldsymbol{\ast}$-})\textbf{free increments} if $X$ is adapted and $0 \leq s < t$ implies that $\{X_i(t) - X_i(s) : 1 \leq i \leq n\}$ is ($\ast$-)free from $\cA_s$.
For instance, a process $Y \colon \R_+ \to \cA$ has $\ast$-free increments if and only if $(Y,Y^*)$ has jointly free increments.
A process $X = (X_1,\ldots,X_n) \colon \R_+ \to \cA^n$ is called an \textbf{$\boldsymbol{n}$-dimensional} (\textbf{semi})\textbf{circular} \textbf{Brownian motion} if $X(0) = 0$, $X$ has jointly $\ast$-free increments, and $(X_1(t) - X_1(s),\ldots,X_n(t) - X_n(s))$ is a $\ast$-free family of (semi)circular elements each with variance $t-s$ whenever $0 \leq s < t$.
\pagebreak

\begin{ex}[Free examples]\label{ex.freeQC}
Fix $M,N \in C_a(\R_+;\cA)$ with constant expectation, and assume that $(M,N)$ has jointly free increments.
By Example \ref{ex.freeinc}, $M,N \in \M^{\infty}$.
By Lemma \ref{lem.condexpcalc}\ref{item.freeinc}, $(M,N)$ and $(N,M)$ both satisfy the hypotheses of Theorem \ref{thm.magicQC} with $\Gamma = \E$.
Assume also that there is some $r > 0$ such that \eqref{eq.nondeg} holds.
Then Theorem \ref{thm.newmagicQC}\ref{item.newmagicC} applies to $(M,N)$.
If, in addition, $(M^*,N^*) = (M,N)$, then Theorem \ref{thm.newmagicQC}\ref{item.newmagicR} applies to $(M,N)$.

If $\E[(M(t) - M(s))(N(t) - N(s))] = 0$, then \eqref{eq.freeinchyp1} holds by Lemma \ref{lem.condexpcalc}\ref{item.freeinc}.
Therefore, Proposition \ref{prop.QCoffree}\ref{item.QCoffreeC} says that $\into \Lambda(t)[\d M(t), \d N(t)] \equiv 0$ whenever $\Lambda \colon \R_+ \to \mathbb{B}_2(\cA)$ is a $\norm{\cdot}_{2,2;1}$-LCLB complex-bilinear trace triprocess.
If, in addition, $(M^*,N)$ has jointly free increments and $\E[(M(t)-M(s))^*(N(t) - N(s))] = 0$, then \eqref{eq.freeinchyp2} holds by Lemma \ref{lem.condexpcalc}\ref{item.freeinc}.
Therefore, Proposition \ref{prop.QCoffree}\ref{item.QCoffreeR} says that $\into \Lambda(t)[\d M(t), \d N(t)] \equiv 0$ whenever $\Lambda \colon \R_+ \to \mathbb{B}_2(\cA)$ is any $\norm{\cdot}_{2,2;1}$-LCLB trace triprocess.
\end{ex}

Suppose $\Lambda \colon \R_+ \to \mathbb{B}_2(\cA)$ is a $\norm{\cdot}_{2,2;1}$-LCLB trace triprocess, $X = (X_1,\ldots,X_n) \colon \R_+ \to \cA^n$ is an $n$-dimensional (semi)circular Brownian motion, and $H_1,K_1,\ldots,H_n,K_n \colon \R_+ \to \mathbb{B}(\cA)$ are $\norm{\cdot}_{2;2}$-LCLB trace biprocesses.
By Example \ref{ex.freeQC} and Theorem \ref{thm.QCSI}, if $U \coloneqq \sum_{i=1}^n\into H_i[\d X_i]$ and $V \coloneqq \sum_{i=1}^n \into K_i[\d X_i]$, then
\begin{align*}
    \into \Lambda(t)[\d U(t), \d V(t)] & = \sum_{i,j=1}^n\into \Lambda(t)[H_i(t)[\d X_i(t)], K_j(t)[\d X_j(t)]] \\
    & = \sum_{i=1}^n \into \Lambda(t)[H_i(t)[\d X_i(t)], K_i(t)[\d X_i(t)]],
\end{align*}
and $\into \Lambda(t)[H_i(t)[\d X_i(t)], K_i(t)[\d X_i(t)]]$ may often be computed with Theorems \ref{thm.magicQC} or \ref{thm.newmagicQC}.
This provides a natural way to compute (certain) quadratic covariations of It\^{o} processes driven by multidimensional free Brownian motions.
Moreover, such computations agree with the free It\^{o} product rule in \cite{NikitopoulosIto}.

\begin{ex}[Classical examples]\label{ex.clQC}
Let $n \in \N$ and $(\cA_n,(\cA_{n,t})_{t \geq 0}, \tau_n)$ be as in Example \ref{ex.clmart}.
Suppose two classical adapted stochastic processes $M,N \colon \R_+ \times \Om \to \MnC$ have constant $P$-expectation and jointly $P$-independent increments, i.e., $(M(t,\cdot) - M(s,\cdot), N(t,\cdot) - N(s,\cdot))$ is $P$-independent of $\sF_s$ whenever $0 \leq s < t$.
Then $M$ and $N$ are $L^2$-martingales.
If, in addition, $(M(t,\cdot))_{t \geq 0}$ and $(N(t,\cdot))_{t \geq 0}$ are $P$-independent, then \eqref{eq.freeinchyp2} holds with $(\E,\cA_s) = (\tau_n,\cA_{n,s})$ by Lemma \ref{lem.condexpcalc}\ref{item.classinc}.
Consequently, if we also know that $M,N \in \tbM_{\tau_n}^2$ (as is the case, by Theorem \ref{thm.matBMtilde}, when $M$ and $N$ are Hermitian Brownian motions and $(\sF_t)_{t \geq 0}$ satisfies the usual conditions), then Proposition \ref{prop.QCoffree}\ref{item.QCoffreeR} says that $\into \Lambda(t)[\d M(t), \d N(t)] \equiv 0$ whenever $\Lambda \colon \R_+ \to \mathbb{B}_2(\cA_n)$ is a $\norm{\cdot}_{2,2;1}$-LCLB trace triprocess.

Now, write
\[
\la a,b \ra_n \coloneqq n \Tr_n(b^*a) = n^2\tr_n(b^*a) \qquad (a,b \in \MnC),
\]
and let $X \colon \R_+ \times \Om \to \MnC_{\sa}$ be an $\la \cdot, \cdot \ra_n$-Brownian motion.
In other words, if $\cE \subseteq \MnC_{\sa}$ is an $\la \cdot, \cdot \ra_n$-orthonormal basis of the real inner product space $(\MnC_{\sa},\la \cdot, \cdot \ra_n)$, then $(\la X,e \ra_n)_{e \in \cE}$ is a Brownian motion in $\R^{n^2}$.
Since such an $\cE$ is an $\la \cdot, \cdot \ra_{L^2(\tr_n)}$-orthogonal basis for the complex inner product space $(\MnC,\la \cdot, \cdot \ra_{L^2(\tr_n)})$, Lemma \ref{lem.condexpcalc}\ref{item.classinc} says that if $0 \leq s < t$ and $a \in \cA_{n,s}$, then
\begin{align*}
    \tau_n[(X(t) - X(s))&  a(X(t) - X(s))\mid \cA_{n,s}] = \sum_{e,f \in \cE} \E_P[\la X(t) - X(s), e\ra_n\la X(t) - X(s), f\ra_n] \, eaf \\
    & = \sum_{e \in \cE} \E_P[\la X(t) - X(s), e\ra_n^2] \, eae = (t-s)\sum_{e \in \cE} eae  = \kappa_X((s,t]) \sum_{e \in \cE}eae.
\end{align*}
Above, we have considered $\R_+ \ni t \mapsto X(t) \coloneqq X(t,\cdot) \in L^2(\tau_n)$ as a noncommutative $L^2$-martingale in the usual way.
Now, by the ``magic formulas'' (\cite[\S3.1]{DHK2013}),
\[
\sum_{e \in \cE} eae = \tr_n(a).
\]
This gives
\[
\tau_n[(X(t) - X(s))a(X(t) - X(s)) \mid \cA_{n,s}] = \tr_n(a) \, \kappa_X((s,t]) \qquad (0 \leq s < t, \; a \in \cA_{n,s}).
\]
Thus, \eqref{eq.Gammahyp} holds with $M=N=X$ and $\Gamma = \tr_n$ (defined as a map $\cA_n \to \cA_n$).
If, in addition, $(\sF_t)_{t \geq 0}$ satisfies the usual conditions so that $X \in \tbM_{\tau_n}^2$, then Theorems \ref{thm.magicQC} and \ref{thm.newmagicQC} apply to $M=N=X$.
\end{ex}

\begin{ex}[$q$-Brownian motion]\label{ex.qBM}
Let $q \in [-1,1)$, and write $\Gamma_q(q)$ for the second quantization of the contraction $q = q \id_{L^2(\R_+)} \in B(L^2(\R_+))$;
see \cite[Thm.\ 2.11]{BKS1997}.
By \cite[Thm.\ 3.1]{DonatiMartinS2003}, if $X \colon \R_+ \to \cA_{\sa}$ is a $q$-Brownian motion, then
\[
\E[(X(t) - X(s))a(X(t) - X(s)) \mid \cA_s] = (t-s)\,\Gamma_q(q)a = \Gamma_q(q)a \, \kappa_X((s,t]) \qquad (0 \leq s < t, \;a \in \cA_s).
\]
Thus, \eqref{eq.Gammahyp} holds with $M=N=X$ and $\Gamma = \Gamma_q(q)$, and Theorems \ref{thm.magicQC} and \ref{thm.newmagicQC} apply to $M=N=X$.
\end{ex}

\begin{rem}
The ``noncommutative conditional variance'' formulas in the above examples form our primary motivation for the key hypothesis \eqref{eq.Gammahyp} in Theorem \ref{thm.magicQC}.
\end{rem}

Similar to the comments made after Example \ref{ex.freeQC}, Example \ref{ex.qBM} and Theorem \ref{thm.QCSI} combine to give a natural way to compute quadratic covariations of It\^{o} processes driven by $q$-Brownian motion, and these computations agree with the known ``It\^{o} product rule'' for $q$-Brownian stochastic integrals (\cite[Thm.\ 3.2]{DonatiMartinS2003}---see also \cite[Prop.\ 4.4]{DS2018}).

\section{It\texorpdfstring{\^{o}}{}'s formula}\label{sec.NCIF}

Retain the filtered $\mathrm{C}^*$-probability spaces $(\cA,(\cA_t)_{t \geq 0},\E = \E_{\mathsmaller{\cA}})$, $(\cB,(\cB_t)_{t \geq 0},\E_{\mathsmaller{\cB}})$, and $(\cC,(\cC_t)_{t \geq 0},\E_{\mathsmaller{\cC}})$ from Section \ref{sec.QC}.
Also, write $\cA_{\beta}$ for a fixed element of $\{\cA,\cA_{\sa}\}$ and $\cB_{\gamma}$ for a fixed element of $\{\cB,\cB_{\sa}\}$.
Henceforth, we assume the reader is familiar with (higher-order) Fr\'{e}chet derivatives; 
see \cite[Ch.\ 1]{HJ2014} for the relevant background.
If $\cV,\cW$ are normed vector spaces, $\cU \subseteq \cV$ is an open set, and $F \colon \cU \to \cV$ is $k$-times Fr\'{e}chet differentiable, then we shall write $D^kF \colon \cU \to B_k(\cV^k;\cW)$ for the $k^{\text{th}}$ Fr\'{e}chet derivative of $F$, i.e., $D^kF(p)[v_1,\ldots,v_k] = \partial_{v_k}\cdots \partial_{v_1}F(p)$ for all $p \in \cU$ and $v_1,\ldots,v_k \in \cV$.

\subsection{Adapted \texorpdfstring{$C^k$}{} maps}\label{sec.adapCk}

In this section, we define the class of functions to which our noncommutative It\^{o}'s formula will apply. 
We also provide elementary examples of such functions.
In later sections, we provide some more sophisticated examples drawing on work from \cite{JLS2022,NikitopoulosNCk}.

\begin{defi}[$C^{k,\ell}$ map]\label{def.Ckl}
Let $\cV,\cW,\cZ$ be real normed vector spaces, $\cU \subseteq \cV \times \cW$ be an open set, and $k,\ell \in \N_0$.
A map $F \colon \cU \to \cZ$ is called $\boldsymbol{C^{k,\ell}}$, written $F \in C^{k,\ell}(\cU;\cZ)$, if for every $(v,w) \in \cU$, there is exists a radius $r > 0$ such that 
\begin{enumerate}[label=(\roman*),font=\normalfont]
    \item $B_r(v) \times B_r(w) \subseteq \cU$;
    \item for all $(x,y) \in B_r(v) \times B_r(w)$, $F(x,\cdot) \in C^{\ell}(B_r(w);\cZ)$ and $F(\cdot,y) \in C^k(B_r(v);\cZ)$; and
    \item for all $i=0,\ldots,k$ and $j=0,\ldots,\ell$, the maps
    \begin{align*}
    & \cU \ni (x,y) \mapsto D_1^iF(x,y) \coloneqq D^i(F(\cdot,y))(x) \in B_i(\cV^i;\cZ) \, \text{ and} \\
    & \cU \ni (x,y) \mapsto D_2^jF(x,y) \coloneqq D^j(F(x,\cdot))(y) \in B_j(\cW^j;\cZ)
    \end{align*}
    are continuous.
\end{enumerate}
By convention, the zeroth derivative of a function is the function itself.
\end{defi}

For the definition below, recall that a (real--)$k$-linear map $T \colon \cA_{\sa}^k \to \cB$ is always identified with its complex--$k$-linear extension $\cA^k \to \cB$ (Observation \ref{obs.realklin}).

\begin{defi}[Adapted $C^{k,\ell}$ map]\label{def.adapCkl}
Let $k,\ell \in \N_0$ and $\cU \subseteq \cA_{\beta} \times \cB_{\gamma}$ be an open set.
A map $F \colon \cU \to \cC$ is called \textbf{adapted $\boldsymbol{C^{k,\ell}}$}, written $F \in C_a^{k,\ell}(\cU;\cC)$, if
\begin{enumerate}[label=(\roman*),font=\normalfont]
    \item $F \in C^{k,\ell}(\cU;\cC)$ when we consider $\cA_{\beta}$, $\cB_{\gamma}$, and $\cC$ as real Banach spaces;
    \item if $i=0,\ldots,k$ and $(a,b) \in \cU$, then $D_1^iF(a,b) \in \mathbb{B}_i(\cA^i;\cC)$, and $D_1^iF \colon U \to \mathbb{B}_i(\cA^i;\cC)$ is continuous with respect to $\vertiii{\cdot}_i$;
    \item if $j=0,\ldots,\ell$ and $(a,b) \in \cU$, then $D_2^jF(a,b) \in \mathbb{B}_j(\cB^j;\cC)$, and $D_2^jF \colon \cU \to \mathbb{B}_j(\cB^j;\cC)$ is continuous with respect to $\vertiii{\cdot}_j$;\pagebreak
    \item if $i=0,\ldots,k$, $t \geq 0$, and $(a,b) \in (\cA_t \times \cB_t) \cap U$, then $D_1^iF(a,b)$ belongs to $\cF_{i,t}(\E_{\mathsmaller{\cA}},\ldots,\E_{\mathsmaller{\cA}};\E_{\mathsmaller{\cC}})$; and
    \item if $j=0,\ldots,\ell$, $t \geq 0$, and $(a,b) \in (\cA_t \times \cB_t) \cap U$, then $D_2^jF(a,b)$ belongs to $\cF_{j,t}(\E_{\mathsmaller{\cB}},\ldots,\E_{\mathsmaller{\cB}};\E_{\mathsmaller{\cC}})$.
\end{enumerate}
If $\cV \subseteq \cB_{\gamma}$ is an open subset, a map $G \colon \cV \to \cC$ is called \textbf{adapted $\boldsymbol{C^k}$}, written $G \in C_a^k(\cV;\cC)$, if the map $\R \times \cV \ni (t,b) \mapsto G(b) \in \cC$ is adapted $C^{m,k}$ for some (equivalently, all) $m \in \N_0$.
As one might expect, we also write $C_a^{\infty}(\cV;\cC) \coloneqq \bigcap_{k \in \N}C_a^k(\cV;\cC)$.
\end{defi}

\begin{ex}[Inversion map]\label{ex.inv}
If $\cU = \mathrm{GL}(\cA) \coloneqq \{$invertible elements of $\cA\} \subseteq \cA$ and $F(g) \coloneqq g^{-1}$ for all $g \in \cU$, then $F \in C^{\infty}(\cU;\cA)$, and
\[
D^kF(g)[b_1,\ldots,b_k] = (-1)^k\sum_{\pi \in S_k}g^{-1}b_{\pi(1)}\cdots g^{-1}b_{\pi(k)}g^{-1} \qquad (g \in \cU, \; b_1,\ldots,b_k \in \cA),
\]
where $S_k$ is the symmetric group on $k$ letters.
Thus, by Proposition \ref{prop.TkinFk}, $F \in C_a^{\infty}(\cU;\cA)$.
\end{ex}

\begin{nota}[Noncommutative derivative]\label{nota.ncder}
For $p(\lambda) = \sum_{i=0}^n c_i \lambda^i \in \C[\lambda]$ and $k \in \N$, define
\[
\partial_{\mathsmaller{\otimes}}^kp(\a) \coloneqq k!\sum_{i=0}^nc_i\sum_{\delta \in \N_0^{k+1} : |\delta|=i-k}a_1^{\delta_1} \otimes \cdots \otimes a_{k+1}^{\delta_{k+1}} \in \cA^{\otimes(k+1)} \qquad \big(\a = (a_1,\ldots,a_{k+1}) \in \cA^{k+1}\big),
\]
where $|\delta| = \delta_1 + \cdots + \delta_{k+1}$ for $\delta = (\delta_1,\ldots,\delta_{k+1}) \in \N_0^{k+1}$, and empty sums are defined to be zero.
Also, write $\partial_{\mathsmaller{\otimes}} \coloneqq \partial_{\mathsmaller{\otimes}}^1$ and $\partial_{\mathsmaller{\otimes}}^kp(a) \coloneqq \partial_{\mathsmaller{\otimes}}^kp(a,\ldots,a)$ for all $a \in \cA$.
\end{nota}

\begin{ex}[Polynomials]\label{ex.polyadapCk}
If $p \in \C[\lambda]$ and $p_{\mathsmaller{\cA}} \colon \cA \to \cA$ is the map $a \mapsto p(a)$, then $p_{\mathsmaller{\cA}} \in C^{\infty}(\cA;\cA)$, and
\[
D^kp_{\mathsmaller{\cA}}(a)[b_1,\ldots,b_k] = \frac{1}{k!}\sum_{\pi \in S_k}\partial_{\mathsmaller{\otimes}}^kp(a)\sh_k[b_{\pi(1)},\ldots,b_{\pi(k)}] \qquad (a,b_1,\ldots,b_k \in \cA);
\]
see \cite[Prop.\ 4.3.1]{NikitopoulosNCk}.
Therefore, by Proposition \ref{prop.TkinFk}, $p_{\mathsmaller{\cA}} \in C_a^{\infty}(\cA;\cA)$.
Later, we shall see more generally that if $P \in (\TrP_n^*)^m$ is an $m$-tuple of trace $\ast$-polynomials in $n$ indeterminates, then $P_{\mathsmaller{(\cA,\E)}} \in C_a^{\infty}(\cA^n;\cA^m)$ (Example \ref{ex.trpolytrC} and Theorem \ref{thm.trCkadapCk}).
\end{ex}

\begin{defi}[Wiener space]\label{def.Wk}
Write $M(\R,\cB_{\R})$ for the space of complex Borel measures on $\R$.
For $\mu \in M(\R,\cB_{\R})$, write $\mu_{(0)} \coloneqq |\mu|(\R)$ for the total variation norm of $\mu$ and $\mu_{(k)} \coloneqq \int_{\R} |\xi|^k\,|\mu|(\d\xi) \in [0,\infty]$ for the ``$k^{\text{th}}$ moment'' of $|\mu|$.
The $\boldsymbol{k^{\text{\textbf{th}}}}$ \textbf{Wiener space} $W_k(\R)$ is the set of functions $f \colon \R \to \C$ such that there exists a (necessarily unique) $\mu \in M(\R,\cB_{\R})$ with $\mu_{(k)} < \infty$ and $f(\lambda) = \int_{\R} e^{i \xi \lambda} \, \mu(\d\xi)$ for all $\lambda \in \R$.
\end{defi}

\begin{ex}[Operator functions]\label{ex.Wiener}
If $f \colon \R \to \C$ is a continuous function, then the map $f_{\mathsmaller{\cA}} \colon \cA_{\sa} \to \cA$ defined via the functional calculus by $a \mapsto f(a)$ is called the \textbf{operator function} associated to $f$.
Using Duhamel's formula, i.e.,
\[
e^a - e^b = \int_0^1 e^{ta}(a-b)e^{(1-t)b} \,\d t \qquad (a,b \in \cA),
\]
it is possible to show that if $f = \int_{\R} e^{i\xi\boldsymbol{\cdot}} \,\mu(\d\xi) \in W_k(\R)$ and $\Sigma_k = \{(s_1,\ldots,s_k) \in \R_+^k : s_1+\cdots+s_k \leq 1\}$, then $f_{\mathsmaller{\cA}} \in C^k(\cA_{\sa};\cA)$, and
\begin{equation}
    D^kf_{\mathsmaller{\cA}}(a)[b_1,\ldots,b_k] = \sum_{\pi \in S_k}\int_{\R}\int_{\Sigma_k} (i\xi)^k e^{is_1\xi a}b_{\pi(1)}\cdots e^{is_k\xi a}b_{\pi(k)} e^{i(1-\sum_{j=1}^ks_j)\xi a} \,\d s_1\cdots \d s_k\,\mu(\d\xi) \label{eq.Wkderform}
\end{equation}
for all $a,b_1,\ldots,b_k \in \cA_{\sa}$;
see \cite[\S5]{ACDS2009} for this kind of argument.
From the derivative formula \eqref{eq.Wkderform}, it can be shown that $f_{\mathsmaller{\cA}} \in C_a^k(\cA_{\sa};\cA)$, as we encourage the reader to ponder.
We shall provide details in a more general context in Remark \ref{rem.Wiener} below.
\end{ex}

All the examples above are actually ``trace $C^k$ maps,'' which we define in Section \ref{sec.trsmoothmaps}.
What we witness concretely in these examples is the general fact that trace $C^k$ maps are adapted $C^k$ (Theorem \ref{thm.trCkadapCk}).

To end this section, we show that classical $C^k$ maps on spaces of matrices give rise to adapted $C^k$ maps on the spaces of random matrices from Example \ref{ex.clmart}.

\begin{lem}\label{lem.clCk}
Fix $n,m,d,\ell \in \N$ and $\MnC_{\beta} \in \{\MnC,\MnC_{\sa}\}$, and let $(\cA_n,(\cA_{n,t})_{t \geq 0},\tau_n)$ be as in Example \ref{ex.clmart}.
If $k \in \N_0$ and $f \in C^k(\MnC_{\beta}^d;\MmC^{\ell})$, then the map $\cA_{n,\beta}^d \ni \mathbf{a} \mapsto f_*(\mathbf{a}) \coloneqq f \circ \mathbf{a} \in \cA_m^{\ell}$ is (Fr\'echet) $C^k$, and
\begin{equation}
    \big(\partial_{\mathbf{b}_k}\cdots\partial_{\mathbf{b}_1}f_*(\mathbf{a}))(\om) = \partial_{\mathbf{b}_k(\om)}\cdots \partial_{\mathbf{b}_1(\om)}f(\mathbf{a}(\om)) \qquad \big(\mathbf{a},\mathbf{b}_i \in \cA_{n,\beta}^d, \text{ a.e.\ } \om \in \Om\big). \label{eq.derivsanity}
\end{equation}
(If $k=0$, then \eqref{eq.derivsanity} should be interpreted as $f_*(\mathbf{a}) = f \circ \mathbf{a}$, i.e., the definition of $f_*$.
Also, $\MnC_{\beta}^d$ and $\MmC^{\ell}$ are viewed as real Banach spaces.)
\end{lem}

\begin{proof}
We proceed by induction on $k$.
For the base case, we just need to prove that if $f$ is continuous, then so is $f_*$.
To this end, suppose $(\mathbf{a}_j)_{j \in \N}$ is a sequence in $\cA_{n,\beta}^d$ converging to $\mathbf{a} \in \cA_{n,\beta}$.
Then there exists an $R > 0$ such that for a.e.\ $\om \in \Om$, $\{\mathbf{a}_j(\om) : j \in \N\} \cup \{\mathbf{a}(\om)\} \subseteq C_R \coloneqq \{\mathbf{b} \in \MnC_{\beta}^d : \norm{\mathbf{b}} \leq R\}$;
here, $\norm{\cdot} = \norm{\cdot}_{L^{\infty}(\tr_n^{\oplus d})}$.
Since $C_R$ is compact and $f$ is continuous, $f|_{C_R}$ is uniformly continuous.
It follows that $f_*(\mathbf{a}_j) = f \circ \mathbf{a}_j \to f \circ \mathbf{a} = f_*(\mathbf{a})$ in $\cA_m^{\ell}$ as $j \to \infty$.
Thus, $f_*$ is continuous.

For the induction step, suppose we know the desired conclusions for $C^{k-1}$ functions with $k \geq 1$.
If $f \in C^k(\MnC_{\beta}^d;\MmC^{\ell})$ and $\mathbf{a},\mathbf{b}_1,\ldots,\mathbf{b}_k \in \cA_{n,\beta}^d$, then for a.e.\ $\om \in \Om$,
\begin{align*}
    \delta_{\om}(\mathbf{b}_k) & \coloneqq (\partial_{\mathbf{b}_{k-1}}\cdots \partial_{\mathbf{b}_1}f_*(\mathbf{a}+\mathbf{b}_k))(\om) - (\partial_{\mathbf{b}_{k-1}}\cdots \partial_{\mathbf{b}_1}f_*(\mathbf{a}))(\om) \\
    & = \partial_{\mathbf{b}_{k-1}(\om)}\cdots \partial_{\mathbf{b}_1(\om)}f(\mathbf{a}(\om)+\mathbf{b}_k(\om)) - \partial_{\mathbf{b}_{k-1}(\om)}\cdots \partial_{\mathbf{b}_1(\om)}f(\mathbf{a}(\om)) \\
    & = \int_0^1 \partial_{\mathbf{b}_k(\om)}\cdots \partial_{\mathbf{b}_1(\om)}f(\mathbf{a}(\om) + t\mathbf{b}_k(\om))\,\d t
\end{align*}
by the induction hypothesis and the fundamental theorem of calculus.
It follows that
\begin{align*}
    \e_{\om}(\mathbf{b}_k) & \coloneqq \delta_{\om}(\mathbf{b}_k) - \partial_{\mathbf{b}_k(\om)}\cdots \partial_{\mathbf{b}_1(\om)}f(\mathbf{a}(\om)) \\
    & = \int_0^1 \big(D^kf(\mathbf{a}(\om) + t\mathbf{b}_k(\om)) - D^kf(\mathbf{a}(\om))\big)[\mathbf{b}_1(\om),\ldots,\mathbf{b}_k(\om)]\,\d t.
\end{align*}
Writing $B_k \coloneqq B_k((\MnC_{\beta}^d)^k;\MmC^{\ell}) = B_k(L^{\infty}(\tr_{n,d})^k;L^{\infty}(\tr_{m,\ell}))$, this gives
\[
\norm{\e_{\om}(\mathbf{b}_k)} \leq \norm{\mathbf{b}_1(\om)}\cdots\norm{\mathbf{b}_k(\om)}\sup_{0 \leq t \leq 1}\norm{D^kf(\mathbf{a}(\om) + t\mathbf{b}_k(\om)) - D^kf(\mathbf{a}(\om))}_{B_k}.
\]
Using this estimate and the fact that $\norm{\a}_{L^{\infty}(\tau_n^{\oplus d})} = P\text{-}\esssup\{\norm{\a(\om)} : \om \in \Om\}$ for all $\a \in \cA_n^d$, we may appeal to the continuity of $D^kf$ and the compactness of $C_R$ as in the previous paragraph to conclude that $f_* \in C^k(\cA_{n,\beta}^d;\cA_m^{\ell})$ and \eqref{eq.derivsanity} holds.
\end{proof}

\begin{prop}[Classical functions]\label{prop.clCk}
If $k \in \N$ and $f \in C^k(\MnC_{\beta}^d;\MmC^{\ell})$, then $f_* \in C_a^k(\cA_{n,\beta}^d;\cA_m^{\ell})$.
\end{prop}

\begin{proof}
Let $B_k$ be as in the proof of Lemma \ref{lem.clCk}, and write $\tau_{n,d} \coloneqq \tau_n^{\oplus d}$, etc.\ for direct sum traces.
We begin by arguing that a $P$-essentially bounded $B_k$-valued random variable determines an element of $\mathbb{B}_k((\cA_{n,\beta}^d)^k ; \cA_m^{\ell})$ in the obvious way.
To this end, first observe that
\[
\norm{\a}_{L^p(\tr_{n,d})} \leq \norm{\a}_{L^{\infty}(\tr_{n,d})} \leq (nd)^\frac1p\norm{\a}_{L^p(\tr_{n,d})} \qquad \big(\a \in \MnC^d,\;p \in [1,\infty)\big).
\]
Consequently, if $T \colon (\MnC_{\beta}^d)^k \to \MmC^{\ell}$ is a $k$-linear map and $p,p_1,\ldots,p_k \in [1,\infty]$, then
\begin{equation}
    \norm{T}_{B_k(L^{p_1}(\tr_{n,d}) \times \cdots \times L^{p_k}(\tr_{n,d});L^p(\tr_{m,\ell}))} \leq (nd)^{\frac{1}{p_1}+\cdots+\frac{1}{p_k}}\norm{T}_{B_k}.\label{eq.Bkbd}
\end{equation}
Now, suppose $L \colon \Om \to B_k$ is a $P$-essentially bounded $B_k$-valued random variable, and let $\b_1,\ldots,\b_k \in \cA_{n,\beta}^d$.
If $p,p_1,\ldots,p_k \in [1,\infty]$ satisfy $1/p_1+\cdots+1/p_k = 1/p$, then
\begin{align*}
    \norm{L[\mathbf{b}_1,\ldots,\mathbf{b}_k]}_{L^p(\tau_{m,\ell})} & = \E_P\big[\norm{L[\mathbf{b}_1,\ldots,\mathbf{b}_k]}_{L^p(\tr_{m,\ell})}^p\big]^{\frac{1}{p}} \\
    & \leq (nd)^{\frac{1}{p_1}+\cdots+\frac{1}{p_k}} P\text{-}\esssup_{\om \in \Om} \norm{L(\om)}_{B_k} \E_P\big[\norm{\mathbf{b}_1}_{L^{p_1}(\tr_{n,d})}^p \cdots \norm{\mathbf{b}_k}_{L^{p_k}(\tr_{n,d})}^p\big]^{\frac{1}{p}} \\
    & \leq (nd)^{\frac1p}P\text{-}\esssup_{\om \in \Om} \norm{L(\om)}_{B_k} \prod_{i=1}^k\E_P\big[\norm{\mathbf{b}_i}_{L^{p_i}(\tr_{n,d})}^{p_i}\big]^{\frac{1}{p_i}} \\
    & = (nd)^{\frac1p}P\text{-}\esssup_{\om \in \Om} \norm{L(\om)}_{B_k} \norm{\mathbf{b}_1}_{L^{p_1}(\tau_{n,d})}\cdots \norm{\mathbf{b}_k}_{L^{p_k}(\tau_{n,d})} \displaybreak
\end{align*}
by \eqref{eq.Bkbd} and H\"older's inequality (with obvious adjustments for infinite indices).
Consequently, if we define $\Lambda \colon (\cA_{n,\beta}^d)^k \to \cA_m^{\ell}$ by $(\b_1,\ldots,\b_k) \mapsto L[\b_1,\ldots,\b_k]$, then $\Lambda \in \mathbb{B}_k((\cA_{n,\beta}^d)^k ; \cA_m^{\ell})$, and
\[
\vertiii{\Lambda}_k \leq nd\,P\text{-}\esssup_{\om \in \Om} \norm{L(\om)}_{B_k}.
\]
With this in hand, it is easy to see Lemma \ref{lem.clCk} implies that if $f$ is a $C^k$ function as in the statement, then $D^kf_* \colon \cA_{n,\beta}^d \to \mathbb{B}_k((\cA_{n,\beta}^d)^k ; \cA_m^{\ell})$ is continuous.

Next, suppose $f$ is a $C^k$ function as in the statement.
If $t \geq 0$ and $\mathbf{a} \in (\cA_{n,t}^d)_{\beta}$, then $\mathbf{a}$ is $\sF_t$-measurable as a random variable $\Om \to \MnC_{\beta}^d$.
Thus, the random variable $f \circ \mathbf{a} = f_*(\mathbf{a})$ is $\sF_t$-measurable, i.e., belongs to $\cA_{m,t}^{\ell}$.
In particular, $f_* \in C_a^0(\cA_{n,\beta}^d;\cA_m^{\ell})$.
It remains to show that if $u \geq t \geq 0$, $\mathbf{a} \in (\cA_{n,t}^d)_{\beta}$, $i =1,\ldots,k$, and $\mathbf{b}_1,\ldots,\mathbf{b}_k \in (\cA_{n,u}^d)_{\beta}$, then
\[
\tau_{m,\ell}\big[D^kf_*(\mathbf{a})[\mathbf{b}_1,\ldots,\mathbf{b}_{i-1},\mathbf{c},\mathbf{b}_{i+1},\ldots,\mathbf{b}_k] \mid \cA_{m,u}^{\ell}\big] = D^kf_*(\a)\big[\mathbf{b}_1,\ldots,\mathbf{b}_{i-1},\tau_{n,d}\big[\mathbf{c} \mid \cA_{n,u}^d\big],\mathbf{b}_{i+1},\ldots,\mathbf{b}_k\big]
\]
for all $\mathbf{c} \in \cA_{n,\beta}^d$.
Since $\mathbf{a},\mathbf{b}_1,\ldots,\mathbf{b}_k$ are all $\sF_u$-measurable, formula \eqref{eq.derivsanity} in Lemma \ref{lem.clCk} guarantees that there exists a $P$-essentially bounded $\sF_u$-measurable random variable $T = T(\mathbf{a},\mathbf{b}_1,\ldots,\b_{i-1},\b_{i+1},\ldots,\mathbf{b}_k)$ with values in $B(\MnC_{\beta}^d;\MmC^{\ell})$ such that
\[
(D^kf_*(\mathbf{a})[\mathbf{b}_1,\ldots,\mathbf{b}_{i-1},\mathbf{c},\mathbf{b}_{i+1},\ldots,\mathbf{b}_k])(\om) = T(\om)[\c(\om)] \qquad \big(\mathbf{c} \in \cA_{n,\beta}^d, \text{ a.e.\ } \om \in \Om\big).
\]
The required identity then follows from the fact that $\tau_{m,\ell}[\cdot \mid \cA_{m,u}^{\ell}]$ and $\tau_{n,d}[\cdot \mid \cA_{n,u}^d]$ are given in terms of classical matrix-valued conditional expectations of the form $\E_P[\cdot \mid \sF_u]$.
\end{proof}

\subsection{The formula}\label{sec.form}

In this section, we first state our noncommutative It\^{o}'s formula and discuss some useful special cases.
At the end of the section, we prove the formula.

\begin{thm}[Noncommutative It\^{o}'s formula]\label{thm.NCIFtimedep}
Let $\cU \subseteq \cA_{\beta} \times \cB_{\gamma}$ be an open set.
Suppose $A \in \mathbb{FV}_{\mathsmaller{\cA}}^{\infty}$ and $X \colon \R_+ \to \cB$ is $L^{\infty}$-decomposable.
If $(A(t),X(t)) \in \cU$ for all $t \geq 0$ and $F \in C_a^{1,2}(\cU;\cC)$, then
\[
\d F(A(t), X(t)) = D_aF(A(t),X(t))[\d A(t)] + D_xF(A(t),X(t))[\d X(t)] + \frac{1}{2} D_x^2F(A(t), X(t))[\d X(t), \d X(t)],
\]
where $D_a = D_1$, $D_x = D_2$, and $D_x^2 = D_2^2$ in the notation of Definition \ref{def.Ckl}.
More precisely,
\begin{align*}
    F(A,X) & = F(A(0),X(0)) + \into D_aF(A(t),X(t))[\d A(t)] \\
    & \hspace{7.5mm} + \into D_xF(A(t),X(t))[\d X(t)] + \frac{1}{2} \into D_x^2F(A(t), X(t))[\d X(t), \d X(t)].
\end{align*}
\end{thm}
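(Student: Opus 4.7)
The plan is to mimic the classical proof via Taylor's theorem, reducing everything to limits of Riemann--Stieltjes-type sums that we have already identified with the three integrals on the right-hand side. Fix $t \geq 0$ and a partition $\pi$ of $[0,t]$. I will write the telescoping sum
\[
F(A(t),X(t)) - F(A(0),X(0)) = \sum_{s \in \pi} \big(F(A(s),X(s)) - F(A(s_-),X(s_-))\big)
\]
and split each increment as $[F(A(s),X(s)) - F(A(s_-),X(s))] + [F(A(s_-),X(s)) - F(A(s_-),X(s_-))]$. The first bracket I would expand via first-order Taylor in the $a$-variable (since $F \in C^{1,\cdot}$ only), and the second via second-order Taylor in the $x$-variable (since $F \in C^{\cdot,2}$). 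After adding and subtracting left-endpoint evaluations, each increment becomes
\[
D_aF(A(s_-),X(s_-))[\Delta_sA] + D_xF(A(s_-),X(s_-))[\Delta_sX] + \tfrac{1}{2}D_x^2F(A(s_-),X(s_-))[\Delta_sX,\Delta_sX] + \rho_s^{(1)} + \rho_s^{(2)},
\]
where $\rho_s^{(1)} = \int_0^1 \bigl(D_aF(A(s_-)+u\Delta_sA, X(s)) - D_aF(A(s_-),X(s_-))\bigr)[\Delta_sA]\,du$ and $\rho_s^{(2)} = \int_0^1 (1-u)\bigl(D_x^2F(A(s_-),X(s_-)+u\Delta_sX) - D_x^2F(A(s_-),X(s_-))\bigr)[\Delta_sX,\Delta_sX]\,du$.

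Next, I would identify the limits of the three ``main'' sums as $|\pi| \to 0$. The $\Delta_sA$ sum converges in $\cC$ to $\int_0^t D_aF(A(s),X(s))[dA(s)]$ by Corollary \ref{cor.contRSint}, since $A \in \mathbb{FV}_{\mathsmaller{\cA}}^{\infty}$ and $s \mapsto D_aF(A(s),X(s)) \in B(\cA;\cC)$ is norm-continuous. The $\Delta_sX$ sum converges in $L^2(\cC_t,\E_{\mathsmaller{\cC}})$ to $\int_0^t D_xF(A(s),X(s))[dX(s)]$ by Proposition \ref{prop.LERSapprox}, since $D_xF(A,X)$ is adapted (using the $\cF_t$-condition in Definition \ref{def.adapCkl}) and $\norm{\cdot}_{2;2}$-LCLB, and $X$ is $\tilde{L}^2$-decomposable (because $L^{\infty}$-decomposable and $\M^{\infty} \subseteq \tbM^2$). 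The quadratic sum converges in $L^1(\cC_t,\E_{\mathsmaller{\cC}})$ to $\frac{1}{2}\int_0^t D_x^2F(A(s),X(s))[dX(s),dX(s)]$ by Proposition \ref{prop.LEQRSapprox}, since $D_x^2F(A,X) \colon \R_+ \to \mathbb{B}_2(\cB^2;\cC)$ is adapted and $\vertiii{\cdot}_2$-continuous.

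The main obstacle is the Taylor remainder $\sum_{s \in \pi}(\rho_s^{(1)} + \rho_s^{(2)})$. For $\rho_s^{(1)}$, bound
\[
\Big\|\sum_s \rho_s^{(1)}\Big\|_{\cC} \leq V_{\cA}(A:[0,t])\,\sup_{s \in \pi,\, u \in [0,1]}\big\|D_aF(A(s_-)+u\Delta_sA, X(s)) - D_aF(A(s_-),X(s_-))\big\|_{\cA \to \cC};
\]
the supremum tends to $0$ as $|\pi| \to 0$ by uniform continuity of $D_aF$ on a compact neighborhood of the diagonal path $K \coloneqq \{(A(s),X(s)) : s \in [0,t]\} \subseteq \cU$, combined with $L^{\infty}$-continuity of $A$ and $X$ (which ensures that for small $|\pi|$ all arguments lie inside such a neighborhood). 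For $\rho_s^{(2)}$, use $\vertiii{\cdot}_2$-continuity of $D_x^2F$ to get
\[
\Big\|\sum_s \rho_s^{(2)}\Big\|_{1} \leq \tfrac{1}{2}\sup_{s \in \pi,\, u \in [0,1]}\vertiii{D_x^2F(A(s_-),X(s_-)+u\Delta_sX) - D_x^2F(A(s_-),X(s_-))}_2 \cdot \sum_s \norm{\Delta_sX}_2^2.
\]
The supremum vanishes by the same compactness argument, while $\sum_s \norm{\Delta_sX}_2^2 \leq 2\sum_s\norm{\Delta_sN}_2^2 + 2\sum_s\norm{\Delta_sB}_2^2$ is bounded: the first term equals $2\norm{N(t)-N(0)}_2^2$ by Lemma \ref{lem.M(t)-M(s)}, where $N$ is the martingale part of $X$, and the second is at most $V_{L^2}(B:[0,t]) \cdot \max_s\norm{\Delta_sB}_2 \to 0$ by $L^2$-continuity of the FV part $B$. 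Passing to an appropriate common mode of convergence (e.g., $L^1$) then yields the desired identity at time $t$; since $t$ was arbitrary, the full conclusion follows.
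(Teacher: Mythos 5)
Your proposal is correct and follows essentially the same route as the paper's proof: telescoping over a partition, splitting each increment into an $a$-increment and an $x$-increment, applying Taylor's theorem with integral remainder (first order in $a$, second order in $x$), identifying the three main sums via Corollary \ref{cor.contRSint}, Proposition \ref{prop.LERSapprox}, and the quadratic covariation theory, and killing the remainders by uniform continuity of $D_aF$ and $D_x^2F$ on a compact neighborhood of the path together with the bound $\sum_s\norm{\Delta_sX}_2^2 \lesssim \kappa_N((0,t]) + V_{L^2}(B:[0,t])$. The only cosmetic difference is that you re-center the $D_aF$ term at $(A(s_-),X(s_-))$ and absorb the discrepancy into the remainder, whereas the paper keeps the mixed evaluation $D_aF(A(s_-),X(s))$ and handles that sum by a slight adjustment of Corollary \ref{cor.contRSint}; both work.
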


Recall that we have seen several examples of $L^{\infty}$-decomposable processes.
Indeed, if $q \in [-1,1)$, $n \in \N_0$, and $X \colon \R_+ \to \cA_{\sa}$ is a $q$-Brownian motion, then the process $M_n(t) = t^{n/2}H_n^{(q)}\big(t^{-1/2}X(t)\big)$ belongs to $\M_{\mathsmaller{\cA}}^{\infty}$ (Example \ref{ex.qGauss}).
Also, if $X \colon \R_+ \to \cA$ is any $L^{\infty}$-decomposable process and $H \in \EP$, then $\into H[\d X] \colon \R_+ \to \cB$ is $L^{\infty}$-decomposable.
Using the work of Biane--Speicher \cite{BS1998}, we can also show that many stochastic integrals against semicircular Brownian motion are $L^{\infty}$-continuous martingales and thus are $L^{\infty}$-decomposable.

\begin{prop}\label{prop.babyfrint}
\hspace{-0.25mm}If\hspace{-0.25mm} $X\hspace{-0.25mm} \colon\hspace{-0.25mm} \R_+\hspace{-0.25mm} \to\hspace{-0.25mm} \cA_{\sa}$\hspace{-0.25mm} is\hspace{-0.25mm} a\hspace{-0.25mm} semicircular\hspace{-0.25mm} Brownian\hspace{-0.25mm} motion,\hspace{-0.25mm} $P\hspace{-0.25mm} \in\hspace{-0.25mm} \TrP_{n,1,1}^*$,\hspace{-0.25mm} and\hspace{-0.25mm} $Y_1,\hspace{-0.25mm}\ldots,\hspace{-0.25mm}Y_n\hspace{-0.25mm} \colon\hspace{-0.25mm} \R_+\hspace{-0.25mm} \to\hspace{-0.25mm} \cA$ are adapted and $L^{\infty}$-LCLB, then $\into P(Y_1(t),\ldots,Y_n(t),\d X(t)) \in \M_{\mathsmaller{\cA}}^{\infty}$.
\end{prop}

\begin{proof}
As the reader may verify, it suffices to prove that if $A,B,C,D,E \colon \R_+ \to \cA$ are adapted and $L^{\infty}$-LCLB, then $\into (A(t)\,\d X(t)\,B(t) + \E[C(t)\,\d X(t)\,D(t)]\,E(t)) \in \M_{\mathsmaller{\cA}}^{\infty}$.
Since
\[
\into \E[C(t)\,\d X(t)\,D(t)]\,E(t) = \into \E[D(t)\,C(t)\,\d X(t)]\,E(t) \equiv 0
\]
by Example \ref{ex.elemtrbipinteg}, we need to show $\into A(t)\,\d X(t)\,B(t) \in \M_{\mathsmaller{\cA}}^{\infty}$.
To this end, recall from Proposition \ref{prop.LERSapprox} that
\[
\mathbb{L}^2\text{-}\lim_{\Pi \in \cP_{\R_+}}\sum_{t \in \Pi}A(t_-)\,(X(t \wedge \cdot) - X(t_- \wedge \cdot))\,B(t_-) = \into A(t)\,\d X(t)\,B(t).\pagebreak
\]
Also, by the proof of the aforementioned proposition, if $\Pi,\Pi' \in \cP_{\R_+}$, then
\begin{align*}
    \e_{\Pi,\Pi'} & \coloneqq \sum_{t \in \Pi}A(t_-)\,(X(t \wedge \cdot) - X(t_- \wedge \cdot))\,B(t_-) - \sum_{t \in \Pi'}A(t_-)\,(X(t \wedge \cdot) - X(t_- \wedge \cdot))\,B(t_-) \\
    & = \into \big(A^{\Pi} \otimes B^{\Pi} - A^{\Pi'} \otimes B^{\Pi'}\big) \sh \d X,
\end{align*}
where the latter is an elementary integral.
Consequently, by \cite[Thm.\ 3.2.1]{BS1998}, Lemma \ref{lem.HPi}\ref{item.HPipw}, and the dominated convergence theorem, if $t \geq 0$, then
\begin{align*}
    \sup_{0 \leq s \leq t}\big\|\e_{\Pi,\Pi'}(s)\big\|_{\infty}^2 & \leq 8\int_0^t \big\|A^{\Pi} \otimes B^{\Pi} - A^{\Pi'} \otimes B^{\Pi'}\big\|_{\infty}^2 \,\d s \\
    & \leq 8\int_0^t \big(\big\|A^{\Pi}\big\|_{\infty} \big\|B^{\Pi} - B^{\Pi'}\big\|_{\infty} + \big\|A^{\Pi} - A^{\Pi'}\big\|_{\infty} \big\|B^{\Pi'}\big\|_{\infty}\big)^2 \,\d s \xrightarrow[\Pi,\Pi' \in \cP_{\R_+}]{|\Pi|,|\Pi'| \to 0} 0.
\end{align*}
To be clear, the $L^{\infty}$ norm on the right-hand side of the first inequality above is the (operator) norm on the minimal $\mathrm{C}^*$-tensor product $\cA \otimes_{\min} \cA^{\op}$ (\cite[\S2.2]{NikitopoulosIto}).
Since $\M_{\mathsmaller{\cA}}^{\infty}$ is complete, we conclude that $\big(\sum_{t \in \Pi}A(t_-)\,(X(t \wedge \cdot) - X(t_- \wedge \cdot))\,B(t_-)\big)_{\Pi \in \cP_{\R_+}}$ converges in $\M_{\mathsmaller{\cA}}^{\infty}$.
The result follows.
\end{proof}

\begin{rem}
We note that \cite[Thm.\ 3.2.1]{BS1998} is stated and proved in the $\mathrm{W}^*$ case.
Once again, as in Remark \ref{rem.BDG}, one can deduce the general $\mathrm{C}^*$ case from Appendix \ref{sec.CstarLp}.
\end{rem}

Next, we examine some useful special cases of our noncommutative It\^{o}'s formula.

\begin{ex}[Time-dependent It\^{o}'s formula]\label{ex.timedepIF}
The prototypical example of an FV argument $A$ in Theorem \ref{thm.NCIFtimedep} is $A(t) = t$, which results in the ``noncommutative time-dependent It\^{o}'s formula.''
Fix an open set $\cU \subseteq \cA_{\beta}$ and a map $F \colon \R_+ \times \cU \to \cB$.
Suppose there exist $r > 0$ and $F_r \in C_a^{1,2}((-r,\infty) \times \cU; \cB)$ such that $F_r|_{\R_+ \times \cU} = F$.
(This should be interpreted as the condition $F \in C_a^{1,2}(\R_+ \times \cU; \cB)$.
Also, note that we consider $(-r,\infty)$ to be an open subset of $\C_{\sa} = \R$.)
If $X \colon \R_+ \to \cA$ is $L^{\infty}$-decomposable and $X(t) \in \cU$ for all $t \geq 0$, then Theorem \ref{thm.NCIFtimedep} gives
\[
\d F(t,X(t)) = \partial_tF(t,X(t)) \,\d t + D_xF(t,X(t))[\d X(t)] + \frac{1}{2}D_x^2F(t,X(t))[\d X(t), \d X(t)].
\] 
If $F$ has no time dependence, i.e., if $F \in C_a^2(\cU;\cB)$, then we get
\[
\d F(X(t)) = DF(X(t))[\d X(t)] + \frac{1}{2}D^2F(X(t))[\d X(t), \d X(t)],
\]
which is Theorem \ref{thm.NCIF} from the introduction.
\end{ex}

\begin{ex}[Conjugation]\label{ex.conjug}
Let $a \in \cA_0$ and $X \colon \R_+ \to \GL(\cA) \subseteq \cA$ be $L^{\infty}$-decomposable.
Noncommutative It\^{o}'s formula (in the form of Theorem \ref{thm.NCIF}) applied to the adapted $C^2$ map $\cU = \mathrm{GL}(\cA) \ni g \mapsto gag^{-1} \in \cA$ (similar to Example \ref{ex.inv}) yields
\begin{align*}
    XaX^{-1} & = X(0)aX(0)^{-1} + \into \Big(\d X(t)\,aX(t)^{-1} - X(t)aX(t)^{-1}\,\d X(t)\,X(t)^{-1}\Big) \\
    & \hspace{5mm} + \into \Big(X(t)aX(t)^{-1}\,\d X(t)\,X(t)^{-1}\,\d X(t)\,X(t)^{-1} - \d X(t)\,aX(t)^{-1}\,\d X(t)\,X(t)^{-1}\Big).
\end{align*}
Already, this is an example other noncommutative It\^{o} formulas from the literature cannot directly handle.
\end{ex}

\begin{ex}[Multivariate case]\label{ex.multivarIF}
Fix $n,m \in \N$ and, for each $i =1,\ldots,n$ and $j =1,\ldots,m$, filtered $\mathrm{C}^*$-probability spaces $(\cA_i, (\cA_{i,t})_{t \geq 0}, \E_{\mathsmaller{\cA}_i})$ and $(\cB_j, (\cB_{j,t})_{t \geq 0}, \E_{\mathsmaller{\cB}_j})$.
If we take
\[
\cA = \cA_1 \oplus \cdots \oplus \cA_n \; \text{ and } \; \cB = \cB_1 \oplus \cdots \oplus \cB_m
\]
(with the direct sum filtrations and traces) in Theorem \ref{thm.NCIFtimedep}, then we obtain a multivariate version of noncommutative It\^{o}'s formula.
Specifically, fix an open set $\cU \subseteq \cA_{\beta} \times \cB_{\gamma}$.
Now, suppose that, for each\pagebreak\ $i =1,\ldots,n$ and $j =1,\ldots,m$, $A_i \in \mathbb{FV}_{\mathsmaller{\cA_i}}^{\infty}$ and $X_j \colon \R_+ \to \cB_j$ is $L^{\infty}$-decomposable.
If $F \in C_a^{1,2}(\cU;\cC)$ and the process $(A,X) \coloneqq (A_1,\ldots,A_n,X_1,\ldots,X_m)$ takes values in $\cU$, then
\begin{align*}
    \d F(A(t),X(t)) & = \sum_{i=1}^n D_{a_i}F(A(t),X(t))[\d A_i(t)] + \sum_{j=1}^m D_{x_j}F(A(t),X(t))[\d X_j(t)] \\
    & \hspace{18mm} + \frac{1}{2}\sum_{j,k=1}^m D_{x_k}D_{x_j}F(A(t),X(t))[\d X_j(t), \d X_k(t)],
\end{align*}
where $D_{a_i}$ is the derivative in the $i^{\text{th}}$ variable and $D_{x_j}$ is the derivative in the $(n+j)^{\text{th}}$ variable.
\end{ex}

We now get to work on proving Theorem \ref{thm.NCIFtimedep}.
To begin, we recall Taylor's theorem with integral remainder.
Let $\cV$ and $\cW$ be real Banach spaces, and let $\cU \subseteq \cV$ be a convex open set.
Taylor's theorem (e.g., \cite[Thm.\ 1.107]{HJ2014}) says that if $k \in \N$ and $F \in C^k(\cU;\cW)$, then
\[
F(p+h) - F(p) - \sum_{i=1}^{k-1} \frac{1}{i!}\partial_h^iF(p) = \frac{1}{(k-1)!}\int_0^1 (1-t)^{k-1}\partial_h^kF(p+th)\,\d t
\]
for all $p \in \cU$ and $h \in \cV$ such that $p+h \in \cU$.
We shall freely use this below.

\begin{proof}[Proof of Theorem \ref{thm.NCIFtimedep}]
Let $t \geq 0$.
By a standard Lebesgue number lemma argument using the compactness of $\{(A(s),X(s)) : 0 \leq s \leq t\}$, there exist $\e,\delta_1,\ldots,\delta_n > 0$ and $(a_1,x_1),\ldots,(a_n,x_n) \in \cU$ such~that
\begin{enumerate}[label=(\roman*),font=\normalfont]
    \item $\cU_i \coloneqq B_{\delta_i}(a_i) \times B_{\delta_i}(x_i) \subseteq \cU$ for all $i =1,\ldots,n$; and
    \item if $0 \leq r,s \leq t$ and $|r-s| < \e$, then $(A(r),X(s)) \in \cU_i$ for some $i \in \{1,\ldots,n\}$.
\end{enumerate}
Now, fix a partition $\pi$ of $[0,t]$ such that $|\pi| < \e$.
If $s \in \pi$, then $|s-s_-| \leq |\pi| < \e$, so there exists an $i \in \{1,\ldots,n\}$ such that $(A(r_1),X(r_2)) \in \cU_i = B_{\delta_i}(a_i) \times B_{\delta_i}(x_i) \subseteq \cU$ whenever $r_1,r_2 \in [s_-,s]$.
We may therefore write
\begin{align*}
    F(A(t),X(t)) - F(A(0),X(0)) & = \sum_{s \in \pi}\big(F(A(s),X(s)) - F(A(s_-),X(s_-))\big) \\
    & = \sum_{s \in \pi}\big(F(A(s),X(s)) - F(A(s_-),X(s)) \\
    & \hspace{15mm} + F(A(s_-),X(s)) - F(A(s_-),X(s_-))\big).
\end{align*}
Next, letting $s \in \pi$ and $i$ be as before, we appeal to the convexity of $B_{\delta_i}(a_i)$ and $B_{\delta_i}(x_i)$ to use Taylor's theorem with integral remainder in two ways.
First,
\begin{align*}
    F(A(s)&, X(s)) - F(A(s_-),X(s)) = \int_0^1 D_aF(A(s_-) + r\Delta_sA,X(s))[\Delta_sA]\,\d r \\
    & = D_aF(A(s_-),X(s))[\Delta_sA] + \int_0^1 \big(D_aF(A(s_-) + r\Delta_sA,X(s)) - D_aF(A(s_-),X(s))\big)[\Delta_sA]\,\d r.
\end{align*}
Second,
\begin{align*}
    F(A(s_-)&,X(s)) - F(A(s_-),X(s_-)) \\
    & = D_xF(A(s_-),X(s_-))[\Delta_sX] + \int_0^1(1-r) D_x^2F(A(s_-),X(s_-)+r\Delta_sX)[\Delta_sX,\Delta_sX]\,\d r \\
    & = D_xF(A(s_-),X(s_-))[\Delta_sX] + \frac{1}{2}D_x^2F(A(s_-),X(s_-))[\Delta_sX,\Delta_sX] \\
    & \hspace{6.75mm} + \int_0^1(1-r) \big(D_x^2F(A(s_-),X(s_-)+r\Delta_sX) - D_x^2F(A(s_-),X(s_-))\big)[\Delta_sX,\Delta_sX]\,\d r
\end{align*}
because $\int_0^1(1-r)\,\d r = 1/2$.
\pagebreak

We now identify the terms of interest.
Since the linear process $\R_+ \ni t \mapsto D_xF(A(t),X(t)) \in \mathbb{B}(\cB;\cC)$ is $\norm{\cdot}_{2;2} \leq \vertiii{\cdot}$-continuous and adapted, Proposition \ref{prop.LERSapprox} yields
\begin{equation}
    L^2\text{-}\lim_{\pi \in \cP_{[0,t]}}\sum_{s \in \pi}D_xF(A(s_-),X(s_-))[\Delta_sX] = \int_0^t D_xF(A(s),X(s))[\d X(s)]. \label{eq.IFproof1}
\end{equation}
Since the bilinear process $\R_+ \ni t \mapsto D_x^2F(A(t),X(t)) \in \mathbb{B}_2(\cB^2;\cC)$ is $\norm{\cdot}_{2,2;1} \leq \vertiii{\cdot}_2$-continuous and adapted, Theorem \ref{thm.QC1}\ref{item.cheapRSsumQC} yields
\begin{equation}
    L^1\text{-}\lim_{\pi \in \cP_{[0,t]}}\sum_{s \in \pi}D_x^2F(A(s_-),X(s_-))[\Delta_sX, \Delta_sX] = \int_0^t D_x^2F(A(s),X(s))[\d X(s), \d X(s)]. \label{eq.IFproof2}
\end{equation}
Next, a slight adjustment of the proof of Proposition \ref{prop.contRSint} yields
\begin{equation}
    L^{\infty}\text{-}\lim_{\pi \in \cP_{[0,t]}}\sum_{s \in \pi}D_aF(A(s_-),X(s))[\Delta_sA] = \int_0^t D_aF(A(s),X(s))[\d A(s)] \label{eq.IFproof3}
\end{equation}
because $\R_+ \ni t \mapsto D_aF(A(t),X(t)) \in \mathbb{B}(\cA;\cC)$ is $\norm{\cdot}_{\infty;\infty} \leq \vertiii{\cdot}$-continuous and $A$ is $L^{\infty}$-FV.

Finally, we show that the remaining terms converge to zero.
To this end, define
\begin{align*}
    \e_{\pi} & \coloneqq \sum_{s \in \pi}\int_0^1 \big(D_aF(A(s_-) + r\Delta_sA,X(s)) - D_aF(A(s_-),X(s))\big)[\Delta_sA]\,\d r \, \text{ and} \\
    \delta_{\pi} & \coloneqq \sum_{s \in \pi}\int_0^1(1-r) \big(D_x^2F(A(s_-),X(s_-)+r\Delta_sX) - D_x^2F(A(s_-),X(s_-))\big)[\Delta_sX,\Delta_sX]\,\d r.
\end{align*}
Then
\begin{align*}
     \norm{\e_{\pi}}_{\infty} & \leq \int_0^1\sum_{s \in \pi}\big\|D_aF(A(s_-) + r\Delta_sA,X(s)) - D_aF(A(s_-),X(s))\big\|_{\infty;\infty}\norm{\Delta_sA}_{\infty}\,\d r \\
    & \leq V_{\cA}(A : [0,t])\sup_{(s,r) \in \pi \times [0,1]}\vertiii{D_aF(A(s_-) + r\Delta_sA,X(s)) - D_aF(A(s_-),X(s))} \xrightarrow[\pi \in \cP_{[0,t]}]{|\pi| \to 0} 0 \numberthis\label{eq.IFproof4}
\end{align*}
because $(A,X) \colon \R_+ \to \cU$ and $D_aF \colon \cU \to \mathbb{B}(\cA;\cC)$ are uniformly continuous on compact sets.
Next, decomposing $X$ as $X = X(0) + N + B$, note that
\begin{align*}
    \sum_{s \in \pi} \norm{\Delta_sX}_2^2 & \leq \sum_{s \in \pi} \norm{\Delta_sN}_2^2 + \sum_{s \in \pi} \big(2\norm{\Delta_sN}_2+\norm{\Delta_sB}_2\big)\norm{\Delta_sB}_2 \\
    & \leq \kappa_N((0,t]) + 2\sup_{0 \leq s \leq t}\big(2\norm{N(s)}_2 + \norm{B(s)}_2\big) \, V_{L^2(\E_{\mathsmaller{\cB}})}(B : [0,t]) =\vcentcolon C_t.
\end{align*}
Since $C_t < \infty$, we obtain
\begin{align*}
    \norm{\delta_{\pi}}_1 & \leq \int_0^1(1-r)\sum_{s \in \pi}\big\|D_x^2F(A(s_-),X(s_-)+r\Delta_sX) - D_x^2F(A(s_-),X(s_-)\big\|_{2,2;1}\norm{\Delta_sX}_2^2\,\d r \\
    & \leq \frac{C_t}{2}\sup_{(s,r) \in \pi \times [0,1]}\vertiii{D_x^2F(A(s_-),X(s_-)+r\Delta_sX) - D_x^2F(A(s_-),X(s_-))}_2 \xrightarrow[\pi \in \cP_{[0,t]}]{|\pi| \to 0} 0 \numberthis\label{eq.IFproof5}
\end{align*}
because $(A,X) \colon \R_+ \to \cU$ and $D_x^2F \colon \cU \to \mathbb{B}_2(\cB^2;\cC)$ are uniformly continuous on compact sets.
Putting together \eqref{eq.IFproof1}--\eqref{eq.IFproof5}, we conclude that
\begin{align*}
    F(A(t),X(t)) - F(A(0),X(0)) & = \sum_{s \in \pi}D_aF(A(s_-),X(s))[\Delta_sA] + \sum_{s \in \pi}D_xF(A(s_-),X(s_-))[\Delta_sX] \\
    & \hspace{15mm} + \frac{1}{2}\sum_{s \in \pi} D_x^2F(A(s_-),X(s_-))[\Delta_sX,\Delta_sX] + \e_{\pi} + \delta_{\pi} \\
    & \hspace{-5.25mm}\xrightarrow[\pi \in \cP_{[0,t]}]{|\pi| \to 0} \int_0^tD_aF(A,X)[\d A] + \int_0^t D_xF(A,X)[\d X] + \frac{1}{2}\int_0^tD_x^2F(A,X)[\d X, \d X]
\end{align*}
in $L^1(\E_{\mathsmaller{\cC}})$.
This completes the proof.
\end{proof}

\subsection{Examples: Trace smooth maps}\label{sec.trsmoothmaps}

In this section, we introduce a class of adapted $C^k$ maps large enough that it contains most common examples of interest, including those induced by functional calculus.
The maps we consider are inspired by the tracial noncommutative $C^k$ functions introduced and studied by Jekel--Li--Shlyakhtenko \cite{JLS2022}.
To begin our study, we describe how to differentiate trace $\ast$-polynomials.

\begin{lem}\label{lem.partial}
Let $n \in \N$, and write $P_{i,\e}(x_1,\ldots,x_n) \coloneqq x_i^{\e} \in \TrP^*(x_1,\ldots,x_n) = \TrP^*(\x)$ for $i =1,\ldots,n$ and $\e \in \{1,\ast\}$.
If $i =1,\ldots,n$, then there exists a unique complex-linear map
\[
\partial_{x_i} \colon \TrP^*(\x) \to \TrP^*(\x)[y]
\]
such that for all $j =1,\ldots,n$, $\e \in \{1,\ast\}$, $P \in \C^*\la \x\ra$, and $Q,R \in \TrP^*(\x)$,
\begin{align*}
    (\partial_{x_i}P_{j,\e})(\x,y) & = \delta_{ij}y^{\e}, \\
   (\partial_{x_i}\tr(P))(\x,y) & = \tr((\partial_{x_i}P)(\x,y)), \;\text{ and} \\
    (\partial_{x_i}(QR))(\x,y) & = (\partial_{x_i}Q)(\x,y)\,R(\x) + Q(\x)\,(\partial_{x_i}R)(\x,y).
\end{align*}
\end{lem}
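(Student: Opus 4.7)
I will separate uniqueness from existence. Uniqueness is quick: the three displayed rules pin down $\partial_{x_i}$ on each $*$-monomial in $\C^*\la\x\ra$ by induction on the word length via the Leibniz rule, with the base case supplied by the first rule; linearity then extends the determination to all of $\C^*\la\x\ra$. Using the spanning description $\TrP^*(\x) = \spn\{\tr(P_1)\cdots\tr(P_\ell)\, P_0 : \ell \in \N_0,\, P_0,\ldots,P_\ell \in \C^*\la\x\ra\}$, together with the trace rule and iterated Leibniz, determines $\partial_{x_i}$ uniquely on a spanning set, hence on all of $\TrP^*(\x)$.

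For existence on the free $*$-algebra $\C^*\la\x\ra$, recall that $\C^*\la\x\ra$ is the free associative $\C$-algebra on the $2n$ letters $x_1, x_1^*, \ldots, x_n, x_n^*$, so a $\C$-linear map out of it is determined by its values on $*$-monomials without any relations to check. On a $*$-monomial $m = a_1\cdots a_k$ with each $a_j \in \{x_1, x_1^*, \ldots, x_n, x_n^*\}$, I would set
\[
(\partial_{x_i} m)(\x, y) \coloneqq \sum_{j\,:\, a_j \in \{x_i, x_i^*\}} a_1\cdots a_{j-1}\, y^{\e(a_j)}\, a_{j+1}\cdots a_k \in \TrP^*(\x)[y],
\]
where $\e(x_i) = 1$ and $\e(x_i^*) = *$, and extend $\C$-linearly. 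Splitting the index set of this sum over a concatenated monomial $QR$ according to whether the distinguished letter comes from $Q$ or from $R$ yields the Leibniz rule on $\C^*\la\x\ra$, and rule (1) is immediate from the base case $m = x_j^\e$.

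For the extension to $\TrP^*(\x)$, I would define
\begin{align*}
\partial_{x_i}(\tr(P_1)\cdots\tr(P_\ell)\, P_0)(\x, y) &\coloneqq \sum_{j=1}^\ell \tr(P_1)\cdots \tr((\partial_{x_i}P_j)(\x, y))\cdots \tr(P_\ell)\, P_0(\x) \\
&\quad + \tr(P_1)\cdots \tr(P_\ell)\, (\partial_{x_i}P_0)(\x, y)
\end{align*}
on the spanning set above and extend $\C$-linearly. The main obstacle, and the only nontrivial part of the argument, is verifying that this definition respects the relations defining $\TrP^*(\x)$ as a quotient: cyclicity $\tr(AB) = \tr(BA)$, the abstract-trace identity $\tr(\tr(A)\, B) = \tr(A)\tr(B)$, and $\tr(1) = 1$. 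The decisive check is cyclicity: for $A, B \in \TrP^*(\x)$, the identity $\partial_{x_i}\tr(AB) = \partial_{x_i}\tr(BA)$ unwinds via Leibniz and the trace rule to
\[
\tr\big((\partial_{x_i}A)\, B + A\, (\partial_{x_i}B)\big) = \tr\big((\partial_{x_i}B)\, A + B\, (\partial_{x_i}A)\big),
\]
which holds because each summand lies in $\TrP^*(\x)[y] \subseteq \TrP^*(\x, y)$ on which the abstract trace $\tr$ is tracial. Compatibility with the remaining two relations reduces to a direct unwinding of definitions using that $\tr(P) \in Z(\TrP^*(\x, y))$. Once well-definedness is in hand, properties (1)--(3) follow immediately from the construction.
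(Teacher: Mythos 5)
Your proof is correct, and it takes a genuinely different route from the paper's on the existence half. The paper does not construct $\partial_{x_i}$ from scratch: it invokes Lemma 3.5 of \cite{JLS2022}, which provides the analogous derivation $\tilde{\partial}_{x_i}$ on the plain trace-polynomial algebra $\TrP(\x)$, and then uses the identification $\TrP^*(x_1,\ldots,x_n) = \TrP(x_1,x_1^*,\ldots,x_n,x_n^*)$ to set $\partial_{x_i} \coloneqq \tilde{\partial}_{x_i} + \tilde{\partial}_{x_i^*}$; the three rules are then inherited from the corresponding rules for $\tilde{\partial}_{x_i}$ and $\tilde{\partial}_{x_i^*}$. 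Your argument instead builds the derivation directly: the explicit "replace one occurrence of $x_i^{\e}$ by $y^{\e}$" formula on $\ast$-monomials, the splitting of the index set to get Leibniz on $\C^*\la\x\ra$, and the extension to the spanning set $\tr(P_1)\cdots\tr(P_\ell)P_0$ with a well-definedness check. You correctly identify that the only real content in that check is cyclicity, and your verification $\tr\big((\partial_{x_i}A)B + A(\partial_{x_i}B)\big) = \tr\big((\partial_{x_i}B)A + B(\partial_{x_i}A)\big)$ via traciality of $\tr$ on $\TrP^*(\x,y)$ is exactly right (centrality of the trace symbols and linearity of $\tr$ are preserved automatically by any derivation with $\partial_{x_i}\tr(\cdot) = \tr(\partial_{x_i}\cdot)$, as you implicitly use). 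Your uniqueness argument is the same in substance as the paper's: the set on which two candidates agree contains the generators and is closed under multiplication and $\tr$, hence is everything. What each approach buys: the paper's is shorter and reuses existing infrastructure, at the cost of an external citation and a slightly delicate bookkeeping point (the linear variable of $\tilde{\partial}_{x_i^*}$ must be relabeled $y^*$ for the first rule to read $\partial_{x_i}x_j^* = \delta_{ij}y^*$); yours is self-contained and makes the well-definedness issue, which is hidden inside the cited lemma, fully explicit. The one place where you are a bit quick is the final claim that property (3) for general $Q,R \in \TrP^*(\x)$ "follows immediately": it requires commuting the trace factors of $R$ past $P_0$ and then applying the already-established Leibniz rule on $\C^*\la\x\ra$ to $P_0Q_0$, but this is routine.
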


\begin{proof}
By \cite[Lem.\ 3.5]{JLS2022}, there exists a unique complex-linear map $\tilde{\partial}_{x_i} \colon \TrP(\x) \to \TrP^*(\x)[y]$ such that for all $j =1,\ldots,n$, $P \in \C\la \x\ra$, and $Q,R \in \TrP(\x)$,
\begin{align*}
    (\partial_{x_i}P_{j,1})(\x,y) & = \delta_{ij}y, \\
    (\partial_{x_i}\tr(P))(\x,y) & = \tr((\partial_{x_i}P)(\x,y)), \;\text{ and} \\
    (\partial_{x_i}(QR))(\x,y) & = (\partial_{x_i}Q)(\x,y)\,R(\x) + Q(\x)\,(\partial_{x_i}R)(\x,y).
\end{align*}
Recall that $\TrP^*(x_1,\ldots,x_n) = \TrP(x_1,y_1,\ldots,x_n,y_n)$, where $x_j^* = y_j$.
Thus, $\partial_{x_i} \coloneqq \tilde{\partial}_{x_i} + \tilde{\partial}_{y_i} = \tilde{\partial}_{x_i} + \tilde{\partial}_{x_i^*}$ does the job.
For uniqueness, note that if $D \subseteq \TrP^*(\x)$ is a complex-linear subspace containing $1,x_1,x_1^*\ldots,x_n,x_n^*$ that is closed under multiplication and $\tr$ (i.e., $P,Q \in D \Rightarrow PQ,\tr(P) \in D$), then $D = \TrP^*(\x)$.
Taking $D$ to be the set on which two candidates for $\partial_{x_i}$ agree, we see these candidates must agree on all of $\TrP^*(\x)$.
\end{proof}

\begin{ex}
If $P(x_1,x_2,x_3) = x_1x_2x_2^*x_3 + 3i\tr(x_1x_2^*)\,x_2 + x_1^*x_3^2 + 5$, then
\[
(\partial_{x_2}P)(x_1,x_2,x_3,y) = x_1yx_2^*x_3 + x_1x_2y^*x_3 + 3i\tr(x_1y^*)\,x_2 + 3i\tr(x_1x_2^*)\,y.
\]
Procedurally speaking, the trace polynomial $\partial_{x_i}P$ is computed by finding each individual occurrence of $x_i^{\e}$, replacing it with $y^{\e}$, and then adding up the resulting trace polynomials.
\end{ex}

As we shall see, $\partial_{x_i}P$ is related to the calculation of the first derivative of $P_{\mathsmaller{(\cA,\E)}}$.
Next, we describe how the $\partial_{x_i}$ operators can be applied multiple times, which will help us compute higher derivatives of $P_{\mathsmaller{(\cA,\E)}}$.
Let $k \in \N$ and $i_1,\ldots,i_k \in \{1,\ldots,n\}$, and suppose we have defined
\[
Q \coloneqq \partial_{x_{i_k}}\cdots\partial_{x_{i_1}}P \in \TrP^*(\x)[y_1,\ldots,y_k] \subseteq \TrP^*(\x,y_1,\ldots,y_k).
\]
If $i_{k+1} \in \{1,\ldots,n\}$, then we define
\[
\partial_{x_{i_{k+1}}}\cdots\partial_{x_{i_1}}P \coloneqq \partial_{x_{i_{k+1}}}Q \in \TrP^*(\x,y_1,\ldots,y_k)[y_{k+1}].
\]
Above, $\partial_{x_{i_{k+1}}}$ is the operator from Lemma \ref{lem.partial} that maps $\TrP^*(\x,y_1,\ldots,y_k)$ to $\TrP^*(\x,y_1,\ldots,y_k)[y_{k+1}]$.
From this recursive definition, it is easy to see that, in fact,
\[
\partial_{x_{i_{k+1}}}\cdots\partial_{x_{i_1}}P \in \TrP^*(\x)[y_1,\ldots,y_{k+1}] \subseteq \TrP^*(\x,y_1,\ldots,y_k)[y_{k+1}].
\]
With this notation, we can define an algebraic $k^{\text{th}}$ ``total'' derivative of $P$.

\begin{nota}\label{nota.TrPolyder}
If $n,k \in \N$ and $P \in \TrP_n^* = \TrP^*(\x)$, then we write
\[
\big(\partial^kP\big)(\x,\y_1,\ldots,\y_k) \coloneqq \sum_{i_1,\ldots,i_k=1}^n(\partial_{x_{i_k}}\cdots\partial_{x_{i_1}}P)(\x,y_{1,i_1},\ldots,y_{k,i_k}) \in \TrP^*(\x)[\y_1,\ldots,\y_k],
\]
where $\y_i = (y_{i,1},\ldots,y_{i,n})$ ($i=1,\ldots,n$).
Also, if $m \in \N$ and $P = (P_1,\ldots,P_m) \in (\TrP_n^*)^m$, then we write
\[
\partial^kP \coloneqq (\partial^kP_1,\ldots,\partial^kP_m) \in (\TrP_{n,k,(n,\ldots,n)}^*)^m.
\]
(Recall that $\TrP_{n,k,(n,\ldots,n)}^* = \TrP^*(\x)[\y_1,\ldots,\y_k]$.)
Finally, we write $\partial P \coloneqq \partial^1P$.
\end{nota}
\pagebreak

\begin{ex}
If $p(x) = x^n \in \C[x] \subseteq \TrP^*(x)$, then\vspace{-0.275mm}
\[
\big(\partial^kp\big)(x,y_1,\ldots,y_k) = \sum_{\pi \in S_k}\sum_{|\delta| = n-k} x^{\delta_1}y_{\pi(1)}\cdots x^{\delta_k}y_{\pi(k)}x^{\delta_{k+1}} \in \TrP^*(x,y_1,\ldots,y_k).\vspace{-0.275mm}
\]
In particular, if $a,b_1,\ldots,b_k \in \cA$, then\vspace{-0.275mm}
\[
\big(\partial^kp\big)(a,b_1,\ldots,b_k) = \frac{1}{k!}\sum_{\pi \in S_k}\partial_{\mathsmaller{\otimes}}^kp(a)\sh_k[b_{\pi(1)},\ldots,b_{\pi(k)}],\vspace{-0.275mm}
\]
where $\partial_{\mathsmaller{\otimes}}^kp(a) \in \cA^{\otimes(k+1)}$ is the noncommutative derivative from Notation \ref{nota.ncder}.
\end{ex}

We now prove a result that makes rigorous the idea that $\partial^kP$ is the $k^{\text{th}}$ derivative of $P$.
Both the result and its proof are very similar to \cite[Lem.\ 3.7]{JLS2022}.
In fact, it is possible to deduce our result from \cite[Lem.\ 3.7]{JLS2022} by breaking arguments into their real and imaginary parts, but doing so requires a similar level of effort to simply (re-)proving the result from scratch.

\begin{thm}[Higher Fr\'echet derivatives of trace $\ast$-polynomials]\label{thm.TrPolyder}
If $n,m \in \N$ and $P \in (\TrP_n^*)^m$, then $P_{\mathsmaller{(\cA,\E)}} \in C^{\infty}(\cA^n;\cA^m)$ when $\cA^n$ and $\cA^m$ are viewed as real Banach spaces.
Moreover, if $k \in \N$, then\vspace{-0.275mm}
\[
D^kP_{\mathsmaller{(\cA,\E)}}(\a)[\b_1,\ldots,\b_k] = \big(\partial^kP\big)(\a)[\b_1,\ldots,\b_k] \qquad \big(\a,\b_1,\ldots,\b_k \in \cA^n\big).\vspace{-0.275mm}
\]
In particular, $D^kP_{\mathsmaller{(\cA,\E)}}(\a) \in \mathbb{B}_k((\cA^n)^k;\cA^m)$ for all $\a \in \cA^n$, and, as a map from $\cA^n$ to $\mathbb{B}_k((\cA^n)^k;\cA^m)$, $D^kP_{\mathsmaller{(\cA,\E)}} = (\partial^kP)_{\mathsmaller{(\cA,\E)}}$ belongs to $BC_{\loc}(\cA^n;\mathbb{B}_k((\cA^n)^k;\cA^m))$.
\end{thm}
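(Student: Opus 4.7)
The plan is to establish the formula $D^k P_{\mathsmaller{(\cA,\E)}}(\a)[\b_1,\ldots,\b_k] = (\partial^k P)(\a)[\b_1,\ldots,\b_k]$ by first handling $k=1$ via a structural induction on $P$, and then bootstrapping to general $k$ by induction. By working coordinatewise, it suffices to treat the case $m=1$, so I would fix $P \in \TrP_n^* = \TrP^*(\x)$ throughout. The last two assertions of the theorem (that the derivatives lie in $\mathbb{B}_k$ and that the derivative maps belong to $BC_{\loc}(\cA^n;\mathbb{B}_k((\cA^n)^k;\cA^m))$) then fall out for free from the identification with $(\partial^k P)_{\mathsmaller{(\cA,\E)}}$, combined with the general properties of evaluation maps recorded in Notation \ref{nota.eval}.

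For the $k=1$ base case, I would show by structural induction that for each $i \in \{1,\ldots,n\}$, the partial real Fréchet derivative $\partial_{a_i}P_{\mathsmaller{(\cA,\E)}}(\a)[b]$ exists, is continuous in $\a$, and equals $(\partial_{x_i}P)_{\mathsmaller{(\cA,\E)}}(\a,b)$. The base cases are $P=c \in \C$, $P=x_j$, and $P=x_j^*$, which reduce to the fact that $a \mapsto a$ and $a \mapsto a^*$ are real-linear bounded maps on $\cA$ with derivatives $b$ and $b^*$ respectively, matching the defining relations of $\partial_{x_i}$ in Lemma \ref{lem.partial}. For the inductive step, closure under linear combinations is trivial; closure under products follows from the real Fréchet product rule (using that $\cA \times \cA \to \cA$ is a bounded real bilinear map), which matches the Leibniz rule $\partial_{x_i}(QR) = (\partial_{x_i}Q)R + Q(\partial_{x_i}R)$; and closure under the abstract trace follows from the chain rule applied to the bounded complex-linear (hence real-linear) state $\E \colon \cA \to \C$, matching $\partial_{x_i}\tr(Q) = \tr(\partial_{x_i}Q)$. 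Because $\TrP^*(\x)$ is the smallest subspace containing $1, x_1,x_1^*,\ldots,x_n,x_n^*$ that is closed under multiplication and $\tr$, the induction covers everything. Summing over $i$ gives the total derivative formula $DP_{\mathsmaller{(\cA,\E)}}(\a)[\b] = (\partial P)(\a,\b)$.

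For the inductive step $k \Rightarrow k+1$, note that once $\b_1,\ldots,\b_k \in \cA^n$ are fixed, the map $\a \mapsto (\partial^k P)(\a)[\b_1,\ldots,\b_k]$ is the evaluation of a trace $\ast$-polynomial in $\a$ alone (obtained by substituting the fixed coordinates $b_{j,i_j}$ into the indeterminates of $\partial^k P \in \TrP^*(\x)[\y_1,\ldots,\y_k]$). Applying the $k=1$ case to this trace polynomial, differentiating in the direction $\b_{k+1}$, and summing over $i_{k+1}$ yields exactly $(\partial^{k+1}P)(\a)[\b_1,\ldots,\b_{k+1}]$, as required. Joint continuity of $D^{k+1}P_{\mathsmaller{(\cA,\E)}}$ as a map to the space of real $(k+1)$-linear maps then follows from the $BC_{\loc}$ property of the evaluation of the trace $\ast$-polynomial $\partial^{k+1}P$, and this also verifies the $C^{k+1}$ condition needed to iterate the induction.

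The main obstacle, I expect, is less a deep difficulty than a matter of careful bookkeeping: one must keep straight that we are working with \emph{real} Fréchet derivatives on complex operator algebras (so that $\ast$ has a nontrivial derivative), verify that the algebraic rules for $\partial_{x_i}$ in Lemma \ref{lem.partial} precisely mirror the analytic product/chain rules for $P_{\mathsmaller{(\cA,\E)}}$, and invoke the previously established fact (from Notation \ref{nota.eval}) that evaluation of a member of $(\TrP_{n,k,d}^*)^m$ produces an element of $BC_{\loc}(\cA^n;\mathbb{B}_k(\cA^d;\cA^m))$ to extract the uniform $\mathbb{B}_k$-bounds and continuity ``for free.''
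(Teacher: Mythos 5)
Your proposal is correct and takes essentially the same route as the paper's proof: reduce to $m=1$, establish the $k=1$ case by a structural induction over the generators of $\TrP_n^*$ (constants, $x_j$, $x_j^*$, products, and $\tr$), then induct on $k$ by applying the first-order case to $\partial^{k-1}P$ regarded as a trace $\ast$-polynomial in extra indeterminates, finally invoking the continuity of $(\partial^kP)_{\mathsmaller{(\cA,\E)}}$ together with the standard fact that existence of iterated directional derivatives plus continuity of the candidate multilinear map yields $C^k$-smoothness (the paper cites Fact 73 in \cite{HJ2014} for this last step). The only cosmetic difference is that you compute partial derivatives $\partial_{a_i}$ and sum, whereas the paper differentiates directly in a full direction $\b \in \cA^n$.
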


\begin{proof}
It suffices to treat the $m=1$ case.
To this end, let $P \in \TrP_n^*$ and $k \in \N$.
Recall that $(\partial^kP)_{\mathsmaller{(\cA,\E)}}$ belongs to $BC_{\loc}(\cA^n; \mathbb{B}_k((\cA^n)^k ;\cA))$.
In particular, $(\partial^kP)_{\mathsmaller{(\cA,\E)}}$ is continuous as a map from $\cA^n$ to $B_k((\cA^n)^k;\cA)$.
By \cite[Fact 1.73]{HJ2014}, it therefore suffices to prove that if $\a,\b_1,\ldots,\b_k \in \cA^n$, then the directional derivative $\partial_{\b_k}\cdots \partial_{\b_1}P_{\mathsmaller{(\cA,\E)}}(\a)$ exists in $\cA$, and $\partial_{\b_k}\cdots \partial_{\b_1}P_{\mathsmaller{(\cA,\E)}}(\a) = (\partial^kP)(\a)[\b_1,\ldots,\b_k]$.
We shall prove this by induction on $k$.

For the base case, let $D \coloneqq \{Q \in \TrP_n^* : \partial_{\b}Q_{\mathsmaller{(\cA,\E)}}(\a) = (\partial Q)(\a)[\b]$ for all $\a,\b \in \cA^n\}$.
Clearly, $D \subseteq \TrP_n^*$ is a complex-linear subspace containing $1$.
Now, if $j =1,\ldots,n$ and $\e \in \{1,\ast\}$, then\vspace{-0.275mm}
\[
\partial_{\b}(P_{j,\e})_{\mathsmaller{(\cA,\E)}}(\a) = \lim_{t \to 0} \frac{(a_j+tb_j)^{\e} - a_j^{\e}}{t} = b_j^{\e} = \sum_{i=1}^n\delta_{ij}b_i^{\e} = (\partial P_{j,\e})(\a)[\b].\vspace{-0.275mm}
\]
Thus, $x_1,x_1^*,\ldots,x_n,x_n^* \in D$.
Now, if $P,Q \in D$, then the Leibniz rule yields\vspace{-0.275mm}
\begin{align*}
    \partial_{\b}(PQ)_{\mathsmaller{(\cA,\E)}}(\a) & = \partial_{\b}(P_{\mathsmaller{(\cA,\E)}}Q_{\mathsmaller{(\cA,\E)}})(\a) = (\partial_{\b}P_{\mathsmaller{(\cA,\E)}}(\a))\,Q(\a) + P(\a) \, (\partial_{\b}Q_{\mathsmaller{(\cA,\E)}}(\a)) \vspace{-0.275mm}\\
    & = (\partial P)(\a)[\b] \, Q(\a) + P(\a) \, \partial Q(\a)[\b] = \partial (PQ)(\a)[\b]\vspace{-0.275mm}
\end{align*}
so that $PQ \in D$ as well.
Finally, since $\E$ is a linear map, we get\vspace{-0.275mm}
\begin{align*}
    \partial_{\b}\tr(P)_{\mathsmaller{(\cA,\E)}}(\a) & = \partial_{\b}\E[P_{\mathsmaller{(\cA,\E)}}](\a)  = \E[\partial_{\b}P_{\mathsmaller{(\cA,\E)}}(\a)] \vspace{-0.275mm}\\
    & = \E[(\partial P)(\a)[\b]] = \tr(\partial P)(\a)[\b] = (\partial \tr(P))(\a)[\b]\vspace{-0.275mm}
\end{align*}
so that $\tr(P) \in D$.
It follows that $D = \TrP_n^*$.

Finally, suppose we know the claimed formula for $(k-1)$-fold directional derivatives of trace $\ast$-polynomials.
Since $\partial^{k-1}P \in \TrP_{n,k,(n,\ldots,n)}^* \subseteq \TrP_{kn}^*$, we can apply the base case to $Q \coloneqq \partial^{k-1}P$ viewed simply as a member of $\TrP_{kn}^* = \TrP^*(\x,\y_1,\ldots,\y_{k-1})$.
In particular, if $\mathbf{A},\mathbf{B} \in \cA^{kn}$, then\vspace{-0.275mm}
\[
\partial_{\mathbf{B}}Q_{\mathsmaller{(\cA,\E)}}(\mathbf{A}) = (\partial Q)(\mathbf{A})[\mathbf{B}] = \sum_{j=1}^n \Bigg(\big(\partial_{x_j}Q\big)(\mathbf{A})[B_{1,j}] + \sum_{i=1}^{k-1}\big(\partial_{y_{i,j}}Q\big)(\mathbf{A})[B_{i+1,j}]\Bigg),\vspace{-0.275mm}
\]
where $\mathbf{B} = (B_{1,1},\ldots,B_{1,n},\ldots,B_{k,1},\ldots,B_{k,n})$.
Now, applying the induction hypothesis to $P$ and plugging in $\mathbf{A} = (\a,\b_1,\ldots,\b_{k-1}) \in \cA^{kn}$ and $\mathbf{B} = (\b_k,0,\ldots,0) \in \cA^{kn}$ above, we get\vspace{-0.275mm}
\begin{align*}
    \partial_{\b_k}\cdots \partial_{\b_1}P_{\mathsmaller{(\cA,\E)}}(\a) & = \frac{\d}{\d t}\Big|_{t=0}\big(\partial^{k-1}P\big)(\a+t\b_k,\b_1,\ldots,\b_{k-1}) = \partial_{\mathbf{B}}Q_{\mathsmaller{(\cA,\E)}}(\mathbf{A}) \vspace{-0.275mm}\\
    & = \sum_{j=1}^n \big(\partial_{x_j}Q\big)(\a,\b_1,\ldots,\b_{k-1})[b_{k,j}] = \big(\partial^kP\big)(\a)[\b_1,\ldots,\b_k].\vspace{-0.275mm}
\end{align*}
This completes the proof.
\end{proof}\pagebreak

We use this result as a jumping-off point to define a large class of functions $\cA_{\beta}^n \supseteq \cU \to \cA^m$ of multiple noncommuting variables.

\begin{defi}[Trace continuous/smooth maps]\label{def.tracecontCk}
Fix $n,m \in \N$, $k \in \N_0$, and $d \coloneqq (d_1,\ldots,d_k) \in \N^k$.
Also, let $\cU \subseteq \cA_{\beta}^n$ be an open set.
Finally, recall $\mathbb{B}_0(\cA_{\beta}^{\emptyset};\cA^m) \coloneqq \cA^m$ and $\vertiii{\cdot}_0 = \norm{\cdot}$.
\begin{enumerate}[label=(\roman*),font=\normalfont]
    \item Let $\cA_{\gamma} \in \{\cA,\cA_{\sa}\}$.
    Define $C_{\E}(\cU;\mathbb{B}_k(\cA_{\gamma}^d;\cA^m))$ to be the set of $F \colon \cU \to \mathbb{B}_k(\cA_{\gamma}^d;\cA^m)$ such that for all $\a \in \cU$, there exists an $r > 0$ and a sequence $(P_j)_{j \in \N}$ in $(\TrP_{n,k,d}^*)^m$ such that
    \[
    B_r(\a) \coloneqq \big\{\b \in \cA_{\beta}^n : \norm{\a - \b}_{\infty} < r\big\} \subseteq \cU \; \text{ and }  \; \sup_{\b \in B_r(\a)} \vertiii{F(\b) - (P_j)_{\mathsmaller{(\cA,\E)}}(\b)}_k \xrightarrow{j \to \infty} 0.
    \]
    The members of $C_{\E}(\cU;\mathbb{B}_k(\cA_{\gamma}^d;\cA^m))$ are called \textbf{trace continuous} maps from $\cU$ to $\mathbb{B}_k(\cA_{\gamma}^d;\cA^m)$.
    Note that $C_{\E}(\cU;\mathbb{B}_k(\cA_{\gamma}^d;\cA^m)) \subseteq C(\cU;\mathbb{B}_k(\cA_{\gamma}^d;\cA^m))$.\label{item.tracecont}
    \item Define $C_{\E}^k(\cU;\cA^m)$ to be the space of $F \in C^k(\cU;\cA^m)$ such that
    \[
    D^iF \in C_{\E}(\cU;\mathbb{B}_i((\cA_{\beta}^n)^i;\cA^m)) \qquad (i =0,\ldots,k).
    \]
    The members of $C_{\E}^k(\cU;\cA^m)$ are called \textbf{trace $\boldsymbol{C^k}$} maps from $\cU$ to $\cA^m$.\label{item.traceCk}
\end{enumerate}
Also, write $C_{\E}^0(\cU;\mathbb{B}_k(\cA_{\gamma}^d;\cA^m)) \coloneqq C_{\E}(\cU;\mathbb{B}_k(\cA_{\gamma}^d;\cA^m))$ and $C_{\E}^{\infty}(\cU;\cA^m) \coloneqq \bigcap_{k \in \N}C_{\E}^k(\cU;\cA^m)$.
\end{defi}

\begin{ex}[Trace $\ast$-polynomials]\label{ex.trpolytrC}
If $Q \in (\TrP_{n,k,d}^*)^m$, then $Q_{\mathsmaller{(\cA,\E)}} \in C_{\E}(\cA^n;\mathbb{B}_k(\cA^d;\cA^m))$.
Consequently, by Theorem \ref{thm.TrPolyder}, if $P \in (\TrP_n^*)^m$, then $P_{\mathsmaller{(\cA,\E)}} \in C_{\E}^{\infty}(\cA^n;\cA^m)$.
\end{ex}

\begin{ex}[Inversion map]
Using geometric series arguments and the formula from Example \ref{ex.inv}, one can show that if $\cU = \GL(\cA)$ and $F(g) \coloneqq g^{-1}$ for all $g \in \cU$, then $F \in C_{\E}^{\infty}(\cU;\cA)$.
\end{ex}

For the next example, recall that $BC_{\loc}(\cV;\cW)$ is the Fr\'echet space of continuous maps $\cV \to \cW$ that are bounded on bounded sets (Notation \ref{nota.BCloc}).

\begin{ex}\label{ex.JLS}
The closure of $\{P_{\mathsmaller{(\cA,\E)}} : P \in (\TrP_{n,k,d}^*)^m\}$ in $BC_{\loc}(\cA_{\beta}^n;\mathbb{B}_k(\cA_{\gamma}^d;\cA^m))$ is contained in $C_{\E}(\cA_{\beta}^n;\mathbb{B}_k(\cA_{\gamma}^d;\cA^m))$.
In particular, if $F \in C^k(\cA_{\beta}^n;\cA^m)$ is such that for all $i =0,\ldots,k$, the $i^{\text{th}}$ derivative $D^iF$ belongs to the closure of $\{P_{\mathsmaller{(\cA,\E)}} : P \in (\TrP_{n,i,(n,\ldots,n)}^*)^m\}$ in $BC_{\loc}(\cA_{\beta}^n;\mathbb{B}_i((\cA_{\beta}^n)^i;\cA^m))$, then $F \in C_{\E}^k(\cA_{\beta}^n;\cA^m)$.
Consequently, the tracial noncommutative $C^k$ functions introduced and studied in \cite{JLS2022} provide examples of elements of $C_{\E}^k(\cA_{\sa}^n;\cA^m)$.
In the next section, we give examples of this kind that arise from the functional calculus (i.e., operator functions);
see Remark \ref{rem.JLS}.
\end{ex}

Next, we demonstrate why trace continuous/$C^k$ maps are relevant to us.

\begin{lem}\label{lem.trcontadap}
Let $n,m \in \N$ and $\cU \subseteq \cA_{\beta}^n$ be an open set.
\begin{enumerate}[label=(\roman*),font=\normalfont]
    \item If $F \in C_{\E}(\cU;\cA^m)$, $t \geq 0$, and $\a \in \cU \cap \cA_t^n$, then $F(\a) \in \cA_t^m$.\label{item.trcont1}
    \item Let $k \in \N$, $d \in \N^k$, and $\cA_{\gamma} \in \{\cA,\cA_{\sa}\}$.
    If $F \in C_{\E}(\cU;\mathbb{B}_k(\cA_{\gamma}^d;\cA^m))$, $t \geq 0$, and $\a \in \cU \cap \cA_t^n$, then $F(\a) \in \cT_{m,k,d,t}$.
    (Recall that we view $\mathbb{B}_k(\cA_{\sa}^d;\cA^m)$ as a subset of $\mathbb{B}_k(\cA^d;\cA^m)$.)
    In particular, if $X \colon \R_+ \to \cA^n$ is an adapted, $L^{\infty}$-LCLB (resp., continuous) process with values in $\cU$, then $F(X)$ is a $\vertiii{\cdot}_k$-LCLB (resp., continuous) multivariate trace $k$-process.\label{item.trcont2}
\end{enumerate}
\end{lem}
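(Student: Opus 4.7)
The plan is to reduce both claims to the observation that trace $\ast$-polynomial evaluations at tuples from $\cA_t^n$ automatically land in the natural adapted objects ($\cA_t$ and $\cT_{m,k,d,t}^0$), then invoke norm-closedness to pass to limits.

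For (i), I would fix $\a \in \cU \cap \cA_t^n$ and use the definition of $C_{\E}(\cU;\cA^m)$ to produce $(P_j)_{j \in \N} \subseteq (\TrP_n^*)^m$ with $P_j(\a) \to F(\a)$ in $\cA^m$. The key step is to observe $P_j(\a) \in \cA_t^m$ for every $j$. This follows by structural induction on the construction of $\TrP^*(\x)$: the generators $x_i, x_i^*$ evaluate to $a_i, a_i^* \in \cA_t$ because $\cA_t$ is a unital $\ast$-subalgebra; sums, products, and complex scalar multiples preserve $\cA_t$; and if $\tr(Q)$ appears, its evaluation is $\E[Q(\a)] \in \C 1 \subseteq \cA_t$, where $Q(\a) \in \cA_t$ by the inductive hypothesis. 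Since $\cA_t$ is norm-closed as a $\mathrm{C}^\ast$-subalgebra, the limit satisfies $F(\a) \in \cA_t^m$.

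For (ii), the argument is almost immediate: the definition of $C_{\E}(\cU;\mathbb{B}_k(\cA_\gamma^d;\cA^m))$ yields $(P_j)_{j \in \N} \subseteq (\TrP_{n,k,d}^*)^m$ with $\sup_{\b \in B_r(\a)}\vertiii{F(\b) - P_j(\b)}_k \to 0$, and in particular $\vertiii{P_j(\a) - F(\a)}_k \to 0$. By the very definition of $\cT_{m,k,d,t}^0$, each $P_j(\a) \in \cT_{m,k,d,t}^0$ because $\a \in \cA_t^n$; since $\cT_{m,k,d,t}$ is the $\vertiii{\cdot}_k$-closure of $\cT_{m,k,d,t}^0$ inside $\mathbb{B}_k(\cA^d;\cA^m)$, I conclude $F(\a) \in \cT_{m,k,d,t}$. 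For the case $\cA_\gamma = \cA_{\sa}$, I would invoke Observation \ref{obs.realklin}(ii) and the remarks following it to identify $\mathbb{B}_k(\cA_{\sa}^d;\cA^m)$ with a subspace of $\mathbb{B}_k(\cA^d;\cA^m)$ under equivalent $\vertiii{\cdot}_k$-norms, so that the closure argument transfers verbatim.

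For the ``in particular'' consequence, given an adapted $X \colon \R_+ \to \cA^n$ taking values in $\cU$, every time slice $X(t) \in \cU \cap \cA_t^n$ gives $F(X(t)) \in \cT_{m,k,d,t}$ by (ii), so $F \circ X$ is a multivariate trace $k$-process. The $\vertiii{\cdot}_k$-regularity of $F \circ X$ then follows by composing the $\vertiii{\cdot}_k$-continuity of $F$ (automatic from $C_{\E}(\cU;\mathbb{B}_k(\cA_\gamma^d;\cA^m)) \subseteq C(\cU;\mathbb{B}_k(\cA_\gamma^d;\cA^m))$) with the $L^\infty$-regularity of $X$; local boundedness near any fixed time is obtained by covering a neighborhood's image under $X$ by finitely many balls on which $F$ is a uniform limit of trace polynomials, each of which lies in $BC_{\loc}$ and is therefore bounded on the ball.

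The proof is essentially a definition chase resting on earlier structural results, and there is no genuine obstacle. The only place that requires a moment of care is the real-versus-complex linearity identification in the $\cA_{\sa}$ case, which is handled once and for all by Observation \ref{obs.realklin}.
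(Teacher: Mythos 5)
Your proof is correct and follows essentially the same route as the paper's: evaluate the approximating trace $\ast$-polynomials at $\a \in \cA_t^n$, observe that these evaluations land in $\cA_t^m$ (resp.\ $\cT_{m,k,d,t}^0$), and pass to the limit using the relevant norm-closedness. The paper in fact leaves item (i) entirely to the reader and records only the closure argument for item (ii), so your structural induction for (i) merely fills in a detail the paper omits.
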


\begin{proof}
We leave the first item to the reader.
For the second, let $t \geq 0$ and $\a \in \cU \cap \cA_t^n$.
If $P \in (\TrP_{n,k,d}^*)^m$, then $P(\a) \in \cT_{m,k,d,t}^0 \subseteq \cT_{m,k,d,t}$ by definition.
Now, if $r > 0$ and $(P_j)_{j \in \N}$ are as in Definition \ref{def.tracecontCk}\ref{item.tracecont}, then $P_j(\a) \to F(\a)$ in $\mathbb{B}_k(\cA_{\gamma}^d;\cA^m) \subseteq \mathbb{B}_k(\cA^d;\cA^m)$ as $j \to \infty$.
Since $\cT_{m,k,d,t} \subseteq \mathbb{B}_k(\cA^d;\cA^m)$ is a $\vertiii{\cdot}_k$-closed set, we conclude that $F(\a) \in \cT_{k,m,d,t}$.
\end{proof}

\begin{thm}[Trace $C^k\Rightarrow$ adapted $C^k$]\label{thm.trCkadapCk}
If $n,m,k \in \N$ and $\cU \subseteq \cA_{\beta}^n$ is an open set, then
\[
C_{\E}^k(\cU;\cA^m) \subseteq C_a^k(\cU;\cA^m).
\]
\end{thm}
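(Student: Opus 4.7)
The plan is to unpack both definitions and show that every condition for membership in $C_a^k(\cU;\cA^m)$ falls out of the trace continuity of the derivatives $D^iF$, $0 \le i \le k$, via Lemma \ref{lem.trcontadap} and Proposition \ref{prop.TkinFk}. Concretely, fix $F \in C_{\E}^k(\cU;\cA^m)$; using the freedom in the parameter $m$ in Definition \ref{def.adapCkl}, I will take the "time-direction" parameter to be $0$, so that verifying $F \in C_a^k(\cU;\cA^m)$ reduces to checking that for each $i \in \{0,\ldots,k\}$:
\begin{enumerate}[label=(\roman*),leftmargin=2\parindent]
    \item $D^i F(\a) \in \mathbb{B}_i((\cA_\beta^n)^i;\cA^m)$ for all $\a \in \cU$, and $D^iF \colon \cU \to \mathbb{B}_i((\cA_\beta^n)^i;\cA^m)$ is $\vertiii{\cdot}_i$-continuous;
    \item $D^iF(\a) \in \cF_{i,t}((\cA_\beta^n)^i;\cA^m)$ whenever $t \ge 0$ and $\a \in \cU \cap \cA_t^n$.
\end{enumerate}

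Step (i) is essentially tautological from Definition \ref{def.tracecontCk}\ref{item.tracecont}: by assumption $D^iF \in C_{\E}(\cU;\mathbb{B}_i((\cA_\beta^n)^i;\cA^m))$, and the local uniform $\vertiii{\cdot}_i$-approximation by trace $\ast$-polynomial maps gives both that each value $D^iF(\a)$ lies in $\mathbb{B}_i$ (which is $\vertiii{\cdot}_i$-closed in $B_i^{\infty,\ldots,\infty;\infty}$ by Observation \ref{obs.Ftpq}\ref{item.WOTclosed}) and that $D^iF$ is $\vertiii{\cdot}_i$-continuous on $\cU$. The $i = 0$ case of (i) is just the continuity of $F$ itself as a map into $\cA^m = \mathbb{B}_0$.

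Step (ii) is the heart of the matter, but it is already packaged: by Lemma \ref{lem.trcontadap}\ref{item.trcont2}, trace continuity of $D^iF$ guarantees that $D^iF(\a) \in \cT_{m,i,(n,\ldots,n),t}$ whenever $\a \in \cU \cap \cA_t^n$, and Proposition \ref{prop.TkinFk} tells us that $\cT_{m,i,(n,\ldots,n),t} \subseteq \cF_{i,t}$. For the $i=0$ case we similarly invoke Lemma \ref{lem.trcontadap}\ref{item.trcont1} to see that $F(\a) \in \cA_t^m$, which is exactly the $i=0$ adaptedness condition on $\cA^m = \mathbb{B}_0$.

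I don't anticipate any real obstacle: the theorem is essentially a bookkeeping statement asserting that ``trace continuous derivatives land in the filtered trace $k$-process subspaces,'' and both ingredients --- the fact that trace $\ast$-polynomial evaluations sit inside $\cT_{m,k,d,t}$ and the fact that $\cT_{m,k,d,t} \subseteq \cF_{k,t}$ --- have already been established. The only mild subtlety is to confirm that when the ``time'' variable parameter is taken trivial (so $D_1^i F \equiv 0$ for $i \ge 1$ and the conditions on $D_1^i F$ are vacuous), the remaining conditions on $D_2^j F$ coincide with what trace $C^k$ supplies; this is immediate once the two definitions are placed side by side.
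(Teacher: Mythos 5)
Your proposal is correct and follows essentially the same route as the paper's proof: both reduce the claim to (a) $\vertiii{\cdot}_i$-continuity of $D^iF$, which is immediate from the definition of trace continuity, and (b) the containment $D^iF(\a) \in \cT_{m,i,(n,\ldots,n),t} \subseteq \cF_{i,t}$ for $\a \in \cU \cap \cA_t^n$, obtained by combining Lemma \ref{lem.trcontadap} with Proposition \ref{prop.TkinFk}, together with $F(\a) \in \cA_t^m$ for the $i=0$ case. Your extra remarks on the trivial time parameter are a harmless elaboration of what the paper leaves implicit.
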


\begin{proof}
By definition, if $F \in C_{\E}^k(\cU;\cA^m)$ and $i =1,\ldots,k$, then $D^iF \in C_{\E}(\cU;\mathbb{B}_i((\cA_{\beta}^n)^i;\cA^m))$.
In particular, $D^iF \colon \cU \to \mathbb{B}_i((\cA_{\beta}^n)^i;\cA^m)$ is continuous with respect to $\vertiii{\cdot}_i$.
In addition, if $t \geq 0$ and $\a \in \cU \cap \cA_t^n$, then $D^iF(\a) \in \cT_{m,i,(n,\ldots,n),t} \subseteq \cF_{i,t}(\E^{\oplus n},\ldots,\E^{\oplus n};\E^{\oplus m})$ by Lemma \ref{lem.trcontadap}\ref{item.trcont2} and Proposition \ref{prop.TkinFk}.
Since $F(\a) \in \cA_t^m$ as well by Lemma \ref{lem.trcontadap}\ref{item.trcont1}, we conclude that $F \in C_a^k(\cU;\cA^m)$.
\end{proof}
\pagebreak

In particular, noncommutative It\^{o}'s formula (the multivariate version, Example \ref{ex.multivarIF}) applies to trace $C^2$ maps from $\cU$ to $\cA^m$.

\begin{ex}[Noncommutative It\^{o}'s formula for trace $C^2$ maps]
Let $n,m \in \N$ and $\cU \subseteq \cA_{\beta}^n$ be an open set.
Suppose $X = (X_1,\ldots,X_n) \colon \R_+ \to \cA^n$ is an $n$-tuple of $L^{\infty}$-decomposable processes such that $X(t) \in \cU$ for all $t \geq 0$.
If $F \in C_{\E}^2(\cU;\cA^m)$, then noncommutative It\^{o}'s formula (from Example \ref{ex.multivarIF}) says
\begin{align*}
    \d F(X(t)) & = DF(X(t))[\d X(t)] + \frac{1}{2}D^2F(X(t))[\d X(t), \d X(t)] \\
    & = \sum_{i=1}^n D_{x_i}F(X(t))[\d X_i(t)] + \frac{1}{2}\sum_{i,j=1}^n D_{x_j}D_{x_i}F(X(t))[\d X_i(t), \d X_j(t)]
\end{align*}
In particular, if $P \in (\TrP_n^*)^m$, then
\begin{align*}
    \d P(X(t)) & = (\partial P)(X(t))[\d X(t)] + \frac{1}{2}(\partial^2P)(X(t))[\d X(t), \d X(t)] \\
    & = \sum_{i=1}^n \big(\partial_{x_i}P\big)(X(t))[\d X_i(t)] + \frac{1}{2}\sum_{i,j=1}^n \big(\partial_{x_j}\partial_{x_i}P\big) (X(t))[\d X_i(t), \d X_j(t)]
\end{align*}
by Theorem \ref{thm.TrPolyder}.
Now, write $M_i \coloneqq X_i^{\mathrm{m}}$, and suppose, in addition, that
\begin{enumerate}[leftmargin=2\parindent]
\itemsep0em
    \item $(M_i^*,M_i^*)=(M_i,M_i)$ satisfies the hypotheses of Theorem \ref{thm.newmagicQC} and
    \item $i \neq j \Rightarrow \E[(M_i(t) - M_i(s))a(M_j(t) - M_j(s)) \mid \cA_s] = 0$ whenever $0 \leq s < t$ and $a \in \cA_s$.
\end{enumerate}
This is the case if, e.g., $M = (M_1,\ldots,M_n)$ is an $n$-dimensional semicircular Brownian motion or if $n=1$ and $M_1$ is a $q$-Brownian motion ($-1 \leq q < 1$).
By Lemma \ref{lem.trcontadap}\ref{item.trcont2}, Proposition \ref{prop.QCoffree}, and Theorem \ref{thm.newmagicQC} (plus Remark \ref{rem.M=N*}),
\begin{align*}
    \d F(X(t)) & = \sum_{i=1}^n D_{x_i}F(X(t))[\d X_i(t)] + \frac{1}{2}\sum_{i=1}^n \E\big[D_{x_i}^2F(X(t))[e_i(t),e_i(t)] \mid \cA_t\big] \, \kappa_{M_i}(\d t), \; \text{ and} \\
    \d P(X(t)) & = \sum_{i=1}^n \big(\partial_{x_i}P\big)(X(t))[\d X_i(t)] + \frac{1}{2}\sum_{i=1}^n \E\big[\big(\partial_{x_i}^2P\big)(X(t))[e_i(t),e_i(t)] \mid \cA_t\big] \, \kappa_{M_i}(\d t),
\end{align*}
where $e_i(t) = \norm{M_i(t+r_i) - M_i(t+r_i)}_2^{-1}(M_i(t+r_i) - M_i(t+r_i))$ as in Remark \ref{rem.M=N*}.
\end{ex}

\subsection{Examples: Scalar functions}\label{sec.NCk}

If $f \in C(\R)$, then it is easy to show that $f_{\mathsmaller{\cA}} \in C(\cA_{\sa} ; \cA)$.
(See the beginning of the proof of Theorem \ref{thm.NCkistrCk} below.)
However, it is not generally true that if $k \in \N$ and $f \in C^k(\R)$, then $f_{\mathsmaller{\cA}} \in C^k(\cA_{\sa};\cA)$.
In this section, we show that if $f \colon \R \to \C$ is ``slightly better than $C^k$,'' then operator function $f_{\mathsmaller{\cA}} \colon \cA_{\sa} \to \cA$ associated to $f$ is not only $C^k$ but adapted $C^k$ (actually, trace $C^k$).
The object needed to express $D^kf_{\mathsmaller{\cA}}$ in this case is called a multiple operator integral (MOI).
We begin by reviewing relevant facts about MOIs.
For much more information, see the survey book \cite{ST2019}.

For the duration of this section, fix $m \in \N$ and Polish spaces (i.e., complete separable metric spaces) $\Om_1,\ldots,\Om_m$. Also, write $\Om \coloneqq \Om_1 \times \cdots \times \Om_m$.
We first review the notion of the integral projective tensor product $\ell^{\infty}(\Om_1,\cB_{\Om_1}) \iotimes \cdots \iotimes \ell^{\infty}(\Om_m,\cB_{\Om_m})$, the idea for which is due to Peller \cite{Peller2006}.
Here, if $\Xi$ is a set and $\sG$ is a $\sigma$-algebra on $\Xi$, then $\ell^{\infty}(\Xi,\sG)$ is the space of bounded $\sG$/$\cB_{\C}$-measurable functions $\Xi \to \C$.\label{page.bddmeas}

\begin{defi}[IPTPs]\label{def.babyIPTP}
An $\boldsymbol{\ell^{\infty}}$\textbf{-integral projective decomposition} (IPD) of a function $\varphi \colon \Om \to \C$ is a choice $(\Sigma,\rho,\varphi_1,\ldots,\varphi_m)$ of a $\sigma$-finite measure space $(\Sigma,\sH,\rho)$ and, for each $j =1,\ldots,m$, a product-measurable function $\varphi_j \colon \Om_j \times \Sigma \to \C$ such that $\varphi_j(\cdot,\sigma) \in \ell^{\infty}(\Om_j,\cB_{\Om_j})$ for all $\sigma\in \Sigma$,
\begin{align*}
    & \int_{\Sigma} \|\varphi_1(\cdot,\sigma)\|_{\ell^{\infty}(\Om_1)}\cdots\|\varphi_m(\cdot,\sigma)\|_{\ell^{\infty}(\Om_m)} \, \rho(\d\sigma) < \infty, \; \text{ and} \numberthis\label{eq.intfincond} \\
    & \varphi(\boldsymbol{\om}) = \int_{\Sigma} \varphi_1(\om_1,\sigma) \cdots \varphi_m(\om_m,\sigma) \, \rho(\d\sigma) \; \text{ for all } \; \boldsymbol{\om} \in \Om,
\end{align*}
where $\boldsymbol{\om} = (\om_1,\ldots,\om_m)$.
Also, for any function $\varphi \colon \Om \to \C$, define
\[
\|\varphi\|_{\ell^{\infty}(\Om_1,\cB_{\Om_1}\hspace{-0.1mm}) \iotimes \cdots \iotimes \ell^{\infty}(\Om_m,\cB_{\Om_m}\hspace{-0.1mm})} \hspace{-0.2mm}\coloneqq\hspace{-0.2mm} \inf\hspace{-0.6mm}\Bigg\{\hspace{-0.6mm}\int_{\Sigma} \prod_{j=1}^m\|\varphi_j(\cdot,\sigma)\|_{\ell^{\infty}(\Om_j)}\,\rho(\d\sigma) : (\Sigma,\rho,\varphi_1,\ldots,\varphi_m)  \text{ is an } \ell^{\infty}\text{-IPD of } \varphi\hspace{-0.6mm}\Bigg\},
\]
where $\inf \emptyset \coloneqq \infty$. Finally, we define
\[
\ell^{\infty}(\Om_1,\cB_{\Om_1}) \iotimes \cdots \iotimes \ell^{\infty}(\Om_m,\cB_{\Om_m}) \coloneqq \big\{\varphi \in \ell^{\infty}(\Om,\cB_{\Om}) : \|\varphi\|_{\ell^{\infty}(\Om_1,\cB_{\Om_1}\hspace{-0.1mm}) \iotimes \cdots \iotimes \ell^{\infty}(\Om_m,\cB_{\Om_m}\hspace{-0.1mm})} < \infty\big\}
\]
to be the \textbf{integral projective tensor product of} $\boldsymbol{\ell^{\infty}(\Om_1,\cB_{\Om_1}),\ldots,\ell^{\infty}(\Om_m,\cB_{\Om_m})}$.
\end{defi}

It is not obvious that the integral in \eqref{eq.intfincond} makes sense.
In fact, the function being integrated is not necessarily measurable, but it \textit{is} ``almost measurable,'' i.e., measurable with respect to the $\rho$-completion of $\sH$;
see \cite[Lem.\ 2.2.1]{NikitopoulosNCk} for a proof.
Now, it is easy to see that if $\varphi \colon \Om \to \C$ is a function, then
\[
\|\varphi\|_{\ell^{\infty}(\Om)} \leq  \|\varphi\|_{\ell^{\infty}(\Om_1,\cB_{\Om_1}\hspace{-0.1mm}) \iotimes \cdots \iotimes \ell^{\infty}(\Om_m,\cB_{\Om_m}\hspace{-0.1mm})}.
\]
It is also the case that $\ell^{\infty}(\Om_1,\cB_{\Om_1}) \iotimes \cdots \iotimes \ell^{\infty}(\Om_m,\cB_{\Om_m}) \subseteq \ell^{\infty}(\Om,\cB_{\Om})$ is a unital $\ast$-subalgebra and that $(\ell^{\infty}(\Om_1,\cB_{\Om_1}) \iotimes \cdots \iotimes \ell^{\infty}(\Om_m,\cB_{\Om_m}),\|\cdot\|_{\ell^{\infty}(\Om_1,\cB_{\Om_1}\hspace{-0.1mm}) \iotimes \cdots \iotimes \ell^{\infty}(\Om_m,\cB_{\Om_m}\hspace{-0.1mm})})$ is a unital Banach $\ast$-algebra with respect to pointwise operations;
see \cite[Prop.\ 2.2.3]{NikitopoulosNCk} for proofs of these facts.

Next, we review a special case of the ``separation of variables'' approach to defining multiple operator integrals, developed to various degrees in \cite{Peller2006,ACDS2009,Peller2016,NikitopoulosMOI}.
For the remainder of this section, fix a complex Hilbert space $H$, a von Neumann algebra $\cM \subseteq B_{\C}(H)$, and $k \in \N$.
If $(\Sigma,\sH,\rho)$ is a measure space and $F \colon \Sigma \to \cM$ is a map, we say that $F$ is \textbf{pointwise Pettis integrable} if for every $h_1,h_2 \in H$, $\la F(\cdot)h_1,h_2 \ra \colon \Sigma \to \C$ is $(\sH,\cB_{\C})$-measurable and $\int_{\Sigma} |\la F(\sigma)h_1,h_2 \ra|\,\rho(\d\sigma) < \infty$.
In this case, \cite[Lem.\ 4.2.1]{NikitopoulosNCk} says that there exists a unique $T \in B_{\C}(H)$ such that $\la Th_1,h_2 \ra = \int_{\Sigma} \la F(\sigma)h_1,h_2 \ra\,\rho(\d\sigma)$ for all $h_1,h_2 \in H$;
moreover, $T \in \mathrm{W}^*(F(\sigma) : \sigma \in \Sigma) \subseteq \cM$.
We shall write $\int_{\Sigma} F \,\d\rho = \int_{\Sigma}F(\sigma)\,\rho(\d\sigma) \coloneqq T$ for this operator.

\begin{thm}[Definition of MOIs]\label{thm.babyMOI}
Let $\a = (a_1,\ldots,a_{k+1}) \in \cM_{\sa}^{k+1}$,
\[
\varphi \in \ell^{\infty}\big(\sigma(a_1),\cB_{\sigma(a_1)}\big) \iotimes \cdots \iotimes \ell^{\infty}\big(\sigma(a_{k+1}),\cB_{\sigma(a_{k+1})}\big),
\]
and $(b_1,\ldots,b_k) \in \cM^k$.
\begin{enumerate}[label=(\roman*),font=\normalfont]
    \item If $(\Sigma,\rho,\varphi_1,\ldots,\varphi_{k+1})$ is an $\ell^{\infty}$-IPD of $\varphi$, then the map
    \[
    \Sigma \ni \sigma \mapsto F(\sigma) \coloneqq \varphi_1(a_1,\sigma)\,b_1\cdots \varphi_k(a_k,\sigma)\,b_k\,\varphi_{k+1}(a_{k+1},\sigma) \in \cM
    \]
    is pointwise Pettis integrable, and the pointwise Pettis integral
    \begin{align*}
        \big(I^{\a}\varphi\big)[b_1,\ldots,b_k] & = \int_{\sigma(a_{k+1})}\cdots\int_{\sigma(a_1)} \varphi(\blambda)\,P^{a_1}(\d\lambda_1)\,b_1\cdots P^{a_k}(\d\lambda_k)\,b_k\,P^{a_{k+1}}(\d\lambda_{k+1}) \\
        & \coloneqq \int_{\Sigma} F\,\d\rho \in \cM
    \end{align*}
    is independent of the chosen $\ell^{\infty}$-IPD of $\varphi$.
    In the notation above, $P^a$ represents the projection-valued spectral measure of the operator $a \in \cM_{\sa}$.\label{item.welldef}
    \item The map $\cM^k \ni (b_1,\ldots,b_k) \mapsto (I^{\a}\varphi)[b_1,\ldots,b_k] \in \cM$ is complex $k$-linear and bounded.
    Also, the map
    \[
    \ell^{\infty}\big(\sigma(a_1),\cB_{\sigma(a_1)}\big) \iotimes \cdots \iotimes \ell^{\infty}\big(\sigma(a_{k+1}),\cB_{\sigma(a_{k+1})}\big) \ni \varphi \mapsto I^{\a}\varphi \in B_k(\cM^k;\cM)
    \]
    is complex linear and has operator norm at most one.
    The object $I^{\a}\varphi$ is the \textbf{multiple operator integral} (MOI) of $\varphi$ with respect to $P^{a_1},\ldots,P^{a_{k+1}}$.\label{item.bddlin}
    \item If $(\cM,\E_{\mathsmaller{\cM}})$ is a $\mathrm{W}^*$-probability space, then $I^{\a} \varphi \in \mathbb{B}_k(\cM)$, and\label{item.vertiiibd}
    \[
    \vertiii{I^{\a}\varphi}_k \leq \norm{\varphi}_{\ell^{\infty}(\sigma(a_1),\cB_{\sigma(a_1)}\hspace{-0.1mm}) \iotimes \cdots \iotimes \ell^{\infty}(\sigma(a_{k+1}),\cB_{\sigma(a_{k+1})}\hspace{-0.1mm}))}.
    \]
\end{enumerate}
\end{thm}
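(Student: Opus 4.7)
The plan is to construct $I^{\mathbf{a}}\varphi$ as a pointwise Pettis integral against a chosen $\ell^{\infty}$-IPD of $\varphi$, verify independence of the IPD by reducing to simple tensors, and then read off items (ii) and (iii) from the Pettis representation combined with noncommutative H\"{o}lder.

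For item (i), after fixing an IPD $(\Sigma,\rho,\varphi_1,\ldots,\varphi_{k+1})$, I would verify pointwise Pettis integrability of $F$ as follows. The operator-norm bound
\[
\norm{F(\sigma)}_{B_{\C}(H)} \leq \prod_{j=1}^{k+1}\norm{\varphi_j(\cdot,\sigma)}_{\ell^{\infty}(\Om_j)}\prod_{i=1}^{k}\norm{b_i}
\]
is immediate from $\norm{\varphi_j(a_j,\sigma)} \leq \norm{\varphi_j(\cdot,\sigma)}_{\ell^{\infty}}$ (Borel functional calculus), and integrability of $\sigma \mapsto |\ip{F(\sigma)h_1,h_2}|$ follows from \eqref{eq.intfincond}. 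For the measurability of $\sigma \mapsto \ip{F(\sigma)h_1,h_2}$, I would first treat the case where each $\varphi_j(\cdot,\sigma)$ is a finite sum $\sum_{\ell} c_{j,\ell}(\sigma)\chi_{E_{j,\ell}}$ with measurable scalar coefficients and fixed $E_{j,\ell} \in \cB_{\Om_j}$, in which case $\varphi_j(a_j,\sigma)$ is a finite combination of spectral projections of $a_j$ and measurability is obvious. The general product-measurable case follows by monotone-class approximation of each $\varphi_j$ on the Polish space $\Om_j\times\Sigma$ and passage to the limit under the dominated bound above, producing $(I^{\mathbf{a}}\varphi)[b_1,\ldots,b_k] = \int_\Sigma F\,d\rho \in \cM$.

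To prove independence of IPD, I would first check by a direct algebraic calculation that for any IPD built from simple functions the integral reduces to the IPD-free expression $\sum c_{i_1,\ldots,i_{k+1}} P^{a_1}(E_{1,i_1})b_1\cdots b_k P^{a_{k+1}}(E_{k+1,i_{k+1}})$ corresponding to the representation of $\varphi$ as a sum of tensor products of indicators. For a general IPD, I would approximate each slice $\varphi_j(\cdot,\sigma)$ uniformly in $\lambda_j$ by simple functions, invoke the previous step, and pass to the limit under the Pettis integral using the uniform bound; two IPDs representing the same $\varphi$ yield the same limit. This approximation-reconciliation step---arranging the simple approximants to be jointly measurable and uniformly dominated so that the limit survives in both IPDs simultaneously---is the main technical obstacle.

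Item (ii) is then automatic: the estimate $|\ip{(I^{\mathbf{a}}\varphi)[b_1,\ldots,b_k]h_1,h_2}| \leq \int_\Sigma \norm{F(\sigma)}\,\norm{h_1}\,\norm{h_2}\,d\rho$ and an infimum over IPDs gives the operator-norm bound, and the claimed linearity/multilinearity is inherited from $F$. For item (iii), I would use $L^p$-duality in the standard form $H = L^2(\tau)$ with $\tau[Tc] = \ip{T\hat{c},\hat{1}}_{L^2(\tau)}$, so that the Pettis definition yields
\[
\tau\big[(I^{\mathbf{a}}\varphi)[b_1,\ldots,b_k]\,c\big] = \int_\Sigma \tau[F(\sigma)c]\,\rho(d\sigma).
\]
Noncommutative H\"{o}lder applied to the $(k+2)$-fold product $\varphi_1(a_1,\sigma)b_1\cdots\varphi_{k+1}(a_{k+1},\sigma)c$ with exponents $(\infty,p_1,\infty,p_2,\ldots,p_k,\infty,p')$ (where $p^{-1} = \sum_i p_i^{-1}$ and $p^{-1}+(p')^{-1}=1$) bounds the integrand by $\prod_j\norm{\varphi_j(\cdot,\sigma)}_{\ell^{\infty}}\prod_i\norm{b_i}_{p_i}\norm{c}_{p'}$. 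Taking the supremum over $\norm{c}_{p'}\leq 1$ and then the infimum over IPDs delivers the desired $\vertiii{\cdot}_k$ bound.
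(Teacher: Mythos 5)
First, a point of comparison: the paper does not prove this theorem at all --- it simply cites Theorem 4.2.4 of \cite{NikitopoulosNCk} for items (i)--(ii) and Proposition 4.3.3 of \cite{NikitopoulosMOI} for item (iii) --- so you are attempting from scratch what the paper delegates. Your arguments for items (ii) and (iii) are essentially correct and match the standard route: the operator-norm bound followed by an infimum over IPDs gives (ii), and for (iii) the identity $\tau\big[(I^{\a}\varphi)[b_1,\ldots,b_k]\,c\big]=\int_{\Sigma}\tau[F(\sigma)c]\,\rho(d\sigma)$ in standard form, combined with noncommutative H\"older and $L^p$--$L^{p'}$ duality, is exactly how the $\vertiii{\cdot}_k$ bound is obtained. (Two small cautions: the dominating function $\sigma\mapsto\prod_j\norm{\varphi_j(\cdot,\sigma)}_{\ell^{\infty}}$ is only measurable for the $\rho$-completion, as the paper notes after Definition \ref{def.babyIPTP}; and your error estimates in the approximation step require first normalizing the IPD so that each slice has $\ell^{\infty}$-norm at most one and $\rho$ is finite, since the IPD only controls the integral of the full product of slice norms.)

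The genuine gap is in item (i): independence of the chosen IPD, which you correctly identify as the main obstacle but do not close. Your plan --- uniformly approximate each slice $\varphi_j(\cdot,\sigma)$ by simple functions and reduce to the ``simple IPD'' case --- does not reconcile two different IPDs of the same $\varphi$. Discretizing the range of $\varphi_j(\cdot,\sigma)$ produces level sets that depend on $\sigma$, so the approximating object is \emph{not} a finite sum of fixed tensor products of indicators; it is still an integral over $\Sigma$ of $\sigma$-dependent simple slices, and showing that \emph{its} value depends only on the represented function is the same problem you started with. Moreover, even granting the reduction, the two IPDs produce two sequences of approximants representing two \emph{different} perturbed functions $\varphi^{(n)}\neq\tilde{\varphi}^{(n)}$, each converging to $\varphi$, and without an IPD-free characterization of the integral there is no way to compare $I^{(n)}$ with $\tilde{I}^{(n)}$. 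What is actually needed --- and what the cited reference supplies --- is an intrinsic formula for the matrix coefficients $\ip{\int_{\Sigma}F\,d\rho\;h_1,h_2}$ obtained by a Fubini-type interchange, expressing them as iterated integrals of $\varphi$ against the scalar set functions $\ip{P^{a_1}(\cdot)b_1\cdots P^{a_{k+1}}(\cdot)h_1,h_2}$, which manifestly depend only on $\varphi$ and not on its decomposition. Without that step (or an equivalent one), item (i) is not proved.
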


\begin{proof}
The first two items are \cite[Thm.\ 4.2.4]{NikitopoulosNCk}.
The final item is a special case of \cite[Prop.\ 4.3.3]{NikitopoulosMOI}.
(See also \cite[Ex.\ 4.1.5]{NikitopoulosMOI}.)
\end{proof}
\pagebreak

\begin{ex}[Algebraic tensor functions]\label{ex.elemtensfunc}
Let $n \in \N$.
For each $j =1,\ldots,m$ and $\ell =1,\ldots,n$, fix a bounded Borel measurable function $\psi_{j,\ell} \colon \Om_j \to \C$.
If
\[
\psi(\boldsymbol{\om}) \coloneqq \sum_{\ell=1}^n\psi_{1,\ell}(\om_1)\cdots\psi_{m,\ell}(\om_m) \qquad (\boldsymbol{\om} \in \Om),
\]
then it is easy to see that $\psi \in \ell^{\infty}(\Om_1,\cB_{\Om_1}) \iotimes \cdots \iotimes \ell^{\infty}(\Om_m,\cB_{\Om_m})$ with
\[
\norm{\psi}_{\ell^{\infty}(\Om_1,\cB_{\Om_1}\hspace{-0.1mm}) \iotimes \cdots \iotimes \ell^{\infty}(\Om_m,\cB_{\Om_m}\hspace{-0.1mm})} \leq \sum_{\ell=1}^n\norm{\psi_{1,\ell}}_{\ell^{\infty}(\Om_1)}\cdots\norm{\psi_{m,\ell}}_{\ell^{\infty}(\Om_m)}.
\]
If $m = k+1$, $\a = (a_1,\ldots,a_{k+1}) \in \cM_{\sa}^{k+1}$, and $\Om_j = \sigma(a_j)$ ($j =1,\ldots,k+1$) as well, then
\[
\big(I^{\a}\psi\big)[b_1,\ldots,b_k] = \sum_{\ell=1}^n \psi_{1,\ell}(a_1)\,b_1 \cdots \psi_{k,\ell}(a_k)\,b_k \,\psi_{k+1,\ell}(a_{k+1}) \qquad \big((b_1,\ldots,b_k) \in \cM^k\big).
\]
This applies when, e.g., $\psi(\lambda_1,\ldots,\lambda_{k+1}) = \sum_{|\delta| \leq d} c_{\delta} \lambda_1^{\delta_1}\cdots\lambda_{k+1}^{\delta_{k+1}} \in \C[\lambda_1,\ldots,\lambda_{k+1}]$.
\end{ex}

Next, we make precise the notion of ``slightly better than $C^k$'' mentioned at the beginning of the section and introduced in \cite{NikitopoulosNCk}.
To begin, we define divided differences, a scalar counterpart to the noncommutative derivatives from Notation \ref{nota.ncder}.

\begin{defi}[Divided differences]\label{def.divdiff}
Let $S \subseteq \C$ and $f \colon S \to \C$ be a function.
Define $f^{[0]} \coloneqq f$ and, for $k \in \N$ and distinct $\lambda_1,\ldots,\lambda_{k+1} \in S$, recursively define
\[
f^{[k]}(\lambda_1,\ldots,\lambda_{k+1}) \coloneqq \frac{f^{[k-1]}(\lambda_1,\ldots,\lambda_k) - f^{[k-1]}(\lambda_1,\ldots,\lambda_{k-1},\lambda_{k+1})}{\lambda_k-\lambda_{k+1}}.
\]
We call $f^{[k]}$ the $\boldsymbol{k^{\textbf{th}}}$ \textbf{divided difference} of $f$.
\end{defi}

By an elementary induction argument,
\[
f^{[k]}(\lambda_1,\ldots,\lambda_{k+1}) = \sum_{i=1}^{k+1} f(\lambda_i) \prod_{j \neq i}(\lambda_i-\lambda_j)^{-1}
\]
for all distinct $\lambda_1,\ldots,\lambda_{k+1} \in S$.
In particular, $f^{[k]}$ is symmetric in its arguments.
Now, we state a useful expression for $f^{[k]}$ when $f \in C^k(\R)$ or when $f \colon \C \to \C$ is entire;
see \cite[Prop.\ 2.1.3(ii)]{NikitopoulosNCk} for a proof.

\begin{prop}\label{prop.divdiff}
Fix $S \subseteq \C$, $f \colon S \to \C$, and $k \in \N$.
In addition, write
\[
\Sigma_k \coloneqq \big\{(s_1,\ldots,s_k) \in \R_+^k : s_1+\cdots+s_k \leq 1\big\}.
\]
If $S = \R$ and $f \in C^k(\R)$ or if $S = \C$ and $f \colon \C \to \C$ is entire, then
\[
f^{[k]}(\lambda_1,\ldots,\lambda_{k+1}) = \int_{\Sigma_k}f^{(k)}\Bigg(\sum_{j=1}^ks_j\lambda_j+\Bigg(1-\sum_{j=1}^k s_j\Bigg)\lambda_{k+1}\Bigg) \,  \d s_1 \cdots \d s_k
\]
for all distinct $\lambda_1,\ldots,\lambda_{k+1}$ belonging to $\R$ or $\C$, respectively.
In particular, if $f \in C^k(\R)$, then $f^{[k]}$ extends uniquely to a (symmetric) continuous function $\R^{k+1} \to \C$;
and if $f \colon \C \to \C$ is entire, then $f^{[k]}$ extends uniquely to a (symmetric) continuous function $\C^{k+1} \to \C$.
We use the same notation for these extensions.
\end{prop}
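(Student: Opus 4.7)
The plan is to prove the integral representation by induction on $k$, with the base case and inductive step both driven by the fundamental theorem of calculus. The continuous extension claim will be essentially a corollary: the right-hand side manifestly makes sense and is continuous as a function of $(\lambda_1,\ldots,\lambda_{k+1})$ on all of $\R^{k+1}$ (or $\C^{k+1}$), and it provides a symmetric continuous extension that must be unique because the set of distinct tuples is dense.

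For the base case $k=1$, I would observe that if $\lambda_1\neq\lambda_2$, then the map $s\mapsto f(s\lambda_1+(1-s)\lambda_2)$ is $C^1$ on $[0,1]$ (real case) or holomorphic on a neighborhood of $[0,1]\subseteq\C$ (entire case), and the fundamental theorem of calculus gives
\[
f(\lambda_1)-f(\lambda_2)=(\lambda_1-\lambda_2)\int_0^1 f'(s\lambda_1+(1-s)\lambda_2)\,ds,
\]
so $f^{[1]}(\lambda_1,\lambda_2)=\int_0^1 f'(s\lambda_1+(1-s)\lambda_2)\,ds$, matching the claim with $\Sigma_1=[0,1]$.

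For the inductive step, assume the formula holds for $k-1\geq 1$. Writing $r\coloneqq 1-\sum_{j=1}^{k-1}s_j$ and $u\coloneqq\sum_{j=1}^{k-1}s_j\lambda_j$ to compress notation, the induction hypothesis gives both
\[
f^{[k-1]}(\lambda_1,\ldots,\lambda_k)=\int_{\Sigma_{k-1}} f^{(k-1)}(u+r\lambda_k)\,ds_1\cdots ds_{k-1}
\]
and the analogous identity with $\lambda_k$ replaced by $\lambda_{k+1}$. Subtracting and dividing by $\lambda_k-\lambda_{k+1}$, I apply FTC once more to the integrand: since $(u+r\lambda_k)-(u+r\lambda_{k+1})=r(\lambda_k-\lambda_{k+1})$,
\[
\frac{f^{(k-1)}(u+r\lambda_k)-f^{(k-1)}(u+r\lambda_{k+1})}{\lambda_k-\lambda_{k+1}}=r\int_0^1 f^{(k)}\bigl(u+rt\lambda_k+r(1-t)\lambda_{k+1}\bigr)\,dt.
\]
I would then change variables $s_k\coloneqq rt$ (holding $s_1,\ldots,s_{k-1}$ fixed), noting that $r\,dt=ds_k$, that $s_k\in[0,r]$ iff $s_1+\cdots+s_k\leq 1$, and that $r(1-t)=1-\sum_{j=1}^k s_j$. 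After applying Fubini, the resulting iterated integral is exactly the claimed one over $\Sigma_k$.

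The main points of care rather than obstacles will be: (i) verifying the Fubini application, which is justified since $f^{(k)}$ is continuous and $\Sigma_k$ is compact (real case) or since $f^{(k)}$ is locally bounded on any compact set containing the affine simplex traced out by the integration variables (entire case); and (ii) noting that in the entire case, ``$f^{(k-1)}$'' and ``$f^{(k)}$'' denote complex derivatives, and the one-variable FTC used above is the standard statement for holomorphic functions along line segments. Finally, for the extension claim, the right-hand side of the displayed formula is well-defined for every tuple in $\R^{k+1}$ or $\C^{k+1}$ (not just distinct ones) and depends continuously on $(\lambda_1,\ldots,\lambda_{k+1})$ by dominated convergence; symmetry follows from the symmetry of $f^{[k]}$ on the dense subset of distinct tuples, and uniqueness of the continuous extension is immediate by density.
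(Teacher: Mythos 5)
Your proof is correct. The paper does not prove this proposition itself—it defers to Proposition 2.1.3.(ii) of \cite{NikitopoulosNCk}—and your induction on $k$ via the fundamental theorem of calculus along the segment from $\lambda_{k+1}$ to $\lambda_k$, followed by the substitution $s_k = rt$, is precisely the standard Hermite--Genocchi argument used there, with the extension/symmetry claims handled correctly by density of the distinct tuples.
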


\begin{ex}[Divided differences of polynomials]\label{ex.divdiffpoly}
Let $p(\lambda) = \sum_{i=0}^n c_i \lambda^i \in \C[\lambda]$, viewed as an entire function $\C \to \C$.
If $\blambda \coloneqq (\lambda_1,\ldots,\lambda_{k+1}) \in \C^{k+1}$ has distinct entries, then
\[
p^{[k]}(\blambda) = \sum_{i=0}^nc_i\sum_{|\delta| = i-k} \blambda^{\delta} = \sum_{i=0}^nc_i\sum_{\delta \in \N_0^{k+1} : |\delta| = i-k}\lambda_1^{\delta_1}\cdots \lambda_{k+1}^{\delta_{k+1}}. \numberthis\label{eq.divdiffpoly}
\]
As is the case with many properties of divided differences, the identity above may be proven by induction on $k$;
see \cite[Ex.\ 2.1.5]{NikitopoulosNCk}.
By continuity, i.e., Proposition \ref{prop.divdiff}, \eqref{eq.divdiffpoly} holds for \textit{all} $\blambda \in \C^{k+1}$.
In particular, $p^{[k]} \in \C[\lambda_1,\ldots,\lambda_{k+1}]$.
\end{ex}

For the next example, recall that $W_k(\R)$ is the $k^{\text{th}}$ Wiener space (Definition \ref{def.Wk}).
\pagebreak

\begin{ex}[Divided differences of $W_k$ functions]\label{ex.divdiffWk}
If $f = \int_{\R} e^{i \xi \boldsymbol{\cdot}} \,\mu(\d\xi) \in W_k(\R)$, then $f \in C^k(\R)$, and $f^{(k)}(\lambda) = \int_{\R} (i\xi)^ke^{i\xi\lambda} \, \mu(\d\xi)$ for all $\lambda \in \R$.
In particular, by Proposition \ref{prop.divdiff},
\[
f^{[k]}(\blambda) = \int_{\Sigma_k}\int_{\R} (i\xi)^k e^{is_1\xi\lambda_1}\cdots e^{is_k\xi\lambda_k}e^{i(1-\sum_{j=1}^ks_j)\xi\lambda_{k+1}}\,\mu(\d\xi) \,\d s_1\cdots \d s_k \numberthis\label{eq.divdiffWk}
\]
for all $\blambda = (\lambda_1,\ldots,\lambda_{k+1}) \in \R^{k+1}$.
\end{ex}

We now finally turn to the definition of the space of functions $\R \to \C$ of interest:
the space $NC^k(\R)$ of noncommutative $C^k$ functions.

\begin{nota}\label{nota.superCk}
Let $r > 0$.
For a function $\varphi \colon \R^{k+1} \to \C$, define
\[
\|\varphi\|_{r,k+1} \coloneqq \big\|\varphi|_{[-r,r]^{k+1}}\big\|_{\ell^{\infty}([-r,r],\cB_{[-r,r]}\hspace{-0.1mm})^{\iotimes(k+1)}} \in [0,\infty].
\]
Now, if $f \in C^k(\R)$, then we define
\[
\|f\|_{\cC^{[k]},r} \coloneqq \sum_{j=0}^k\big\|f^{[j]}\big\|_{r,j+1} \in [0,\infty] \; \text{ and } \; \cC^{[k]}(\R) \coloneqq \big\{ g \in C^k(\R) : \|g\|_{\cC^{[k]},s}  < \infty \text{ for all } s > 0\big\},
\]
where $\|\cdot\|_{r,1} \coloneqq \|\cdot\|_{\ell^{\infty}([-r,r])}$.
\end{nota}

Note that $\cC^{[k]}(\R) \subseteq C^k(\R)$ is a complex-linear subspace and $\{\|\cdot\|_{\cC^{[k]},r} : r > 0\}$ is a collection of seminorms on $\cC^{[k]}(\R)$.
This collection of seminorms makes $\cC^{[k]}(\R)$ into a complex Fr\'{e}chet space---actually, a Fr\'{e}chet $\ast$-algebra.
This is proven as \cite[Prop.\ 3.1.3(iv)]{NikitopoulosNCk}.

\begin{ex}[Polynomials]\label{ex.polysuperCk}
Fix $p \in \C[\lambda]$, viewed as a smooth function $\R \to \C$.
By Example \ref{ex.divdiffpoly}, $p^{[k]} \in \C[\lambda_1,\ldots,\lambda_{k+1}]$ for all $k \in \N$.
Thus, $p \in \bigcap_{k \in \N}\cC^{[k]}(\R)$ by Example \ref{ex.elemtensfunc}.
\end{ex}

\begin{defi}[Noncommutative $C^k$ functions]\label{def.NCk}
If $k \in \N$, then we define $NC^k(\R) \coloneqq \overline{\C[\lambda]} \subseteq \cC^{[k]}(\R)$ to be the space of \textbf{noncommutative $\boldsymbol{C^k}$ functions}.
To be clear, the closure in the previous sentence takes place in the complex Fr\'{e}chet space $\cC^{[k]}(\R)$.
\end{defi}

Since $\C[\lambda] \subseteq \cC^{[k]}(\R)$ is a $\ast$-subalgebra, $NC^k(\R)$ is a Fr\'{e}chet $\ast$-algebra in its own right.
Before giving many examples of noncommutative $C^k$ functions, we demonstrate why $NC^k(\R)$ is of current interest to us.

\begin{lem}\label{lem.CstarMOI}
Suppose $(\cM,\E_{\mathsmaller{\cM}})$ is a $\mathrm{W}^*$-probability space such that $\cM$ contains $\cA$ as a unital $\mathrm{C}^*$-subalgebra and $\E_{\mathsmaller{\cM}}|_{\cA} = \E$.
(Such an $(\cM,\E_{\mathsmaller{\cM}})$ always exists;
see Appendix \ref{sec.CstarLp}.)
\begin{enumerate}[label=(\roman*),font=\normalfont]
    \item If $p \in \C[\lambda]$, then\label{item.polyMOI}
    \[
    \partial_{\mathsmaller{\otimes}}^kp(\a)\sh_k[b_1,\ldots,b_k] = k! \big(I^{\a}p^{[k]}\big)[b_1,\ldots,b_k] \qquad \big(\a \in \cM_{\sa}^{k+1}, \; b_1,\ldots,b_k \in \cM\big).
    \]
    \item If $f \in NC^k(\R)$, $\a = (a_1,\ldots,a_{k+1}) \in \cA_{\sa}^{k+1}$, and $b = (b_1,\ldots,b_k) \in \cA^k$, then
    \[
    \big(I^{\a}f^{[k]}\big)[b] \in \mathrm{C}^*(1,a_1,\ldots,a_{k+1},b_1,\ldots,b_k) \subseteq \cA \subseteq \cM.
    \]
    Moreover, the restricted map $I^{\a}f^{[k]} \colon \cA^k \to \cA$ belongs to $\mathbb{B}_k(\cA)$, and
    \[
    \vertiii{I^{\a}f^{[k]}}_k \leq \big\|f^{[k]}\big\|_{\ell^{\infty}(\sigma(a_1),\cB_{\sigma(a_1)}\hspace{-0.1mm}) \iotimes \cdots \iotimes \ell^{\infty}(\sigma(a_{k+1}),\cB_{\sigma(a_{k+1})}\hspace{-0.1mm}))} \leq \big\|f^{[k]}\big\|_{\norm{\a}_{\infty},k+1}.
    \]
    Finally, the map $\cA_{\sa}^{k+1} \ni \a \mapsto I^{\a}f^{[k]} \in \mathbb{B}_k(\cA)$ belongs to $C_{\E}(\cA_{\sa}^{k+1};\mathbb{B}_k(\cA))$.\label{item.CstarMOI} 
\end{enumerate}
Owing to the second item, we shall use the same MOI notation as in the $\mathrm{W}^*$ case for $I^{\a}f^{[k]} \colon \cA^k \to \cA$ when $f \in NC^k(\R)$ and $\a \in \cA_{\sa}^{k+1}$.
\end{lem}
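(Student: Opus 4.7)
The plan is to prove item \ref{item.polyMOI} by direct computation and then bootstrap to item \ref{item.CstarMOI} by approximating $f\in NC^k(\R)$ uniformly (on spectra) by polynomials, invoking the IPTP contraction property of MOIs recorded in Theorem \ref{thm.babyMOI}\ref{item.vertiiibd}. For item \ref{item.polyMOI}, writing $p(\lambda)=\sum_{i=0}^n c_i\lambda^i$, one expands both sides using formulas already in the paper: Notation \ref{nota.ncder} together with the definition of $\sh_k$ gives
\[
\partial_{\otimes}^k p(\a)\,\sh_k[b_1,\ldots,b_k]=k!\sum_{i=0}^n c_i\sum_{|\delta|=i-k}a_1^{\delta_1}b_1\,a_2^{\delta_2}\cdots a_k^{\delta_k}b_k\,a_{k+1}^{\delta_{k+1}},
\]
while Example \ref{ex.divdiffpoly} yields $p^{[k]}(\blambda)=\sum_i c_i\sum_{|\delta|=i-k}\lambda_1^{\delta_1}\cdots\lambda_{k+1}^{\delta_{k+1}}$, which is an algebraic tensor function of the type in Example \ref{ex.elemtensfunc}; computing $I^{\a}p^{[k]}$ via that example yields the same sum divided by $k!$, proving item \ref{item.polyMOI}.

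For item \ref{item.CstarMOI}, by definition of $NC^k(\R)$ there is a sequence $(p_n)$ in $\C[\lambda]$ with $p_n\to f$ in $\cC^{[k]}(\R)$, hence $\|f^{[k]}-p_n^{[k]}\|_{r,k+1}\to 0$ for every $r>0$. Fix $\a\in\cA_{\sa}^{k+1}$; since $\sigma(a_j)\subseteq[-\|\a\|_\infty,\|\a\|_\infty]$, Theorem \ref{thm.babyMOI}\ref{item.vertiiibd} applied in the ambient $\mathrm{W}^*$-probability space $(\cM,\tau)$ gives
\[
\vertiii{I^{\a}f^{[k]}-I^{\a}p_n^{[k]}}_k\le\big\|f^{[k]}-p_n^{[k]}\big\|_{\ell^{\infty}(\sigma(a_1))\iotimes\cdots\iotimes\ell^{\infty}(\sigma(a_{k+1}))}\le\big\|f^{[k]}-p_n^{[k]}\big\|_{\|\a\|_\infty,k+1}\to 0,
\]
so $I^{\a}p_n^{[k]}\to I^{\a}f^{[k]}$ in $\mathbb{B}_k(\cM)$. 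By item \ref{item.polyMOI}, each $(I^{\a}p_n^{[k]})[b]$ lies in the $\ast$-algebra generated by $\{1,a_1,\ldots,a_{k+1},b_1,\ldots,b_k\}$, which sits inside $\mathrm{C}^*(1,a_1,\ldots,a_{k+1},b_1,\ldots,b_k)\subseteq\cA$; since $\cA$ is norm-closed in $\cM$, evaluating the limit at any $b\in\cA^k$ gives $(I^{\a}f^{[k]})[b]\in\mathrm{C}^*(1,a_1,\ldots,a_{k+1},b_1,\ldots,b_k)\subseteq\cA$. The norm bound $\vertiii{I^{\a}f^{[k]}}_k\le\|f^{[k]}\|_{\|\a\|_\infty,k+1}$ in $\mathbb{B}_k(\cA)$ is then obtained by directly invoking Theorem \ref{thm.babyMOI}\ref{item.vertiiibd} on $(\cM,\tau)$ and restricting to $\cA^k$, using that $\tau|_{\cA}=\E$ forces the $L^p(\cA,\E)$- and $L^p(\cM,\tau)$-norms to agree on $\cA$, so $\vertiii{\cdot}_k$ computed in $\mathbb{B}_k(\cA)$ is dominated by $\vertiii{\cdot}_k$ computed in $\mathbb{B}_k(\cM)$.

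For the trace continuity assertion, for each $n$ define the trace $\ast$-polynomial
\[
P_n(x_1,\ldots,x_{k+1})[y_1,\ldots,y_k]\coloneqq\sum_i c_i^{(n)}\sum_{|\delta|=i-k}x_1^{\delta_1}y_1\,x_2^{\delta_2}\cdots x_k^{\delta_k}y_k\,x_{k+1}^{\delta_{k+1}}\in\TrP^*_{k+1,k,(1,\ldots,1)},
\]
where $c_i^{(n)}$ are the coefficients of $p_n$ (no genuine trace terms are needed because $p_n$ is a polynomial). For fixed $\a_0\in\cA_{\sa}^{k+1}$ and $r>0$, item \ref{item.polyMOI} gives $(P_n)_{(\cA,\E)}(\b)=I^{\b}p_n^{[k]}$ in $\mathbb{B}_k(\cA)$ for every $\b\in B_r(\a_0)$, and since $\|\b\|_\infty\le\|\a_0\|_\infty+r$ uniformly over the ball the just-proved norm bound yields
\[
\sup_{\b\in B_r(\a_0)}\vertiii{I^{\b}f^{[k]}-(P_n)_{(\cA,\E)}(\b)}_k\le\big\|f^{[k]}-p_n^{[k]}\big\|_{\|\a_0\|_\infty+r,\,k+1}\to 0,
\]
verifying the approximation criterion of Definition \ref{def.tracecontCk}\ref{item.tracecont}. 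The only real subtlety, rather than difficulty, is the bookkeeping required to transfer the IPTP contraction estimate of Theorem \ref{thm.babyMOI}\ref{item.vertiiibd} — which is proved in the $\mathrm{W}^*$-setting — down to $\mathbb{B}_k(\cA)$; this is handled uniformly by the agreement of $L^p$-norms noted above, which simultaneously yields the norm bound, the containment in $\cA$, and the uniform-on-balls approximation needed for trace continuity.
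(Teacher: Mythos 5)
Your proposal is correct and follows essentially the same route as the paper: item (i) by combining Examples \ref{ex.elemtensfunc} and \ref{ex.divdiffpoly}, and item (ii) by approximating $f$ with polynomials in $NC^k(\R)$, using Theorem \ref{thm.babyMOI} and the closedness of $\mathrm{C}^*(1,a_1,\ldots,a_{k+1},b_1,\ldots,b_k)$ in $\cM$ for the containment, the agreement of $L^p$-norms from $\tau|_{\cA}=\E$ for the $\vertiii{\cdot}_k$-bound, and the resulting uniform-on-bounded-sets convergence of the trace $\ast$-polynomial maps $(p_n^{[k]})_{\mathsmaller{\cA}}$ for trace continuity. The only cosmetic difference is that you verify the local approximation condition of Definition \ref{def.tracecontCk}\ref{item.tracecont} ball by ball, while the paper phrases the same estimate as closedness of $C_{\E}$ under uniform convergence on bounded sets.
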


\begin{proof}
For the first item, combine Examples \ref{ex.elemtensfunc} and \ref{ex.divdiffpoly}.
For the second, let $(p_n)_{n \in \N}$ be a sequence in $\C[\lambda]$ converging to $f$ in $NC^k(\R)$.
By the first item, it is clear that
\[
\big(I^{\a}p_n^{[k]}\big)[b_1,\ldots,b_k] \in \mathrm{C}^*(1,a_1,\ldots,a_{k+1},b_1,\ldots,b_k) \qquad (n \in \N).
\]
By Theorem \ref{thm.babyMOI}\ref{item.bddlin},
\[
\big(I^{\a}p_n^{[k]}\big)[b_1,\ldots,b_k] \xrightarrow{n \to \infty} \big(I^{\a}f^{[k]}\big)[b_1,\ldots,b_k]
\]
in $\cM$ (i.e., in operator norm).
Since $\mathrm{C}^*(1,a_1,\ldots,a_{k+1},b_1,\ldots,b_k) \subseteq \cA \subseteq \cM$ is closed, we conclude that $\big(I^{\a}f^{[k]}\big)[b_1,\ldots,b_k] \in \mathrm{C}^*(1,a_1,\ldots,a_{k+1},b_1,\ldots,b_k)$.
\pagebreak

Next, since $\E_{\mathsmaller{\cM}}|_{\cA} = \E$, we have that $\norm{a}_{L^p(\E_{\mathsmaller{\cM}})} = \norm{a}_{L^p(\E)}$ for all $a \in \cA$.
Thus, by what we proved in the previous paragraph and Theorem \ref{thm.babyMOI}\ref{item.vertiiibd}, if $1/p_1+\cdots+1/p_k=1/p$, then
\begin{align*}
    \big\|\big(I^{\a}f^{[k]}\big)[b_1,\ldots,b_k]\big\|_{L^p(\E)} & = \big\|\big(I^{\a}f^{[k]}\big)[b_1,\ldots,b_k]\big\|_{L^p(\E_{\mathsmaller{\cM}})} \\
    & \leq \big\|f^{[k]}\big\|_{\ell^{\infty}(\sigma(a_1),\cB_{\sigma(a_1)}\hspace{-0.1mm}) \iotimes \cdots \iotimes \ell^{\infty}(\sigma(a_{k+1}),\cB_{\sigma(a_{k+1})}\hspace{-0.1mm}))} \norm{b_1}_{L^{p_1}(\E_{\mathsmaller{\cM}})}\cdots \norm{b_k}_{L^{p_k}(\E_{\mathsmaller{\cM}})} \\
    & = \big\|f^{[k]}\big\|_{\ell^{\infty}(\sigma(a_1),\cB_{\sigma(a_1)}\hspace{-0.1mm}) \iotimes \cdots \iotimes \ell^{\infty}(\sigma(a_{k+1}),\cB_{\sigma(a_{k+1})}\hspace{-0.1mm}))} \norm{b_1}_{L^{p_1}(\E)}\cdots \norm{b_k}_{L^{p_k}(\E)}.
\end{align*}
This gives the claimed $\vertiii{\cdot}_k$-norm bound.

Finally, write $\big(f^{[k]}\big)_{\mathsmaller{\cA}} \colon \cA_{\sa}^{k+1} \to \mathbb{B}_k(\cA)$ for the map $\a \mapsto I^{\a}f^{[k]}$.
It is clear from the first item that
\[
\big(p_n^{[k]}\big)_{\mathsmaller{\cA}} \in \{P_{\mathsmaller{(\cA,\E)}} : P \in \TrP_{k+1,k,(1,\ldots,k)}^*\} \subseteq C_{\E}(\cA_{\sa}^{k+1};\mathbb{B}_k(\cA)) \qquad (n \in \N).
\]
By the bound proven in the previous paragraph, $\big(p_n^{[k]}\big)_{\mathsmaller{\cA}} \to \big(f^{[k]}\big)_{\mathsmaller{\cA}}$ uniformly on bounded sets, i.e., in the topology of $BC_{\loc}(\cA_{\sa}^{k+1};\mathbb{B}_k(\cA))$, as $n \to \infty$.
Since $C_{\E}(\cA_{\sa}^{k+1};\mathbb{B}_k(\cA))$ is closed under uniform convergence on bounded subsets, we conclude that $\big(f^{[k]}\big)_{\mathsmaller{\cA}} \in C_{\E}(\cA_{\sa}^{k+1};\mathbb{B}_k(\cA))$, as desired.
\end{proof}

\begin{thm}[$NC^k \Rightarrow$ trace $C^k$]\label{thm.NCkistrCk}
If $k \in \N$ and $f \in NC^k(\R)$, then $f_{\mathsmaller{\cA}} \in C_{\E}^k(\cA_{\sa};\cA)$, and
\begin{equation}
    D^kf_{\mathsmaller{\cA}}(a)[b_1,\ldots,b_k] = \sum_{\pi \in S_k} \underbrace{\int_{\sigma(a)}\cdots\int_{\sigma(a)}}_{k+1 \, \mathrm{times}} f^{[k]}(\blambda) \, P^a(\d\lambda_1)\,b_{\pi(1)}\cdots P^a(\d\lambda_k)\,b_{\pi(k)}\,P^a(\d\lambda_{k+1}) \label{eq.opfuncderform}
\end{equation}
for all $a,b_1,\ldots,b_k \in \cA_{\sa}$.
\end{thm}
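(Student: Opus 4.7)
The plan is to approximate $f$ by polynomials in the Fréchet topology of $NC^k(\R)$, identify the limiting derivatives via multiple operator integrals, and close with the standard Banach-calculus fact that uniform convergence of all derivatives of order $\le k$ identifies the limit as a $C^k$ map with the expected derivatives.

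First I would introduce the candidate derivative. For $j \in \{0,1,\ldots,k\}$, define
\[
\Phi_j(a)[b_1,\ldots,b_j] \coloneqq \sum_{\pi \in S_j}\bigl(I^{(a,\ldots,a)}f^{[j]}\bigr)[b_{\pi(1)},\ldots,b_{\pi(j)}], \quad a,b_1,\ldots,b_j \in \cA_{\sa},
\]
with the convention $\Phi_0 = f_{\mathsmaller{\cA}}$ (noting $I^{(a)}f = f(a)$). Lemma \ref{lem.CstarMOI}\ref{item.CstarMOI} (combined with Proposition \ref{prop.divdiff} to make sense of $f^{[j]}$) gives $\Phi_j(a) \in \mathbb{B}_j(\cA)$ with the MOI bound
\[
\vertiii{\Phi_j(a)}_j \le j!\,\bigl\|f^{[j]}\bigr\|_{\|a\|,j+1}.
\]

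Next, fix a sequence $(p_n)_{n \in \N}$ in $\C[\lambda]$ with $p_n \to f$ in $\cC^{[k]}(\R)$. Combining Example \ref{ex.polyadapCk} with Lemma \ref{lem.CstarMOI}\ref{item.polyMOI} gives
\[
D^j (p_n)_{\mathsmaller{\cA}}(a)[b_1,\ldots,b_j] = \sum_{\pi \in S_j}\bigl(I^{(a,\ldots,a)}p_n^{[j]}\bigr)[b_{\pi(1)},\ldots,b_{\pi(j)}],
\]
i.e.\ $D^j(p_n)_{\mathsmaller{\cA}}$ is exactly $\Phi_j$ with $f$ replaced by $p_n$; in particular, each $D^j(p_n)_{\mathsmaller{\cA}}$ is a trace $\ast$-polynomial evaluation on $\cA_{\sa}$. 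Applying the $\vertiii{\cdot}_j$-bound of Lemma \ref{lem.CstarMOI}\ref{item.CstarMOI} to $p_n - f$ and using monotonicity of $\|\cdot\|_{r,j+1}$ in $r$ yields, for every $r > 0$ and $j \in \{0,\ldots,k\}$,
\[
\sup_{\|a\| \le r} \vertiii{D^j(p_n)_{\mathsmaller{\cA}}(a) - \Phi_j(a)}_j \le j!\,\bigl\|(p_n - f)^{[j]}\bigr\|_{r,j+1} \le j!\,\|p_n - f\|_{\cC^{[k]},r} \to 0.
\]
The $j=0$ case is operator-norm convergence $(p_n)_{\mathsmaller{\cA}} \to f_{\mathsmaller{\cA}}$ uniformly on bounded subsets of $\cA_{\sa}$, and since $\|\cdot\|_{j;j} \le \vertiii{\cdot}_j$, the display also gives $D^j (p_n)_{\mathsmaller{\cA}} \to \Phi_j$ uniformly on bounded sets in the operator norm of $B_j((\cA_{\sa})^j;\cA)$.

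The conclusion now rests on a standard Banach-calculus fact, proved inductively from the $C^1$ case via the fundamental theorem of calculus with Bochner integrals: if $(g_n)$ is a sequence of $C^k$ maps from an open convex set $U$ in a Banach space to a Banach space $\cW$ such that $g_n \to g$ pointwise and each $D^j g_n$ converges locally uniformly to a continuous map $\Psi_j$ (for $j = 1,\ldots,k$), then $g \in C^k(U;\cW)$ with $D^j g = \Psi_j$. Applied to $g_n = (p_n)_{\mathsmaller{\cA}}$, $g = f_{\mathsmaller{\cA}}$, $\Psi_j = \Phi_j$, this yields $f_{\mathsmaller{\cA}} \in C^k(\cA_{\sa};\cA)$ with $D^j f_{\mathsmaller{\cA}} = \Phi_j$, which is exactly Eq. \eqref{eq.opfuncderform}. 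Trace continuity is then immediate from the preceding paragraph: $D^j f_{\mathsmaller{\cA}} = \Phi_j$ is a $\vertiii{\cdot}_j$-uniform limit on every ball of the trace $\ast$-polynomial evaluations $D^j (p_n)_{\mathsmaller{\cA}}$, so $D^j f_{\mathsmaller{\cA}} \in C_{\E}(\cA_{\sa}; \mathbb{B}_j(\cA^j;\cA))$ by the closure defining Definition \ref{def.tracecontCk}\ref{item.tracecont}, and hence $f_{\mathsmaller{\cA}} \in C_{\E}^k(\cA_{\sa};\cA)$. The main obstacle is the Banach-calculus step: routine in spirit but requiring care to handle all derivative orders simultaneously in an infinite-dimensional Banach space, essentially an iterated Taylor-remainder argument.
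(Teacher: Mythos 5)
Your proposal is correct, and it takes a somewhat different route from the paper's for the central analytic step. The paper does not prove the $C^k$-differentiability and the derivative formula \eqref{eq.opfuncderform} internally: it quotes Theorem 1.2.3 of \cite{NikitopoulosNCk} for that, and then devotes its own effort to deducing \emph{trace} continuity of $D^jf_{\mathsmaller{\cA}}$ from the formula, via Lemma \ref{lem.CstarMOI}\ref{item.CstarMOI} (trace continuity of $\a \mapsto I^{\a}f^{[k]}$ in the full $(k+1)$-tuple) together with two observations that diagonal restriction and symmetrization preserve trace continuity. You instead give a self-contained proof of the differentiability statement by polynomial approximation: the estimate $\sup_{\norm{a}\leq r}\vertiii{D^j(p_n)_{\mathsmaller{\cA}}(a) - \Phi_j(a)}_j \leq j!\norm{p_n - f}_{\cC^{[k]},r}$ is correct (using $(p_n-f)^{[j]} = p_n^{[j]} - f^{[j]}$, the $j!$ terms of the symmetrization, and the continuous inclusion $\cC^{[k]}(\R) \subseteq \cC^{[j]}(\R)$), and then the standard fact that locally uniform convergence of $D^jg_n$ for all $j \leq k$ identifies the limit as $C^k$ with the limiting derivatives finishes the job; trace continuity then comes for free, since each $D^j(p_n)_{\mathsmaller{\cA}}$ is literally a trace $\ast$-polynomial evaluation and Definition \ref{def.tracecontCk}\ref{item.tracecont} is a local uniform closure. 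This is precisely the alternative the paper gestures at when it says the proof of Lemma \ref{lem.CstarMOI} ``is not far from showing this.'' What your route buys is self-containment and a cleaner derivation of trace continuity; what it costs is reliance on the Banach-space convergence-of-derivatives lemma, which you correctly flag as the only step needing care (it is standard, via the mean value inequality applied iteratively, but you should either prove it or cite it, e.g.\ from \cite{HJ2014}). One cosmetic caveat: the bound in Lemma \ref{lem.CstarMOI}\ref{item.CstarMOI} is stated without the factorial, which enters only through the sum over $S_j$ in $\Phi_j$, exactly as you have it.
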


\begin{proof}
First, let $f \in C(\R)$.
By the classical Weierstrass approximation theorem, there exists a sequence $(p_n)_{n \in \N}$ in $\C[\lambda]$ converging uniformly on compact sets to $f$.
For $r > 0$, write $C_r \coloneqq \{a \in \cA_{\sa} : \norm{a} \leq r\}$.
By basic properties of the functional calculus, if $r > 0$, then
\[
\sup_{a \in C_r} \norm{f(a) - p_n(a)} = \sup_{a \in C_r}\norm{f - p_n}_{\ell^{\infty}(\sigma(a))} = \norm{f - p_n}_{\ell^{\infty}([-r,r])} \xrightarrow{n \to \infty} 0.
\]
Thus, $(p_n)_{\mathsmaller{\cA}} \to f_{\mathsmaller{\cA}}$ in $BC_{\loc}(\cA_{\sa};\cA)$ as $n \to \infty$.
Since $p_{\mathsmaller{\cA}} \in C_{\E}^{\infty}(\cA_{\sa};\cA) \subseteq C_{\E}(\cA_{\sa};\cA)$ for all $p \in \C[\lambda]$, we conclude that $f_{\mathsmaller{\cA}} \in C_{\E}(\cA_{\sa};\cA)$.

Next, the fact that $f \in NC^k(\R)$ implies that $f_{\mathsmaller{\cA}} \in C^k(\cA_{\sa};\cA)$ and \eqref{eq.opfuncderform} holds is \cite[Thm.\ 1.2.3]{NikitopoulosNCk}.
(However, the proof of Lemma \ref{lem.CstarMOI} is not far from showing this, as we encourage the reader to ponder.)
To complete the proof, we argue that \eqref{eq.opfuncderform} implies $f_{\mathsmaller{\cA}} \in C_{\E}^k(\cA_{\sa};\cA)$.
Indeed, let $m,n \in \N$, $d \in \N^k$, $\cA_{\gamma} \in \{\cA,\cA_{\sa}\}$, and $F \in C_{\E}(\cA_{\beta}^n;\mathbb{B}_k(\cA_{\gamma}^d;\cA^m))$.
We make two easy observations.
First, the map
\[
\cA_{\beta} \ni a \mapsto F(a,\ldots,a) \in \mathbb{B}_k(\cA_{\gamma}^d;\cA^m)
\]
belongs to $C_{\E}(\cA_{\beta};\mathbb{B}_k(\cA_{\gamma}^d;\cA^m))$.
Next, if $\cV$ and $\cW$ are vector spaces and $T \colon \cV^k \to \cW$ is a $k$-linear map, then we write
\[
\operatorname{Sym} (T)[v_1,\ldots,v_k] \coloneqq \sum_{\pi \in S_k} T[v_{\pi(1)},\ldots,v_{\pi(k)}] \qquad (v_1,\ldots,v_k \in \cV).
\]
The second observation is that if $d_1 = \cdots = d_k$, then $\operatorname{Sym}(F) \in C_{\E}(\cA_{\beta}^n;\mathbb{B}_k(\cA_{\gamma}^d;\cA^m))$.
Combining these two observations, we conclude from Lemma \ref{lem.CstarMOI}\ref{item.CstarMOI} that if $f \in NC^k(\R)$, then the map
\[
\cA_{\sa} \ni a \mapsto \operatorname{Sym}\big(I^{a,\ldots,a}f^{[k]}\big|_{\cA_{\sa}^k}\big) \in \mathbb{B}_k\big(\cA_{\sa}^k;\cA\big)
\]
belongs to $C_{\E}(\cA_{\sa};\mathbb{B}_k(\cA_{\sa}^k;\cA))$.
Since \eqref{eq.opfuncderform} may be rewritten as
\[
D^kf_{\mathsmaller{\cA}}(a) = \operatorname{Sym}\big(I^{a,\ldots,a}f^{[k]}|_{\cA_{\sa}^k}\big) \qquad (a \in \cA_{\sa}),
\]
we see that $f_{\mathsmaller{\cA}} \in C_{\E}^k(\cA_{\sa};\cA)$.
This completes the proof.
\end{proof}

\begin{rem}\label{rem.JLS}
A careful study of the proof yields that if $f \in NC^k(\R)$, then $f_{\mathsmaller{\cA}} \colon \cA_{\sa} \to \cA$ is an example of the functions described in Example \ref{ex.JLS} (with $n=m=1$ and $\cA_{\beta} = \cA_{\sa}$).
\end{rem}

Combining this with work from the last section, we arrive at a general It\^{o}'s formula for noncommutative $C^2$ functions of self-adjoint $L^{\infty}$-decomposable processes.
This generalizes the functional free It\^{o} formula for free It\^{o} processes (\cite[Thm.\ 4.3.4]{NikitopoulosIto}).
\pagebreak

\begin{cor}[It\^{o}'s formula for $NC^2$ functions]
If $X \colon \R_+ \to \cA$ is a self-adjoint $L^{\infty}$-decomposable process and $f \in NC^2(\R)$, then
\begin{align*}
    \d f(X(t)) & = \int_{\sigma(X(t))}\int_{\sigma(X(t))} f^{[1]}(\lambda,\mu) \, P^{X(t)}(\d\lambda) \,\d X(t) \,P^{X(t)}(\d\mu) \\
    & \hspace{7.5mm} + \int_{\sigma(X(t))}\int_{\sigma(X(t))}\int_{\sigma(X(t))} f^{[2]}(\lambda,\mu,\nu) \, P^{X(t)}(\d\lambda) \,\d X(t) \,P^{X(t)}(\d\mu) \,\d X(t) \,P^{X(t)}(\d\nu).
\end{align*}
\end{cor}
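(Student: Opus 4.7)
The plan is to obtain this corollary as a direct application of Theorem \ref{thm.NCIF} (the time-independent form of the noncommutative It\^{o} formula) to the operator function $F \coloneqq f_{\mathsmaller{\cA}} \colon \cA_{\sa} \to \cA$, taking $\cB = \cA$ and $\cA_\beta = \cA_{\sa}$. The three things to verify are: (i) that $F$ is an adapted $C^2$ map; (ii) that the first two Fr\'echet derivatives of $F$ coincide with the MOIs appearing in the statement; and (iii) that the coefficients work out correctly once one accounts for the symmetrization in Eq.~\eqref{eq.opfuncderform} and the factor of $\tfrac12$ in front of the quadratic-variation term.

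First I would invoke Theorem \ref{thm.NCkistrCk}, which tells us that $f \in NC^2(\R)$ implies $f_{\mathsmaller{\cA}} \in C_{\E}^2(\cA_{\sa};\cA)$, together with Theorem \ref{thm.trCkadapCk} to upgrade this to $f_{\mathsmaller{\cA}} \in C_a^2(\cA_{\sa};\cA)$. Combined with the hypothesis that $X$ is self-adjoint and $L^{\infty}$-decomposable, the hypotheses of Theorem \ref{thm.NCIF} are satisfied with $\cU = \cA_{\sa}$, so
\[
F(X(t)) = F(X(0)) + \int_0^t DF(X(s))[dX(s)] + \frac{1}{2}\int_0^t D^2F(X(s))[dX(s), dX(s)].
\]

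Next I would identify the Fr\'echet derivatives via Eq.~\eqref{eq.opfuncderform}. For $k=1$ the symmetric group $S_1$ is trivial, yielding
\[
DF(a)[b] = \int_{\sigma(a)}\!\int_{\sigma(a)} f^{[1]}(\lambda,\mu)\,P^a(d\lambda)\, b\, P^a(d\mu),
\]
which matches the first term in the claim upon substituting $a = X(t)$ and formally $b = dX(t)$. For $k=2$, Eq.~\eqref{eq.opfuncderform} gives a sum over $\pi \in S_2$ of two iterated MOI expressions. Since $X(t)$ is self-adjoint and $f^{[2]}$ is symmetric in its arguments (Proposition \ref{prop.divdiff}), the two permutations $\pi = \mathrm{id}$ and $\pi = (1\,2)$ produce bilinear maps that agree when evaluated on the diagonal $(b,b)$; in particular, evaluated on the ``diagonal'' quadratic-variation integrator $(dX(t), dX(t))$ they yield identical quadratic covariation integrals by the uniqueness assertion in Theorem \ref{thm.QC2}. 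The factor of $2 = |S_2|$ thus cancels the $\tfrac12$ from It\^{o}'s formula, leaving exactly the triple-MOI expression claimed.

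The main obstacle, though essentially bookkeeping, is to justify rigorously that the symmetrization collapse under the diagonal quadratic variation is legitimate. Concretely, one must check that the two trace triprocesses $\R_+ \ni t \mapsto (I^{X(t),X(t),X(t)}f^{[2]})[\cdot,\cdot]$ and its transpose (swap of the two middle slots) induce the same quadratic covariation $\into \Lambda[dX,dX]$. This follows from the symmetry of $f^{[2]}$ together with the symmetry of the diagonal evaluation $\Lambda[u,u] = \Lambda^{\mathrm{T}}[u,u]$ at the level of the Riemann--Stieltjes sums in Theorem \ref{thm.QC1}\ref{item.cheapRSsumQC}, which are a priori well-defined by Lemma \ref{lem.CstarMOI}\ref{item.CstarMOI} (ensuring the relevant maps lie in $\mathbb{B}_2(\cA)$ with appropriate trace-continuous dependence on $X(t)$, hence are in $\mathrm{Q}$). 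Once these pieces are assembled, the claimed formula reads off directly.
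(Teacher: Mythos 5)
Your proposal is correct and follows exactly the route the paper takes: its proof is literally "combine Theorems \ref{thm.NCIFtimedep}, \ref{thm.trCkadapCk}, and \ref{thm.NCkistrCk}," and your write-up is a faithful expansion of that chain, including the correct observation that the $|S_2|=2$ symmetrization in Eq.~\eqref{eq.opfuncderform} cancels the $\tfrac12$ because the two transposed trace triprocesses agree on the diagonal Riemann--Stieltjes sums defining $\into \Lambda[dX,dX]$. No gaps.
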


\begin{proof}
Combine Theorems \ref{thm.NCIFtimedep}, \ref{thm.trCkadapCk}, and \ref{thm.NCkistrCk}.
\end{proof}

At this point, it is reasonable to wonder whether all this work was worthwhile.
Specifically, one may wonder whether there are nontrivial examples of noncommutative $C^k$ functions.
As promised, here is a result from \cite{NikitopoulosNCk} showing that a function $f \colon \R \to \C$ only has to be ``slightly better than $C^k$'' to be $NC^k$.

\begin{thm}[Nikitopoulos \cite{NikitopoulosNCk}]\label{thm.NCk}
Let $k \in \N$.
Write $\dot{B}_1^{k,\infty}(\R)$ for the homogeneous $(k,\infty,1)$-Besov space (\cite[Def.\ 3.3.1]{NikitopoulosNCk}), $C_{\loc}^{k,\e}(\R)$ for the space of $C^k$ functions whose $k^{\text{th}}$ derivatives are locally $\e$-H\"{o}lder continuous (\cite[Def.\ 3.3.8]{NikitopoulosNCk}), and $W_k(\R)_{\loc}$ for set of functions $f \colon \R \to \C$ such that for all $r > 0$, there exists a $g \in W_k(\R)$ such that $f|_{[-r,r]} = g|_{[-r,r]}$.
\begin{enumerate}[label=(\roman*),font=\normalfont]
    \item $C^{k+1}(\R) \subseteq W_k(\R)_{\loc} \subseteq NC^k(\R)$, and $W_k(\R)$ is dense in $NC^k(\R)$.\label{item.WkinNCk}
    \item $\dot{B}_1^{k,\infty}(\R) \subseteq NC^k(\R)$, and $C_{\loc}^{k,\e}(\R) \subseteq NC^k(\R)$ for all $\e > 0$.\label{item.BesovHolderinNCk}
\end{enumerate}
\end{thm}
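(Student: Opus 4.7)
The statement is attributed to \cite{NikitopoulosNCk}, so the cleanest route is to invoke it; nevertheless, here is how I would organize a proof from scratch.

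My plan is to prove part (i) by a three-step chain (inclusion of $C^{k+1}$ into $W_k(\R)_{\loc}$, inclusion of $W_k(\R)_{\loc}$ into $NC^k(\R)$, density of $W_k(\R)$), and then reduce part (ii) to part (i) via standard harmonic-analytic decompositions. First, for $C^{k+1}(\R) \subseteq W_k(\R)_{\loc}$: given $f \in C^{k+1}(\R)$ and $r>0$, pick a smooth cutoff $\chi_r \in C_c^{\infty}(\R)$ with $\chi_r \equiv 1$ on $[-r,r]$, so that $g \coloneqq \chi_r f \in C_c^{k+1}(\R)$ agrees with $f$ on $[-r,r]$. Integration by parts $k+1$ times gives $|\widehat{g}(\xi)| \leq C(1+|\xi|)^{-(k+1)}$, hence $d\mu(\xi) \coloneqq \widehat{g}(\xi)\,d\xi$ has finite $k^{\text{th}}$ moment and Fourier inversion presents $g$ as an element of $W_k(\R)$.

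Second, for $W_k(\R) \subseteq NC^k(\R)$, I would exploit the IPD already visible in Example~\ref{ex.divdiffWk}: if $f = \int_{\R} e^{i\xi\,\boldsymbol{\cdot}}\,\mu(d\xi) \in W_k(\R)$ and $j \in \{0,\ldots,k\}$, then the formula
\[
f^{[j]}(\blambda) = \int_{\Sigma_j}\int_{\R}(i\xi)^j e^{is_1\xi\lambda_1}\cdots e^{i(1-\sum s_\ell)\xi\lambda_{j+1}}\,\mu(d\xi)\,ds
\]
is manifestly an $\ell^\infty$-integral projective decomposition on any $[-r,r]^{j+1}$ with total mass controlled by $\mu_{(j)} \leq \mu_{(0)} + \mu_{(k)} < \infty$. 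I would then approximate the entire function $\zeta \mapsto e^{i\xi\zeta}$ by its Taylor partial sums $T_N(\xi,\zeta) = \sum_{n=0}^N \frac{(i\xi\zeta)^n}{n!}$, substitute $T_N$ into the IPD, and invoke the triangle inequality for $\|\cdot\|_{\cC^{[k]},r}$: the tail estimate $|e^{i\xi\zeta} - T_N(\xi,\zeta)| \leq \frac{(|\xi|r)^{N+1}}{(N+1)!}e^{|\xi|r}$, integrated against $|\xi|^k\,d|\mu|$, gives convergence in $\cC^{[k]}(\R)$ of polynomials to $f$. To pass from $W_k(\R) \subseteq NC^k(\R)$ to $W_k(\R)_{\loc} \subseteq NC^k(\R)$, observe that $f^{[j]}|_{[-r,r]^{j+1}}$ depends only on $f|_{[-r,r]}$, so $\|f\|_{\cC^{[k]},r} = \|g_r\|_{\cC^{[k]},r}$ for the approximating $g_r \in W_k(\R)$. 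Density of $W_k(\R)$ in $NC^k(\R)$ then follows by approximating each polynomial $p$ by $\chi_R p \in C_c^\infty(\R) \subseteq W_k(\R)$: for $r < R$ we have $\chi_R p = p$ on $[-r,r]$, so $\|\chi_R p - p\|_{\cC^{[k]},r} = 0$.

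For part (ii), the inclusion $C^{k,\e}_{\loc}(\R) \subseteq NC^k(\R)$ would be deduced from the Besov embedding, since locally $C^{k,\e}$ functions can be cut off and then reside in $\dot B_1^{k,\infty}(\R)$ (after subtracting a Taylor polynomial of degree $k$ so the homogeneous seminorm makes sense). Thus the content is the Besov inclusion $\dot B_1^{k,\infty}(\R) \subseteq NC^k(\R)$. Here I would use a Littlewood--Paley decomposition $f = \sum_{n \in \Z} f_n$ with $\widehat{f_n}$ supported in a dyadic annulus of radius $\sim 2^n$, show that each $f_n$ is in $W_k(\R)$ with $\|f_n\|_{W_k} \lesssim 2^{nk}\|f_n\|_{L^1}$, and use the defining Besov condition $\sum_n 2^{nk}\|f_n\|_{L^1} < \infty$ (up to polynomial corrections) to obtain summability in $\cC^{[k]}(\R)$, hence in $NC^k(\R)$ after taking the limit.

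The main obstacles will be the second step: producing \emph{polynomial} approximations in the $\cC^{[k]}$-topology, which requires quantitative control of the integral projective tensor norm of $e^{i\xi(\boldsymbol{\cdot})} - T_N(\xi,\boldsymbol{\cdot})$ uniformly in $\xi$ and the Besov embedding, whose difficulty lies in arranging that the Littlewood--Paley pieces not only converge in $L^1$ but converge summably in the IPTP norm on each $[-r,r]^{j+1}$ and for every $j \leq k$ simultaneously.
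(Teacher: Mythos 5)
First, note that the paper does not actually prove this theorem: its ``proof'' consists of citations to Sections 3.2 and 3.3 of the reference \cite{NikitopoulosNCk}, so your sketch is being measured against the source rather than against an argument in the text. Your overall architecture (cut off, pass through $W_k(\R)$, localize via the locality of divided differences, and reduce the H\"{o}lder case to the Besov case) matches the cited development, but three steps as written would fail. (1) For $g = \chi_r f \in C_c^{k+1}(\R)$, the pointwise bound $|\widehat{g}(\xi)| \leq C(1+|\xi|)^{-(k+1)}$ does \emph{not} give $\int_{\R}|\xi|^k|\widehat{g}(\xi)|\,d\xi < \infty$: the integrand decays like $|\xi|^{-1}$ and the integral diverges logarithmically. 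The inclusion $C_c^{k+1}(\R) \subseteq W_k(\R)$ is true, but one must argue via Cauchy--Schwarz, e.g.\ $\int|\xi|^k|\widehat{g}| \leq \|(1+|\xi|)^{k-(k+1)}\|_{L^2}\,\|g\|_{H^{k+1}}$, which is finite precisely because $k+1 > k+\tfrac{1}{2}$. (2) In the polynomial approximation of $f \in W_k(\R)$, the Taylor remainder bound forces you to integrate $|\xi|^k\frac{(|\xi|r)^{N+1}}{(N+1)!}e^{|\xi|r}$ against $|\mu|$, and $e^{|\xi|r}$ need not be $|\mu|$-integrable when $\mu$ merely has finite $k^{\text{th}}$ moment; worse, the moments $\int\xi^n\,\mu(d\xi)$ for $n > k$ need not exist, so $T_N$ does not even produce a polynomial. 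You must first truncate $\mu$ to $[-R,R]$ (this converges in the $\cC^{[k]}$ seminorms by dominated convergence since $|\xi|^j \leq 1+|\xi|^k$) and only then Taylor-expand.

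(3) The Besov step contains a genuine misidentification. The space $\dot{B}_1^{k,\infty}(\R)$ is the $(k,\infty,1)$-Besov space, whose defining condition on the Littlewood--Paley pieces is $\sum_n 2^{nk}\|f_n\|_{L^{\infty}} < \infty$, not $\sum_n 2^{nk}\|f_n\|_{L^1} < \infty$. The pieces $f_n$ are bounded functions that are in general not in $L^1$, so $\widehat{f_n}$ is not a finite measure and the reduction ``$f_n \in W_k(\R)$ with $\|f_n\|_{W_k} \lesssim 2^{nk}\|f_n\|_{L^1}$'' is unavailable. The actual content of this step is a Peller-type estimate: writing $f_n = f_n \ast \tilde{\psi}_n$ for a fattened Littlewood--Paley multiplier, one constructs an explicit $\ell^{\infty}$-integral projective decomposition of $f_n^{[k]}$ directly from the convolution kernel and proves $\|f_n^{[k]}\|_{r,k+1} \lesssim 2^{nk}\|f_n\|_{L^{\infty}}$; summability in $\cC^{[k]}(\R)$ (modulo the polynomial corrections inherent in the homogeneous space) then follows from the Besov condition. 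Without that kernel-level IPD estimate, part (ii) does not reduce to part (i).
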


\begin{proof}
See \cite[\S3.2]{NikitopoulosNCk} for \ref{item.WkinNCk} and \cite[\S3.3]{NikitopoulosNCk} for \ref{item.BesovHolderinNCk}.
Alternatively, see the end of \cite[\S4.1]{NikitopoulosIto} for a brief summary of all the relevant arguments.
\end{proof}

\begin{rem}\label{rem.Wiener}
By combining Theorems \ref{thm.trCkadapCk}, \ref{thm.NCkistrCk}, and \ref{thm.NCk}\ref{item.WkinNCk} with Example \ref{ex.divdiffWk}, we finally get a full proof---using rather heavy machinery---of the claims made in Example \ref{ex.Wiener}.
Since the direct proof suggested in Example \ref{ex.Wiener} is less complicated, it is reasonable to wonder whether we have gained anything by working with $NC^k(\R)$ instead of $W_k(\R)$ or $W_k(\R)_{\loc}$.
In fact, one gains two things.
First, one gains more functions.
Specifically, \cite[Thm.\ 3.4.1]{NikitopoulosNCk} demonstrates that the containment $W_k(\R)_{\loc} \subseteq NC^k(\R)$ is strict.
Second, one gains computational flexibility, even when $f \in W_k(\R)_{\loc}$.
Specifically, instead of being restricted to working with decompositions as in \eqref{eq.divdiffWk} when computing $D^kf_{\mathsmaller{\cA}}$, one can work with \textit{any} integral projective decomposition of $f^{[k]}$.
\end{rem}

\appendix
\section{\texorpdfstring{$L^p$}{} spaces of \texorpdfstring{$\mathrm{C}^*$}{}-probability spaces}\label{sec.CstarLp}

In this appendix, we show how basic facts about $L^p$ spaces of $\mathrm{W}^*$-probability spaces imply those of $L^p$ spaces of $\mathrm{C}^*$-probability spaces.
We take the $\mathrm{W}^*$ theory for granted;
see \cite{Dixmier1953,daSilva2018} for relevant results.
Let $(\cA,\E)$ be a $\mathrm{C}^*$-probability space, and write $\pi \colon \cA \to B_{\C}(H)$ for the (faithful) GNS representation corresponding to $\E$.
Recall that this means $(H,\ip{\cdot,\cdot})$ is the completion of the complex inner product space $(\cA,\ip{\cdot,\cdot}_{\E})$, where $\ip{a,b}_{\E} \coloneqq \E[b^*a]$, and $\pi(a) \colon H \to H$ is the bounded complex-linear map determined by $\pi(a)b = ab$ ($a,b \in \cA$). 

\begin{prop}\label{prop.reducetovNa}
If $\cM$ is the $\sigma$-WOT closure (equivalently, WOT closure) of $\cA$ in $B_{\C}(H)$ and
\[
\E_{\mathsmaller{\cM}}[A] \coloneqq \ip{A1,1} \qquad (A \in \cM),
\]
then $(\cM,\E_{\mathsmaller{\cM}})$ is a $\mathrm{W}^*$-probability space, and $\E_{\mathsmaller{\cM}} \circ \pi = \E$.
\end{prop}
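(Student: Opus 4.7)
My plan is to verify each ingredient of the $\mathrm{W}^*$-probability space structure in turn by transferring information from $(\cA, \E)$ to $(\cM, \tau)$ through $\pi$. Since $\E$ is faithful, $\pi$ is injective, so I would identify $\cA$ with $\pi(\cA) \subseteq B_{\C}(H)$. Then $\pi(\cA)$ is a unital $\ast$-subalgebra of $B_{\C}(H)$, and by von Neumann's bicommutant theorem its WOT and $\sigma$-WOT closures coincide and equal $\pi(\cA)''$; this justifies the parenthetical ``equivalently'' in the statement and shows $\cM$ is a von Neumann algebra. Several easy pieces I would dispatch at once: $\tau \circ \pi = \E$ because $\tau[\pi(a)] = \ip{a \cdot 1, 1}_H = \E[1^*a] = \E[a]$; $\tau$ is $\C$-linear with $\tau[I] = \E[1] = 1$ and $\tau[A^*A] = \|A \cdot 1\|^2 \geq 0$; and $\tau$ is the vector functional $A \mapsto \ip{A \cdot 1, 1}_H$, which is WOT- and hence $\sigma$-WOT-continuous, so $\tau$ is normal.

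Next I would establish traciality by a $\sigma$-WOT density bootstrap. Traciality of $\E$ yields $\tau[\pi(a)\pi(b)] = \E[ab] = \E[ba] = \tau[\pi(b)\pi(a)]$ for $a,b \in \cA$. Fixing $B \in \cM$, both $A \mapsto \tau[AB]$ and $A \mapsto \tau[BA]$ are $\sigma$-WOT-continuous on $\cM$, since left and right multiplication by $B$ are $\sigma$-WOT-continuous and $\tau$ is normal. Hence $S_B := \{A \in \cM : \tau[AB] = \tau[BA]\}$ is $\sigma$-WOT-closed. For $B \in \pi(\cA)$, $S_B \supseteq \pi(\cA)$, so $S_B = \cM$ by density. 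Repeating the argument in the other variable with $A \in \cM$ fixed then gives $\tau[AB] = \tau[BA]$ for all $A, B \in \cM$.

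Finally, for faithfulness I would exploit that $1 \in H$ is cyclic for $\cM$, together with traciality. If $\tau[A^*A] = 0$, then $\|A \cdot 1\|_H^2 = 0$, so $A \cdot 1 = 0$. For any $B \in \cM$, traciality and the fact that $A^*$ is the Hilbert-space adjoint of $A$ give
\[
\|AB \cdot 1\|_H^2 = \tau[B^*A^*AB] = \tau[A^*ABB^*] = \ip{ABB^* \cdot 1,\, A \cdot 1}_H = 0,
\]
so $AB \cdot 1 = 0$ for all $B \in \cM$. Since $\pi(\cA) \cdot 1 = \cA$ is dense in $H$ by the GNS construction, $A$ annihilates a dense subspace of $H$ and therefore vanishes. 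The step requiring the most care will be the $\sigma$-WOT continuity of $A \mapsto \tau[AB]$ and $A \mapsto \tau[BA]$ in the traciality bootstrap; once that is settled, both traciality and faithfulness follow cleanly from density plus the cyclic vector argument.
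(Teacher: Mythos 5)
Your proof is correct, and the overall strategy matches the paper's: the only substantive points are traciality and faithfulness, traciality is obtained by exactly the same $\sigma$-WOT density bootstrap (separate $\sigma$-WOT continuity of multiplication plus normality of the vector state), and faithfulness reduces to showing that $1$ is separating for $\cM$. The one place you diverge is in how you prove the separating property: you invoke the traciality of $\tau$ on all of $\cM$ (which you have just established) to compute $\|AB\cdot 1\|^2 = \tau[A^*ABB^*] = \ip{ABB^*\cdot 1, A\cdot 1} = 0$, whereas the paper works directly with a net $\pi(a_j)\to A$ and the traciality of $\E$ on $\cA$ to get the identity $\ip{Ab,c} = \ip{A1, cb^*}$ for $b,c\in\cA$. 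Your route makes the logical dependence of faithfulness on traciality explicit (so the order of the two steps matters), but it is the cleaner and more standard ``cyclic plus tracial implies separating'' argument; the paper's version avoids quoting traciality of $\tau$ on $\cM$ at the cost of a slightly more hands-on limit computation. Both are sound.
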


\begin{proof}
The only nontrivial assertions are that $\E_{\mathsmaller{\cM}}$ is faithful and tracial on $\cM$.
To see that $\E_{\mathsmaller{\cM}}$ is tracial, note that if $a,b \in \cA$, then
\[
\E_{\mathsmaller{\cM}}[\pi(a)\pi(b)] = \E_{\mathsmaller{\cM}}[\pi(ab)] = \E[ab] = \E[ba] = \E_{\mathsmaller{\cM}}[\pi(ba)] = \E_{\mathsmaller{\cM}}[\pi(b)\pi(a)]
\]
by the traciality of $\E$.
In other words, $\E_{\mathsmaller{\cM}}[AB] = \E_{\mathsmaller{\cM}}[BA]$ for all $A,B \in \pi(\cA)$.
Since multiplication $\cM \times \cM \to \cM$ is argumentwise $\sigma$-WOT continuous and $\E_{\mathsmaller{\cM}}$ is normal, we conclude that $\E_{\mathsmaller{\cM}}$ is tracial from the $\sigma$-WOT density of $\pi(\cA)$ in $\cM$.

To see that $\E_{\mathsmaller{\cM}}$ is faithful, it suffices to prove that if $A \in \cM$ and $A1 = 0$, then $A = 0$, i.e., that $1 \in \cA \subseteq H$ is separating for $\cM$.
To this end, suppose $A \in \cM$ and $A1=0$.
Now, let $(a_j)_{j \in J}$ be a net in $\cA$ such that $\pi(a_j) \to A$ in the $\sigma$-WOT.
If $b,c \in \cA$, then $\ip{\pi(a_j)b, c} = \E[c^*a_jb] = \E[(cb^*)^*a_j] = \ip{\pi(a_j)1,cb^*}$ again by the traciality of $\E$.
But then $\ip{Ab,c} = \lim_{j \in J}\ip{\pi(a_j)b,c} = \lim_{j \in J}\ip{\pi(a_j)1,cb^*} = \lim_{j \in J} \ip{A1,cb^*} = 0$.
Since $\cA$ is dense in $H$, we conclude that $\ip{Ah,k} = 0$ for all $h,k \in H$, from which it follows that $A = 0$.
\end{proof}

\begin{cor}\label{cor.CstarLpfromWstarLp}
Let $p \in [1,\infty)$, and write $\norm{a}_p \coloneqq \E[|a|^p]^{1/p}$ and $\norm{a}_{\infty} \coloneqq \norm{a}$ for $a \in \cA$.
\begin{enumerate}[label=(\roman*),font=\normalfont]
    \item $\norm{\cdot}_p$ is a norm on $\cA$, and $|\E[a]| \leq \norm{a}_1$ for all $a \in \cA$.
    \item Noncommutative H\"{o}lder's inequality holds:
    If $p_1,\ldots,p_k,q \in [1,\infty]$ and $1/p_1+\cdots+1/p_k \leq 1/q$, then $\norm{a_1 \cdots a_k}_q \leq \norm{a_1}_{p_1}\cdots\norm{a_k}_{p_k}$ for all $a_1,\ldots,a_k \in \cA$.
\end{enumerate}
We write $L^p(\cA,\E)$ for the completion of $\cA$ with respect to $\norm{\cdot}_p$ and $\tilde{\E} \colon L^1(\cA,\E) \to \C$ for the bounded complex-linear extension of $\E \colon \cA \to \C$.
\end{cor}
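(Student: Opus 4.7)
The plan is to reduce everything to the corresponding facts in the $\mathrm{W}^*$-setting, which are taken for granted, via the embedding supplied by Proposition \ref{prop.reducetovNa}. Applying that proposition produces a $\mathrm{W}^*$-probability space $(\cM, \tau)$ together with the faithful unital $\ast$-homomorphism $\pi \colon \cA \to \cM$ satisfying $\tau \circ \pi = \E$. The key observation I would establish first is that $\pi$ pulls back the noncommutative $L^p$-norm on $\cM$ to $\norm{\cdot}_p$ on $\cA$; that is, $\norm{a}_p = \norm{\pi(a)}_{L^p(\tau)}$ for every $a \in \cA$ and every $p \in [1, \infty]$.

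For finite $p$, this reduces to verifying $\pi(|a|^p) = |\pi(a)|^p$ for $a \in \cA$, which follows from the fact that $\ast$-homomorphisms between $\mathrm{C}^*$-algebras commute with continuous functional calculus on positive elements: since $|a|^p = (a^*a)^{p/2}$ in the $\mathrm{C}^*$-algebra $\cA$ and $\pi(a)^*\pi(a) = \pi(a^*a)$, the functional calculus applied to the continuous function $x \mapsto x^{p/2}$ commutes with $\pi$, yielding $\pi((a^*a)^{p/2}) = (\pi(a)^*\pi(a))^{p/2} = |\pi(a)|^p$. Applying $\tau$ and taking $p$-th roots gives the norm identity. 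For $p = \infty$, the identity is automatic because any injective $\ast$-homomorphism between $\mathrm{C}^*$-algebras is isometric, and $\norm{\pi(a)}_{L^\infty(\tau)} = \norm{\pi(a)}_{\cM}$ by the convention of the paper.

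With this identity in hand, both items of the corollary transfer from the $\mathrm{W}^*$-case. For item (i): the seminorm property of $\norm{\cdot}_p$ is inherited, and if $\norm{a}_p = 0$ then $\pi(a) = 0$ in $L^p(\tau)$, which forces $\pi(a) = 0$ in $\cM$ (since $\pi(a) \in \cM$ embeds into $L^p(\tau)$), hence $a = 0$ by injectivity of $\pi$. The bound $|\E[a]| = |\tau[\pi(a)]| \leq \norm{\pi(a)}_{L^1(\tau)} = \norm{a}_1$ is immediate. For item (ii), using that $\pi$ is a $\ast$-homomorphism,
\[
\norm{a_1 \cdots a_k}_q = \norm{\pi(a_1) \cdots \pi(a_k)}_{L^q(\tau)} \leq \norm{\pi(a_1)}_{L^{p_1}(\tau)} \cdots \norm{\pi(a_k)}_{L^{p_k}(\tau)} = \norm{a_1}_{p_1} \cdots \norm{a_k}_{p_k}
\]
by the $\mathrm{W}^*$-version of noncommutative Hölder's inequality (which, as noted in the paper, is stated for $p_1^{-1} + \cdots + p_k^{-1} = q^{-1}$ but extends to the inequality version since $p \leq q \Rightarrow \norm{\cdot}_p \leq \norm{\cdot}_q$).

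The main (modest) obstacle is the functional-calculus compatibility $\pi(|a|^p) = |\pi(a)|^p$; everything else is essentially formal once the norm identity is in place. No nontrivial analysis is required beyond what Proposition \ref{prop.reducetovNa} already provides.
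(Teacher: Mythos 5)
Your proposal is correct and follows essentially the same route as the paper: the paper's proof likewise rests on the key identity $\E[|a|^p] = \tau[|\pi(a)|^p]$, obtained from the compatibility of the $\ast$-homomorphism $\pi$ with continuous functional calculus, and then transfers both items from the $\mathrm{W}^*$-theory on $(\cM,\tau)$. The only difference is cosmetic: you spell out the definiteness of $\norm{\cdot}_p$ via faithfulness of $\tau$ and injectivity of $\pi$, which the paper leaves implicit.
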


\begin{proof}
Since $\pi$ is a $\ast$-homomorphism, if $a \in \cA$, then $|\pi(a)| = \pi(|a|)$.
Also, if $a \in \cA$ is normal ($a^*a=aa^*$) and $f \colon \R \to \C$ is a continuous function, then $\pi(a)$ is normal, and $f(\pi(a)) = \pi(f(a))$.
From this, we obtain the key observation that
\begin{equation}
    \E[|a|^p] = \E_{\mathsmaller{\cM}}[\pi(|a|^p)] = \E_{\mathsmaller{\cM}}[|\pi(|a|)|^p] = \E_{\mathsmaller{\cM}}[|\pi(a)|^p] \qquad (a \in \cA). \label{eq.Lpnormisom}
\end{equation}
Since $\E = \E_{\mathsmaller{\cM}}\circ \pi$ as well and $(\cM,\E_{\mathsmaller{\cM}})$ is a $\mathrm{W}^*$-probability space, the claimed properties follow easily from the corresponding properties of the noncommutative $L^p$ norm $\norm{A}_{L^p(\E_{\mathsmaller{\cM}})} = \E_{\mathsmaller{\cM}}[|A|^p]^{1/p}$ on $\cM$.
\end{proof}

By definition, $L^2(\cA,\E)$ is $H$ as a Banach space, so $L^2(\cA,\E)$ is a Hilbert space.
We write $\ip{\cdot,\cdot}_2 = \ip{\cdot,\cdot}$ for its inner product.
We now prove additional properties of $L^p(\cA,\E)$.

\begin{lem}\label{lem.LpALpM}
Suppose $1 \leq p < q < \infty$.
\begin{enumerate}[label=(\roman*),font=\normalfont]
    \item $\pi \colon \cA \to \cM$ extends to an isometric isomorphism $\iota_p \colon L^p(\cA,\E) \to L^p(\cM,\E_{\mathsmaller{\cM}})$, and $\tilde{\E} = \E_{\mathsmaller{\cM}} \circ \iota_1$.\label{item.LpALpM}
    \item The identity on $\cA$ extends to an injective contraction $\iota_{q,p} \colon L^q(\cA,\E) \to L^p(\cA,\E)$.
    Accordingly, we shall consider $L^q(\cA,\E)$ as a subset of $L^p(\cA,\E)$.\label{item.LpLq}
\end{enumerate}
\end{lem}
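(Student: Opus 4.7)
The plan is to leverage Proposition \ref{prop.reducetovNa} and the key identity $\E[|a|^p] = \tau[|\pi(a)|^p]$ established in Eq. \eqref{eq.Lpnormisom} in order to transfer both statements from the ambient $\mathrm{W}^*$-probability space $(\cM,\tau)$, where the analogous facts are part of the standard theory.

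For (i), the identity \eqref{eq.Lpnormisom} says that $\pi \colon \cA \to \cM$ is isometric with respect to the $L^p$-norms. Since $\cA$ is by construction $\norm{\cdot}_p$-dense in $L^p(\cA,\E)$, $\pi$ extends uniquely to an isometric embedding $\iota_p \colon L^p(\cA,\E) \to L^p(\cM,\tau)$. The compatibility $\tilde\E = \tau \circ \iota_1$ then follows by continuity from the identity $\E = \tau \circ \pi$ on the dense subset $\cA$. The main obstacle is surjectivity of $\iota_p$: since $\cM$ is $L^p$-dense in $L^p(\cM,\tau)$, I must show that each $A \in \cM$ admits $L^p(\tau)$-approximations from $\pi(\cA)$. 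I would invoke the Kaplansky density theorem to produce a net $(a_j)_{j \in J}$ in $\cA$ with $\norm{\pi(a_j)} \leq \norm{A}$ and $\pi(a_j) \to A$ strongly. To upgrade SOT-convergence of this \emph{bounded} net to $L^p(\tau)$-convergence, for $p = 2$ I would use the identification of $L^2(\tau)$ with the GNS Hilbert space (so $\norm{\pi(a_j) - A}_{L^2(\tau)} = \norm{(\pi(a_j) - A)1}_H \to 0$); for $p > 2$, the operator inequality $|B|^p \leq \norm{B}^{p-2}|B|^2$ applied to $B = \pi(a_j) - A$ bounds the $L^p$-norm by the $L^2$-norm times the uniform operator-norm bound; for $1 \leq p < 2$, $L^p$-convergence follows from $L^2$-convergence since $\tau$ is a tracial state.

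For (ii), the $L^p$-monotonicity $\norm{a}_p \leq \norm{a}_q$ for $a \in \cA$ is immediate from Eq. \eqref{eq.Lpnormisom} together with the analogous $\mathrm{W}^*$-inequality on $\cM$, so the identity map $\cA \to \cA$ extends to a contraction $\iota_{q,p}$ between the respective completions. For injectivity, I would use that the obvious commuting square on the dense subalgebra $\cA$, namely $\iota_p \circ \iota_{q,p} = \iota_{q,p}^{\cM} \circ \iota_q$ (where $\iota_{q,p}^{\cM} \colon L^q(\cM,\tau) \to L^p(\cM,\tau)$ is the corresponding $\mathrm{W}^*$-contraction), extends by continuity to all of $L^q(\cA,\E)$. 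Since $\iota_p$ and $\iota_q$ are bijective by (i) and $\iota_{q,p}^{\cM}$ is injective in the $\mathrm{W}^*$-setting, $\iota_{q,p}$ must be injective as well.

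The only genuinely non-formal step is the bounded-SOT-to-$L^p(\tau)$ upgrade underlying surjectivity in (i); everything else reduces cleanly to basic $\mathrm{W}^*$-facts through the isometry \eqref{eq.Lpnormisom}.
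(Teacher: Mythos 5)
Your proof is correct and follows essentially the same route as the paper's: transfer everything through the isometry \eqref{eq.Lpnormisom}, use Kaplansky density to get surjectivity of $\iota_p$ (i.e.\ density of $\pi(\cA)$ in $L^p(\cM,\tau)$), and obtain (ii) from the commuting square with the injective $\mathrm{W}^*$-inclusion $L^q(\cM,\tau) \hookrightarrow L^p(\cM,\tau)$. The only (immaterial) variation is the step upgrading bounded strong convergence to $L^p(\tau)$-convergence: you use the GNS identification of the $L^2(\tau)$-norm together with the operator inequality $|B|^p \leq \norm{B}^{p-2}|B|^2$ for $p > 2$, whereas the paper shows $\tau\big[|\pi(a_j)-A|^{2n}\big] \to 0$ for $2n > p$ using normality of $\tau$ and joint $\sigma$-S$^*$OT-continuity of multiplication on bounded sets; both arguments are valid.
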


\begin{proof}
We take both items in turn.

\ref{item.LpALpM} By \eqref{eq.Lpnormisom}, $\pi$ extends to an isometry $L^p(\cA,\E) \to L^p(\cM,\E_{\mathsmaller{\cM}})$, so all we need to prove is that $\pi(\cA)$ is dense in $L^p(\cM,\E_{\mathsmaller{\cM}})$.
To this end, let $A \in \cM$.
By Kaplansky's density theorem, there is a bounded net $(a_j)_{j \in J}$ in $\cA$ such that $\pi(a_j) \to A$ in the $\sigma$-S$^*$OT ($\sigma$-strong$^*$ operator topology).
We claim that $\pi(a_j) \to A$ in $L^p(\cM,\E_{\mathsmaller{\cM}})$.
Indeed, since multiplication is jointly $\sigma$-S$^*$OT-continuous on bounded sets, if $n \in \N$, then the product $|\pi(a_j) - A|^{2n} = ((\pi(a_j) - A)^*(\pi(a_j) - A))^n$ converges to $0$ in the $\sigma$-S$^*$OT (in particular, in the $\sigma$-WOT).
Since $\E_{\mathsmaller{\cM}}$ is normal, we conclude that $\lim_{j \in J}\norm{\pi(a_j) - A}_{L^{2n}(\E_{\mathsmaller{\cM}})}^{2n} = \lim_{j \in J}\E_{\mathsmaller{\cM}}[|\pi(a_j)-A|^{2n}] = 0$.
Taking $n > p/2$, we get $\lim_{j \in J}\norm{\pi(a_j) - A}_{L^p(\E_{\mathsmaller{\cM}})} \leq \lim_{j \in J}\norm{\pi(a_j) - A}_{L^{2n}(\E_{\mathsmaller{\cM}})} = 0$.
This proves the claim.
Since $\cM$ is dense in $L^p(\cM,\E_{\mathsmaller{\cM}})$ by definition, this completes the proof that $\pi \colon \cA \to \cM$ extends to an isometric isomorphism $\iota_p \colon L^p(\cA,\E) \to L^p(\cM,\E_{\mathsmaller{\cM}})$.
The identity $\tilde{\E} = \E_{\mathsmaller{\cM}} \circ \iota_1$ then follows from the identity $\E = \E_{\mathsmaller{\cM}} \circ \pi$.

\ref{item.LpLq} Write $I_{q,p} \colon L^q(\cM,\E_{\mathsmaller{\cM}}) \to L^p(\cM,\E_{\mathsmaller{\cM}})$ for the inclusion, which we know to be an injective contraction from the $\mathrm{W}^*$ theory.
Then $\iota_{q,p} \coloneqq \iota_p^{-1}\circ I_{q,p}  \circ \iota_q$ is an injective contraction.
Since $\iota_{q,p}$ clearly agrees with $\id_{\cA}$ on $\cA$, we are done.
\end{proof}

\begin{prop}\label{prop.dual}
Let $p,q \in [1,\infty]$ be such that $1/p + 1/q = 1$.
\begin{enumerate}[label=(\roman*),font=\normalfont]
    \item If $a \in \cA$, then $\norm{a}_p = \sup\{|\E[ab]| : b \in \cA, \, \norm{a}_q \leq 1\}$.
    If $1 < p,q < \infty$ as well, then the map $\cA \ni a \mapsto (b \mapsto \tilde{\E}[ab]) \in L^p(\cA,\E)^*$ extends to an isometric isomorphism $L^q(\cA,\E) \to L^p(\cA,\E)^*$.\label{item.dual}
    \item Let us identify $\cA$ with $\pi(\cA)$ so that $\cA \subseteq \cM$.
    The map $\cA \ni a \mapsto (b \mapsto \tilde{\E}[ab]) \in L^1(\cA,\E)$ extends to an isometric isomorphism $\cM \to L^1(\cA,\E)^*$ that is a homeomorphism with respect to the $\sigma$-WOT on $\cM$ and the weak$^*$ topology on $L^1(\cA,\E)^*$.\label{item.dualL1}
\end{enumerate}
\end{prop}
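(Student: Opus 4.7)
The plan is to reduce both parts of the proposition to the corresponding duality statements for the enveloping $\mathrm{W}^*$-probability space $(\cM,\tau)$ from Proposition \ref{prop.reducetovNa}, using the isometric isomorphisms $\iota_p \colon L^p(\cA,\E) \to L^p(\cM,\tau)$ from Lemma \ref{lem.LpALpM}\ref{item.LpALpM} together with the identity $\tau \circ \pi = \E$. All the duality statements we need on the $\mathrm{W}^*$-side (including the $p \in \{1,\infty\}$ endpoint cases) are standard and may be taken for granted per the beginning of the appendix.

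First I would establish the supremum formula in part (i). Fix $a \in \cA$ and apply the $\mathrm{W}^*$-level dual formula to $\pi(a) \in \cM$ to get
\[
\norm{a}_p = \norm{\pi(a)}_{L^p(\tau)} = \sup\bigl\{|\tau[\pi(a)B]| : B \in \cM,\ \norm{B}_{L^q(\tau)} \leq 1\bigr\}.
\]
The key step is to replace the supremum over $B \in \cM$ with a supremum over $\pi(b)$ for $b \in \cA$. By the Kaplansky-density argument inside the proof of Lemma \ref{lem.LpALpM}\ref{item.LpALpM}, $\pi(\cA)$ is $L^q(\tau)$-dense in $\cM$ (and, more to the point, in the closed $L^q$-unit ball once one rescales). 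Combined with H\"{o}lder's inequality $|\tau[\pi(a)(B-\pi(b))]| \leq \norm{\pi(a)}_{L^p(\tau)}\norm{B-\pi(b)}_{L^q(\tau)}$, this yields $\norm{a}_p = \sup\{|\tau[\pi(a)\pi(b)]| : b \in \cA,\ \norm{b}_q \leq 1\} = \sup\{|\E[ab]| : b \in \cA,\ \norm{b}_q \leq 1\}$, where the last equality uses $\tau[\pi(a)\pi(b)] = \tau[\pi(ab)] = \E[ab]$.

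For the isometric-isomorphism clause in part (i) (with $1 < p,q < \infty$), I would compose the three isomorphisms
\[
L^q(\cA,\E) \xrightarrow{\,\iota_q\,} L^q(\cM,\tau) \xrightarrow{\,D\,} L^p(\cM,\tau)^* \xrightarrow{\,(\iota_p^{-1})^*\,} L^p(\cA,\E)^*,
\]
where $D$ is the (known) $\mathrm{W}^*$ duality $A \mapsto (B \mapsto \tau[AB])$ and $(\iota_p^{-1})^*$ is the Banach-space adjoint of the isometric inverse. The composition is an isometric isomorphism, and on $a \in \cA \subseteq L^q(\cA,\E)$ it sends $a$ to the functional $b \mapsto \tau[\iota_q(a)\,\iota_p(b)] = \tau[\pi(a)\pi(b)] = \E[ab] = \tilde{\E}[ab]$, which identifies it with the extension of the map specified in the statement. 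Uniqueness of the bounded linear extension of $\cA \to L^p(\cA,\E)^*$ (guaranteed by the sup formula of part (i)) finishes this part. Part (ii) is handled in the same way using the $\mathrm{W}^*$-predual duality $\cM \xrightarrow{\cong} L^1(\cM,\tau)^*$ in place of $D$.

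The only step requiring genuine care is the topological claim in part (ii): that the resulting isometric isomorphism $\Phi \colon \cM \to L^1(\cA,\E)^*$ is a homeomorphism from the $\sigma$-WOT on $\cM$ to the weak$^*$ topology on $L^1(\cA,\E)^*$. Since $L^1(\cM,\tau)$ is the (unique) predual of $\cM$, the $\mathrm{W}^*$-duality $D_1 \colon \cM \to L^1(\cM,\tau)^*$ is already a homeomorphism for the $\sigma$-WOT and weak$^*$ topology. The map $\Phi = (\iota_1^{-1})^* \circ D_1$ therefore inherits the homeomorphism property provided $(\iota_1^{-1})^*$ is a weak$^*$-to-weak$^*$ homeomorphism --- and this holds for the adjoint of any isometric isomorphism of Banach spaces, since weak$^*$ convergence of functionals is tested on a dense set of vectors and $\iota_1$ is surjective. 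The forward-and-backward argument is then routine: a net $(A_\lambda)$ in $\cM$ converges to $A$ in the $\sigma$-WOT iff $\tau[A_\lambda B] \to \tau[AB]$ for every $B \in L^1(\cM,\tau)$ (by normality and density of $\pi(\cA)$), iff $\tilde{\E}[A_\lambda b] \to \tilde{\E}[Ab]$ for every $b \in L^1(\cA,\E)$ via $B = \iota_1(b)$, which is exactly weak$^*$ convergence of $\Phi(A_\lambda)$ to $\Phi(A)$ in $L^1(\cA,\E)^*$.
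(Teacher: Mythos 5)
Your proposal follows essentially the same route as the paper: establish the sup formula by passing to $(\cM,\tau)$ via $\pi$ and a density argument, then obtain the isometric isomorphisms by conjugating the known $\mathrm{W}^*$ dualities with $\iota_p$ and the adjoint identification of duals (the paper writes this as $\iota_q^*$; your $(\iota_p^{-1})^*$ points in the wrong direction, but the intended map $\varphi \mapsto \varphi\circ\iota_p$ is clear). Your treatment of the weak$^*$/$\sigma$-WOT homeomorphism in part (ii) just fills in details the paper leaves to the reader.

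The one step that would fail as literally written is the density claim at the endpoint $p=1$, $q=\infty$: there $\norm{\cdot}_{L^\infty(\tau)}$ is the operator norm and $\pi(\cA)$ is \emph{not} operator-norm dense in $\cM$, so the H\"{o}lder estimate $|\tau[\pi(a)(B-\pi(b))]| \leq \norm{\pi(a)}_{L^1(\tau)}\norm{B-\pi(b)}_{L^\infty(\tau)}$ cannot be made small. The repair is the one already implicit in your appeal to Kaplansky: the unit ball of $\pi(\cA)$ is $\sigma$-strong$^*$ dense in that of $\cM$, and $B \mapsto \tau[\pi(a)B]$ is $\sigma$-WOT continuous by normality of $\tau$, so the supremum over the ball of $\cM$ is already attained as a supremum over the ball of $\pi(\cA)$ without any norm approximation. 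For $q<\infty$ your argument is correct as stated.
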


\begin{proof}
We take both items in turn.

\ref{item.dual} Since $\pi(\cA) \subseteq L^q(\cM,\E_{\mathsmaller{\cM}})$ is dense,
\[
\norm{A}_{L^p(\E_{\mathsmaller{\cM}})} = \sup\{|\E_{\mathsmaller{\cM}}[A\pi(b)]| : b \in \cA, \; \norm{b}_q = \norm{\pi(b)}_{L^q(\E_{\mathsmaller{\cM}})} \leq 1\} \qquad (A \in \cM).\pagebreak
\]
Taking $A = \pi(a)$ with $a \in \cA$, we get
\begin{align*}
    \norm{a}_p & = \norm{\pi(a)}_{L^p(\E_{\mathsmaller{\cM}})} = \sup\{|\E_{\mathsmaller{\cM}}[\pi(a)\pi(b)]| : b \in \cA, \; \norm{b}_q \leq 1\} \\
    & = \sup\{|\E_{\mathsmaller{\cM}}[\pi(ab)]| : b \in \cA, \; \norm{b}_q \leq 1\} = \sup\{|\E[ab]| : b \in \cA, \; \norm{b}_q \leq 1\}.
\end{align*}
As a result, $\cA \ni a \mapsto (b \mapsto \tilde{\E}[ab]) \in L^p(\cA,\E)^*$ extends to a linear isometry $T \colon L^q(\cA,\E) \to L^p(\cA,\E)^*$.
Identifying $L^p(\cA,\E) \cong L^p(\cM,\E_{\mathsmaller{\cM}})$ via $\iota_p$ and $L^q(\cM,\E_{\mathsmaller{\cM}})^* \cong L^q(\cA,\E)^*$ via $\iota_q^*$, we conclude from the $\mathrm{W}^*$ theory that $T$ is surjective.

\ref{item.dualL1} Identifying $L^1(\cM,\E_{\mathsmaller{\cM}})^* \cong L^1(\cA,\E)^*$ via $\iota_1^*$, the claims of this item follow readily from the $\mathrm{W}^*$ theory and the $\sigma$-WOT density of $\cA$ in $\cM$.
\end{proof}

\begin{rem}\label{rem.Linfty}
Since $L^1(\cA,\E)^* \cong \cM$, it is conceptually appropriate to define $L^{\infty}(\cA,\E) \coloneqq \cM$.
For notational convenience, we do not do so in this paper;
we take $L^{\infty}(\cA,\E)$ to be $\cA$.
\end{rem}

We end this appendix by proving the $\mathrm{C}^*$ case of Proposition \ref{prop.condexp}, again taking the $\mathrm{W}^*$ case for granted.

\begin{proof}[Proof of Proposition \ref{prop.condexp}]
Let $\cB \subseteq \cA$ be a $\mathrm{C}^*$-subalgebra, and write $\cN$ for the $\sigma$-WOT closure (equivalently, WOT closure) of $\pi(\cB)$ in $\cM$.
Then $\iota_p$ restricts to an isometric isomorphism $L^p(\cB,\E) \to L^p(\cN,\E_{\mathsmaller{\cM}})$ for all $p \in [1,\infty)$.
The only non-obvious part of this statement is the surjectivity of the restrictions.
This is taken care of by the argument from the proof of Lemma \ref{lem.LpALpM}\ref{item.LpALpM}, which shows that $\pi(\cB)$ is dense in $L^p(\cN,\E_{\mathsmaller{\cM}})$.
Therefore, identifying $L^p(\cA,\E) \cong L^p(\cM,\E_{\mathsmaller{\cM}})$ and $L^p(\cB,\E) \cong L^p(\cN,\E_{\mathsmaller{\cM}})$ using $\iota_p$, the map $\E[\cdot \mid \cB] \coloneqq \E_{\mathsmaller{\cM}}[\cdot \mid \cN]$ satisfies all the desired properties.
\end{proof}

\section{Notation index}\label{sec.nota}

\begin{tabbing}
$\cP_I$, $\cP_I^*$ \qquad\qquad\qquad\qquad \= partitions and augmented partitions of interval $I$; Notas.\ \ref{nota.part} \& \ref{nota.augpart}, pp.\ \pageref{nota.part} \& \pageref{nota.augpart} \\

$1_S$ \> indicator function of $S$; p.\ \pageref{page.indicator} \\

$V(F:I)$ \> variation of $F$ on interval $I$; Nota.\ \ref{nota.nota}\ref{item.V1}, p.\ \pageref{item.V1} \\

$B_k(\cV_1 \times \cdots \times \cV_k;\cV)$ \> bounded real--$k$-linear maps $\cV_1 \times \cdots \times \cV_k \to \cV$; Nota.\ \ref{nota.nota}\ref{item.Bk}, p.\ \pageref{item.Bk} \\

$F(t\pm)$, $F_{\pm}$ \> left/right limit of $F$ at $t$, left/right limit function of $F$; Nota.\ \ref{nota.nota}\ref{item.l/rlim}, p.\ \pageref{item.l/rlim} \\

$L_{(\loc)}^p(\Om,\mu;\cV)$ \> (local) Bochner $L^p$ space; Nota.\ \ref{nota.nota}\ref{item.Bochner}, p.\ \pageref{item.Bochner} \\

$(\mathcal{A},(\mathcal{A}_t)_{t\ge 0},\E = \E_{\mathsmaller{\cA}})$ \> filtered $\mathrm{C}^*$- or $\mathrm{W}^\ast$-probability space; Def.\ \ref{def.filtr}, pp.\ \pageref{page.Cstarprob} \& \pageref{def.filtr}\\

$\tr_n$ \> normalized trace on $n\times n$ matrices; Ex.\ \ref{ex.randommatrices}, p.\ \pageref{ex.randommatrices} \\

$L^p(\mathcal{A},\E) = L^p(\E)$, \> noncommutative $L^p$ space; Nota.\ \ref{nota.Lp}, p.\ \pageref{nota.Lp} \\

$\norm{\cdot}_p = \norm{\cdot}_{L^p(\E)}$ \> noncommutative $L^p$ norm; Nota.\ \ref{nota.Lp}, p.\ \pageref{nota.Lp} \\

$\E[\,\cdot\mid \mathcal{B}]$ \> conditional expectation onto $\mathrm{C}^\ast$- or $\mathrm{W}^\ast$-subalgebra $\mathcal{B}$; Prop.\ \ref{prop.condexp}, p.\ \pageref{prop.condexp} \\

$B_k^{p_1,\ldots,p_k;p}$ \> bounded real--$k$-linear maps $L^{p_1}(\E_1)\times\cdots\times L^{p_k}(\E_k)\to L^{p}(\E)$; \\
\> Nota.\ \ref{nota.bddlin}, p.\ \pageref{nota.bddlin} \\

$\|\cdot\|_{p_1,\ldots,p_k;p}$ \> operator norm on $B_k^{p_1,\ldots,p_k;p}$; Nota.\ \ref{nota.bddlin}, p.\ \pageref{nota.bddlin} \\

$\mathbb{B}_k,\mathbb{B}_k(\cA^d;\cA^m),\mathbb{B}_k(\cA)$ \> multilinear maps that are bounded uniformly on tuples of $L^p$ spaces with \\ \> exponents satisfying a H\"{o}lder conjugate relation; Notas.\ \ref{nota.bddlin}, \ref{nota.Bbkmult}, \& \ref{nota.tens}\ref{item.BbkA}, \\
\> pp.\ \pageref{nota.bddlin}, \pageref{nota.Bbkmult}, \& \pageref{item.BbkA} \\

$\vertiii{\cdot}_k$ \> norm on $\mathbb{B}_k$; Nota.\ \ref{nota.bddlin}, p.\ \pageref{nota.bddlin} \\

$\mathbb{C}\langle \mathbf{x}\rangle$, $\mathbb{C}^\ast\langle\mathbf{x}\rangle$ \> noncommutative ($\ast$-)polynomials in $\x = (x_1,\ldots,x_n)$; Nota.\ \ref{nota.TrPoly}, p.\ \pageref{nota.TrPoly} \\

$\mathrm{TrP} (\mathbf{x})$, $\mathrm{TrP}^\ast(\mathbf{x})$, $\mathrm{TrP}^\ast_n$ \> trace ($\ast$-)polynomials in $\x = (x_1,\ldots,x_n)$; Nota.\ \ref{nota.TrPoly}, p.\ \pageref{nota.TrPoly} \\

$\mathrm{TrP}^\ast_{n,k,d}$, $\mathrm{TrP}^{\ast,\mathbb{C}}_{n,k,d}$  \> trace $\ast$-polynomials in $(\x,\y_1,\ldots,\y_k)$ that are ($\C$--)$k$-linear in $(\mathbf{y}_1,\ldots,\mathbf{y}_k)$, \\
\> where $\x = (x_1,\ldots,x_n)$, $\mathbf{y}_j = (y_{j,1},\ldots,y_{j,d_j})$, and $d = (d_1,\ldots,d_k)$; \\
\> Nota.\ \ref{nota.lintrpoly}, p.\ \pageref{nota.lintrpoly} \\

$\mathrm{ev}^n_{\mathsmaller{(\mathcal{A},\E)}}$, $\mathrm{ev}^{n,m,k,d}_{\mathsmaller{(\mathcal{A},\E)}}$, $P_{\mathsmaller{(\mathcal{A},\E)}}$ \> evaluation maps for spaces of trace $\ast$-polynomials; Nota.\ \ref{nota.eval}, p.\ \pageref{nota.eval} \\

$[1,\infty\rangle$ \> either $[1,\infty]$ or $[1,\infty)$; Conv.\ \ref{conv.cond}, p.\ \pageref{conv.cond} \\

$\mathcal{F}_{k,t}^{p_1,\ldots,p_k;p}$, $\mathcal{F}_{k,t}$, $\mathcal{F}_t$ \> induced filtrations on spaces of multilinear maps; Def.\ \ref{def.adapt}\ref{item.klinadap1}--\ref{item.klinadap2}, p.\ \pageref{item.klinadap1} \\

$C_a(\mathbb{R}_+;L^p(\E))$ \> $L^p$-continuous, adapted processes; Def.\ \ref{def.adapt}\ref{item.Lpadap}, p.\ \pageref{item.Lpadap} \\

$\mathcal{T}_t$, $\mathcal{T}_{k,t}$, $\mathcal{T}_{m,k,d,t}$, etc. \> closure of evaluations of trace polynomials at arguments from the filtration \\
\> at time $t$; Nota.\ \ref{nota.tens}\ref{item.Tfilt}, p.\ \pageref{item.Tfilt} \\

$\#_k$, $\#_k^{\mathsmaller{\E}}$ \> alternating (expectation) multiplication maps on $k$-fold tensor products; \\
\> Nota.\ \ref{nota.tens}\ref{item.hash}--\ref{item.Ehash}, p.\ \pageref{item.hash} \\

$\mathbb{FV}^p = \mathbb{FV}_{\mathsmaller{\cA}}^p$ \> $L^p$-continuous $L^p$-finite variation processes; Def.\ \ref{def.processes}\ref{item.FV}, p.\ \pageref{item.FV} \\

$\M^p = \M_{\mathsmaller{\cA}}^p$, $\tbM^p = \tbM_{\mathsmaller{\cA}}^p$ \> $L^p$-continuous martingales, closure of $\M^{\infty}$ in $\M^p$; Def.\ \ref{def.processes}\ref{item.mart}, p.\ \pageref{item.mart} \\

$X^t$ \> process $X$ stopped at time $t$; Nota.\ \ref{nota.stop}, p.\ \pageref{nota.stop}\\

$X^{\mathrm{m}}$, $X^{\mathrm{fv}}$ \> martingale part of $X$, FV part of $X$; Cor.\ \ref{cor.uniquedecomp}, p.\ \pageref{cor.uniquedecomp}\\

$F^{(\Pi,\xi)}$, $F^{\Pi}$ \> step-function approximations of $F$ associated to (augmented) partitions; \\
\> Nota.\ \ref{nota.augpart}, p.\ \pageref{nota.augpart}\\

$\ell_{(\loc)}^{\infty}(\R_+;\cV)$  \> (locally) bounded functions $\R_+ \to \cV$;
pp.\ \pageref{page.ell} \& \pageref{thm.QC1} \\

$\nu_F(\d t) = \norm{\d F(t)}_{\cV}$ \> variation measure of $F \colon I \to \cV$; Thm.\ \ref{thm.LSint} \& Lem.\ \ref{lem.elemintbd}, p.\ \pageref{thm.LSint} \\

$\EP^{p;q}$, $\EP$ \> elementary predictable processes; Def.\ \ref{def.EP}, p.\ \pageref{def.EP} \\

$\kappa_X$ \> measure associated to $L^2$-decomposable process $X$; Nota.\ \ref{nota.kappa}, p.\ \pageref{nota.kappa} \\

$\cI(X)$, $\tilde{\cI}(X)$ \> stochastically $X$-integrable processes; Def.\ \ref{def.integrands}, p.\ \pageref{def.integrands} \\

$\norm{\cdot}_{X,t}$ \> seminorm on $\cI(X)$; Def.\ \ref{def.integrands}, p.\ \pageref{def.integrands} \\

$\into H[\d X] = I_X(H)$ \> (stochastic) integral of $H$ against $X$; Nota.\ \ref{nota.EPint} \& Thm.\ \ref{thm.stochint}, pp.\ \pageref{nota.EPint} \& \pageref{thm.stochint} \\

$\mathbb{L}^p\text{-}\lim$ \> locally uniform $L^p$-limit; Nota.\ \ref{nota.bLplim}, p.\ \pageref{nota.bLplim} \\

LCLB, LLLB  \> left-continuous and locally bounded, left-limited and locally bounded; \\
\> Ex.\ \ref{ex.LLLB}, p.\ \pageref{ex.LLLB} \\

$\mathrm{RS}_{\Pi}^{X,Y}(\Lambda)$ \> quadratic Riemann--Stieltjes sum; Nota.\ \ref{nota.qRS}, p.\ \pageref{nota.qRS} \\

$\mathrm{Q}_0$ \> adapted, bilinear processes $\Lambda \colon \R_+ \to \mathbb{B}_2$ that are left-continuous with locally \\
\> bounded variation w.r.t.\ $\vertiii{\cdot}_2$; Nota.\ \ref{nota.Q}, p.\ \pageref{nota.Q} \\

$\mathrm{Q}$ \> closure of $\mathrm{Q}_0$ in $\ell_{\loc}^{\infty}(\R_+;B_2^{2,2;1})$; Thm.\ \ref{thm.QC1}, p.\ \pageref{thm.QC1} \\

$\llbracket X, Y \rrbracket^{\Lambda} = \into \Lambda[\d X, \d Y]$ \> $\Lambda$-quadratic covariation of $X$ and $Y$; Defs.\ \ref{def.QC1} \& \ref{def.QC2}, pp.\ \pageref{def.QC1} \& \pageref{def.QC2} \\

$\kappa_{M,N}$ \> the measure $(\kappa_M + \kappa_N)/2$; Lem.\ \ref{lem.QCL1bd}, p.\ \pageref{lem.QCL1bd} \\

$\mathcal{Q}$, $\mathcal{Q}(X,Y)$ \> equivalence classes in $L_{\loc}^1(\R_+,\kappa_{M,N};B_2^{2,2;1})$ of elements of $\mathrm{Q}$, closure of $\mathcal{Q}$ \\
\> in $L_{\loc}^1(\R_+,\kappa_{M,N};B_2^{2,2;1})$, where $M$ is the  martingale part of $X$ and $N$ is \\ 
\> the martingale part of $Y$; Nota.\ \ref{nota.QXY}, p.\ \pageref{nota.QXY} \\

$\cA_{\beta}$, $\cB_{\gamma}$ \> fixed element of $\{\cA,\cA_{\sa}\}$, fixed element of $\{\cB,\cB_{\sa}\}$; \S\ref{sec.NCIF}, p.\ \pageref{sec.NCIF} \\

$D^kF$ \> $k^{\text{th}}$ Fréchet derivative of $F$; \S\ref{sec.NCIF}, p.\ \pageref{sec.NCIF} \\

$C_a^{k,\ell}(\cU;\cC)$, $C_a^k(\cU;\cC)$ \> adapted $C^{k,\ell}$ maps, adapted $C^k$ maps; Def.\ \ref{def.adapCkl}, p.\ \pageref{def.adapCkl} \\

$\partial_{\mathsmaller{\otimes}}^kp$ \> tensor noncommutative derivative of polynomial $p \in \C[\lambda]$; Nota.\ \ref{nota.ncder}, p.\ \pageref{nota.ncder} \\

$W_k(\R)$ \> $k^{\text{th}}$ Wiener space; Ex.\ \ref{def.Wk}, p.\ \pageref{def.Wk} \\

$f_{\mathsmaller{\cA}} \colon \cA_{\sa} \to \cA$ \> operator function induced by the scalar function $f$; Ex.\ \ref{ex.Wiener}, p.\ \pageref{ex.Wiener} \\

$\partial_{x_i}P$ \> algebraic derivative of the trace $\ast$-polynomial $P$ in the indeterminate $x_i$; \\
\> Lem.\ \ref{lem.partial}, p.\ \pageref{lem.partial} \\

$\partial^k P$ \> algebraic $k^{\text{th}}$ derivative of the trace $\ast$-polynomial $P$; Nota.\ \ref{nota.TrPolyder}, p.\ \pageref{nota.TrPolyder} \\

$C_{\E}(\cU;\mathbb{B}_k(\cA_{\gamma}^d;\cA^m))$ \> trace continuous maps $\cU \to \mathbb{B}_k(\cA_{\gamma}^d;\cA^m)$, where $\cA_{\beta}^n \in \{\cA^n,\cA_{\sa}^n\}$, $\cU \subseteq \cA_{\beta}^n$ \\
\> is open, $\cA_{\gamma}^d \in \{\cA^d,\cA_{\sa}^d\}$, and $d = (d_1,\ldots,d_k)$; Def.\ \ref{def.tracecontCk}\ref{item.tracecont}, p.\ \pageref{item.tracecont} \\

$C_{\E}^k(\cU;\cA^m)$ \> trace $C^k$ maps $\cU \to \cA^m$, where $\cU \subseteq \cA_{\beta}^n \in \{\cA^n,\cA_{\sa}^n\}$ is open; \\
\> Def.\ \ref{def.tracecontCk}\ref{item.traceCk}, p.\ \pageref{item.traceCk} \\

$\ell^{\infty}(\Xi,\sG)$ \> bounded $\sG$/$\cB_{\C}$-measurable functions $\Xi \to \C$; p.\ \pageref{page.bddmeas} \\

$I^{a_1,\ldots,a_{k+1}}\varphi$ \> multiple operator integral (MOI); Thm.\ \ref{thm.babyMOI}, p.\ \pageref{thm.babyMOI} \\

$f^{[k]}$ \> $k^{\text{th}}$ divided difference of the scalar function $f$; Def.\ \ref{def.divdiff}, p.\ \pageref{def.divdiff} \\

$NC^k(\R)$ \> noncommutative $C^k$ functions $\R \to \C$; Def.\ \ref{def.NCk}, p.\ \pageref{def.NCk}
\end{tabbing}

\hfill

\begin{ack}
\phantomsection
\addcontentsline{toc}{section}{Acknowledgments}
We are grateful to Michael Anshelevich, Guillaume C\'{e}bron, Nicolas Gilliers, Dimitri Shlyakhtenko, and Roland Speicher for inspiring conversations.
We extend special thanks to Bruce Driver, colleague and mentor, whose insights helped resolve many key technical issues throughout this work and who helped us realize that Proposition \ref{prop.clCk} holds.
Moreover, it was his initial intuition that ``all It\^o formulas are created equal'' that led us down the path to this paper in the first place.
\end{ack}

\phantomsection
\addcontentsline{toc}{section}{References}
\bibliographystyle{amsplain}
\bibliography{NCSC.bib}

\providecommand{\bysame}{\leavevmode\hbox to3em{\hrulefill}\thinspace}
\providecommand{\MR}{\relax\ifhmode\unskip\space\fi MR }
\providecommand{\MRhref}[2]{%
  \href{http://www.ams.org/mathscinet-getitem?mr=#1}{#2}
}
\providecommand{\href}[2]{#2}
\begin{thebibliography}{10}

\bibitem{ABDVG2022}
S.~Albeverio, L.~Borasi, F.~C. De~Vecchi, and M.~Gubinelli, \emph{Grassmannian stochastic analysis and the stochastic quantization of {E}uclidean fermions}, Probability Theory and Related Fields \textbf{183} (2022), 909--995.

\bibitem{Anshelevich2000}
M.~Anshelevich, \emph{Free stochastic measures via noncrossing partitions}, Advances in Mathematics \textbf{155} (2000), 154--179.

\bibitem{Anshelevich2002}
\bysame, \emph{It\^{o} formula for free stochastic integrals}, Journal of Functional Analysis \textbf{188} (2002), 292--315.

\bibitem{Anshelevich2004}
\bysame, \emph{$q$-{L}\'{e}vy processes}, Journal f\"{u}r die reine und angewandte Mathematik \textbf{576} (2004), 181--207.

\bibitem{AH1984}
D.~B. Applebaum and R.~L. Hudson, \emph{Fermion {I}t\^{o}'s formula and stochastic evolutions}, Communications in Mathematical Physics \textbf{96} (1984), 473--496.

\bibitem{ACDS2009}
N.~A. Azamov, A.~L. Carey, P.~G. Dodds, and F.~A. Sukochev, \emph{Operator integrals, spectral shift, and spectral flow}, Canadian Journal of Mathematics \textbf{61} (2009), 241--263.

\bibitem{Biane1}
P.~Biane, \emph{Free brownian motion, free stochastic calculus and random matrices}, Free probability theory ({W}aterloo, {ON}, 1995), Fields Institute Communications, vol.~12, American Mathematical Society, Providence, RI, 1997, pp.~1--19.

\bibitem{BS1998}
P.~Biane and R.~Speicher, \emph{Stochastic calculus with respect to free {B}rownian motion and analysis on {W}igner space}, Probability Theory Related Fields \textbf{112} (1998), 373--409.

\bibitem{BS2001}
\bysame, \emph{Free diffusions, free entropy and free {F}isher information}, Annales de l'Institut Henri Poincar\'{e} (B) Probability and Statistics \textbf{37} (2001), 581--606.

\bibitem{Bichteler2002}
K.~Bichteler, \emph{Stochastic {I}ntegration with {J}umps}, Encyclopedia of Mathematics and its Applications, vol.~89, Cambridge University Press, Cambridge, 2002.

\bibitem{BKS1997}
M.~Bo\.{z}ejko, B.~K\"{u}mmerer, and R.~Speicher, \emph{$q$-{G}aussian processes: {N}on-commutative and classical aspects}, Communications in Mathematical Physics \textbf{185} (1997), 129--154.

\bibitem{BS1991}
M.~Bo\.{z}ejko and R.~Speicher, \emph{An example of a generalized {B}rownian motion}, Communications in Mathematical Physics \textbf{137} (1991), 519--531.

\bibitem{Cebron2013}
G.~C\'{e}bron, \emph{Free convolution operators and free {H}all transform}, Journal of Functional Analysis \textbf{265} (2013), 2645--2708.

\bibitem{CW1990}
K.~L. Chung and R.~J. Williams, \emph{Introduction to {S}tochastic {I}ntegration}, 2nd ed., Probability and Its Applications, Birkh\"{a}user, Boston, MA, 1990.

\bibitem{Cohn2013}
D.~L. Cohn, \emph{Measure {T}heory}, 2nd ed., Birkh\"{a}user Advanced Texts Basler Lehrbr\"{u}cher, Springer, New York, NY, 2013.

\bibitem{Conway1990}
J.~B. Conway, \emph{A {C}ourse in {F}unctional {A}nalysis}, 2nd ed., Graduate Texts in Mathematics, vol.~96, Springer, New York, NY, 1990.

\bibitem{Conway2000}
\bysame, \emph{A {C}ourse in {O}perator {T}heory}, Graduate Studies in Mathematics, vol.~21, American Mathematical Society, Providence, RI, 2000.

\bibitem{Courrege1962}
P.~Courr\`{e}ge, \emph{Int\'{e}grale stochastiques et martingales de carr\'{e} int\'{e}grable}, S\'{e}minaire {B}relot--{C}hoquet--{D}enyde. {T}h\'{e}orie du {P}otentiel (1962--1963), vol.~7, Secr\'{e}tariat math\'{e}matique, Paris, 1962, pp.~1--20.

\bibitem{daSilva2018}
R.~C. da~Silva, \emph{Lecture notes on noncommutative ${L}_p$-spaces}, preprint, \href{https://arxiv.org/abs/1803.02390}{arXiv:1803.02390} [math.OA], 2018.

\bibitem{Dabrowski2010}
Y.~Dabrowski, \emph{A non-commutative path space approach to stationary free stochastic differential equations}, preprint, \href{https://arxiv.org/abs/1006.4351}{arXiv:1006.4351} [math.OA], 2010.

\bibitem{DVFGG2025}
F.~De~Vecchi, L.~Fresta, M.~Gordina, and M.~Gubinelli, \emph{Non-commutative ${L}^p$ spaces and {G}rassmann stochastic analysis}, Probability Theory and Related Fields \textbf{192} (2025), 949--1029.

\bibitem{DVFG2025}
F.~C. De~Vecchi, L.~Fresta, and M.~Gubinelli, \emph{A stochastic analysis of subcritical {E}uclidean fermionic field theories}, The Annals of Probability \textbf{53} (2025), 906--966.

\bibitem{DS2018}
A.~Deya and R.~Schott, \emph{On stochastic calculus with respect to $q$-{B}rownian motion}, Journal of Functional Analysis \textbf{274} (2018), 1047--1075.

\bibitem{Dixmier1953}
J.~Dixmier, \emph{Formes lin\'{e}aires sur un anneau d’op\'{e}rateurs}, Bulletin de la Soci\'{e}t\'{e} Math\'{e}matique de France \textbf{81} (1953), 9--39.

\bibitem{Dixmier1981}
\bysame, \emph{Von {N}eumann {A}lgebras}, North-Holland Mathematical Library, vol.~27, North-Holland Publishing Company, Amsterdam, 1981.

\bibitem{DDM1970}
C.~Dol\'{e}ans-Dade and P.-A. Meyer, \emph{Int\'{e}grales stochastiques par rapport aux martingales locales}, S\'{e}minaire de {P}robabilit\'{e}s ({U}niversit\'{e} de {S}trasbourg), tome 4 (1968--1969), Springer, Berlin-Heidelberg-New York, 1970, pp.~77--107.

\bibitem{DonatiMartinS2003}
C.~Donati-Martin, \emph{Stochastic integration with respect to $q$ {B}rownian motion}, Probability Theory and Related Fields \textbf{125} (2003), 77--95.

\bibitem{DHK2013}
B.~K. Driver, B.~C. Hall, and T.~Kemp, \emph{The large-${N}$ limit of the {S}egal--{B}argmann transform on ${U}_{N}$}, Journal of Functional Analysis \textbf{265} (2013), 2585--2644.

\bibitem{EK1986}
S.~N. Ethier and T.~G. Kurtz, \emph{Markov {P}rocesses: {C}haracterization and {C}onvergence}, Wiley Series in Probability and Statistics, John Wiley \& Sons, New York, NY, 1986.

\bibitem{HJ2014}
P.~H\'{a}jek and M.~Johanis, \emph{Smooth {A}nalysis in {B}anach {S}paces}, De Gruyter Series in Nonlinear Analysis and Applications, vol.~19, De Gruyter, Berlin, 2014.

\bibitem{HiaiUeda2006}
F.~Hiai and Y.~Ueda, \emph{Free transportation cost inequalities for noncommutative multi-variables}, Infinite Dimensional Analysis, Quantum Probability and Related Topics \textbf{9} (2006), 391--412.

\bibitem{HP1984}
R.~L. Hudson and K.~R. Parthasarathy, \emph{Quantum {I}to's formula and stochastic evolutions}, Communications in Mathematical Physics \textbf{93} (1984), 301--323.

\bibitem{Ito1944}
K.~It\^{o}, \emph{Stochastic integral}, Proceedings of the Imperial Academy \textbf{20} (1944), 519--524.

\bibitem{Ito1951}
\bysame, \emph{Multiple {W}iener integral}, Journal of the Mathematical Society of Japan \textbf{3} (1951), 157--169.

\bibitem{VaradhanStroock1987}
\bysame, \emph{Selected {P}apers}, Springer-Verlag, New York, NY, 1987, Edited and with an introduction by S.R.S. Varadhan and D.W. Stroock.

\bibitem{JarrowProtter2004}
R.~Jarrow and P.~Protter, \emph{A short history of stochastic integration and mathematical finance: {T}he early years, 1880--1970}, A {F}estschrift for {H}erman {R}ubin, Institute of Mathematical Statistics Lecture Notes -- Monograph Series, vol.~45, Institute of Mathematical Statistics, Beachwood, OH, 2004, pp.~75--91.

\bibitem{JLS2022}
D.~Jekel, W.~Li, and D.~Shlyakhtenko, \emph{Tracial smooth functions of non-commuting variables and the free {W}asserstein manifold}, Dissertationes Mathematicae \textbf{580} (2022), 1--150.

\bibitem{Kemp2016}
T.~Kemp, \emph{The large-${N}$ limits of {B}rownian motions on {$\mathbb{GL}_N$}}, International Mathematics Research Notices \textbf{2016} (2016), 4012--4057.

\bibitem{Kemp2017}
\bysame, \emph{Heat kernel empirical laws on {$\mathbb{U}_N$} and {$\mathbb{GL}_N$}}, Journal of Theoretical Probability \textbf{30} (2017), 397--451.

\bibitem{KS1992}
B.~K\"{u}mmerer and R.~Speicher, \emph{Stochastic integration on the {C}untz algebra ${O}_{\infty}$}, Journal of Functional Analysis \textbf{103} (1992), 372--408.

\bibitem{KW1967}
H.~Kunita and S.~Watanabe, \emph{On square integrable martingales}, Nagoya Mathematical Journal \textbf{30} (1967), 209--245.

\bibitem{McKean1969}
H.~P. McKean, Jr., \emph{Stochastic {I}ntegrals}, Probability and Mathematical Statistics: A Series of Monographs and Textbooks, Academic Press, Inc., New York, NY, 1969.

\bibitem{Meyer1967x4}
P.-A. Meyer, \emph{Int\'{e}grales stochastiques. {I}, {II}, {III}, {IV}}, S\'{e}minaire de {P}robabilit\'{e}s ({U}niversit\'{e} de {S}trasbourg), tome 1 (1966--1967), Springer, Berlin-Heidelberg-New York, 1967, pp.~72--94, 95--117, 118--141, 142--162.

\bibitem{MS2017}
J.~A. Mingo and R.~Speicher, \emph{Free {P}robability and {R}andom {M}atrices}, Fields Institute Monographs, vol.~35, Springer, New York, NY, 2017.

\bibitem{MotooWatanabe1965}
M.~Motoo and S.~Watanabe, \emph{On a class of additive functionals of {M}arkov processes}, Journal of Mathematics of Kyoto University \textbf{4} (1965), 429--469.

\bibitem{NS2006}
A.~Nica and R.~Speicher, \emph{Lectures on the {C}ombinatorics of {F}ree {P}robability}, London Mathematical Society Lecture Notes Series, vol. 335, Cambridge University Press, Cambridge, 2006.

\bibitem{NikitopoulosIto}
E.~A. Nikitopoulos, \emph{{I}t\^{o}'s formula for noncommutative ${C}^2$ functions of self-adjoint free {I}t\^{o} processes}, Documenta Mathematica \textbf{27} (2022), 1447--1507, Erratum: Documenta Mathematica \textbf{28} (2023), 1275--1277.

\bibitem{NikitopoulosMOI}
\bysame, \emph{Multiple operator integrals in non-separable von {N}eumann algebras}, Journal of Operator Theory \textbf{89} (2023), 361--427.

\bibitem{NikitopoulosNCk}
\bysame, \emph{Noncommutative ${C}^k$ functions and {F}r\'{e}chet derivatives of operator functions}, Expositiones Mathematicae \textbf{41} (2023), 115--163.

\bibitem{Peller2006}
V.~V. Peller, \emph{Multiple operator integrals and higher operator derivatives}, Journal of Functional Analysis \textbf{233} (2006), 515--544.

\bibitem{Peller2016}
\bysame, \emph{Multiple operator integrals in perturbation theory}, Bulletin of Mathematical Sciences \textbf{6} (2016), 15--88.

\bibitem{PX1997}
G.~Pisier and Q.~Xu, \emph{Non-commutative martingale inequalities}, Communications in Mathematical Physics \textbf{189} (1997), 667--698.

\bibitem{Protter2005}
P.~E. Protter, \emph{Stochastic {I}ntegration and {D}ifferential {E}quations}, 2nd ed., Stochastic Modelling and Applied Probability, vol.~21, Springer-Verlag, Berlin Heidelberg, 2005.

\bibitem{RY1999}
D.~Revuz and M.~Yor, \emph{Continuous {M}artingales and {B}rownian {M}otion}, 3rd ed., Grundlehren der mathematischen Wissenschaften, vol. 293, Springer, Berlin Heidelberg, 1999.

\bibitem{RW2}
L.~C.~G. Rogers and D.~Williams, \emph{Diffusions, {M}arkov {P}rocesses, and {M}artingales, {V}olume 2: {I}t\^{o} {C}alculus}, 2nd ed., Cambridge University Press, Cambridge, 2000.

\bibitem{Shlyakhtenko2009}
D.~Shlyakhtenko, \emph{Lower estimates on microstates free entropy dimension}, Analysis \& PDE \textbf{2} (2009), 119--146.

\bibitem{ST2019}
A.~Skripka and A.~Tomskova, \emph{Multilinear {O}perator {I}ntegrals: {T}heory and {A}pplications}, Lecture Notes in Mathematics, vol. 2250, Springer, Switzerland, 2019.

\bibitem{VoiculescuVI}
D.~Voiculescu, \emph{The analogues of entropy and of {F}isher's information measure in free probability theory. {VI}. {L}iberation and mutual free information}, Advances in Mathematics \textbf{146} (1999), 101--166.

\end{thebibliography}
\end{document}